\newcommand{\R}{{\mathbb R}}
\newcommand{\uu}{\mathbf{u}}
\newcommand{\vv}{\mathbf{v}}
\newcommand{\w}{\mathbf{w}}
\newcommand{\h}{\mathbf{h}}
\newcommand{\q}{\mathbf{q}}
\newtheorem{theorem}{Theorem}
\theoremstyle{plain}
\newtheorem{corollary}{Corollary}
\newtheorem{definition}{Definition}
\newtheorem{lemma}{Lemma}
\newtheorem{remark}{Remark}
\numberwithin{equation}{section}
\begin{document}

\author{Luis A. Caffarelli} 
\address[Luis A. Caffarelli] 
{Department of Mathematics \newline 
\indent University of Texas at Austin \newline 
\indent 1 University Station, C1200, Austin, TX 78712, USA} 
\email[Luis A. Caffarelli]{caffarel@math.utexas.edu} 

\author{Henrik Shahgholian}
\address[Henrik Shahgholian]
{Department of Mathematics \newline 
\indent KTH Royal Institute of Technology \newline 
\indent 100 44 Stockholm, Sweden} 
\email[Henrik Shahgholian]{henriksh@kth.se}

\author{Karen Yeressian}
\address[Karen Yeressian]
{Department of Mathematics \newline 
\indent KTH Royal Institute of Technology \newline 
\indent 100 44 Stockholm, Sweden} 
\email[Karen Yeressian]{kareny@kth.se}

\title[A minimization problem with free boundary]
{A minimization problem with free boundary related to a cooperative system}

\date{\today}
\keywords{Minimization, Cavitational flow,  Free boundary, System, Regularity} 
%35R35  	Free boundary problems
%35J60  	Nonlinear elliptic equations
\subjclass[2010]{Primary 35R35; Secondary 35J60} 

\begin{abstract} 
We study the minimum problem for the functional 
\begin{equation*}
\int_{\Omega}\bigl( \vert \nabla \uu \vert^{2} + Q^{2}\chi_{\{\vert \uu\vert>0\}} \bigr)dx
\end{equation*}
with the constraint $u_i\geq 0$ for $i=1,\cdots,m$ 
where $\Omega\subset\mathbb{R}^{n}$ is a bounded domain 
and $\uu=(u_1,\cdots,u_m)\in H^{1}(\Omega;\mathbb{R}^{m})$. 

Using an array of technical tools, from geometric analysis for the free boundaries, we reduce the
problem to its scalar counterpart and hence conclude similar results as that of scalar problem. This can also be seen as the most novel part of the paper, that possibly can lead to further developments of free boundary regularity for systems.

\end{abstract}

\maketitle 

\tableofcontents

\section{Introduction}\label{section1000} 

\subsection{Background}\label{subsection500} 
In the last four decades the regularity theory of 
free boundary problems has seen    an unprecedented surge of developments of  new technical devices, that  have  resulted in solving both old and new problems, unfeasible with earlier techniques. Most of these tools, enrooted in the analysis  of minimal surfaces, have been enhanced 
and undergone major changes and in some cases even being reincarnated. 
Cavitational flow, Obstacle problem and Thin obstacles are a few 
among many of those problems, that have been treated successfully with these newly developed tools. It is, however, not until very recently that problems which involve system of equations have been treated from a regularity theory point of view, see 
\cite{JiangLin2007,Andersson2016,AnderssonShahgholianUraltsevaWeiss2015}. 
There seems to be lack of a general methodology and approach for analyzing  the regularity for systems of free boundary problems.\footnote{Competitive systems, which gives rise to disjoint support of limiting solutions,  have been much in focus in the last decade (see e.g. \cite{caff-lin-2008}, \cite{Terracini}).  Competitive system of more than two equations usually give rise to the so-called junction points, where more than two-phases can meet; such points are called multiple junction points. Hence the approach for studying competitive system  differs substantially from that of cooperative systems, where they usually give rise to smooth free boundaries, that are locally graphs.} Our intention with this paper is to initiate the study of Cavitational problems 
where several flows  are involved, and interact whenever there is 
phase transition. 

The mathematical model we have chosen to work with is the by-now classical 
problem of Bernoulli type free boundary, that was treated by  the first author with H. Alt \cite{AltCaffarelli1981}.  
The simplest setting of such a problem asks for properties 
of the minimizers $\uu=(u_1,\cdots,u_m)$ of the functional 
\begin{equation*}
J(\uu)=\int_{\Omega}\bigl(\vert \nabla \uu\vert^{2}
+Q^{2}\chi_{\{\vert \uu\vert>0\}}\bigr)dx
\end{equation*}
over an appropriate Sobolev vector-valued functions, 
domain $\Omega \subset \mathbb{R}^n$, smooth enough $Q$, 
and boundary values. 

Minimizers of this functional describe (optimal) stationary  thermal   insulation, allowing a prescribed  heat  loss from the insulating layer. 
 The heat flows in from the boundary of the domain $\Omega$,  through a vector function 
 $\mathbf{g} \in H^1 (\Omega; \R^m)$ on the boundary (boundary data). 
 Each $g_i$ gives rise to  a potential function $u_i$ describing the heat distribution from  the data $g_i$, and the system has to cost through Dirichlet energy as well as the total volume of heated
 region. Since this is a system, the latter is described by $|\{|\uu| >0 \}| $. If the supports of 
  $g_i$-s stay  far from each other (and data is small enough) then it is reasonable  that the system behaves exactly  like scalar case, for each $i=1, \cdots , m$.  When the supports of $g_i$-s come close (or some  $g_i$-s become large), then naturally the volume of each  support $\{ u_i >0 \} $ increases, and at some stage it is less costly to use same insulation layer, i.e. they prefer  to share support, and hence $\sup u_i = \sup u_j$ for some of these $i,j$.\footnote{A different way of explaining this is to consider two balls $B_1(z)$, and $B_1(z + Re)$ for a  direction $e$,  and a large constant  $R>0$. We set $D_R = B_1(z) \cup B_1(z + Re)$, and minimize our functional  in $\R^n \setminus D_R$ with some  non-negative  boundary data  on $D_R$. For large values of $R$ the insulation layers for each ball is separated, and by decreasing $R$ the supports eventually intersect. But before this happens, it is less costly to share insulation, by having the same support for  all components of the solution vector.} 
  Those $g_i$ that are still small  (and their support stay far from others) will insulate separately. The total heat of the system at each point is given by $\sum u_i$, and this is a major difference between our problem and standard scalar problem.
 A similar model  can appear in  population dynamics where several species coexist, and overflow the patches. In such models (and many others) each $u_i$ may represent a population density (or any quantity given by the system).  
   We refer to Section \ref{supports}  for relation between the supports of $u_i$, and 
 for rigorous  arguments concerning our discussion here.

Other models of such a problem appears as equilibrium state(s) of 
cooperative systems, corresponding to  reaction-diffusion systems, 
with high concentration of energy close to the free  boundary. 
Limit of such singularly perturbed  problems lead to minimization 
of our functional. Other related models may appear in shape 
optimization, where the Dirichlet energy of vector-valued 
functions are to be minimized, subject to volume constraint 
of the type $\vert\{\vert\uu\vert>0\}\vert=A>0 $, with $\vert\Omega\vert > A $, and 
Dirichlet data on $\partial \Omega$. It is noteworthy that our approach in this paper also applies 
to the corresponding two-phase problems, as well as singular perturbations, and 
volume-constrained  maps.

Our results are  in lines of that of \cite{AltCaffarelli1981} and several of the 
succeeding papers \cite{AguileraCaffarelliSpruck1987}, \cite{Weiss1999}, etc.
However, our methodology (besides the obvious preliminary footwork) and strategy 
is somehow new.  For the main regularity theory,  instead of working with the system, 
we use a reduction method 
to the scalar case  with the  cost of loosing the regularity of the 
free boundary condition that is assumed/given in the scalar case.  More exactly, our analysis boils 
down to a weak solution of 
\begin{equation*}
\Delta u_i=w_iQ\mathcal{H}^{n-1}\llcorner(\Omega\cap\partial^{*}\{\vert \uu\vert>0\})
\enskip\text{for}\enskip i=1,\cdots,m\text{,}
\end{equation*}
where (see Notation section for definitions)
\begin{equation*}
w_i(x)=\lim_{y\in\{\vert \uu\vert>0\},y\to x}\frac{u_i(y)}{\vert \uu(y)\vert}\text{.}
\end{equation*}
In this reformulation   the information about the continuity of the Bernoulli boundary condition 
is lost, since a priori we do not know 
how regular $w_i$ are.  The heart of the matter lies in proving the H\"older 
regularity of the functions $w_i$. It should be remarked that this might be seen 
as the most novel part of of our paper; see Section \ref{section1800}.

In a follow up paper \cite{CSY2016a}  we shall consider this problem in a more general setting, allowing sign change as well as more general integrand (anisotropic as well as degenerate/singular) in our functional.

\subsection{Mathematical Setting}\label{subsection600} 
Let $\Omega\subset\mathbb{R}^{n}$ be a bounded domain and $m\geq 1$ an integer. 
Let $Q:\Omega\to\mathbb{R}$ be Lebesgue measurable and there exist constants 
$Q_{max}\geq Q_{min}>0$ such that $Q_{min}\leq Q\leq Q_{max}$ a.e. in $\Omega$. 
For $\uu\in H^{1}(\Omega;\mathbb{R}^{m})$ let us define 
\begin{equation*}
J(\uu)=\int_{\Omega}\bigl(\vert \nabla \uu\vert^{2}+Q^{2}\chi_{\{\vert \uu\vert>0\}}\bigr)dx
\end{equation*}
where 
\begin{equation*}
\vert\nabla \uu\vert^{2}=\vert\nabla u_1\vert^{2}+\cdots+\vert\nabla u_m\vert^{2}\text{,} 
\end{equation*} 
here $\vert\cdot\vert$ denotes the Euclidean length. 

Let $\mathbf{g}\in H^{1}(\Omega;\mathbb{R}^{m})$ such that 
$g_i\geq 0$ a.e. in $\Omega$ for $i=1,\cdots,m$. 
We consider the minimization problem of the functional 
$J$ for $\uu\in H^{1}(\Omega;\mathbb{R}^{m})$ 
under the constraint that $\uu=\mathbf{g}$ on $\partial\Omega$ and the sign constraints 
\begin{equation*}
u_i\geq 0\enskip\text{a.e. in}\enskip\Omega\enskip\text{for}\enskip i=1,\cdots,m\text{.} 
\end{equation*}

\begin{remark} If we change the volume constraint in our functions above, to 
$ \sum_i \chi_{\{u_i >0\}}$ then the components decouple and we fall back to scalar case for each $i=1, \cdots, m$. 
\end{remark}

In Figure \ref{figure500} an example of local minimizer 
(see Definition \ref{definition500}) is depicted. 
In this example we have $\Omega=(-1,1)^{2}$, $m=2$, $Q=1$, 
$g_{1}(x)=x_2^{-}$ and $g_{2}(x)=x_1^{+}$. 
Because in this paper the sum of the components of $\uu$ 
and the length of $\uu$ will play an important role, we have also 
depicted these functions.  

\begin{figure}
        \begin{center}
        
        \vspace{0.4cm}
        
        \begin{subfigure}[b]{6cm}
        	        \centering
                \includegraphics[width=5cm]{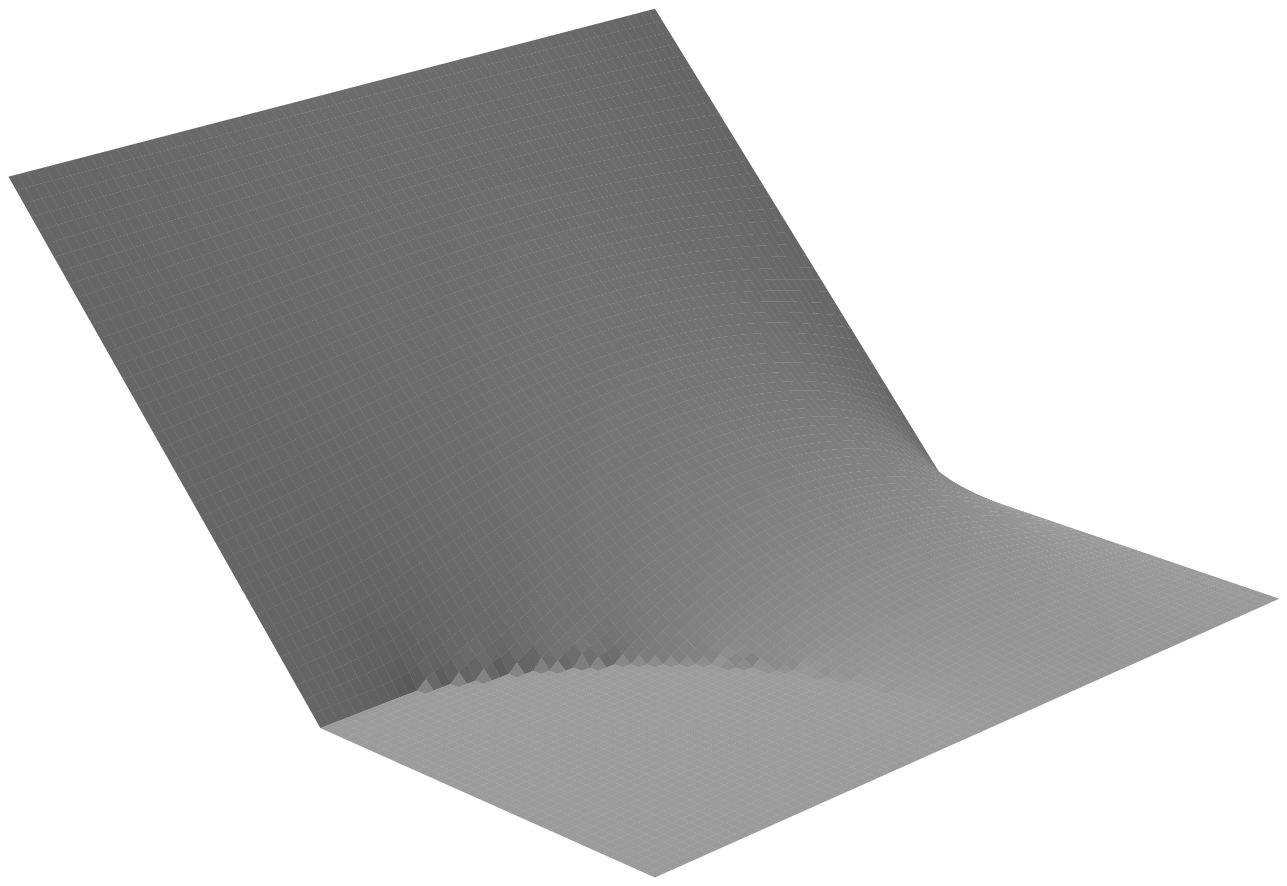}
                \caption{$u_{1}$}
                \label{figure100}
        \end{subfigure}
        \begin{subfigure}[b]{6cm}
	      	\centering
               	\includegraphics[width=5cm]{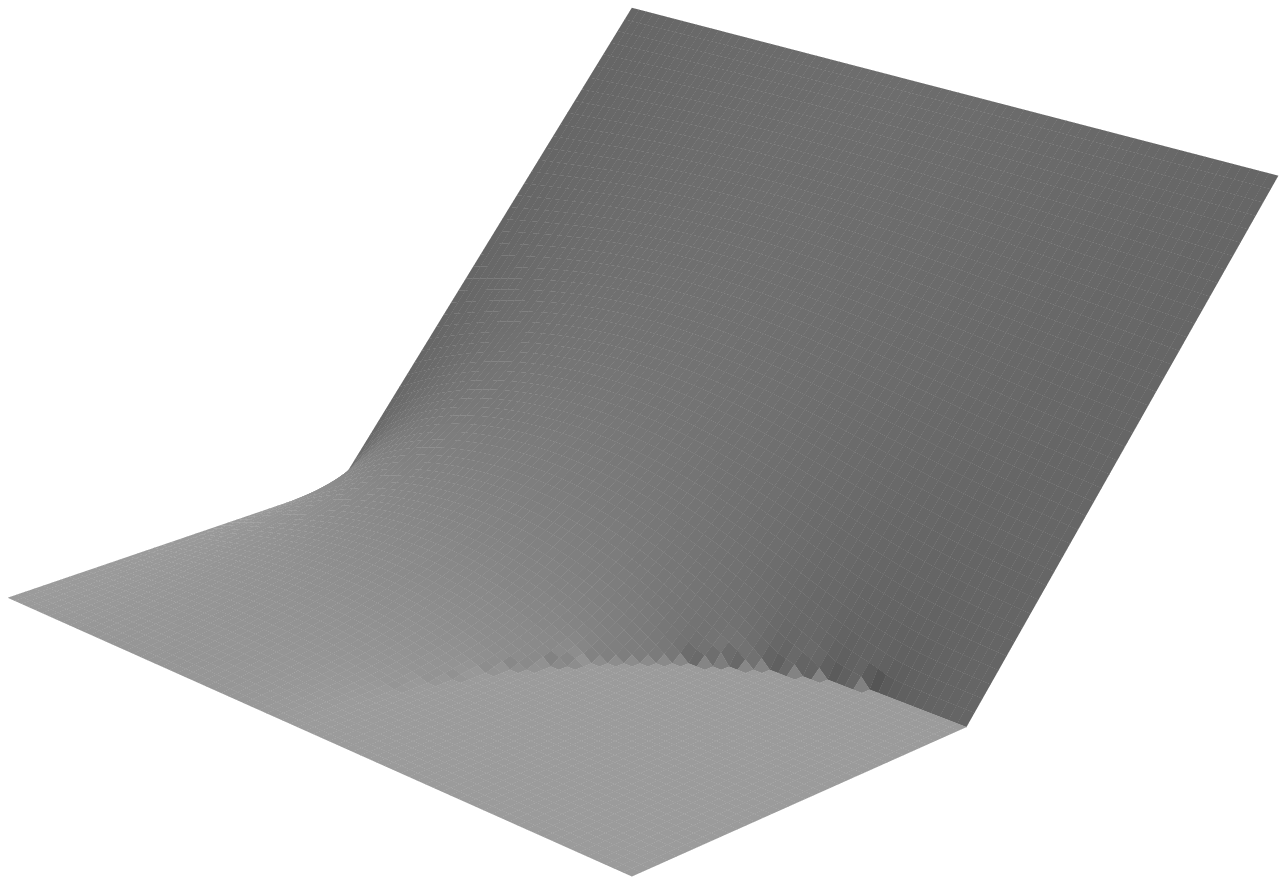}
              	\caption{$u_{2}$}
              	\label{figure200}
        \end{subfigure}
        
         \vspace{0.5cm}
        
         \begin{subfigure}[b]{6cm}
	      	\centering
               	\includegraphics[width=5cm]{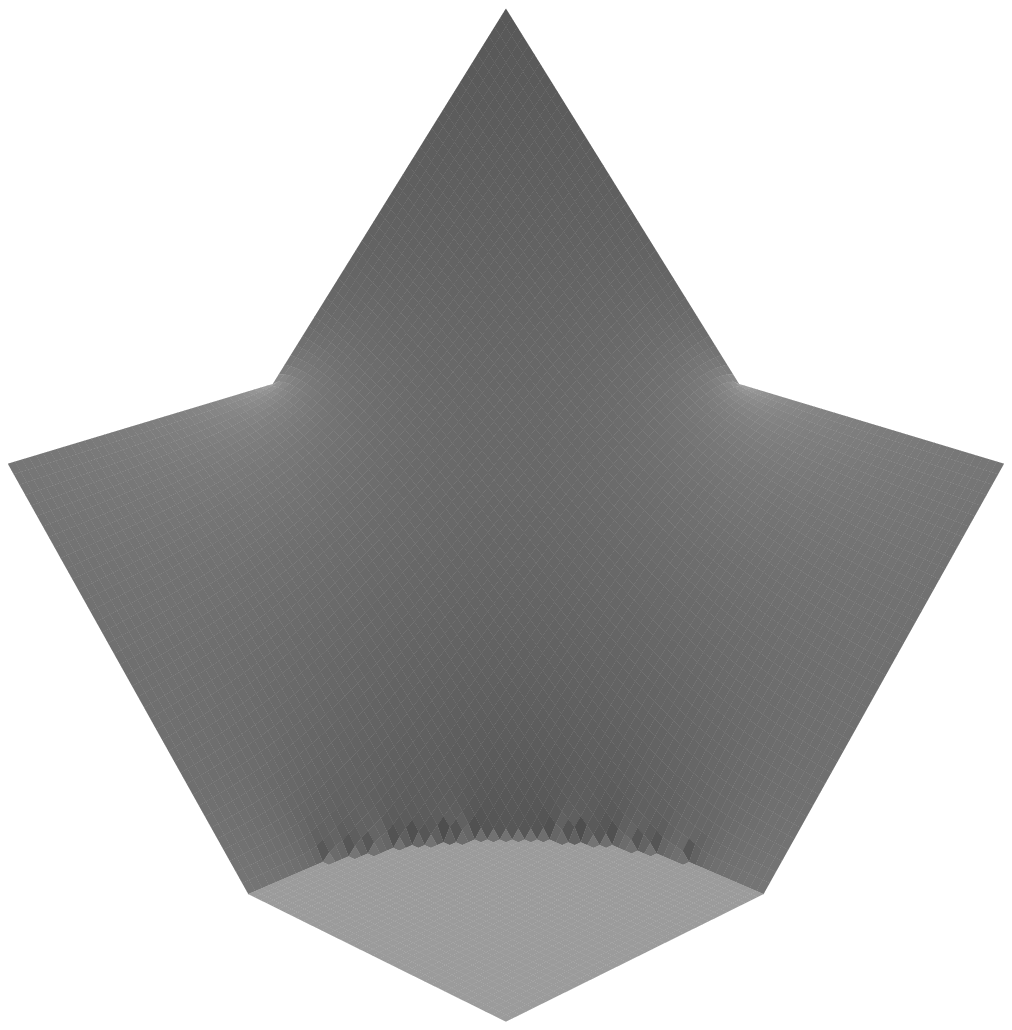}
              	\caption{$u_{1}+u_{2}$}
              	\label{figure300}
        \end{subfigure}
         \begin{subfigure}[b]{6cm}
	      	\centering
               	\includegraphics[width=5cm]{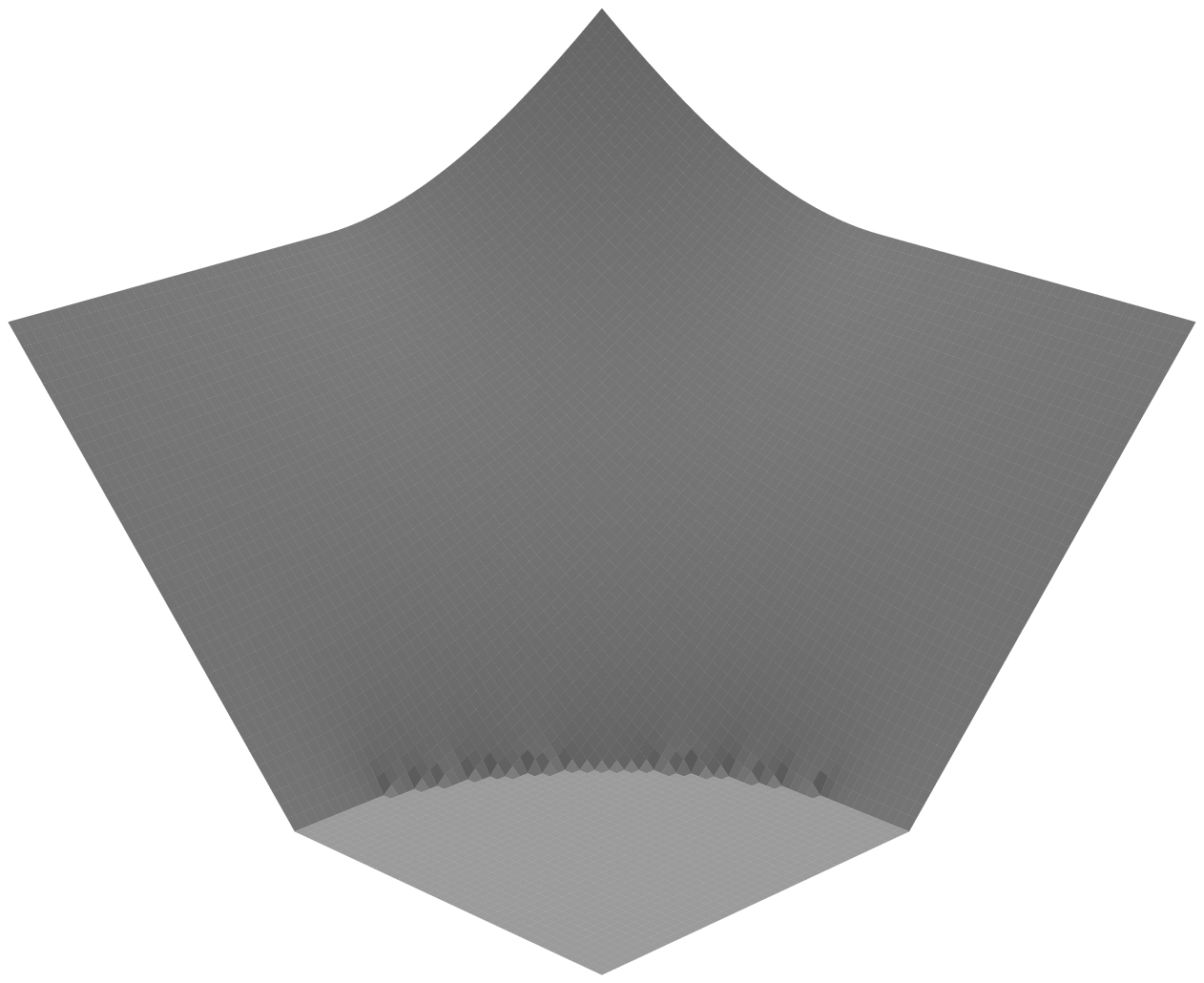}
              	\caption{$\vert \uu\vert$}
              	\label{figure400}
        \end{subfigure}
	\end{center}
        \caption{Components of a local minimizer $\uu$ 
        together with the sum $u_1+u_2$ and its length $\vert \uu\vert$.} 
        \label{figure500}
\end{figure}

\subsection{Notation}\label{subsection700} 
Here we shall line up important notations that are frequently used in this paper.

\begin{tabular}{ll}
  $C$, $C_1$, $C_2$  & generic constants; \\
  $\chi_{D}$ & characteristic function of the set $D$ ($D\subset\mathbb{R}^{n}$); \\
  $\overline{D}$ & the closure of $D$; \\
  $\partial D$ & boundary of $D$; \\
  $\partial^{*} D$ & reduced boundary of $D$; \\
  $\partial_{*} D$ & measure theoretic boundary of $D$; \\
  $D^{\circ}$ & interior of $D$; \\
  $\sigma$ & surface measure; \\
    $\vert\cdot\vert$ & absolute value, euclidean length of a vector, \\ 
                             & norm of a matrix, Lebesgue measure or surface measure; \\
  $\mathcal{H}^{m}$ & $m$-dimensional Hausdorff measure (on $\mathbb{R}^{n}$); \\
  $\Vert\cdot\Vert$ & norm of functions; \\
  $[\cdot]$ & seminorm of functions; \\
  %$D$  & a domain; \\
  $B_r^{m}(x)$ & $\bigl\{y\in\mathbb{R}^{m}\ \vert\ \vert y-x\vert<r \bigr\}$; \\
  $B_r(x)$, $B_r$, $B$ & $B_r^{n}(x)$, $B_r^{n}(0)$, $B_1^{n}(0)$; \\
 % $e_1,\cdots,e_n$ & standard basis of $\mathbb{R}^{n}$; \\
 % $e$, $\upsilon$ & arbitrary unit vectors, outward normals \\
 % 		   & are denoted by $\upsilon$; \\
 % $e\perp e_1$ & $e$ is orthogonal to $e_1$; \\
 % $\partial_{e}f$ & directional derivative of $f$ (in the direction $e$); \\
  $f^{+}$, $f^{-}$ & $\max(0,f)$, $\max(0,-f)$; \\
  % $C_{b}(D)$ & functions in $C(D)$ with finite $\Vert\cdot\Vert_{C(D)}$ norm; \\
  % $C_{b}^{1}(D)$ & functions in $C^{1}(D)$ with finite $\Vert\cdot\Vert_{C^{1}(D)}$ norm; \\
  $\subset\subset$ & compactly contained; \\
  $\fint$ & integral mean; \\
  $\nu$ & outer normal; \\
  $\mu\llcorner A$ & measure $\mu$ restricted to the set $A$. 
\end{tabular}

\section{Main Results}\label{section1200} 

Let us denote by $\mathcal{A}$ the set of our admissible functions, i.e. 
\begin{multline}\label{equ300} 
\mathcal{A}=
\Bigl\{
\vv\in H^{1}(\Omega;\mathbb{R}^{m})\enskip\Bigm\vert\enskip 
\vv=g\enskip\text{on}\enskip\partial\Omega \\ 
\enskip\text{and}\enskip 
\vv_i\geq 0\enskip\text{a.e. in}\enskip\Omega\enskip\text{for}\enskip i=1,\cdots,m
\Bigr\}\text{.}
\end{multline}

We call $\uu\in\mathcal{A}$ an absolute minimizer if $J(\uu)\leq J(\vv)$ for all $\vv\in\mathcal{A}$.

\begin{theorem}\label{theorem1000} 
There exists an absolute minimizer of our problem. 
\end{theorem}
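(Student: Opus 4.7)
The plan is to use the direct method of the calculus of variations. The class $\cA$ is nonempty because $\mathbf{g}\in\cA$, and $J$ is manifestly nonnegative, so a minimizing sequence $\uu^{k}\in\cA$ with $J(\uu^{k})\to\inf_{\cA}J$ exists. Since $Q\geq Q_{\min}>0$, the inequality $\int_{\Omega}|\nabla\uu^{k}|^{2}dx\leq J(\uu^{k})\leq J(\mathbf{g})$ gives a uniform bound on the Dirichlet energies. Writing $\uu^{k}-\mathbf{g}\in H^{1}_{0}(\Omega;\R^{m})$ and applying the Poincaré inequality componentwise then yields a uniform bound on $\|\uu^{k}\|_{H^{1}(\Omega;\R^{m})}$.

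Next I would extract a subsequence (not relabelled) with $\uu^{k}\rightharpoonup\uu$ weakly in $H^{1}(\Omega;\R^{m})$, strongly in $L^{2}$, and pointwise almost everywhere. The subspace $\mathbf{g}+H^{1}_{0}(\Omega;\R^{m})$ is weakly closed, so $\uu=\mathbf{g}$ on $\partial\Omega$ in the trace sense; and $u^{k}_{i}\geq 0$ a.e. passes to the limit $u_{i}\geq 0$, giving $\uu\in\cA$. Weak lower semicontinuity of the Dirichlet integral yields
\begin{equation*}
\int_{\Omega}|\nabla\uu|^{2}dx\leq\liminf_{k\to\infty}\int_{\Omega}|\nabla\uu^{k}|^{2}dx.
\end{equation*}

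The delicate part, and the only real obstacle, is lower semicontinuity of the volume term $\int_{\Omega}Q^{2}\chi_{\{|\uu|>0\}}dx$, since the characteristic function of the positivity set is not weakly lower semicontinuous in general. However, pointwise convergence $\uu^{k}\to\uu$ a.e.\ saves us: at a.e.\ point $x$ with $|\uu(x)|>0$ one has $|\uu^{k}(x)|>0$ for all sufficiently large $k$, so
\begin{equation*}
\chi_{\{|\uu|>0\}}(x)\leq\liminf_{k\to\infty}\chi_{\{|\uu^{k}|>0\}}(x)\qquad\text{for a.e.\ }x\in\Omega.
\end{equation*}
Since $Q^{2}\geq 0$, Fatou's lemma gives
\begin{equation*}
\int_{\Omega}Q^{2}\chi_{\{|\uu|>0\}}dx\leq\liminf_{k\to\infty}\int_{\Omega}Q^{2}\chi_{\{|\uu^{k}|>0\}}dx.
\end{equation*}

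Adding the two lower-semicontinuity inequalities produces $J(\uu)\leq\liminf_{k}J(\uu^{k})=\inf_{\cA}J$, so $\uu$ is an absolute minimizer. Nothing finer (no quasi-minimality, no density estimate) is needed at this stage; the whole argument is the Alt–Caffarelli existence scheme adapted to vector-valued maps with a sign constraint, and the sign constraint is inherited by the limit for free.
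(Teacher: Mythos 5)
Your proof is correct and follows the same direct-method strategy as the paper; the only difference is that you handle the volume term directly with Fatou's lemma via the pointwise inequality $\chi_{\{|\uu|>0\}}(x)\leq\liminf_{k}\chi_{\{|\uu^{k}|>0\}}(x)$ a.e., whereas the paper first extracts a weak$^{*}$ $L^{\infty}$ limit $w$ of $\chi_{\{|\uu^{k}|>0\}}$ and then shows $w\geq\chi_{\{|\uu|>0\}}$ a.e. Both routes rest on the same underlying observation (a.e.\ convergence of $\uu^{k}$ forces $|\uu^{k}|>0$ eventually on $\{|\uu|>0\}$), and your Fatou argument is a bit more economical since it avoids the weak$^{*}$ compactness step.
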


For $\uu,\vv\in H^{1}(\Omega;\mathbb{R}^{m})$ let us  define 
the metric  $d$  on $H^{1}(\Omega;\mathbb{R}^{m})$ by
\begin{equation}\label{equ400} 
d(\uu,\vv)
:=\Vert \uu-\vv\Vert_{H^{1}(\Omega;\mathbb{R}^{m})}
+\Vert \chi_{\{\vert \uu\vert>0\}}-\chi_{\{\vert \vv\vert>0\}}\Vert_{L^{1}(\Omega)}\text{.} 
\end{equation}

\begin{definition}\label{definition500}  
We call $\uu\in\mathcal{A}$ a local 
minimizer if there exists $\epsilon>0$ such that 
$J(\uu)\leq J(\vv)$ for $\vv\in\mathcal{A}$ with $d(\uu,\vv)<\epsilon$.  
\end{definition}

\begin{theorem}[Optimal linear growth]\label{theorem1100} 
There exists $C>0$ such that for $\uu$ a (local) minimizer and 
$B_r(x)\subset\subset\Omega$ (small balls) if 
$\overline{B_r(x)}\cap\{\vert \uu\vert=0\}\not=\emptyset$ then 
\begin{equation*}
\fint_{\partial B_{r}(x)} u_i d\sigma
\leq 
CQ_{max}r\enskip\text{for}\enskip i=1,\cdots,m\text{.} 
\end{equation*}
\end{theorem}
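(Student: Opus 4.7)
I would adapt the classical Alt--Caffarelli linear-growth argument \cite{AltCaffarelli1981} to the vectorial setting, working componentwise but exploiting the coupling through the shared volume term $Q^{2}\chi_{\{|\uu|>0\}}$.

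First, I would show each component $u_i$ is subharmonic on $\Omega$: interior variations of $u_i$ supported in the open set $\{|\uu|>0\}$ do not affect $\chi_{\{|\uu|>0\}}$, so the Euler--Lagrange equation forces $\Delta u_i=0$ there; on the complement $u_i\equiv 0$; combined with the sign constraint $u_i\geq 0$ this yields $\Delta u_i\geq 0$ in $\mathcal{D}'(\Omega)$. Letting $h_i$ be the harmonic replacement of $u_i|_{\partial B_r(x)}$ into $B_r(x)$, subharmonicity gives $0\leq u_i\leq h_i$, and the mean-value property gives $h_i(x)=\mu_i:=\fint_{\partial B_r(x)}u_i\,d\sigma$. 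Hence it suffices to bound the value of the non-negative harmonic function $h_i$ at its centre.

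Second, I would compare $\uu$ with $\vv:=\h$ in $B_r(x)$ (and $\vv=\uu$ outside). This is admissible since $h_i\geq 0$ componentwise, and for local minimizers $d(\uu,\vv)<\varepsilon$ is automatic for small $r$. Since each $h_i\not\equiv 0$ (otherwise $u_i\equiv 0$ and there is nothing to prove), the strong maximum principle forces $|\{|\h|>0\}\cap B_r(x)|=|B_r(x)|$. From $J(\uu)\leq J(\vv)$, together with the orthogonality $\int \nabla h_i\cdot\nabla(u_i-h_i)=0$ (as $h_i$ is harmonic and $u_i-h_i$ vanishes on $\partial B_r(x)$), one obtains
\begin{equation*}
\sum_{i=1}^{m}\int_{B_r(x)}|\nabla(u_i-h_i)|^{2}\,dy \;\leq\; Q_{max}^{2}\,\big|\{|\uu|=0\}\cap B_r(x)\big|.
\end{equation*}
Poincar\'e (with $u_i-h_i=0$ on $\partial B_r(x)$) then gives $\sum_i\|u_i-h_i\|_{L^{2}(B_r(x))}^{2}\leq C r^{2} Q_{max}^{2}|\{|\uu|=0\}\cap B_r(x)|$. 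Choosing $x_0\in\overline{B_r(x)}\cap\{|\uu|=0\}$ --- shifted strictly inside via an infinitesimal enlargement of $r$, and, if needed, by enlarging the ball so that $x_0$ sits at its centre --- the Harnack-type lower bound $h_i\geq c_n\mu_i$ on $B_{r/2}(x)$ coming from the Poisson formula for non-negative harmonic functions, together with $u_i=0$ a.e.\ on $\{|\uu|=0\}$, yields $\int_{B_\rho(x_0)\cap\{|\uu|=0\}}(u_i-h_i)^{2}\geq c\,\mu_i^{2}\,|B_\rho(x_0)\cap\{|\uu|=0\}|$ for $\rho$ a fixed fraction of $r$. Combining the two inequalities and using the trivial bound $|\{|\uu|=0\}\cap B_r(x)|\leq\omega_n r^{n}$ produces
\begin{equation*}
\mu_i^{2}\;\lesssim\; Q_{max}^{2}\,r^{2}\cdot\frac{\omega_n r^{n}}{|B_\rho(x_0)\cap\{|\uu|=0\}|}.
\end{equation*}

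The main obstacle is the resulting dependence on the zero-phase density $|B_\rho(x_0)\cap\{|\uu|=0\}|/\rho^{n}$ at $x_0$: closing the argument requires a uniform lower bound $|B_\rho(x_0)\cap\{|\uu|=0\}|\gtrsim\rho^{n}$. Such a ``positive density of the zero phase at free boundary points'' is the standard companion estimate, usually proven in tandem with linear growth by a \emph{dual} ``fill-in'' competitor: one replaces $\uu$ on a small ball near $x_0$ by the harmonic extension of its boundary trace, truncated componentwise below at $0$ to respect the sign constraint $u_i\geq 0$ (non-negativity is preserved by the maximum principle). Minimality against this fill-in then forces the required lower density and closes the loop, yielding $\mu_i\leq C Q_{max} r$. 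The sign constraint $u_i\geq 0$ is the principal new vectorial complication over the scalar case, but it is handled cleanly throughout since every competitor used is built from harmonic extensions of non-negative data.
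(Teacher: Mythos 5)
Your opening steps match the paper's: subharmonicity of each $u_i$ (so $\fint_{\partial B_r(x)}u_i\,d\sigma$ dominates $u_i(x)$, and the harmonic replacement $h_i$ dominates $u_i$), the componentwise orthogonality trick, and the energy comparison
\[
\int_{B_r(x)}|\nabla(\uu-\h)|^{2}\,dy \;\leq\; Q_{max}^{2}\,\big|B_r(x)\cap\{|\uu|=0\}\big|.
\]
The divergence — and the gap — occurs in how you convert this into a bound on $\mu_i=\fint u_i$.

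You lower-bound the left side via Poincar\'e by $r^{-2}\|u_i-h_i\|_{L^{2}}^{2}$ and then try to extract $\mu_i^{2}|B_\rho(x_0)\cap\{|\uu|=0\}|$ using the Poisson lower bound $h_i\geq c_n\mu_i$. Two things go wrong. First, the point $x_0\in\overline{B_r(x)}\cap\{|\uu|=0\}$ may sit on or arbitrarily near $\partial B_r(x)$; the Poisson lower bound $h_i(y)\geq c(1-|y-x|/r)\mu_i$ degenerates linearly there, and ``infinitesimally enlarging $r$'' does not recover a ball around $x_0$ on which $h_i\gtrsim\mu_i$. Re-centering a ball at $x_0$ changes the quantity $\mu_i$, so that patch does not work either. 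Second, and more fundamentally, you end up with
\[
\mu_i^{2}\,\big|B_\rho(x_0)\cap\{|\uu|=0\}\big|\;\lesssim\; r^{2}Q_{max}^{2}\,\big|B_r(x)\cap\{|\uu|=0\}\big|,
\]
and to cancel the zero-set measures you need a \emph{uniform lower density} bound. But in this paper that density estimate (Lemma~\ref{lemma1600}) is derived \emph{from} the linear growth (Theorem~\ref{theorem1100}) and nondegeneracy (Theorem~\ref{theorem1200}), so invoking it here is circular. Your proposed ``dual fill-in competitor'' is again just harmonic replacement, and by itself it does not give the density bound without already having linear growth in hand.

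The paper sidesteps both issues with one observation you do not have: replace the $L^{2}$ norm of $u_i-h_i$ by the \emph{weighted capacity} $\operatorname{Cap}(B\cap\{u_i=0\};B,1-|x|)$. Lemma~\ref{lemma1100} shows $\operatorname{Cap}(B\cap\{u_i=0\};B,1-|x|)\,\mu_i^{2}\lesssim\int_B|\nabla(h_i-u_i)|^{2}$, with the weight $1-|x|$ exactly absorbing the degeneration of the Poisson kernel near $\partial B$, so the zero set can touch $\partial B_r(x)$ with no loss. Then Lemma~\ref{lemma1300} shows the purely potential-theoretic inequality $|A|\leq\operatorname{Cap}(A;B,1-|x|)$ for any Borel $A\subset B$. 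Inserting $A=B\cap\{|\uu_{x,r}|=0\}$ makes the zero-set measure appear on \emph{both} sides of the chain, and after checking it is strictly positive (Lemma~\ref{lemma1400}) it cancels, yielding $\mu_i\leq CQ_{max}r$ with no density estimate and no constraint on where $x_0$ sits. Without something like Lemma~\ref{lemma1300}, your argument does not close.
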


In particular, from the linear growth estimate proved in Theorem \ref{theorem1100} 
it follows that $\uu$ is Lipschitz continuous, see Corollary \ref{corollary1000}. 

\begin{theorem}[Optimal linear nondegeneracy]\label{theorem1200} 
Let $\uu$ be a (local) minimizer, 
for $B_{r}(x)\subset\Omega$ (small balls), 
$0<\rho<1$ (in the case when $n=2$ and $u$ is a local minimizer also small enough $\rho$) 
and $\overline{B_{\rho r}(x)}\cap\{\vert \uu\vert>0\}\not=\emptyset$ then 
\begin{equation*}
\sup_{B_{r}(x)}\vert \uu\vert 
\geq  
c Q_{min} r
\end{equation*}
where $c>0$ depends only on $\rho$. 
\end{theorem}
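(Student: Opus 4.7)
The plan is to argue by contradiction: assume that for some small $\sigma > 0$ (to be chosen depending on $\rho$ and $n$) we have $\sup_{B_r(x)} |\uu| < \sigma Q_{min} r$, and produce a competitor $\vv \in \cA$ with $J(\vv) < J(\uu)$, contradicting the (local) minimality of $\uu$. The desired constant $c$ in the statement will then be $\sigma$.

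For the competitor I would take the multiplicative perturbation $\vv(y) := \eta(y-x)\, \uu(y)$, where $\eta : \mathbb{R}^n \to [0,1]$ is the radial harmonic-capacity potential of the annulus $B_r \setminus B_{\rho r}$: $\eta \equiv 0$ on $\overline{B_{\rho r}}$, $\eta \equiv 1$ outside $B_r$, and $\eta$ is radially harmonic in between (that is, $\eta(t) = (t^{2-n} - (\rho r)^{2-n})/(r^{2-n} - (\rho r)^{2-n})$ for $n \geq 3$ and the logarithmic analogue for $n = 2$). This $\vv$ agrees with $\uu$ outside $B_r(x)$, vanishes in $B_{\rho r}(x)$, respects the sign constraint $v_i \geq 0$, and has $d(\uu,\vv) \to 0$ as $r \to 0$, so it is admissible for the local-minimization problem.

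On the volume side, since $\{|\vv| > 0\} \subset \{|\uu| > 0\} \setminus B_{\rho r}(x)$,
\begin{equation*}
\int_\Omega Q^2 \bigl( \chi_{\{|\vv|>0\}} - \chi_{\{|\uu|>0\}} \bigr) \, dx \leq - Q_{min}^2 \bigl| \{|\uu| > 0\} \cap B_{\rho r}(x) \bigr|.
\end{equation*}
On the Dirichlet side, expanding $|\nabla \vv|^2 = \eta^2 |\nabla \uu|^2 + \nabla(|\uu|^2) \cdot \nabla(\eta^2/2) + |\uu|^2 |\nabla \eta|^2$, integrating the middle term by parts using the harmonicity of $\eta$ in the annulus (so that $\Delta(\eta^2/2) = |\nabla \eta|^2$), and using the pointwise bound $|\uu|^2 \leq \sigma^2 Q_{min}^2 r^2$ in $B_r(x)$, yield
\begin{equation*}
\int_{B_r(x)} \bigl( |\nabla \vv|^2 - |\nabla \uu|^2 \bigr) \, dx \leq C(n, \rho) \, \sigma^2 Q_{min}^2 r^n,
\end{equation*}
where $C(n,\rho)$ is controlled by the capacity of $\partial B_{\rho r}$ in $B_r$. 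In dimension $n = 2$ this capacity behaves like $1/\log(1/\rho)$ and deteriorates as $\rho \to 1$, which is the source of the extra smallness requirement on $\rho$ in the planar local-minimizer case; absolute minimizers admit more drastic global competitors and do not suffer this.

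The principal obstacle is converting the qualitative hypothesis $\overline{B_{\rho r}(x)} \cap \{|\uu| > 0\} \neq \emptyset$ into a quantitative density bound of the form $|\{|\uu|>0\} \cap B_{\rho r}(x)| \geq c_1(\rho) r^n$, which is precisely what is needed to defeat the $C(n,\rho)\sigma^2$ term. I would address this by combining the Lipschitz estimate obtained as a corollary of Theorem \ref{theorem1100} (which propagates the positivity at a point $y \in \overline{B_{\rho r}(x)} \cap \{|\uu|>0\}$ onto a ball of radius comparable to $|\uu(y)|/Q_{max}$) with an iterative application of the competitor argument on a dyadic family of subscales, in the spirit of the scalar Alt--Caffarelli density estimates. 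Once $c_1(\rho)$ is secured, choosing $\sigma$ so small that $C(n,\rho) \sigma^2 < c_1(\rho)$ delivers $J(\vv) < J(\uu)$, the required contradiction.
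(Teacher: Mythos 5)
Your multiplicative competitor $\vv=\eta\uu$ and the resulting energy estimate $\int_{B_r(x)}(|\nabla\vv|^2-|\nabla\uu|^2)\,dx\lesssim\sigma^2Q_{min}^2r^n$ are correct, and you have correctly identified where the argument is incomplete: to close the contradiction you would need the quantitative density bound $|\{|\uu|>0\}\cap B_{\rho r}(x)|\geq c_1(\rho)r^n$, but the hypothesis only gives you that $\{|\uu|>0\}$ \emph{touches} $\overline{B_{\rho r}(x)}$, which says nothing about measure. The sketch you offer for obtaining that density bound does not hold up. The Lipschitz estimate from Theorem~\ref{theorem1100} only puts a ball of positivity of radius $\sim|\uu(y)|/Q_{max}$ around the touching point $y$, and a priori $|\uu(y)|$ could be arbitrarily small compared to $r$ — that is exactly the degeneracy you are trying to rule out. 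More seriously, the Alt--Caffarelli density estimate you appeal to (here, Lemma~\ref{lemma1600}) is proved \emph{using} the nondegeneracy Theorem~\ref{theorem1200}, not the other way around, so a ``dyadic iteration in the spirit of the scalar density estimate'' risks being circular unless you supply a genuinely independent argument, which you have not.

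The paper resolves this without any density estimate by a different choice of competitor. Instead of a multiplicative cutoff it takes the \emph{truncation}
$v_i(y)=\min\bigl(u_i(y),\,rM_{x,r}\psi_\rho(\tfrac{y-x}{r})\bigr)$, with $M_{x,r}=\tfrac{1}{r}\sup_{B_r(x)}|\uu|$ and $\psi_\rho$ the radial capacity potential of the annulus $B_1\setminus B_\rho$. The gain is that after integration by parts the entire energy excess is controlled by a boundary integral on the \emph{inner} sphere, $C_\rho M_{x,r}\int_{\partial B_\rho}|\uu_{x,r}|\,d\sigma$, rather than on $\partial B_r$. One then invokes the $W^{1,1}$ trace inequality \eqref{equ1910} to bound $\int_{\partial B_\rho}|\uu_{x,r}|\,d\sigma$ from above by a constant (depending on $\rho$ and $M_{x,r}/Q_{min}$) times the \emph{same} interior quantity $\int_{B_\rho}(|\nabla\uu_{x,r}|^2+Q_{x,r}^2\chi_{\{|\uu_{x,r}|>0\}})\,dy$ that appears on the left of the minimality inequality. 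Since the hypothesis $\overline{B_{\rho r}(x)}\cap\{|\uu|>0\}\neq\emptyset$ guarantees (only) that this interior quantity is strictly positive, one may divide it out, and the contradiction with $M_{x,r}$ small follows with no density information whatsoever. This self-comparison via the trace inequality is the essential idea your proposal is missing; without it, the argument as you have set it up cannot be completed.
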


\begin{theorem}[Equation satisfied by each component]\label{theorem1300} 
Let $Q$ be a continuous function, 
then for $\mathcal{H}^{n-1}$ a.e. point $x\in\Omega\cap\partial^{*}\{\vert \uu\vert>0\}$, 
$i=1,\cdots,m$ and $\eta>0$ the non-tangential limit 
\begin{equation}\label{equ500}
w_i(x)=\lim_{y\in\{\vert \uu\vert>0\}\cap\{-(y-x)\cdot\nu_{\{\vert \uu\vert >0\}}(x)\geq\eta\},y\to x}\frac{u_i(y)}{\vert \uu(y)\vert}
\end{equation}
(here $\nu_{\{\vert \uu\vert >0\}}(x)$ is the outer normal to $\{\vert \uu\vert>0\}$ at the point $x$)
exists and we have the equations 
\begin{equation}\label{equ600}
\Delta u_i=w_iQ\mathcal{H}^{n-1}\llcorner(\Omega\cap\partial^{*}\{\vert \uu\vert>0\})
\enskip\text{for}\enskip i=1,\cdots,m\text{.} 
\end{equation} 
\end{theorem}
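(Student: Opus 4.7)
The plan is to show that $\Delta u_i$ is a nonnegative Radon measure supported on $\partial\{\vert\uu\vert>0\}$ and then identify its density at reduced boundary points via blow-up, with the non-tangential limit $w_i$ emerging as a byproduct. First, interior variations $\uu+t\phi e_i$ with $\phi\in C_c^\infty(\{\vert\uu\vert>0\})$ preserve $\chi_{\{\vert\uu\vert>0\}}$ and, for small $|t|$, maintain the sign constraint on any component of $\{\vert\uu\vert>0\}$ where $u_i>0$; minimality then gives $\int\nabla u_i\cdot\nabla\phi\,dx=0$, so $u_i$ is harmonic in $\{u_i>0\}$. One-sided variations in the region $\{u_i=0\}\cap\{\vert\uu\vert>0\}$ show $u_i$ is superharmonic there; combined with continuity (Corollary \ref{corollary1000}) and the strong minimum principle this forces $u_i\equiv 0$ in any connected component of $\{\vert\uu\vert>0\}$ where it touches zero, and so $\Delta u_i$ is supported on $\partial\{\vert\uu\vert>0\}$. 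The local finite perimeter of $\{\vert\uu\vert>0\}$ (standard density estimates from Theorems \ref{theorem1100} and \ref{theorem1200}) and the Gauss--Green formula then yield
\begin{equation*}
\langle\Delta u_i,\phi\rangle = -\int_{\partial^*\{\vert\uu\vert>0\}} (\nabla u_i\cdot\nu)\,\phi\,d\mathcal{H}^{n-1},\qquad \phi\in C_c^\infty(\Omega),
\end{equation*}
with density $a_i(x):=-(\nabla u_i\cdot\nu)(x)\geq 0$ on $\partial^*\{\vert\uu\vert>0\}$, where $\nu$ denotes the measure-theoretic outward normal.

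Next, inner (domain) variations $\Phi_t(x)=x+t\xi(x)$, $\xi\in C_c^\infty(\Omega;\R^n)$, applied to $J$ produce after the standard computation the Bernoulli identity $\vert\nabla\uu\vert^2=Q^2$ on $\partial^*\{\vert\uu\vert>0\}$. Fix such a point $x$ and form the blow-up $\uu_r(y):=r^{-1}\uu(x+ry)$. The uniform Lipschitz bound delivers via Arzel\`a--Ascoli a subsequence converging in $C^0_{loc}$, and the reduced-boundary property gives $\chi_{\{\vert\uu_r\vert>0\}}\to\chi_{\{-y\cdot\nu>0\}}$ in $L^1_{loc}$, with $\nu=\nu_{\{\vert\uu\vert>0\}}(x)$. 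At $\mathcal{H}^{n-1}$-a.e.\ $x$ one can further show, using minimality and a Weiss-type monotonicity formula for the system, that the limit takes the rigid form $\uu_0(y)=\alpha\,(-y\cdot\nu)^+$ with $\alpha\in\R^m_{\geq 0}$; the Bernoulli identity then forces $|\alpha|=Q(x)$, so $a_i(x)=\alpha_i$.

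The non-tangential limit is now read off from the blow-up. For $y\to x$ with $-(y-x)\cdot\nu\geq\eta\vert y-x\vert$, write $y=x+rz$ with $r=\vert y-x\vert$, $|z|=1$; uniform convergence of $\uu_r$ on the slice $\{|z|\leq 1,\,-z\cdot\nu\geq\eta\}$ yields
\begin{equation*}
\frac{u_i(y)}{\vert\uu(y)\vert} = \frac{r^{-1}u_i(x+rz)}{r^{-1}\vert\uu(x+rz)\vert} \longrightarrow \frac{\alpha_i(-z\cdot\nu)}{Q(x)(-z\cdot\nu)} = \frac{\alpha_i}{Q(x)},
\end{equation*}
so $w_i(x)=\alpha_i/Q(x)$ exists as claimed in \eqref{equ500}. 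Substituting $a_i=w_iQ$ into the boundary representation of $\Delta u_i$ from the first step delivers \eqref{equ600}.

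The principal difficulty, and the place where the vector-valued case genuinely departs from scalar Alt--Caffarelli, is verifying at $\mathcal{H}^{n-1}$-a.e.\ reduced boundary point that the blow-up really takes the rigid profile $\alpha(-y\cdot\nu)^+$ with a single common normal direction $\nu$ for all components. The Bernoulli identity by itself gives only the scalar constraint $\sum_i\alpha_i^2=Q(x)^2$, and one must exclude blow-ups whose components emerge along different half-spaces or with nonlinear profiles. This is precisely where the paper's scalar-reduction strategy enters: treating $\vert\uu\vert$ as an Alt--Caffarelli-type scalar object (cf.\ Section \ref{section1800}) pins down both the common direction of emergence and the linearity of the limit, after which the identification of the density and of $w_i$ is immediate.
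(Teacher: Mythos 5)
Your outline reaches the right conclusion, but the step you flag as the ``principal difficulty'' --- pinning down the rigid blow-up profile $\alpha(-y\cdot\nu)^{+}$ with a common normal $\nu$ for all components --- is actually a non-issue at \emph{reduced} boundary points, and the tools you reach for (a Weiss monotonicity formula for the system, the NTA scalar reduction of Section~\ref{section1800}) are not what the paper uses here and are not needed. The paper's mechanism is simpler: having established (Lemma~\ref{lemma1700}, via Theorems 4.3/4.5 of Alt--Caffarelli applied to the subharmonic function $\mathcal{U}=\sum_i u_i$) that $\Delta u_i = q_i\,\mathcal{H}^{n-1}\llcorner(\Omega\cap\partial\{\vert\uu\vert>0\})$ with Borel densities $q_i$, and that $\Omega\cap\partial\{\vert\uu\vert>0\}$ agrees with $\partial^*\{\vert\uu\vert>0\}$ up to an $\mathcal{H}^{n-1}$-null set (Lemma~\ref{lemma1800}), one applies the \emph{scalar} Alt--Caffarelli Theorem 4.8 / Remark 4.9 \emph{componentwise}. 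At $\mathcal{H}^{n-1}$-a.e.\ $x\in\partial^*\{\vert\uu\vert>0\}$, the measure-theoretic normal $\nu(x)$ is a fixed direction (this is precisely what ``reduced boundary'' gives), and each $u_i$ separately admits the expansion $u_i(x+y) = q_i(x)(-\nu(x)\cdot y)^{+} + o(\vert y\vert)$ with this \emph{same} $\nu(x)$. There is no room for components to emerge along different half-spaces, and no classification of global homogeneous minimizers is required; the non-tangential limit \eqref{equ500} then reads off immediately, as you computed. The identification $\vert\q(x)\vert = Q(x)$ is obtained in the paper by noting that the blow-up $\vv_x(y)=\q(x)(-\nu\cdot y)^{+}$ is itself an absolute minimizer (Lemma~\ref{lemma2000}) and applying the domain variation formula of Lemma~\ref{lemma1900} to $\vv_x$ with the test field $\Psi_x = \varphi\,\nu(x)$; your route via a Bernoulli identity $\vert\nabla\uu\vert^2 = Q^2$ ``on $\partial^*\{\vert\uu\vert>0\}$'' is morally the same but suffers from the fact that $\nabla\uu$ has no classical trace on the free boundary, which the blow-up formulation avoids. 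The NTA machinery and H\"older continuity of $w_i$ come later and are needed for Theorem~\ref{theorem1360}, not here.

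Two smaller inaccuracies. First, your claim that one-sided variations make $u_i$ ``superharmonic'' on $\{u_i=0\}\cap\{\vert\uu\vert>0\}$ is not how the dichotomy is established; $u_i$ is subharmonic everywhere (Lemma~\ref{lemma1000}), harmonic in $\{\vert\uu\vert>0\}$ (competitor with harmonic replacement), and the strong minimum principle applied to the nonnegative harmonic $u_i$ on each connected component of $\{\vert\uu\vert>0\}$ gives either $u_i\equiv 0$ or $u_i>0$ there. Second, the Gauss--Green identification of the density as $-\nabla u_i\cdot\nu$ presumes a trace of $\nabla u_i$ on $\partial^*\{\vert\uu\vert>0\}$ that is not available a priori; the paper instead gets the density $q_i$ abstractly and only afterwards identifies it through the blow-up expansion.
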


In the following theorem we prove that around a 
free boundary point, the set $\{\vert \uu\vert>0\}$ is a non-tangentially accessible domain. 
In the Definition \ref{definition4300} a non-tangentially accessible domain, 
with its associated parameters $M$, $\xi$ and $c$, is defined. 

\begin{theorem}[$\{\vert \uu\vert>0\}$ is non-tangentially accessible]\label{theorem1350} 
Let $\uu$ be a (local) minimizer, for $B_{r_{0}}(x_{0})\subset\Omega$ (small enough) with $x_{0}\in\partial\{\vert \uu\vert>0\}$, 
there exists $0<\epsilon_{1}<1$ and $0<\tilde\epsilon_{1}<\epsilon_{1}$ 
such that $B_{\epsilon_{1} r_{0}}(x_{0})\cap\{\vert \uu\vert>0\}$ is 
a non-tangentially accessible domain with parameters 
$M>1$, $\xi=\tilde\epsilon_{1}r_{0}$ and $0<c<1$ 
(where $\epsilon_{1}$, $\tilde\epsilon_1$, $M$ and $c$ depend only on 
$n$, $m$, $\frac{Q_{max}}{Q_{min}}$ and additionally on $u$ in the case when $n=2$ and $u$ is a local minimizer). 
\end{theorem}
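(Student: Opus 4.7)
The plan is to verify the three defining properties of a non-tangentially accessible domain—two-sided (interior and exterior) corkscrew and the Harnack chain condition—for the set $D:=B_{\epsilon_{1}r_{0}}(x_{0})\cap\{|\uu|>0\}$ at scale $\xi=\tilde\epsilon_{1}r_{0}$. The parameters will be chosen so that $B_{2\epsilon_{1}r_{0}}(x_{0})\subset\subset\Omega$ and $\tilde\epsilon_{1}\ll\epsilon_{1}$, in order that the uniform estimates available up to this point—the Lipschitz regularity of Corollary \ref{corollary1000}, the linear nondegeneracy of Theorem \ref{theorem1200}, and a suitable positive density estimate for $\{|\uu|=0\}$—apply at every free boundary point in $B_{\epsilon_{1}r_{0}}(x_{0})$ at all scales up to $\xi$.

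\textbf{Interior corkscrew.} For $x\in\partial D$ and $r\leq\xi$ with $x\in\partial\{|\uu|>0\}$, Theorem \ref{theorem1200} supplies a point $y\in B_{r/4}(x)$ with $|\uu(y)|\geq cQ_{min}r$. The Lipschitz estimate $|\nabla\uu|\leq CQ_{max}$ of Corollary \ref{corollary1000} then forces $|\uu|\geq\tfrac{1}{2}cQ_{min}r$ on $B_{\rho r}(y)$ with $\rho:=cQ_{min}/(2CQ_{max})$, so $B_{\rho r}(y)\subset D$, giving an interior corkscrew with constant $M\sim 1/\rho$. Points of $\partial D$ lying on the sphere $\partial B_{\epsilon_{1}r_{0}}(x_{0})$ are handled by the same mechanism applied at the closest free boundary point, using $\tilde\epsilon_{1}\ll\epsilon_{1}$.

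\textbf{Exterior corkscrew.} The key input here is a uniform positive density estimate $|B_{r}(x)\cap\{|\uu|=0\}|\geq c_{0}r^{n}$ for every $x\in\partial\{|\uu|>0\}\cap B_{\epsilon_{1}r_{0}}(x_{0})$ and $r\leq\xi$. I would obtain this by the standard Alt--Caffarelli minimality comparison adapted to the vector setting: take $\tilde\uu$ to be the componentwise harmonic replacement of $\uu$ on $B_{r}(x)$. Then $\tilde u_{i}\geq u_{i}\geq 0$ and $|\tilde\uu|>0$ in $B_{r}(x)$ by the strong maximum principle, so the volume penalty grows by at most $Q_{max}^{2}|B_{r}(x)\cap\{|\uu|=0\}|$, while the Dirichlet gain is bounded below by combining Theorem \ref{theorem1200}, the Harnack inequality applied to a component $\tilde u_{i}$ realizing the supremum of $|\uu|$, and the Poincar\'e inequality on $B_{r}(x)\cap\{|\uu|=0\}$. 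If the density were too small, the gain would dominate the loss and contradict minimality. With density in hand, a Vitali-type covering of $B_{r}(x)\cap\{|\uu|=0\}$ by balls on which the estimate holds uniformly produces $B_{\rho'r}(z)\subset\{|\uu|=0\}\cap B_{\epsilon_{1}r_{0}}(x_{0})$, the required exterior corkscrew.

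\textbf{Harnack chains, and main obstacle.} With both corkscrew conditions in place, the Harnack chain property is standard: given $P_{1},P_{2}\in D$ with $\dist(P_{i},\partial D)\geq s$ and $|P_{1}-P_{2}|\leq 2^{k}s$, a Whitney-type decomposition of $D$ produces a chain of at most $N(k,M)$ overlapping balls of radius $\sim s/M$ inside $D$ connecting $P_{1}$ to $P_{2}$, by replacing any segment of the chord that approaches $\partial D$ by a detour through the interior corkscrew at the corresponding boundary scale. The hardest step is the positive density estimate itself, since the vector-valued volume term $\chi_{\{|\uu|>0\}}$ couples the components: the competitor must replace all components simultaneously, and the Dirichlet gain has to be extracted from the single scalar quantity $|\uu|$ via nondegeneracy, rather than from the individual $u_{i}$. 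Finally, in the $n=2$ local-minimizer case the parameter $\tilde\epsilon_{1}$ must be chosen small enough that the competitor $\tilde\uu$ stays within the $d$-neighborhood prescribed in Definition \ref{definition500}, which accounts for the extra dependence of the constants on $\uu$ in that case.
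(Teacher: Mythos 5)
Your proposal takes a genuinely different route from the paper, and the route has a gap.

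The paper's proof is a reduction: it introduces the scalar function $\mathcal{U}=\sum_{i}u_i$, notes that it is Lipschitz (Corollary \ref{corollary1000}) with $\{\mathcal{U}>0\}=\{|\uu|>0\}$, and then proves the key two-sided comparability $\mathcal{U}(x)\gtrsim \dist(x,\{|\uu|=0\})$ on $B_{r_0/3}\cap\{|\uu|>0\}$, using Poisson representation together with the nondegeneracy of Theorem \ref{theorem1200}. With $\mathcal{U}$ a nonnegative Lipschitz function comparable to the distance to its zero set, the entire NTA verification (both corkscrews and, crucially, the Harnack chain condition) is delegated to the scalar argument of \cite{AguileraCaffarelliSpruck1987}, Section 4. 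You instead try to verify the three conditions of Definition \ref{definition4300} directly. Your interior corkscrew is fine (it is essentially Lemma \ref{lemma1500}). Your uniform positive density for $\{|\uu|=0\}$ is also fine and is already available as Lemma \ref{lemma1600}; note that Definition \ref{definition4300}(ii) only asks for density, not a full exterior corkscrew ball, so the proposed Vitali upgrade is unnecessary (and as sketched it does not actually produce a ball).

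The gap is the Harnack chain condition. Your claim that ``with both corkscrew conditions in place, the Harnack chain property is standard'' is not correct: a two-sided corkscrew condition does not imply the Harnack chain condition, which is a genuinely independent quantitative connectivity requirement. A chord between two interior points can pass arbitrarily close to the free boundary through a narrow channel, and detouring through corkscrew balls only helps if one already knows the domain has no thin passages at any scale — but that is exactly the content of the Harnack chain condition, not a consequence of corkscrew. The structural input that rules out thin passages here, and makes the detour argument terminate in a bounded number of balls, is the two-sided control $\mathcal{U}\sim\dist(\cdot,\{|\uu|=0\})$ (Lipschitz upper bound plus nondegenerate lower bound) enjoyed by the scalar reduction $\mathcal{U}$, which is the backbone of the Aguilera–Caffarelli–Spruck argument that the paper invokes. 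Your proposal never extracts this scalar comparability, so the Harnack chain step is unjustified. To complete your argument you would need to either introduce $\mathcal{U}$ (or $|\uu|$) and prove the two-sided distance comparability before invoking the scalar NTA machinery, or reproduce the ACS chain construction in full; as written, citing ``Whitney decomposition'' does not suffice.
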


\begin{definition}\label{definition600}
For $0<\sigma\leq 1$ and $\nu\in\partial B$ we say that the minimizer 
$\uu$ is $\sigma$-flat in $B_{\rho}(x)\subset\Omega$ in the direction $\nu$ if 
$x\in\partial\{\vert \uu\vert>0\}$ and $\uu=0$ 
in $B_{\rho}(x)\cap\{y\ \vert\ (y-x)\cdot\nu\geq\sigma\rho\}$. 
\end{definition} 

Assume $x_{0}$, $r_{0}$ and $\epsilon_{1}$ to be as in 
Theorem \ref{theorem1350}. 
In Lemma \ref{lemma2200} and \ref{lemma2300} 
using the comparison principle for 
non-tangentially accessible domains 
(see Lemma \ref{lemma3400}) 
we obtain that 
$w_i$ for $i=1,\cdots,m$ are H\"older 
continuous in $B_{\epsilon_{2}\epsilon_1r_0}(x_0)\cap\{\vert\uu\vert>0\}$ where 
$0<\epsilon_{2}<1$ depends on the parameters of 
non-tangentially accessibility which in turn depend on $n$, $m$, $\frac{Q_{max}}{Q_{min}}$ 
and additionally on $u$ in the case when $n=2$ and $u$ is a local minimizer. 

\begin{theorem}[Flatness implies regularity]\label{theorem1360} 
Let $Q$ be H\"older continuous 
and $\uu$ be a minimizer of our functional. 
Then there are constants $\alpha>0$, $\beta>0$, $\sigma_{0}>0$, $\tau_{0}>0$ and $C<\infty$ such that 
if $\uu$ is $\sigma$-flat in $B_{\rho}(x_0)$ in the direction $\nu$ with $\sigma\leq\sigma_0$ 
and $\rho\leq\min(\tau_0\sigma^{\frac{2}{\beta}},\frac{1}{2}\epsilon_{2}\epsilon_1r_0)$ then 
\begin{equation*}
B_{\frac{\rho}{4}}(x_0)\cap\partial\{\vert \uu\vert>0\}\enskip\text{is a}\enskip C^{1,\alpha}
\enskip\text{surface}
\end{equation*}
(a graph in direction $\nu$ of a $C^{1,\alpha}$ function), and for $x_1,x_2$ on this surface 
\begin{equation*}
\vert \nu(x_1)-\nu(x_2)\vert\leq C\sigma\bigl\vert\frac{x_2-x_1}{\rho}\bigr\vert^{\alpha}\text{.} 
\end{equation*}
The constants depend on $n$, $Q_{min}$, $Q_{max}$ 
and the H\"older exponent and norm of $Q$. 
\end{theorem}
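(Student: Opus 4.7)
The plan is to reduce Theorem \ref{theorem1360} to the classical scalar flatness-implies-$C^{1,\alpha}$ theorem (Alt--Caffarelli, De Silva) by passing to a well-chosen scalar combination of the components of $\uu$. First I would introduce the auxiliary function $v := \sum_{i=1}^{m} u_{i}$ and observe that, since $u_i\geq 0$ and each $u_i$ is harmonic in $\{\vert \uu\vert>0\}$, one has $\{v>0\}=\{\vert \uu\vert>0\}$ and, by summing the identities from Theorem \ref{theorem1300},
\begin{equation*}
\Delta v \;=\; \widetilde{Q}\,\mathcal{H}^{n-1}\llcorner\bigl(\Omega\cap\partial^{*}\{\vert \uu\vert>0\}\bigr),\qquad \widetilde{Q}:=Q\sum_{i=1}^{m}w_{i}.
\end{equation*}
Because $\sum w_i^{2}=1$ and $w_i\geq 0$, one has $1\leq \sum w_i\leq \sqrt{m}$, so $Q_{min}\leq \widetilde{Q}\leq \sqrt{m}\,Q_{max}$. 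Combining the H\"older continuity of the $w_i$ on $B_{\epsilon_{2}\epsilon_{1}r_{0}}(x_0)\cap\{\vert \uu\vert>0\}$ from Lemmas \ref{lemma2200} and \ref{lemma2300} with the hypothesis that $Q$ is H\"older, the weight $\widetilde{Q}$ is H\"older continuous in this set, with exponent $\beta>0$ and seminorm controlled by the data.

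Next I would verify that $v$ is a viscosity solution of the scalar one-phase Bernoulli problem
\begin{equation*}
\Delta v = 0 \text{ in } \{v>0\},\qquad \vert\nabla v\vert=\widetilde{Q}\text{ on }\partial\{v>0\}.
\end{equation*}
Since each $\nabla u_i$ at a reduced free boundary point approached non-tangentially from inside equals $-w_i Q\,\nu_{\{\vert \uu\vert>0\}}$, one gets $\nabla v=-\widetilde{Q}\,\nu_{\{\vert \uu\vert>0\}}$, which together with harmonicity of $v$ in $\{v>0\}$ pins down the Bernoulli condition in the viscosity sense. The Lipschitz bound (Corollary \ref{corollary1000}) and the linear non-degeneracy (Theorem \ref{theorem1200}) for $\uu$ transfer to $v$ via the elementary inequalities $\vert\uu\vert\leq v\leq \sqrt{m}\,\vert\uu\vert$, and the $\sigma$-flatness of $\uu$ in $B_\rho(x_0)$ in direction $\nu$ is inherited by $v$ because $\{v=0\}=\{\vert\uu\vert=0\}$. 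Applying the scalar flatness-implies-$C^{1,\alpha}$ theorem for the one-phase problem with H\"older weight $\widetilde{Q}$ to $v$---the scaling $\rho\leq \tau_{0}\sigma^{2/\beta}$ provides exactly the compatibility between the flatness scale and the continuity modulus of $\widetilde{Q}$ needed to initialize the improvement-of-flatness iterations---we conclude that $B_{\rho/4}(x_0)\cap\partial\{v>0\}=B_{\rho/4}(x_0)\cap\partial\{\vert\uu\vert>0\}$ is a $C^{1,\alpha}$ graph in direction $\nu$ with the stated quantitative bound on $\nu$.

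The main obstacle is to justify, with enough quantitative control, the passage from the weak/distributional identity in Theorem \ref{theorem1300} to a genuine viscosity Bernoulli condition on $v$ that can feed the scalar flatness machinery; this relies crucially on the non-tangential existence of the boundary values $w_i$ and their H\"older continuity up to the free boundary, both coming from the NTA property (Theorem \ref{theorem1350}). A closely related subtlety is that $\widetilde{Q}$ depends on the solution $\uu$ through the $w_i$; however, once its H\"older estimate is in hand purely from the geometry of $\{\vert\uu\vert>0\}$, the weight is effectively decoupled from $v$ as an external H\"older datum, and the classical scalar improvement-of-flatness iteration goes through without modification.
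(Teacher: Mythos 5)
Your strategy is sound and runs parallel to the paper's, but it takes a genuinely different route in two respects. The paper reduces to a \emph{single component} $u_{i_0}$ (chosen in Lemma \ref{lemma2200} so that $u_{i_0}\geq\frac{1}{mC_4}\mathcal{U}$, which gives the two-sided nondegeneracy \eqref{equ6610}), shows in Lemma \ref{lemma2310} that $u_{i_0}$ is a \emph{weak solution} with weight $w_{i_0}Q$ in the sense of Section~5 of \cite{AltCaffarelli1981}, checks via Lemma~\ref{lemma2300}/Corollary~\ref{corollary2000} that $w_{i_0}Q$ is H\"older, and then invokes Theorem~8.1 of \cite{AltCaffarelli1981}. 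You instead take $v=\mathcal{U}=\sum_iu_i$, for which nondegeneracy is automatic (so you can bypass the dominant-component step), and phrase the final step in the language of viscosity solutions and improvement of flatness rather than Alt--Caffarelli weak solutions. Both are legitimate scalar reductions, and each buys something: $\mathcal{U}$ dispenses with Lemma~\ref{lemma2200}, while $u_{i_0}$ dispenses with the more delicate estimate on $\sum_i w_i$ that you now need.

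That estimate is the one genuine gap in your write-up. You cite Lemmas~\ref{lemma2200} and \ref{lemma2300} to conclude $\widetilde Q=Q\sum_iw_i$ is H\"older with constants depending only on the data, but Lemma~\ref{lemma2300} bounds $[w_j]_{C^\lambda}$ by a quantity proportional to $\bigl(\sum_{i\neq j}(u_i(a_{r_1})/u_j(a_{r_1}))^2\bigr)^{1/2}$, which can be arbitrarily large when $j$ is a non-dominant component with $u_j(a_{r_1})$ small. Summing these bounds does \emph{not} give a universal H\"older constant for $\sum_iw_i$. To repair this you should estimate $\mathcal{U}/\vert\uu\vert$ directly by factoring
\begin{equation*}
\frac{\mathcal{U}}{\vert\uu\vert}=\frac{\mathcal{U}}{u_{i_0}}\cdot\frac{u_{i_0}}{\vert\uu\vert}\text{,}
\end{equation*}
where the second factor is exactly Corollary~\ref{corollary2000} (seminorm $\leq(m-1)^{1/2}C_5/r_0^\lambda$, values in $[\frac{1}{mC_4},1]$) and the first factor is $\sum_j u_j/u_{i_0}$, each term of which has H\"older seminorm $\leq C_4/r_1^\lambda\cdot u_j(a_{r_1})/u_{i_0}(a_{r_1})\leq C_4/r_1^\lambda$ by \eqref{equ6600} and values in $[0,C_4]$ by \eqref{equ7700}. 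This gives a universal H\"older bound for $\widetilde Q$ and closes the argument. Separately, the passage from the distributional identity of Theorem~\ref{theorem1300} to a genuine viscosity Bernoulli condition for $v$ is stated but not proved; the paper sidesteps this entirely by staying in the Alt--Caffarelli weak-solution framework and applying Theorem~8.1 there, which is the path of least resistance and the one I would recommend you follow as well.
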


\begin{theorem}[Classification of homogenous global minimizers]\label{theorem1400} 
The function $\uu$ is a first order homogenous absolute minimizer in $B$ with connected 
$\{\vert \uu\vert>0\}$ if and only if $u_i=c_i v$ 
where $v$ is a first order homogenous scalar absolute minimizer in $B$ 
with connected $\{v>0\}$, $c\in\mathbb{R}^{m}$, 
$\vert c\vert=1$ and $c_i\geq 0$ for $i=1,\cdots,m$. 
\end{theorem}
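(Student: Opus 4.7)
The plan is to reduce to the scalar case in both directions, using Kato's inequality $\bigl|\nabla|\vv|\bigr|^{2}\leq|\nabla\vv|^{2}$ together with the identity $J(c\tilde v)=\int\bigl(|\nabla\tilde v|^{2}+Q^{2}\chi_{\{\tilde v>0\}}\bigr)dx$, valid whenever $|c|=1$. For the ``if'' direction, suppose $u_{i}=c_{i}v$ with $|c|=1$, $c_{i}\geq 0$, and $v$ a scalar $1$-homogeneous absolute minimizer with connected $\{v>0\}$. Then $|\uu|=v$, $\{|\uu|>0\}=\{v>0\}$, and $J(\uu)$ equals the scalar functional evaluated at $v$. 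For any vector competitor $\vv\in\mathcal{A}$, the scalar function $\tilde v:=|\vv|$ is admissible for the scalar problem with $\tilde v=v$ on $\partial B$; Kato yields $J(\vv)\geq\int\bigl(|\nabla\tilde v|^{2}+Q^{2}\chi_{\{\tilde v>0\}}\bigr)dx$, and scalar minimality of $v$ forces this latter quantity to be $\geq J(\uu)$.

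For the ``only if'' direction I would first establish that each component $u_{i}$ is harmonic on the positivity set and enjoys a strict dichotomy. Testing with $\uu+\e\phi e_{i}$ for $\phi\geq 0$, $\phi\in C_{c}^{\infty}(\{|\uu|>0\})$, preserves admissibility and the positivity set of $\uu$, so minimality gives $\Delta u_{i}\leq 0$ on $\{|\uu|>0\}$; testing with $\uu-\e\phi e_{i}$ for $\phi\geq 0$ supported in $\{u_{i}>0\}$ yields the reverse inequality, so $\Delta u_{i}=0$ on $\{u_{i}>0\}$. Since $u_{i}\geq 0$ is superharmonic on the connected open set $\{|\uu|>0\}$, the strong minimum principle forces $u_{i}\equiv 0$ or $u_{i}>0$ throughout $\{|\uu|>0\}$; in the second case $u_{i}$ is harmonic on the entire connected cone. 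Now exploit $1$-homogeneity by writing $u_{i}(r,\theta)=rf_{i}(\theta)$: harmonicity translates into $-\Delta_{\sS^{n-1}}f_{i}=(n-1)f_{i}$ on the connected spherical cap $\Sigma:=\{|\uu|>0\}\cap\sS^{n-1}$ with $f_{i}|_{\partial\Sigma}=0$, and $f_{i}>0$ in $\Sigma$ whenever $u_{i}\not\equiv 0$. Simplicity of the first Dirichlet eigenvalue of $-\Delta_{\sS^{n-1}}$ on $\Sigma$, together with the fact that a positive Dirichlet eigenfunction is necessarily a first eigenfunction, forces all nontrivial $f_{i}$ to be scalar multiples $c_{i}\varphi_{1}$ of one common first eigenfunction $\varphi_{1}$. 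Absorbing $\bigl(\sum_{j}c_{j}^{2}\bigr)^{1/2}$ into the definition $v:=r\varphi_{1}$ we may assume $|c|=1$, whence $v=|\uu|$ and $u_{i}=c_{i}v$. To verify that this $v$ is itself a scalar absolute minimizer one runs the construction in reverse: any scalar competitor $\tilde v$ yields an admissible vector competitor $\tilde\uu:=c\tilde v$ with $J(\tilde\uu)$ equal to the scalar functional at $\tilde v$, so $J(\uu)\leq J(\tilde\uu)$ transfers minimality from $\uu$ to $v$.

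The main obstacle is the spherical eigenfunction identification: the cap $\Sigma$ is only a connected open subset of $\sS^{n-1}$ with no a priori regularity of $\partial\Sigma$, so some care is needed to justify the simplicity of the first Dirichlet eigenvalue and the characterization of positive eigenfunctions in that generality. All other ingredients (the variation arguments yielding (super)harmonicity, the strong minimum principle, Kato's inequality, and the scalar/vector back-and-forth) are standard once this reduction is in place.
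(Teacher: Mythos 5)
Your argument is correct in its essentials and follows the same spherical eigenfunction strategy as the paper for the ``only if'' direction, but your ``if'' direction is genuinely different, and there is one case you leave out.

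For the ``if'' direction you pass from a vector competitor $\vv$ to the scalar competitor $|\vv|$ via Kato's inequality $\bigl|\nabla|\vv|\bigr|\leq|\nabla\vv|$ and then invoke scalar minimality of $v$. The paper's Lemma \ref{lemma2800} instead uses the Pythagorean decomposition $1=\sum_i c_i^2$ to write $J(\uu)=\sum_i c_i^2\int_B(|\nabla v|^2+\chi_{\{v>0\}})\,dx$, tests scalar minimality against the competitors $w_i/c_i$ for each $i$ with $c_i\neq 0$, and reassembles using $\sum_i c_i^2\chi_{\{w_i>0\}}\leq\chi_{\{|\w|>0\}}$. Your route is a single clean step and avoids the minor bookkeeping of the vanishing $c_i$; both arguments buy the same conclusion at comparable cost.

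For the ``only if'' direction, your domain variations yielding harmonicity of each $u_i$ on $\{|\uu|>0\}$, the dichotomy $u_i\equiv 0$ or $u_i>0$ from the strong minimum principle, the identification of the traces $f_i$ on $\Sigma=\{|\uu|>0\}\cap\partial B$ as first Dirichlet eigenfunctions, and the transfer of minimality via the competitor $\tilde\uu=c\tilde v$, all reproduce Lemma \ref{lemma2900}. What you do not treat is the degenerate case $\Sigma=\partial B$, i.e.\ $\{|\uu|>0\}=B\setminus\{0\}$, where $\partial\Sigma=\emptyset$ and the phrase ``first Dirichlet eigenvalue'' no longer points at $n-1$; your argument still closes (a nonnegative nontrivial spherical harmonic of degree one on the whole sphere does not exist), but you should say so, as the paper excludes this case separately via Lemma \ref{lemma3000} by removability of the isolated singularity of the bounded harmonic map $\uu$, which would force $\{|\uu|>0\}=B$. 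The concern you flag about regularity of $\partial\Sigma$ dissolves if you work with the $H^1_0(\Sigma)$ variational formulation: on a connected open $\Sigma\subsetneq\partial B$ the infimum of the Rayleigh quotient is attained, a minimizer may be taken nonnegative and is then strictly positive by the strong maximum principle, and any other nonnegative nontrivial eigenfunction is $L^2$-nonorthogonal to it and hence shares its eigenvalue, giving simplicity. The paper leaves this step implicit as well, so you are on equal footing there.
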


\begin{definition}\label{definition700} 
Let $\uu$ be a minimizer in $\Omega$. 
We call $\Sigma=\Omega\cap\bigl(\partial\{\vert \uu\vert>0\}\backslash\partial^{*}\{\vert \uu\vert>0\}\bigr)$ 
the singular set of $\uu$. 
\end{definition} 

Let $k^{*}$ be the critical dimension defined in the Section 3 of \cite{Weiss1999}. 

Let us note that by \cite{CaffarelliJerisonKenig2004,DeSilvaJerison2009,JerisonSavin2015} 
it is known that $5\leq k^*\leq 7$. 

\begin{theorem}[Structure of the free boundary]\label{theorem1500} 
Let $Q$ be H\"older continuous 
and $\uu$ be a minimizer in $\Omega$. Then 
$\Sigma$ is a closed set in the relative topology of $\Omega$. 
The free boundary is $C^{1,\alpha}$ smooth in the open set $\Omega\backslash\Sigma$. 

If $n<k^{*}$ then $\Sigma=\emptyset$. If $n=k^{*}$ then the singular set, i.e. $\Sigma$, 
is at most consisting of isolated points. If $n>k^{*}$ then for $s>n-k^{*}$ we have 
$\mathcal{H}^{s}(\Sigma)=0$, i.e. the 
Hausdorff dimension of the singular set is at most $n-k^{*}$. 
\end{theorem}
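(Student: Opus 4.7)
The plan is to reduce everything to the scalar Alt--Caffarelli theory used in \cite{Weiss1999, CaffarelliJerisonKenig2004, DeSilvaJerison2009, JerisonSavin2015}, with Theorem \ref{theorem1400} playing the role of the bridge between the vector and scalar settings. First I would establish the $C^{1,\alpha}$ regularity on $\Omega\setminus\Sigma$ together with the closedness of $\Sigma$. Fix $x_0\in\Omega\cap\partial^{*}\{|\uu|>0\}$. Using the Lipschitz bound implicit in Theorem \ref{theorem1100} and the linear non-degeneracy of Theorem \ref{theorem1200}, the rescalings $\uu_r(y)=\uu(x_0+ry)/r$ are precompact in $C^{0,1}_{\mathrm{loc}}$. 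A Weiss-type monotonicity formula (the vector adaptation of \cite{Weiss1999}, developed elsewhere in the paper) forces every blow-up limit $\uu_0$ to be first-order homogeneous and to minimize $J$ globally. Because $x_0$ is a reduced-boundary point, $\{|\uu_0|>0\}$ is a half-space, hence connected, so Theorem \ref{theorem1400} applies and gives $\uu_0=c\cdot v$ with $c\in\mathbb{R}^{m}$, $|c|=1$, where $v$ is the scalar half-plane solution. Thus $\uu$ is $\sigma$-flat in the sense of Definition \ref{definition600} at scale $r$ with $\sigma(r)\to 0$ as $r\to 0$, and Theorem \ref{theorem1360} yields a $C^{1,\alpha}$ parametrization of $\partial\{|\uu|>0\}$ near $x_0$. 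Since $C^{1,\alpha}$ graphs consist of reduced-boundary points, such a neighborhood lies entirely in $\Omega\cap\partial^{*}\{|\uu|>0\}$; hence the regular set is open in $\partial\{|\uu|>0\}$ and $\Sigma$ is relatively closed in $\Omega$.

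For the Hausdorff dimension bounds I would carry out Federer's dimension reduction. At every $x_0\in\Sigma$ the blow-up limits are again $1$-homogeneous global minimizers. On each connected component $U$ of $\{|\uu_0|>0\}$ the restriction is a $1$-homogeneous minimizer with connected positivity set, so by Theorem \ref{theorem1400} it equals $c_U v_U$ for a scalar $1$-homogeneous Alt--Caffarelli minimizer $v_U$; the singular set of $\uu_0$ on $\overline{U}$ is then exactly the singular set of $v_U$. The Federer translation step---if a $1$-homogeneous minimizer is invariant along a direction $e$, then projecting modulo $e$ yields a $1$-homogeneous minimizer in one dimension less with a singular point at the origin---is inherited from the scalar theory via the same component-wise identification. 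Together with the standard scalar consequences ($\Sigma=\emptyset$ for $n<k^{*}$, only isolated points for $n=k^{*}$, and $\mathcal{H}^{s}(\Sigma)=0$ for every $s>n-k^{*}$ when $n>k^{*}$) and the bounds $5\leq k^{*}\leq 7$ recorded before the statement, this yields the theorem.

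The main obstacle is the dimension-reduction step: one must be careful that Theorem \ref{theorem1400}, whose hypothesis demands a connected positivity set, can be applied on a blow-up only after restricting to each connected component, and one must verify that Federer's translation step commutes with this component-wise reduction. Equivalently, the Weiss monotonicity formula for the vector functional $J$ must be compatible both with restriction to a single component of $\{|\uu|>0\}$ and with removing a Euclidean factor from the ambient space. These are essentially bookkeeping once Theorem \ref{theorem1400} is in hand, but they are exactly what is needed in order for the scalar dimension estimates to be transferred verbatim to the vector-valued problem.
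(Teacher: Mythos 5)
Your proposal follows the same route as the paper, whose proof is literally a citation to Sections 3 and 4 of \cite{Weiss1999} together with the three ingredients you identify: Theorem \ref{theorem1360} (flatness implies regularity), Lemma \ref{lemma2700} (homogeneity of blow-ups via the Weiss monotonicity formula), and Theorem \ref{theorem1400} (classification of homogeneous global minimizers). One remark on the ``main obstacle'' you flag at the end: the worry about disconnected positivity sets of blow-ups, and whether Federer's translation step commutes with a component-by-component application of Theorem \ref{theorem1400}, is not actually present. By Theorem \ref{theorem1350} the set $\{|\uu|>0\}$ is locally non-tangentially accessible near every free boundary point; an NTA domain is connected by the Harnack-chain condition; and for a first-order homogeneous blow-up limit $\uu_0$, local connectedness of $\{|\uu_0|>0\}$ near the vertex propagates by scaling to global connectedness of the entire cone. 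Thus Theorem \ref{theorem1400} applies directly, without decomposing into components, and the scalar dimension-reduction estimates of Weiss transfer exactly as you intend.
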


\begin{theorem}[Higher regularity of the free boundary]\label{theorem1600} 
If $Q\in C^{1,\gamma}$ for $0<\gamma<1$, 
$Q\in C^{k,\gamma}$ for $k\geq 2$ and $0<\gamma<1$, 
$Q\in C^{\infty}$ or $Q$ is real analytic then 
the free boundary is $C^{2,\min(\alpha,\gamma)}$ 
(with $\alpha$ as in Theorem \ref{theorem1360}), $C^{1+k,\gamma}$, $C^{\infty}$ or real analytic, 
respectively, smooth in the open set $\Omega\backslash\Sigma$. 
\end{theorem}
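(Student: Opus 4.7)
The plan is to reduce the higher-regularity question to a classical elliptic boundary-value problem via a partial hodograph transform, and then to bootstrap via Schauder theory. I would work locally near a regular free boundary point $x_{0}\in\partial\{|\uu|>0\}\cap(\Omega\setminus\Sigma)$. By Theorem \ref{theorem1500} and Theorem \ref{theorem1360}, a neighborhood $B_{\rho}(x_{0})$ of such a point is, after rotation, of the form
\begin{equation*}
\{|\uu|>0\}\cap B_{\rho}(x_{0}) \;=\; \{x_{n}>\phi(x')\}\cap B_{\rho}(x_{0}),\qquad \phi\in C^{1,\alpha}.
\end{equation*}
Inside $\{|\uu|>0\}$ every $u_{i}$ is harmonic, and Theorem \ref{theorem1300} combined with the pointwise identity $\sum_{i}w_{i}^{2}=1$ (which follows from $w_{i}(x)=\lim u_{i}/|\uu|$) yields the scalar Bernoulli condition $|\nabla\uu|^{2}=Q^{2}$ at every regular boundary point; equivalently, $\partial_{\nu}u_{i}=Q w_{i}$.

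Next I would pick an index $j$ with $w_{j}(x_{0})>0$; such $j$ exists since $|w(x_{0})|=1$, and Hopf's lemma applied to the harmonic $u_{j}$ gives $\partial_{x_{n}}u_{j}>0$ in a smaller neighborhood. The map
\begin{equation*}
(x',x_{n})\mapsto(y',y_{n})=(x',u_{j}(x))
\end{equation*}
is then a $C^{1,\alpha}$ diffeomorphism from a one-sided neighborhood of $x_{0}$ onto an upper half-ball, flattening the free boundary to $\{y_{n}=0\}$. Writing the inverse as $x_{n}=\psi(y)$ (so that $\psi(y',0)=\phi(y')$) and setting $\tilde u_{i}(y)=u_{i}(x(y))$ for $i\neq j$ (with $\tilde u_{j}\equiv y_{n}$), the equations $\Delta u_{i}=0$ transform into a quasilinear elliptic system in the unknowns $\psi$ and $\tilde u_{i}$ with coefficients smooth in $\nabla\psi$. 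The boundary relations at $\{y_{n}=0\}$ are the Dirichlet conditions $\tilde u_{i}=0$ for $i\neq j$, together with the nonlinear oblique relation obtained by rewriting $|\nabla\uu|^{2}=Q^{2}$ in $y$-variables, which couples $\nabla_{y'}\psi$, $\partial_{y_{n}}\psi$, and the $\partial_{y_{n}}\tilde u_{i}$.

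With this setup the bootstrap proceeds by the Agmon--Douglis--Nirenberg theory of elliptic boundary-value problems. Ellipticity is inherited from $\Delta$, and the Bernoulli condition supplies the one extra scalar boundary relation needed to pair with the extra unknown $\psi$. Starting from $\psi,\tilde u_{i}\in C^{1,\alpha}$ with $Q\in C^{0,\gamma}$, a first application of boundary Schauder estimates yields $\psi\in C^{2,\min(\alpha,\gamma)}$, hence $\phi\in C^{2,\min(\alpha,\gamma)}$. For $Q\in C^{k,\gamma}$ with $k\geq 2$, an induction on tangential difference quotients applied to the differentiated system delivers $\psi\in C^{k+1,\gamma}$ and the claimed regularity of the free boundary. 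The $C^{\infty}$ conclusion follows by iteration, and the real-analytic case follows from the Morrey--Nirenberg real-analyticity theorem applied to the transformed analytic nonlinear system.

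The main obstacle, I expect, is verifying that the coupled nonlinear boundary system for $(\psi,\tilde u_{i})_{i\neq j}$ satisfies the complementing (Lopatinskii--Shapiro) condition uniformly in a neighborhood of the reference point. This reduces to a linear-algebra check at each boundary point, amounting to transversality of the linearized Bernoulli condition to tangential variations at the flat configuration $\phi\equiv 0$. The vector-valued structure makes the verification more involved than in the scalar case treated in \cite{AltCaffarelli1981,Weiss1999}, but the cooperative sign condition $w_{i}\geq 0$ together with $\sum_{i}w_{i}^{2}=1$ prevents cancellations and the check goes through. Once the complementing condition is established, the bootstrap above is classical.
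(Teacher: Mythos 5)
Your proposal follows essentially the paper's route: a partial hodograph--Legendre transform keyed to a nondegenerate component (the paper uses $u_{i_0}$ from Lemma~\ref{lemma2200}, you use any $u_j$ with $w_j(x_0)>0$, and either works since Lemma~\ref{lemma2200} gives the quantitative lower bound), reduction to a quasilinear elliptic system with one nonlinear Bernoulli boundary condition and $m-1$ Dirichlet conditions on the flattened boundary, and a Schauder--ADN bootstrap (the paper cites Theorem~6.8.2 of \cite{Morrey1966}, which covers the $C^{k,\gamma}$, $C^{\infty}$, and real-analytic cases simultaneously). The one piece you acknowledge but do not carry out --- verifying the complementing/coercivity condition --- is where the bulk of the paper's proof actually lies (Steps~6--8): it requires the Kinderlehrer--Nirenberg--Spruck \emph{weighted}-order framework of \cite{KinderlehrerNirenbergSpruck1978}, because the Bernoulli condition is first order while the Dirichlet conditions are zeroth order (so the integers $r_k$ must be assigned $-1$ and $-2$, not uniformly), and the paper computes the linearized principal parts explicitly, observes that the Dirichlet conditions on the auxiliary components $\bar v_k$, $k\geq 2$, decouple, and reduces coercivity to the scalar Neumann problem via the nonvanishing coefficient $-2\bigl(1+\sum_{k\geq 2}(\partial_{y_n}v_k(0))^2\bigr)$ of $\partial_{y_n}\bar v_1$. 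Your stated reason for why the check should succeed --- that $w_i\geq 0$ and $\sum_i w_i^2=1$ ``prevent cancellations'' --- is not the operative mechanism; what makes the linearized system coercive is the triangular structure of the boundary operator, and the nonvanishing above holds regardless of the signs of the $w_i$.
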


\subsection{Structure of the paper}\label{subsection800} 

This paper is structured as follows. 
In Section \ref{section1300}, the existence of an absolute minimizer is established. 
In Section \ref{section1400}, general structure and initial regularity 
of minimizers are demonstrated. 
In Section \ref{section1500}, the optimal linear growth of 
minimizers near to the free boundary 
is proved. 

In Section \ref{section1600}, we carry out preliminary local 
analysis of the minimizers and the free boundary. 
We obtain the optimal linear nondegeneracy of 
minimizers near to the free boundary, nonvanishing density of the coincidence set 
$\{\vert \uu\vert=0\}$ and the noncoincidence set $\{\vert \uu\vert>0\}$ 
near to the free boundary, that noncoincidence set $\{\vert \uu\vert>0\}$ has 
locally finite perimeter, a domain variation formula and 
that linear blowup limits at the free boundary are absolute minimizers. 

In Section \ref{section1800}, we derive the equation satisfied by each component 
$u_i$, we prove that the noncoincidence set $\{\vert \uu\vert>0\}$ is a non-tangentially 
accessible domain, using the last property, locally we reduce the problem to a nondegenerate 
scalar one. 

In Section \ref{section1810}, using the reduction to a scalar problem we obtain that 
flatness of the free boundary implies its regularity and also we discuss the equivalence of 
various definitions of regular points of the free boundary. 

In Section \ref{section1900}, after proving a Poho\v{z}aev type identity we obtain 
a Weiss type monotonicity formula. This monotonicity formula establishes the 
homogeneity of blowup limits. 

In Section \ref{section2000}, we classify all possible homogenous global minimizers 
by relating them with those of the scalar problem. 

In Section \ref{section2050}, we obtain the structure of the free boundary and 
its higher regularity close to regular points provided the data of the problem, i.e. $Q$, is 
accordingly regular. 

In the appendix, for ease of reference, we bring the definition of a 
non-tangentially accessible domain and the associated comparison principle. 

%%%%%%%%%%%%%%%%%%%%%%%%%%
%%%%%  Existence of an Absolute minimizer and Proof of Theorem \ref{theorem1000}}
%%%%%%%%%%%%%%%%%%%%%%%%%%

\section{Existence of an Absolute Minimizer\\ 
(Proof of Theorem \ref{theorem1000})}\label{section1300}

\begin{proof}[Proof of Theorem \ref{theorem1000}]
We have $g\in\mathcal{A}$ (see \eqref{equ300} for the definition of $\mathcal{A}$) 
thus $\mathcal{A}\not=\emptyset$. 
Let $\uu^k\in\mathcal{A}$ be a minimizing sequence, i.e. 
\begin{equation*}
\inf_{\vv\in\mathcal{A}}J(\vv)=\lim_{k\to\infty}J(\uu^k)\text{.} 
\end{equation*}
Then because $g\in\mathcal{A}$ for large enough $k$ we have 
\begin{equation}\label{equ1000} 
J(\uu^{k})<J(\mathbf{g})+1<\infty\text{.} 
\end{equation}
We might assume that \eqref{equ1000} holds for all $k\geq 1$.  Clearly we have the estimate 
\begin{equation}\label{equ1010} 
\int_{\Omega}\vert\nabla\uu^{k}\vert^{2}dx\leq J(\uu^{k})\text{.} 
\end{equation}
Now because $\uu^{k}=\mathbf{g}$ on $\partial\Omega$ 
by the Poincar\'e inequality, \eqref{equ1000} and \eqref{equ1010} we obtain the uniform bound 
\begin{equation*}
\Vert\uu^{k}\Vert_{H^{1}(\Omega;\mathbb{R}^{m})}
\leq C\enskip\text{for}\enskip k\geq 1\text{.} 
\end{equation*}
Also we have trivially the uniform bound 
$\Vert\chi_{\{\vert \uu^{k}\vert>0\}}\Vert_{L^{\infty}(\Omega)}\leq\vert\Omega\vert$ for $k\geq 1$. 
Thus there exists $\uu\in H^{1}(\Omega;\mathbb{R}^{m})$, $w\in L^{\infty}(\Omega)$ 
and a subsequence $k_{\ell}$ such that $\uu^{k_{\ell}}\to \uu$ weakly 
in $H^{1}(\Omega;\mathbb{R}^{m})$, 
$\uu^{k_{\ell}}\to \uu$ a.e. in $\Omega$ and $\chi_{\{\vert \uu^{k_{\ell}}\vert>0\}}\to w$ 
weak$^{*}$ in $L^{\infty}(\Omega)$. 
We denote the sequence $k_{\ell}$ for simplicity by $k$. 

Because $\mathcal{A}$ is a closed (with respect to the strong topology) 
and convex subset of $H^{1}(\Omega;\mathbb{R}^{m})$, it is 
also closed with respect to the weak topology, therefore $\uu\in\mathcal{A}$. 
Let now $E\subset \Omega$ be a measurable set, then we have 
\begin{equation*}
\int_{E}wdx
=\int_{\Omega}\chi_{E}wdx
=\lim_{k\to\infty}\int_{\Omega}\chi_{E}\chi_{\{\vert \uu^{k}\vert>0\}}dx 
\geq 0
\end{equation*}
by the arbitrariness of $E$ we obtain that $w\geq 0$ a.e. in $\Omega$. 
Since $\uu^{k}\to \uu$ a.e. in $\Omega$ we have 
$\chi_{\{\vert \uu^{k}\vert>0\}}\to 1$ a.e. in $\{\vert \uu\vert>0\}$. 
Let $E\subset\{\vert \uu\vert>0\}$ be a measurable set then 
\begin{multline*}
\int_{E}wdx
=\int_{\Omega}\chi_{E}wdx
=\lim_{k\to\infty}\int_{\Omega}\chi_{E}\chi_{\{\vert \uu^{k}\vert>0\}}dx \\
=\lim_{k\to\infty}\int_{\{\vert \uu\vert>0\}}\chi_{E}\chi_{\{\vert \uu^{k}\vert>0\}}dx 
=\int_{\{\vert \uu\vert>0\}}\chi_{E}dx 
=\vert E\vert 
\end{multline*}
from which by the arbitrariness of $E$ we obtain that $w=1$ a.e. in $\{\vert \uu\vert>0\}$. 
We thus have $w\geq\chi_{\{\vert \uu\vert>0\}}$ a.e. in $\Omega$, and 
\begin{multline*}
J(\uu)
=
\int_{\Omega}\bigl(\vert \nabla \uu\vert^{2}+Q^{2} \chi_{\{\vert \uu\vert>0\}}\bigr)dx 
\leq\int_{\Omega}\bigl(\vert \nabla \uu\vert^{2}+Q^{2} w\bigr)dx \\
=\int_{\Omega}\vert \nabla \uu\vert^{2}dx
+\int_{\Omega}Q^{2} w dx 
\leq 
\varliminf_{k\to\infty}\int_{\Omega}\vert \nabla \uu^{k}\vert^{2}dx
+\lim_{k\to\infty}\int_{\Omega}Q^{2} \chi_{\{\vert \uu^{k}\vert>0\}}dx \\
=
\varliminf_{k\to\infty}
\int_{\Omega}\bigl(\vert\nabla \uu^{k}\vert^{2}+Q^{2} \chi_{\{\vert \uu^{k}\vert>0\}}\bigr)dx 
=
\varliminf_{k\to\infty}
J(\uu^{k})
=
\inf_{\vv\in\mathcal{A}}J(\vv)\text{,} 
\end{multline*} 
which proves that $\uu$ is an absolute minimizer and this 
finishes the proof of the theorem. 
\end{proof}

\section{Local Minimizer}\label{section1400}   

\begin{lemma}\label{lemma1000} 
If $\uu$ is a local minimizer then $u_i$ is subharmonic for all $i=1,\cdots,m$. 
\end{lemma}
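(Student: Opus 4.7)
The plan is to show, for any nonnegative $\varphi \in C_c^{\infty}(\Omega)$, that
\begin{equation*}
\int_{\Omega} \nabla u_i \cdot \nabla \varphi \, dx \leq 0,
\end{equation*}
by constructing an admissible competitor that perturbs only the $i$-th component downward and exploiting the one-sided obstacle $u_i \geq 0$ together with the fact that lowering a single component can only shrink the positivity set $\{|\uu|>0\}$.

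Fix $i$ and a nonnegative $\varphi \in C_c^{\infty}(\Omega)$. For small $\epsilon>0$ define $\vv^{\epsilon}$ by $v_j^{\epsilon} = u_j$ for $j\neq i$ and $v_i^{\epsilon} = (u_i - \epsilon \varphi)^{+}$. Then $\vv^{\epsilon}$ has nonnegative components, agrees with $\uu$ (and hence with $\g$) outside $\spt \varphi$, so $\vv^{\epsilon}\in \mathcal{A}$; moreover the key pointwise inclusion $\{|\vv^{\epsilon}|>0\}\subset\{|\uu|>0\}$ holds, since $v_i^{\epsilon}>0$ forces $u_i>\epsilon\varphi\geq 0$, and the other components are unchanged. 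Standard estimates give $\|\uu-\vv^{\epsilon}\|_{H^{1}}\to 0$ and $\|\chi_{\{|\uu|>0\}}-\chi_{\{|\vv^{\epsilon}|>0\}}\|_{L^{1}}\to 0$ as $\epsilon\to 0^{+}$, so $d(\uu,\vv^{\epsilon})<\epsilon_0$ for $\epsilon$ small. Applying the local minimality inequality $J(\uu)\leq J(\vv^{\epsilon})$ and using that the volume term $\int Q^{2}\chi_{\{|\cdot|>0\}}$ does not increase when passing from $\uu$ to $\vv^{\epsilon}$, I obtain
\begin{equation*}
\int_{\Omega} |\nabla u_i|^{2}\, dx \leq \int_{\Omega} |\nabla v_i^{\epsilon}|^{2}\, dx.
\end{equation*}

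Writing $A_{\epsilon}=\{u_i > \epsilon\varphi\}$ and using $\nabla v_i^{\epsilon} = (\nabla u_i - \epsilon\nabla \varphi)\chi_{A_{\epsilon}}$ a.e., expansion of the right-hand side yields
\begin{equation*}
2\epsilon\int_{A_{\epsilon}} \nabla u_i\cdot\nabla\varphi\, dx \leq \epsilon^{2}\int_{A_{\epsilon}} |\nabla \varphi|^{2}\, dx - \int_{\Omega\setminus A_{\epsilon}} |\nabla u_i|^{2}\, dx.
\end{equation*}
Dropping the nonpositive last term and dividing by $2\epsilon$ gives
\begin{equation*}
\int_{A_{\epsilon}} \nabla u_i\cdot\nabla\varphi\, dx \leq \tfrac{\epsilon}{2}\int_{\Omega}|\nabla \varphi|^{2}\, dx.
\end{equation*}
As $\epsilon\to 0^{+}$ the indicator $\chi_{A_{\epsilon}}$ converges a.e.\ to $\chi_{\{u_i>0\}}$, so dominated convergence (the integrand is $L^{1}$) yields $\int_{\{u_i>0\}} \nabla u_i\cdot\nabla\varphi\, dx \leq 0$.

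The remaining step is to extend the integration from $\{u_i>0\}$ to all of $\Omega$; this uses the standard Sobolev-theoretic fact that $\nabla u_i = 0$ a.e.\ on $\{u_i = 0\}$, so the contribution from that set vanishes and the desired inequality $\int_{\Omega}\nabla u_i\cdot\nabla\varphi\, dx\leq 0$ holds for every nonnegative test function $\varphi$, proving $\Delta u_i \geq 0$ in the distributional sense. The main technical care needed is in the passage from the truncated energy inequality to the limit, i.e.\ correctly accounting for the $(\cdot)^{+}$ truncation on $\Omega\setminus A_{\epsilon}$ and verifying admissibility in the metric $d$; both points are routine once the inclusion $\{|\vv^{\epsilon}|>0\}\subset\{|\uu|>0\}$, which is the crucial structural observation, is in hand.
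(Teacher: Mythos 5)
Your proof is correct. The core idea is the same as the paper's: perturb downward by $\epsilon\varphi$ and truncate with $(\,\cdot\,)^{+}$, use the sign constraint $u_i\geq 0$ to keep admissibility, and observe that the truncation can only shrink the positivity set so the volume term does not increase, leaving a one-sided Dirichlet-energy inequality. Where you diverge from the paper is in how you dispose of the truncation. The paper avoids any analysis of the set $A_\epsilon=\{u_i>\epsilon\varphi\}$ by invoking the pointwise contraction $\vert\nabla(\uu-\epsilon\vv)^{+}\vert^{2}\leq\vert\nabla(\uu-\epsilon\vv)\vert^{2}$, which lets it replace the truncated competitor's energy directly by the untruncated one; then the common $\int\vert\nabla\uu\vert^{2}$ cancels on both sides and the inequality $2\int\nabla\uu:\nabla\vv\leq\epsilon\int\vert\nabla\vv\vert^{2}$ drops out in one line before sending $\epsilon\to 0$. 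You instead keep the truncation explicit, restrict the integrals to $A_\epsilon$, throw away the nonpositive term $-\int_{\Omega\setminus A_\epsilon}\vert\nabla u_i\vert^{2}$, and pass to the limit using $\chi_{A_\epsilon}\to\chi_{\{u_i>0\}}$ and the fact that $\nabla u_i=0$ a.e.\ on $\{u_i=0\}$. Both routes are valid; the paper's contraction trick is shorter and sidesteps the dominated-convergence step and the Sobolev fact about gradients vanishing on level sets, while your version makes the mechanism more transparent. One other cosmetic difference: the paper perturbs all $m$ components simultaneously and reads off the scalar conclusion by specializing $\vv$, whereas you perturb a single component from the start; these are equivalent.
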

\begin{proof}
Let $\vv\in C^{1}_{c}(\Omega;\mathbb{R}^{m})$ with $v_i\geq 0$ for $i=1,\cdots,m$, and 
define $u_{\epsilon,i}(x)=(u_i(x)-\epsilon v_i(x))^{+}$ for $x\in\Omega$ and $i=1,\cdots,m$. 
Then $\uu_{\epsilon}\in\mathcal{A}$ and $\lim_{\epsilon\to 0}d(\uu,\uu_{\epsilon})=0$. 
Thus for small enough $\epsilon>0$ we have $J(\uu)\leq J(\uu_{\epsilon})$, and hence  
\begin{multline*}
\int_{\Omega}\bigl(\vert\nabla \uu\vert^{2}+Q^{2} \chi_{\{\vert \uu\vert>0\}}\bigr)dx 
\leq 
\int_{\Omega}\bigl(\vert\nabla (\uu-\epsilon\vv)^{+}\vert^{2}
+Q^{2} \chi_{\{\vert (\uu-\epsilon\vv)^{+} \vert>0\}}\bigr)dx \\
\leq 
\int_{\Omega}\bigl(\vert\nabla (\uu-\epsilon\vv)\vert^{2}+Q^{2} \chi_{\{\vert \uu\vert>0\}}\bigr)dx\text{,} 
\end{multline*}
from which it follows that 
\begin{equation*}
2\int_{\Omega}
\nabla \uu:\nabla\vv
dx
\leq 
\epsilon
\int_{\Omega}
\vert\nabla\vv\vert^{2}
dx\text{.} 
\end{equation*}
Sending $\epsilon\to0$ we obtain 
\begin{equation*}
\int_{\Omega}
\nabla \uu:\nabla\vv
dx
\leq 0
\end{equation*}
which proves that each component $u_i$ is subharmonic in $\Omega$. 
\end{proof}

Because $u_i$ is subharmonic for $i=1,\cdots,m$, for any $x\in\Omega$ the average  
$\fint_{B_r(x)}u_i dy$ is nondecreasing in $r$ (for small $r>0$). Thus the limit 
$\lim_{r\to 0}\fint_{B_r(x)}u_i dy$ 
exists. Because this limit is equal to $u_i(x)$ for a.e. $x\in\Omega$ we might choose a version of $u_i$ such that 
$u_i(x)=\lim_{r\to 0}\fint_{B_r(x)}u_i dy$ 
for all $x\in\Omega$. 

Also $u_i$ being subharmonic by maximum principle we have that $u_i(x)\leq\sup_{\partial\Omega}g_i$ for all $x\in\Omega$. 
Because the averages $\fint_{B_r(x)}u_i dy$ are continuous functions of $x$ 
and $u_i(x)=\inf_{r>0}\fint_{B_r(x)}u_i dy$ we have that $u_i$ is upper-semicontinuous 
in $\Omega$. 

\begin{definition}\label{definition1100} 
Let $B_{r_0}(x)\subset\Omega$ then for $0<r<r_0$ we define 
\begin{equation*}
\uu_{x,r}(y)=\frac{1}{r}\uu(x+ry)
\end{equation*}
and call $\uu_{x,r}$ the linear blowup of $\uu$ at $x$. 
In the case $x=0$ we denote $\uu_r=\uu_{0,r}$. 
We define further
\begin{equation*}
Q_{x,r}(y)=Q(x+ry)\text{.} 
\end{equation*}
In the case $x=0$ we set $Q_{r}(y)=Q_{0,r}(y)$. 
\end{definition}

\begin{lemma}[Initial regularity of minimizers]\label{lemma1050} 
Let $\uu$ be a local minimizer. Then for each compact subset $K\subset\subset\Omega$ there 
exists $C>0$ (depending on $K$ and $Q_{max}$) such that 
\begin{equation*}
\vert \uu(x)-\uu(y)\vert\leq C\vert x-y\vert\ln(\frac{1}{\vert x-y\vert}) ,
\end{equation*}
for $x,y\in K$ and $\vert x-y\vert<\frac{1}{2}$. 
In particular $\uu\in C^{\beta}_{loc}(\Omega;\mathbb{R}^{m})$ 
for all $0<\beta<1$. 
\end{lemma}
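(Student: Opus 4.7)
The plan is to derive the log-Lipschitz modulus by combining the subharmonicity of each $u_i$ (Lemma~\ref{lemma1000}) with an $(n-1)$-dimensional density bound on the measure $\mu_i := \Delta u_i \ge 0$, followed by a Newtonian potential estimate. By the sub-mean-value inequality each $u_i$ is locally bounded. For $x_0 \in \Omega$ and small $r$ with $B_{2r}(x_0) \subset\subset \Omega$, let $\h=(h_1,\ldots,h_m)$ be the componentwise harmonic replacement of $\uu$ in $B_r(x_0)$, extended by $\uu$ elsewhere. Then $\h \in \mathcal{A}$ (since $h_i \ge 0$ by the maximum principle), and $d(\uu,\h) \to 0$ as $r \to 0$, so for $r$ small enough local minimality yields $J(\uu) \le J(\h)$. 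Because $\uu - \h$ vanishes on $\partial B_r(x_0)$ and $\h$ is harmonic, this simplifies to the energy comparison
\[
\int_{B_r(x_0)} |\nabla(\uu - \h)|^2\,dy \;=\; \int_{B_r(x_0)} \bigl(|\nabla \uu|^2 - |\nabla \h|^2\bigr)\,dy \;\le\; Q_{max}^2 |B_r(x_0)| \;\le\; C r^n.
\]

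Testing $\mu_i$ against a cutoff $\phi \in C_c^\infty(B_r(x_0))$ with $\phi \equiv 1$ on $B_{r/2}(x_0)$ and $|\nabla \phi| \le C/r$, the harmonic part drops out of $\int u_i \Delta \phi$ since $h_i$ is harmonic in $B_r(x_0)$, and Cauchy--Schwarz combined with the energy comparison gives the key density bound
\[
\mu_i(B_{r/2}(x_0)) \;\le\; \int \phi\,d\mu_i \;=\; -\int \nabla(u_i - h_i) \cdot \nabla \phi\,dy \;\le\; C r^{n-1},
\]
a genuine improvement over the universal $\mu_i(B_r) \le C r^{n-2}$ obtainable from boundedness and subharmonicity alone. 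To finish, fix a ball $B_R \subset\subset \Omega$ around any given point and use the Green's function representation $u_i = h_i^R - G_R * \mu_i$, where $h_i^R$ is harmonic and hence Lipschitz on compact subsets. For $x_1, x_2 \in B_{R/2}$ with $\rho := |x_1 - x_2|$ small, split $(G_R * \mu_i)(x_1) - (G_R * \mu_i)(x_2)$ at scale $2\rho$: the near piece $\{|y - x_1| < 2\rho\}$, controlled via $G_R \le C|x-y|^{2-n}$ and a dyadic decomposition against the density bound $\mu_i(B_s) \le C s^{n-1}$, contributes $\le C\rho$; the far piece uses $|\nabla_x G_R| \le C|x-y|^{1-n}$ and the same density bound to yield $C\rho \int_{2\rho}^R s^{-1}\,ds \le C\rho \log(1/\rho)$. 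This produces the log-Lipschitz modulus on each $u_i$ and hence on $\uu$; the statement $\uu \in C^{\beta}_{loc}(\Omega;\mathbb{R}^m)$ for every $\beta < 1$ follows at once.

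The principal difficulty I expect is the density upgrade from $r^{n-2}$ to $r^{n-1}$: with only the universal bound the near part of the Newton potential would already diverge, and it is precisely the minimizer's quadratic energy comparison $\int_{B_r}|\nabla(\uu - \h)|^2 \le C r^n$---reflecting the volume scaling $|B_r| \sim r^n$ of the free boundary cost---that forces the near part to converge and relegates the logarithm to the far field.
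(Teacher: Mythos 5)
Your proposal is correct, but it takes a different route from the paper. The paper derives exactly the same energy comparison $\int_{B}\vert\nabla(\uu_{x,r}-\vv_{x,r})\vert^{2}\leq Q_{max}^{2}\vert B\cap\{\vert\uu_{x,r}\vert=0\}\vert$ (equation \eqref{equ1400}) and then simply invokes the argument of Theorem~2.1 in \cite{AltCaffarelliFriedman1984b}, which proceeds by a Campanato-type iteration: the harmonic replacement's gradient has $O(\rho^{n+2})$ mean-oscillation decay, so iterating the comparison across dyadic scales with the borderline error $Cr^{n}$ yields $\nabla\uu\in\mathrm{BMO}_{loc}$, whence the log-Lipschitz modulus. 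You instead convert the same energy comparison into a Radon-measure density bound $\Delta u_i(B_{r/2}(x_0))\leq Cr^{n-1}$ via a single Cauchy--Schwarz against a cutoff (using that the harmonic part drops from the pairing), and then estimate the Riesz potential of this measure directly, with a near/far split at scale $\rho=\vert x_1-x_2\vert$; the far-field dyadic sum produces the logarithm. Both arguments work from the same energy inequality and the same scaling $r^{n}$ on the right-hand side; yours has the pedagogical merit of making the $(n-1)$-dimensional density of $\Delta u_i$ explicit at this early stage (the paper only establishes this later, in Lemma~\ref{lemma1700}, and with sharper two-sided bounds requiring the nondegeneracy Theorem~\ref{theorem1200}), and it localizes the logarithm clearly in the far-field potential; the Campanato route is more black-box but avoids the Riesz representation and the case distinction in the Green's-function asymptotics between $n=2$ (logarithmic kernel) and $n\geq 3$, which you should handle explicitly in the near-field estimate.
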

\begin{proof}
Let $B_r(x)\subset\Omega$. 
Let for $i=1,\cdots,m$, $v_i$ be the harmonic function in $B_r(x)$ such that 
$v_i=u_i$ on $\partial B_r(x)$. Let us extend $\vv=(v_1,\cdots,v_m)$ by $\uu$ in $\Omega\backslash B_r(x)$. 
We have $J(\uu)\leq J(\vv)$ (in the case of a local minimizer, $r$ should be small enough). 
It follows that 
\begin{multline*}
\int_{B}\bigl(\vert\nabla \uu_{x,r}\vert^{2}+Q_{x,r}^{2}\chi_{\{\vert \uu_{x,r}\vert>0\}}\bigr)dy 
\leq\int_{B}\bigl(\vert\nabla \vv_{x,r}\vert^{2}
+Q_{x,r}^{2}\chi_{\{\vert \vv_{x,r}\vert>0\}}\bigr)dy \\
=\int_{B}\bigl(\vert\nabla \vv_{x,r}\vert^{2}+Q_{x,r}^{2}\bigr)dy 
\end{multline*}
therefore 
\begin{multline}\label{equ1200} 
\int_{B}\bigl(\vert\nabla \uu_{x,r}\vert^{2}-\vert\nabla \vv_{x,r}\vert^{2}\bigr)dy
\leq 
\int_{B}
Q_{x,r}^2
\chi_{\{\vert \uu_{x,r}\vert=0\}}dy \\
\leq 
Q_{max}^2
\bigl\vert B\cap\{\vert \uu_{x,r}\vert=0\}\bigr\vert\text{.} 
\end{multline}

For each $i=1,\cdots,m$ we compute 
\begin{multline}\label{equ1300} 
\int_{B}\vert\nabla (u_{x,r,i}-v_{x,r,i})\vert^{2}dy \\
=
\int_{B}\bigl(
\vert\nabla u_{x,r,i}\vert^{2}
-2\nabla (u_{x,r,i}-v_{x,r,i})\cdot\nabla v_{x,r,i} 
-\vert\nabla v_{x,r,i}\vert^{2}
\bigr)dy \\
=
\int_{B}\bigl(
\vert\nabla u_{x,r,i}\vert^{2}
-\vert\nabla v_{x,r,i}\vert^{2}
\bigr)dy\text{.} 
\end{multline}
From \eqref{equ1200} and \eqref{equ1300} we obtain 
\begin{equation}\label{equ1400} 
\int_{B}\vert\nabla (\uu_{x,r}-\vv_{x,r})\vert^{2}dy
\leq
Q_{max}^{2}
\bigl\vert B\cap\{\vert \uu_{x,r}\vert=0\}\bigr\vert\text{.} 
\end{equation}
Thus we have 
\begin{equation*}
\int_{B}\vert\nabla (\uu_{x,r}-\vv_{x,r})\vert^{2}dy
\leq
Q_{max}^{2}\vert B\vert\text{.} 
\end{equation*}
It follows that for $i=1,\cdots,m$ we have separately for each component 
\begin{equation*}
\int_{B}\vert\nabla (u_{x,r,i}-v_{x,r,i})\vert^{2}dy
\leq
Q_{max}^{2}\vert B\vert\text{.} 
\end{equation*}

Proceeding as in Theorem 2.1 of \cite{AltCaffarelliFriedman1984b} we compete the proof of the Lemma. 
\end{proof}

\section{Lipschitz Regularity of Minimizers\\
(Proof of Theorem \ref{theorem1100})}\label{section1500}
Let $\psi\in H^{1}(B)$ then for $A\subset B$ we define 
\begin{equation*}
\operatorname{Cap}(A;B,\psi)
=\inf_{u\in H^{1}_{0}(B),\ u\geq \psi\chi_{A}}\int_{B}\vert\nabla u\vert^{2}dx\text{.} 
\end{equation*}

\begin{lemma}\label{lemma1100} 
There exists $c>0$ such that if 
$u\in H^{1}(B)$ is nonnegative and 
$v$ is the harmonic function in $B$ with $v=u$ on $\partial B$ 
we have 
\begin{equation*}
c\operatorname{Cap}\bigl(B\cap\{u=0\};B,1-\vert x\vert\bigr)
\bigl(
\fint_{\partial B}ud\sigma(y)
\bigr)^{2}
\leq\int_{B}\vert \nabla (v-u)\vert^{2}dx\text{.} 
\end{equation*}
\end{lemma}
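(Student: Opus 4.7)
The plan is to construct an explicit admissible competitor for the variational problem defining $\operatorname{Cap}(A;B,1-|x|)$, where $A:=B\cap\{u=0\}$, built directly out of $v-u$. Set $M:=\fint_{\partial B}u\,d\sigma$; we may assume $M>0$, otherwise the inequality is trivial. The natural candidate is $\phi:=(v-u)^{+}/(c_n M)$, for a dimensional constant $c_n>0$ to be pinned down momentarily. Since $v=u$ on $\partial B$ in the trace sense, $v-u\in H^{1}_{0}(B)$, hence $\phi\in H^{1}_{0}(B)$, and by the standard truncation $|\nabla\phi|^{2}\leq|\nabla(v-u)|^{2}/(c_n M)^{2}$ almost everywhere.

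The key ingredient is a pointwise lower bound of $v$ by the averaged boundary value times the distance to $\partial B$. Because $u\geq 0$ on $\partial B$, the Poisson representation
\begin{equation*}
v(x)=\int_{\partial B}\frac{1-|x|^{2}}{\omega_{n}|x-y|^{n}}u(y)\,d\sigma(y),
\end{equation*}
combined with the crude estimate $|x-y|\leq 2$, yields
\begin{equation*}
v(x)\geq\frac{1-|x|}{2^{n}\omega_{n}}\int_{\partial B}u\,d\sigma=\frac{1-|x|}{2^{n}}M
\end{equation*}
for every $x\in B$, where $\omega_n$ denotes the surface measure of $\partial B$. Choosing $c_n:=2^{-n}$, on $A$ we have $v-u=v\geq 0$, so $(v-u)^{+}=v\geq c_n M(1-|x|)$, i.e.\ $\phi\geq 1-|x|$ on $A$. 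Thus $\phi$ is admissible in the definition of $\operatorname{Cap}(A;B,1-|x|)$.

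Inserting $\phi$ into that definition gives
\begin{equation*}
\operatorname{Cap}(A;B,1-|x|)\leq\int_{B}|\nabla\phi|^{2}dx\leq\frac{1}{c_n^{2}M^{2}}\int_{B}|\nabla(v-u)|^{2}dx,
\end{equation*}
and rearranging yields the claim with $c=c_n^{2}=4^{-n}$. There is no serious obstacle: the only point worth noting is that $(v-u)^{+}\in H^{1}_{0}(B)$, which is routine since positive parts preserve $H^{1}_{0}$-membership; everything else reduces to the Poisson kernel bound and the trivial identity $v-u=v$ on $\{u=0\}$.
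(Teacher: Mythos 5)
Your proposal is correct and follows essentially the same route as the paper: both use the Poisson kernel to bound $v(x)$ from below by $c(1-|x|)\fint_{\partial B}u\,d\sigma$, observe that on $\{u=0\}$ one has $v-u=v$, and then normalize to produce an admissible competitor for the capacity. The only cosmetic differences are that you make the Poisson constant explicit ($2^{-n}$) and take the positive part $(v-u)^{+}$ rather than $v-u$ itself (the latter is already admissible since the constraint is one-sided on $A$), neither of which changes the substance.
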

\begin{proof}
By minimum principle we have that $v\geq 0$ in $B$. 
Let us denote $h=v-u$ then $h=0$ on $\partial B$ and $h=v$ on $B\cap\{u=0\}$. 
Using Poisson formula for unit ball 
there exists a dimensional constant $c_1>0$ such that for $x\in B$ 
\begin{equation*}
v(x)\geq c_{1}(1-\vert x\vert)
\fint_{\partial B}vd\sigma(y)\text{.} 
\end{equation*}
Thus for $x\in B\cap\{u=0\}$ we have 
\begin{equation*}
h(x)\geq c_{1}(1-\vert x\vert)
\fint_{\partial B}ud\sigma(y)
\end{equation*}
since $v=u$ on $\partial B$. 
Let us define 
\begin{equation*}
\tilde h(x)=\Bigl(
c_1\fint_{\partial B}ud\sigma(y)
\Bigr)^{-1}h(x)\text{.} 
\end{equation*}
Then we have $\tilde h=0$ on $\partial B$ and $\tilde h\geq 1-\vert x\vert$ 
on $B\cap\{u=0\}$. Therefore 
\begin{multline*}
\operatorname{Cap}\bigl(B\cap\{u=0\};B,1-\vert x\vert\bigr)
\leq 
\int_{B}\vert\nabla\tilde h\vert^{2}dx \\
=
\Bigl(c_1\fint_{\partial B}ud\sigma(y)\Bigr)^{-2}
\int_{B}\vert\nabla h\vert^{2}dx
\end{multline*}
and this proves the lemma. 
\end{proof}

\begin{lemma}\label{lemma1200} 
There exists $c>0$ such that 
for $\uu$ a (local) minimizer and $B_r(x)\subset\Omega$ (small balls) we have 
\begin{equation*}
\frac{c}{Q_{max}^2}\operatorname{Cap}\bigl(B\cap\{ \vert \uu_{x,r}\vert=0\};B,1-\vert x\vert\bigr)
\bigl\vert
\fint_{\partial B} \uu_{x,r} d\sigma(y)
\bigr\vert_{2}^{2} 
\leq 
\bigl\vert B\cap\{\vert \uu_{x,r}\vert=0\}\bigr\vert\text{.} 
\end{equation*}
\end{lemma}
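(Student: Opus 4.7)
The plan is to combine the energy comparison inequality \eqref{equ1400}, already obtained in the proof of Lemma \ref{lemma1050}, with a componentwise application of Lemma \ref{lemma1100}. Concretely, let $\vv$ be the harmonic replacement of $\uu$ in $B_r(x)$, i.e. $v_i$ is harmonic in $B_r(x)$ with $v_i = u_i$ on $\partial B_r(x)$ for each $i$, extended by $u_i$ outside. Testing (local) minimality of $\uu$ against $\vv$ gives, after the same rescaling and orthogonality calculation already performed to derive \eqref{equ1400},
\begin{equation*}
\sum_{i=1}^{m}\int_{B}\vert\nabla(u_{x,r,i}-v_{x,r,i})\vert^{2}dy
\leq Q_{max}^{2}\bigl\vert B\cap\{\vert \uu_{x,r}\vert=0\}\bigr\vert.
\end{equation*}

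Next I would apply Lemma \ref{lemma1100} separately to each nonnegative rescaled component $u_{x,r,i}$, whose harmonic replacement in $B$ is precisely $v_{x,r,i}$, obtaining
\begin{equation*}
c\,\operatorname{Cap}\bigl(B\cap\{u_{x,r,i}=0\};B,1-\vert x\vert\bigr)
\Bigl(\fint_{\partial B}u_{x,r,i}d\sigma\Bigr)^{2}
\leq \int_{B}\vert\nabla(v_{x,r,i}-u_{x,r,i})\vert^{2}dx
\end{equation*}
for each $i=1,\dots,m$. The key observation is that $\{\vert \uu_{x,r}\vert=0\}\subset\{u_{x,r,i}=0\}$ for every $i$, so by the monotonicity of the capacity in its set argument (a larger set has larger capacity, since the constraint $u\geq\psi\chi_{A}$ becomes more restrictive as $A$ grows) one has
\begin{equation*}
\operatorname{Cap}\bigl(B\cap\{\vert \uu_{x,r}\vert=0\};B,1-\vert x\vert\bigr)
\leq
\operatorname{Cap}\bigl(B\cap\{u_{x,r,i}=0\};B,1-\vert x\vert\bigr).
\end{equation*}
Replacing the capacity on the left-hand side of the componentwise inequality by this smaller quantity, summing over $i$ and using $\bigl\vert\fint_{\partial B}\uu_{x,r}d\sigma\bigr\vert_{2}^{2}=\sum_{i}\bigl(\fint_{\partial B}u_{x,r,i}d\sigma\bigr)^{2}$, then combining with the energy bound displayed above, yields the claimed inequality after absorbing $c$ and dividing by $Q_{max}^{2}$.

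The only genuine subtlety, rather than a deep obstacle, is admissibility of the harmonic replacement in the local-minimizer case: one needs $d(\uu,\vv)<\epsilon$ for the $\epsilon$ of Definition \ref{definition500}, which is why the statement restricts to "small balls". This is ensured for sufficiently small $r$ because $\Vert\uu-\vv\Vert_{H^{1}(B_{r}(x))}$ and $\vert\{\vert \uu\vert>0\}\triangle\{\vert \vv\vert>0\}\vert$ both vanish as $r\to 0$, the first by standard harmonic-replacement estimates and the second by absolute continuity of Lebesgue measure together with the continuity of $\uu$ from Lemma \ref{lemma1050}. Apart from this, the only point that could cause confusion is recognising the correct direction of the capacity monotonicity, so that passing from $\{u_{x,r,i}=0\}$ to the smaller set $\{\vert \uu_{x,r}\vert=0\}$ produces a \emph{valid lower bound} on the left side, rather than accidentally inverting the inequality.
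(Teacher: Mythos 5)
Your proposal is correct and follows essentially the same route as the paper's proof: harmonic replacement, the energy estimate \eqref{equ1400}, Lemma \ref{lemma1100} applied componentwise, the inclusion $\{\vert\uu\vert=0\}\subset\{u_i=0\}$ with monotonicity of capacity, and summation over $i$. The direction of the capacity monotonicity and the summation step are handled exactly as in the paper, so there is nothing to add.
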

\begin{proof}
For $B_r(x)\subset\Omega$ 
and $i=1,\cdots,m$, let $v_i$ be the harmonic function in $B_r(x)$ such that 
$v_i=u_i$ on $\partial B_r(x)$. Extending $\vv=(v_1,\cdots,v_m)$ by $\uu$ 
into $\Omega\backslash B_r(x)$ we have $J(\uu)\leq J(\vv)$ (in the case of local minimizer $r$ should be small enough). 
Proceeding as in the proof of Lemma \ref{lemma1050} we obtain \eqref{equ1400}. 

By Lemma \ref{lemma1100} and using $\{\vert \uu\vert=0\}\subset \{u_i=0\}$ 
for $i=1,\cdots,m$ we obtain 
\begin{multline*}
c
\operatorname{Cap}\bigl(B\cap\{ \vert \uu_{x,r}\vert=0\};B,1-\vert x\vert \bigr)
\bigl\vert 
\fint_{\partial B}\uu_{x,r} d\sigma(y)
\bigr\vert_{2}^{2} \\
\leq 
c
\sum_{i=1}^{m}
\operatorname{Cap}\bigl(B\cap\{ u_{x,r,i}=0\};B,1-\vert x\vert \bigr)
\bigl(
\fint_{\partial B} u_{x,r,i} d\sigma(y)
\bigr)^{2} \\
\leq
\int_{B}\vert\nabla(\uu_{x,r}-\vv_{x,r})\vert^2 dy
\leq 
Q_{max}^2
\bigl\vert B\cap\{\vert \uu_{x,r}\vert=0\}\bigr\vert
\end{multline*}
and this proves the lemma. 
\end{proof}

\begin{lemma}\label{lemma1300}
For $A\subset B$ a Borel set we have 
\begin{equation*}
\vert A\vert\leq\operatorname{Cap}\bigl(A;B,1-\vert x\vert\bigr)\text{.} 
\end{equation*}
\end{lemma}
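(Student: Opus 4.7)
The plan is to reduce to the easily-computable case $A = B$ and then use a pointwise lattice comparison with $v(x) := 1 - |x|$. The key facts about $v$ are that $v \in H^1_0(B)$ with $|\nabla v|^2 = 1$ almost everywhere, and that $v$ is itself the minimizer of $\operatorname{Cap}(B; B, 1 - |x|)$, so this capacity equals $|B|$. The first preparatory step is to prove this identity: for $u \in H^1_0(B)$ with $u \geq v$, the difference $u - v$ is nonnegative and in $H^1_0(B)$, and an integration by parts using $-\Delta v = (n-1)/|x|$ on $B \setminus \{0\}$ yields
\begin{equation*}
\int_B |\nabla u|^2\, dx - |B| = \int_B |\nabla(u-v)|^2\, dx + 2(n-1)\int_B \frac{u - v}{|x|}\, dx \geq 0.
\end{equation*}

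For a general Borel set $A \subset B$ and any admissible $u$ (that is, $u \in H^1_0(B)$ with $u \geq (1-|x|)\chi_A$), the main step is to replace $u$ by $\tilde u := \max(u, v)$. This function lies in $H^1_0(B)$ (both $u$ and $v$ have trace zero on $\partial B$) and satisfies $\tilde u \geq v$ pointwise on $B$, so by the preceding step $\int_B |\nabla \tilde u|^2\, dx \geq |B|$. Using the standard $H^1$ lattice identity
\begin{equation*}
|\nabla \tilde u|^2 = |\nabla u|^2\, \chi_{\{u \geq v\}} + |\nabla v|^2\, \chi_{\{u < v\}} \quad \text{a.e. in } B,
\end{equation*}
together with $|\nabla v|^2 = 1$ a.e., this lower bound rearranges to $\int_{\{u \geq v\}} |\nabla u|^2\, dx \geq |\{u \geq v\}|$. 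The obstacle constraint forces $A \subseteq \{u \geq v\}$, hence $|\{u \geq v\}| \geq |A|$, and therefore $\int_B |\nabla u|^2\, dx \geq |A|$. Taking the infimum over admissible $u$ completes the proof.

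The only places that require a little care are the fact that $\max(u, v) \in H^1_0(B)$ and the gradient decomposition above on the set $\{u = v\}$ (where $\nabla u = \nabla v$ a.e., so the decomposition holds a.e.); these are standard lattice properties of Sobolev spaces. The argument attains the sharp constant $1$ precisely because the pointwise equality $|\nabla v|^2 = 1$ a.e.\ allows the trivial bookkeeping $|\{u \geq v\}| + |\{u < v\}| = |B|$ to absorb the contribution from $\{u < v\}$.
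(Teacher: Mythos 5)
Your proof is correct, and it takes a genuinely different route from the paper's. The paper works with a minimizer $v$ of the capacity problem (implicitly assuming the infimum is attained, which follows from coercivity and weak lower semicontinuity on the convex constraint set), and then argues by contradiction that $v\leq 1-\vert x\vert$ a.e.: if not, truncation by $\min(v,1-\vert x\vert)$ together with $\Delta(1-\vert x\vert)\leq 0$ and the Poincar\'e inequality would produce a competitor with strictly smaller energy. From $v=1-\vert x\vert$ a.e.\ on $A$ one gets $\nabla v=\nabla(1-\vert x\vert)$ a.e.\ on $A$ and hence $\int_B\vert\nabla v\vert^2\geq\int_A\vert\nabla v\vert^2=\vert A\vert$. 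You instead work with an arbitrary admissible competitor $u$, replace it by $\max(u,1-\vert x\vert)$, and invoke the explicit value $\operatorname{Cap}(B;B,1-\vert x\vert)=\vert B\vert$ (which you establish by an integration by parts that is, at bottom, the same superharmonicity input $-\Delta(1-\vert x\vert)=(n-1)/\vert x\vert\geq 0$). The lattice gradient identity then shifts the ``bad'' set $\{u<1-\vert x\vert\}$, where $\vert\nabla(1-\vert x\vert)\vert^2=1$ exactly cancels its measure, onto the right-hand side. Both arguments rely on the same three facts (superharmonicity of $1-\vert x\vert$, its unit-norm gradient, and the $H^1$ lattice structure), but yours avoids the existence of a minimizer and the contradiction argument, replacing them with an explicit full-ball computation plus a one-line bookkeeping step; the paper's approach instead reveals that the capacitary minimizer actually touches the obstacle on all of $A$, which is slightly more structural information.
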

\begin{proof}
Let $v\in H^{1}_{0}(B)$, $v\geq (1-\vert x\vert)\chi_{A}$ a.e. in $B$ 
such that 
\begin{equation*}
\int_{B}\vert \nabla v\vert^{2} dy
=\operatorname{Cap}\bigl(A;B,1-\vert x\vert\bigr)\text{.} 
\end{equation*}

We claim that 
\begin{equation}\label{equ1410} 
v\leq 1-\vert x\vert\enskip\text{a.e. in}\enskip B\text{.}
\end{equation}

Let us denote $w=1-\vert x\vert$. 
Assume \eqref{equ1410} does not hold, then  
\begin{equation*}
\bigl\vert \bigl\{x\in B\bigm\vert v>w\bigr\}\bigr\vert>0\text{.} 
\end{equation*}
Let us define $u=(v-w)^{+}$, then we have $\vert \{u>0\}\vert>0$. 
Because $u\in H^{1}_{0}(B)$ using Poincar\'e inequality we obtain   
\begin{equation}\label{equ1420} 
0<\int_{B}u^{2} dx\leq C\int_{B}\vert \nabla u\vert^{2} dx\text{.} 
\end{equation}
Because $v$ is the minimizer in the definition of capacity, we have 
\begin{multline*}
\int_{B}\vert \nabla v\vert^{2}dx
\leq 
\int_{B}\vert \nabla \min(v,w)\vert^{2}dx \\
=
\int_{B}\chi_{\{v< w\}}\vert \nabla v\vert^{2}dx
+
\int_{B}\chi_{\{v> w\}}\vert \nabla w\vert^{2}dx
\end{multline*}
and thus 
\begin{equation}\label{equ1430} 
\int_{B}\chi_{\{v>w\}}\bigl(\vert \nabla w\vert^{2}-\vert \nabla v\vert^2\bigr)dx
\geq 0\text{.} 
\end{equation}

Next, invoking \eqref{equ1430}, and the fact that $\Delta w\leq 0$ 
in $H^{-1}(B)$, we obtain  
\begin{multline*}
\int_{B}\vert\nabla u\vert^{2}dx
=
\int_{B}\chi_{\{v>w\}}\vert\nabla (v-w)\vert^{2}dx \\
=
\int_{B}\chi_{\{v>w\}}
\bigl(
\vert\nabla v\vert^{2}
-
2\nabla v\cdot\nabla w
+
\vert\nabla w\vert^{2}
\bigr)
dx \\
=
\int_{B}\chi_{\{v>w\}}
\bigl(
\vert\nabla v\vert^{2}
-
2\nabla (v-w)\cdot\nabla w
-
\vert\nabla w\vert^{2}
\bigr)
dx \\
=
\int_{B}\chi_{\{v>w\}}
\bigl(
\vert\nabla v\vert^{2}
-
\vert\nabla w\vert^{2}
\bigr)
dx 
-2
\int_{B}
\nabla u\cdot\nabla w
dx 
\leq 
0
\end{multline*}
which is in contradiction with \eqref{equ1420}. 
This contradiction proves  claim \eqref{equ1410}. Now by \eqref{equ1410}, and that   a.e. in $A$ we have $v\geq 1-\vert x\vert$, 
we obtain $v=1-\vert x\vert$ a.e. in $A$, thus 
\begin{equation*}
\int_{B}\vert \nabla v\vert^{2} dy
\geq 
\int_{A}\vert \nabla v\vert^{2} dy
=
\int_{A}\vert \nabla (1-\vert x\vert)\vert^{2} dy
=\vert A\vert .
\end{equation*}
This proves the lemma. 
\end{proof}

\begin{lemma}\label{lemma1400} 
Let $\uu$ be a (local) minimizer. If 
$B_r(x)\cap\{\vert \uu\vert=0\}\not=\emptyset$ 
then $\vert B_r(x)\cap\{\vert \uu\vert=0\}\vert>0$. 
\end{lemma}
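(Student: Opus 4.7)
\textbf{Proof proposal for Lemma \ref{lemma1400}.} My plan is to argue by contradiction, showing that the vanishing of the Lebesgue measure of the coincidence set inside $B_r(x)$ forces $\uu$ to be harmonic there, which combined with nonnegativity and the existence of an interior zero will force $\uu\equiv 0$ on $B_r(x)$. This will contradict $|B_r(x)\cap\{|\uu|=0\}|=0$.

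Assume, for contradiction, that $|B_r(x)\cap\{|\uu|=0\}|=0$ while there exists $y_0\in B_r(x)\cap\{|\uu|=0\}$ (so in particular $u_i(y_0)=0$ for every $i=1,\ldots,m$, where we use the upper-semicontinuous representative fixed after Lemma \ref{lemma1000}). Fix an arbitrary $z\in B_r(x)$ and $\rho>0$ with $B_\rho(z)\subset\subset B_r(x)$; in the local minimizer case I additionally require $\rho$ small enough that the harmonic replacement below is a competitor in the sense of Definition \ref{definition500}. For each $i$, let $v_i$ be the harmonic function on $B_\rho(z)$ with boundary data $u_i|_{\partial B_\rho(z)}$, and set $\vv=(v_1,\ldots,v_m)$, extended by $\uu$ outside $B_\rho(z)$. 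Then $\vv\in\mathcal{A}$ and $J(\uu)\leq J(\vv)$, so repeating the computation \eqref{equ1200}--\eqref{equ1400} from the proof of Lemma \ref{lemma1050} yields
\begin{equation*}
\int_{B_\rho(z)}|\nabla(\uu-\vv)|^2\,dy\leq Q_{max}^2\,|B_\rho(z)\cap\{|\uu|=0\}|=0,
\end{equation*}
since $B_\rho(z)\subset B_r(x)$ and the set inside has measure zero. Consequently $\uu=\vv$ a.e.\ in $B_\rho(z)$, so each $u_i$ is harmonic in $B_\rho(z)$.

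Since $z\in B_r(x)$ and small $\rho$ were arbitrary and harmonicity is a local property, each $u_i$ is harmonic in all of $B_r(x)$. Now $u_i\geq 0$ in $B_r(x)$ and $u_i(y_0)=0$ at the interior point $y_0$, so the strong minimum principle for harmonic functions forces $u_i\equiv 0$ in the connected set $B_r(x)$. Doing this for every component gives $|\uu|\equiv 0$ on $B_r(x)$, hence $|B_r(x)\cap\{|\uu|=0\}|=|B_r(x)|>0$, contradicting our assumption and proving the lemma.

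The only delicate point I see is bookkeeping for the local minimizer case: the comparison with the harmonic replacement requires $d(\uu,\vv)<\epsilon$, which I ensure by taking $\rho$ small enough (both $\|\uu-\vv\|_{H^1(B_\rho(z))}$ and $\|\chi_{\{|\uu|>0\}}-\chi_{\{|\vv|>0\}}\|_{L^1(B_\rho(z))}$ tend to $0$ as $\rho\to 0$). This does not obstruct the conclusion, because harmonicity inferred from a cover of $B_r(x)$ by such arbitrarily small subballs still globalises to harmonicity on all of $B_r(x)$. No use of Lemma \ref{lemma1100}, \ref{lemma1200} or \ref{lemma1300} is needed for this particular statement; the energy inequality alone, together with the strong minimum principle, does the job.
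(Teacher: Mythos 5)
Your proof is correct and takes essentially the same route as the paper's: assuming $|B_r(x)\cap\{|\uu|=0\}|=0$, harmonic replacement on small subballs shows each $u_i$ is harmonic in $B_r(x)$, and the strong minimum principle for nonnegative harmonic functions then yields a contradiction. The only cosmetic difference is in how the contradiction is closed: you use the interior zero $y_0$ to force $u_i\equiv 0$ for every $i$ (contradicting the measure-zero hypothesis), whereas the paper uses the dichotomy from the minimum principle to produce a component $u_{i_0}>0$ throughout $B_r(x)$ (contradicting nonemptiness of the coincidence set); these are plainly equivalent.
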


\begin{proof}
Assume by contradiction that $\vert B_r(x)\cap\{\vert \uu\vert=0\}\vert=0$. 
Then $\vert \uu\vert>0$ a.e. in $B_r(x)\cap\Omega$. 

Let $y\in B_{r}(x)\cap\Omega$ and $\eta>0$ small enough 
such that $B_{\eta}(y)\subset B_r(x)\cap\Omega$. 
Let $\vv$ be harmonic in $B_{\eta}(y)$ and $\vv=\uu$ on $\partial B_{\eta}(y)$. 
Extend $\vv$ by $\uu$ in $\Omega\backslash B_{\eta}(y)$. 
Choosing $\eta$ small enough, $\vv$ might be made arbitrarily close to 
$\uu$ in metric $d$, see \eqref{equ400}. 
Thus for small enough $\eta$ we have that $J(\uu)\leq J(\vv)$. 
We have $\chi_{\{\vert \uu\vert>0\}}=1$ a.e. in $B_{\eta}(y)$. 

It follows that 
\begin{equation*}
\int_{B_{\eta}(y)}\vert\nabla \uu\vert^{2}dx\leq\int_{B_{\eta}(y)}\vert\nabla \vv\vert^{2}dx ,
\end{equation*}
and thus  $\uu$ is a minimizer of Dirichlet energy in $B_{\eta}(y)$, with its own 
trace as boundary condition, in $B_{\eta}(y)$. 
Hence $\uu$ is harmonic in a neighborhood of any $y\in B_{r}(x)\cap\Omega$. 
It follows that $\uu$ is harmonic in $B_r(x)\cap\Omega$. 
From strong minimum principle it follows that for each component $u_i$, either 
$u_i=0$ in $B_{r}(x)\cap\Omega$ or $u_i>0$ in $B_{r}(x)\cap\Omega$. 
Now because $\vert \uu\vert>0$ a.e. in $B_{r}(x)\cap\Omega$ we obtain that 
there exists $i_0$ such that $u_{i_0}>0$ in $B_{r}(x)\cap\Omega$. 
But this contradicts with $B_r(x)\cap\{\vert \uu\vert=0\}\not=\emptyset$. 
\end{proof}

\begin{proof}[Proof of Theorem \ref{theorem1100}]
Let $\epsilon>0$ be small enough such that $B_{r+\epsilon}(x)\subset\subset\Omega$. 
By Lemma \ref{lemma1200} and \ref{lemma1300} we have 
\begin{equation*}
c\bigl\vert B\cap\{ \vert \uu_{x,r+\epsilon}\vert=0\}\bigr\vert 
\bigl\vert
\fint_{\partial B} \uu_{x,r+\epsilon} d\sigma(y)
\bigr\vert_{2}^{2} 
\leq 
Q_{max}^{2}
\bigl\vert B\cap\{\vert \uu_{x,r+\epsilon}\vert=0\}\bigr\vert , 
\end{equation*}
and by Lemma \ref{lemma1400} 
$$\vert B\cap\{\vert \uu_{x,r+\epsilon}\vert=0\}\vert=(r+\epsilon)^{-n}\vert B_{r+\epsilon}(x)\cap\{\vert \uu\vert=0\}\vert>0.$$ 
Therefore  
\begin{equation*}
c
\bigl\vert
\fint_{\partial B} \uu_{x,r+\epsilon} d\sigma(y)
\bigr\vert_{2}^{2} 
\leq 
Q_{max}^2
\end{equation*}
which by letting $\epsilon\to0$  proves the theorem. 
\end{proof} 

\begin{corollary}\label{corollary1000} 
There exists $C>0$ such that for $\uu$ a (local) minimizer and 
$B_{r}(x)\subset\Omega$ (small enough) such that $\uu(x)=0$ we have 
\begin{equation*} 
[\uu]_{C^{0,1}(B_{\frac{r}{3}}(x))}\leq CQ_{max}\text{.}
\end{equation*}
\end{corollary}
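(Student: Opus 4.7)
The plan is to upgrade the averaged linear growth of Theorem \ref{theorem1100} to a pointwise Lipschitz bound by the standard reduction to interior harmonic estimates. Two ingredients suffice. First, since each $u_i\ge 0$ is subharmonic (Lemma \ref{lemma1000}), the sub-mean value property over balls combined with the monotonicity of spherical averages yields
\[
\sup_{B_{s/2}(z)} u_i \;\leq\; 2^{n}\fint_{\partial B_s(z)} u_i\,d\sigma
\]
whenever $B_s(z)\subset\Omega$; Theorem \ref{theorem1100} then provides $\sup_{B_{s/2}(z)} u_i \leq CQ_{max}s$ as soon as $\overline{B_s(z)}$ meets $\{\vert\uu\vert=0\}$. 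Second, each $u_i$ is in fact harmonic in the open set $\{\vert\uu\vert>0\}$: for any nonnegative $\varphi\in C^1_c(\{\vert\uu\vert>0\})$, the competitor obtained from $\uu$ by replacing $u_i$ with $u_i+\epsilon\varphi$ preserves both the sign constraints and the volume term $\chi_{\{\vert\uu\vert>0\}}$ (since $\vert\uu+\epsilon\varphi\mathrm{\textbf{e}}_i\vert\geq\vert\uu\vert$ on the support of $\varphi$), so minimality gives $\int\nabla u_i\cdot\nabla\varphi\geq 0$; combined with Lemma \ref{lemma1000} this forces $\Delta u_i=0$ in $\{\vert\uu\vert>0\}$.

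Fix $y\in B_{r/3}(x)$, set $d(y):=\dist(y,\{\vert\uu\vert=0\})\leq\vert y-x\vert<r/3$, and choose $y'\in\{\vert\uu\vert=0\}$ with $\vert y-y'\vert=d(y)$. If $d(y)\leq r/12$, then $B_{4d(y)}(y')\subset B_r(x)\subset\Omega$, and the first ingredient applied at $z=y'$ with $s=4d(y)$ gives
\[
\sup_{B_{2d(y)}(y')} u_i \;\leq\; CQ_{max}\,d(y),
\]
hence $\sup_{B_{d(y)}(y)} u_i\leq CQ_{max}d(y)$ because $B_{d(y)}(y)\subset B_{2d(y)}(y')$. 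Since $u_i$ is harmonic in the ball $B_{d(y)}(y)\subset\{\vert\uu\vert>0\}$, the interior gradient estimate then delivers $\vert\nabla u_i(y)\vert\leq (C/d(y))\cdot CQ_{max}d(y)=C'Q_{max}$. In the complementary regime $d(y)>r/12$, apply the first ingredient once at $z=x$ (which is permitted because $\uu(x)=0$) with $s$ slightly below $r$ to get $u_i\leq CQ_{max}r$ on $B_{r/2}(x)\supset B_{r/12}(y)\subset\{\vert\uu\vert>0\}$; the gradient estimate on $B_{r/12}(y)$ again yields $\vert\nabla u_i(y)\vert\leq C'Q_{max}$.

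The pointwise bound $\vert\nabla u_i\vert\leq C'Q_{max}$ thus holds throughout $\{\vert\uu\vert>0\}\cap B_{r/3}(x)$, and it holds trivially on the interior of $\{\vert\uu\vert=0\}$ where $\uu$ vanishes. Continuity of $u_i$ (Lemma \ref{lemma1050}) together with integration along segments (the free boundary is a null set for almost every line) upgrades this to the Lipschitz seminorm bound $[\uu]_{C^{0,1}(B_{r/3}(x))}\leq CQ_{max}$. The main obstacle in the argument is merely the bookkeeping of radii across the two regimes of $d(y)$, making sure that every ball used (the one where Theorem \ref{theorem1100} is applied and the one supporting the harmonic gradient estimate) sits inside $\Omega$ and the balls nest correctly; no essentially new idea beyond Theorem \ref{theorem1100}, subharmonicity, and the harmonicity observation is needed, which is why the result is phrased as a corollary.
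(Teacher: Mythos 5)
Your blueprint is the same as the paper's: use Theorem~\ref{theorem1100} for the linear growth near the zero set, the harmonicity of $u_i$ in $\{\vert\uu\vert>0\}$, and the interior gradient estimate for harmonic functions to get a pointwise bound on $\nabla u_i$ away from the zero set. The organizational difference is that the paper fixes two points $x_1,x_2$ and does casework on whether the segment $[x_1,x_2]$ meets $\{\vert\uu\vert=0\}$ (bounding the gradient via the Poisson kernel in the first case, bounding $\vert\uu(x_k)\vert$ directly via the sub--mean value property plus Theorem~\ref{theorem1100} in the second), while you first establish $\vert\nabla u_i(y)\vert\leq CQ_{\max}$ for every $y\in B_{r/3}(x)\cap\{\vert\uu\vert>0\}$ (splitting on the size of $\dist(y,\{\vert\uu\vert=0\})$) and then integrate. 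Your explicit remark that $u_i$ is harmonic in $\{\vert\uu\vert>0\}$ via the inner-variation competitor $u_i+\epsilon\varphi$ is correct and fills in a fact the paper uses implicitly.

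There is, however, one real (though fixable) gap in your last step. You justify ``integration along segments'' by the parenthetical ``the free boundary is a null set for almost every line.'' By Fubini, that claim is equivalent to $\mathcal{L}^n\bigl(\partial\{\vert\uu\vert>0\}\bigr)=0$, which at this stage of the paper is \emph{not} available: it follows only from Lemma~\ref{lemma1700}--Lemma~\ref{lemma1800}, which rely on the nondegeneracy Theorem~\ref{theorem1200}, proved \emph{after} Corollary~\ref{corollary1000}. Invoking it here would be circular. The conclusion survives with either of two standard repairs. Either argue as the paper does: if $[x_1,x_2]$ meets $\{\vert\uu\vert=0\}$ at some $z$, then both $x_1$ and $x_2$ lie within $\vert x_2-x_1\vert$ of $z$, so $\vert\uu(x_k)\vert\leq CQ_{\max}\vert x_2-x_1\vert$ by subharmonicity and Theorem~\ref{theorem1100}, and one never needs to integrate across the free boundary. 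Or observe that $u_i\in H^1$ and $\nabla u_i=0$ a.e.\ on the level set $\{u_i=0\}\supset\{\vert\uu\vert=0\}$ (a general Sobolev fact), so combined with your pointwise bound on $\{\vert\uu\vert>0\}$ you get $\vert\nabla u_i\vert\leq CQ_{\max}$ a.e.\ in $B_{r/3}(x)$, and Lipschitz continuity on the convex ball then follows from Morrey's inequality without any statement about the null-ness of the free boundary on lines.
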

\begin{proof}
We should show that 
\begin{equation*}
\vert \uu(x_2)-\uu(x_1)\vert\leq C Q_{max} \vert x_2-x_1\vert\enskip\text{for}\enskip x_1,x_2\in B_{\frac{r}{3}}(x)\text{.} 
\end{equation*}

Let us denote by $[x_1,x_2]$ the line segment connecting $x_1$ and $x_2$. 
We consider two cases depending on whether $[x_1,x_2]\cap\{\vert \uu\vert=0\}$ is empty or not. 

If $[x_1,x_2]\cap\{\vert \uu\vert=0\}=\emptyset$ 
then for all $z\in [x_1,x_2]$ we have $\eta=d(z,\{\vert \uu\vert=0\})>0$. 
Because $x\in\{\vert \uu\vert=0\}$ we have $\eta<\frac{r}{3}$. 
We compute 
\begin{equation*}
\overline{B_{\eta}(z)}
\subset\overline{B_{\eta+\frac{r}{3}}(x)}
\subset B_{\frac{2}{3}r}(x)
\subset\Omega\text{.}
\end{equation*}
We have that $\uu$ is harmonic in $B_{\eta}(z)$. 
Because $\overline{B_{\eta}(z)}\cap\{\vert \uu\vert=0\}\not=\emptyset$ by 
Theorem \ref{theorem1100} and Poisson representation formula we have for $i=1,\cdots,m$ 
\begin{equation*}
\vert\nabla u_i(z)\vert
\leq 
\frac{C_{1}}{\eta}\fint_{\partial B_{\eta}(z)}u_id\sigma
\leq 
C_{1}C Q_{max} \text{.} 
\end{equation*}
Because this holds for all $z\in[x_1,x_2]$ 
by mean value theorem this proves the claim in this case. 

If $[x_1,x_2]\cap \{\vert \uu\vert=0\}\not=\emptyset$ then 
there exists $z\in [x_1,x_2]$ such that $\uu(z)=0$. 
For $k=1,2$, $z\in\overline{B_{\vert x_2-x_1\vert}(x_{k})}\cap\{\vert \uu\vert=0\}$. 
We have also 
\begin{equation*}
\overline{B_{\vert x_2-x_1\vert}(x_{k})}
\subset 
\overline{B_{\vert x_2-x_1\vert+\vert x_{k}-x\vert}(x)}
\subset 
B_{\frac{2}{3}r+\frac{r}{3}}(x)
=
B_{r}(x)\subset\Omega\text{.} 
\end{equation*}
Because for $i\in\{1,\cdots,m\}$, $u_i$ is subharmonic using 
Theorem \ref{theorem1100} we obtain 
\begin{equation}\label{equ1500} 
u_{i}(x_k)
\leq 
\fint_{\partial B_{\vert x_{2}-x_{1}\vert}(x_k)} u_i d\sigma
\leq 
C Q_{max} \vert x_{2}-x_{1}\vert
\text{.} 
\end{equation}

Using \eqref{equ1500} we have  
\begin{equation*}
\vert \uu(x_2)-\uu(x_1)\vert 
\leq 
\vert \uu(x_2)\vert+\vert \uu(x_1)\vert \\
\leq 
C_{1} Q_{max} \vert x_{2}-x_{1}\vert
\end{equation*}
which  completes the proof of the corollary. 
\end{proof}

\section{Preliminary Local Analysis\\
(Proof of Theorem \ref{theorem1200})}\label{section1600} 

\subsection{Nondegeneracy}\label{subsection1000} 

Let us define for $\eta>0$ 
\begin{equation*}
\phi(\eta)
=
\left\{
\begin{aligned}
& -\frac{1}{n-2}\frac{1}{\eta^{n-2}}\enskip\text{for}\enskip n\geq 3\text{,} \\
& \ln(\eta)\enskip\text{for}\enskip n=2\text{.} 
\end{aligned}
\right.
\end{equation*}
Then $\phi(\vert x\vert)$ as a function of $x$ is 
radially symmetric, radially increasing and harmonic 
function in $\mathbb{R}^{n}\backslash\{0\}$ (constant multiple 
of the fundamental solution). 

For $0<\rho<1$  we define,  
\begin{equation*}
\psi_{\rho}(x)=\frac{\bigl(\phi(\vert x\vert)-\phi(\rho)\bigr)^{+}}{\phi(1)-\phi(\rho)} ,  
\end{equation*}
which is  radially symmetric, radially nondecreasing, varnishes in $B_{\rho}$, is
harmonic in $\mathbb{R}^{n}\backslash\overline B_{\rho}$, and equals to $1$ on $\partial B$. This will be used in the text below.

\begin{proof}[Proof of Theorem \ref{theorem1200}] 
For $0<\rho<1$ we define 
\begin{equation}\label{equ1600} 
v_i(y)=\min\bigl(u_i(y),rM_{x,r}\psi_{\rho}(\frac{y-x}{r})
\bigr)\enskip\text{for}
\enskip i=1,\cdots,m\enskip\text{and}\enskip y\in B_r(x)\text{,} 
\end{equation}
where $M_{x,r}=\frac{1}{r}\sup_{B_{r}(x)}\vert \uu\vert$ 
and we extend $\vv=(v_1,\cdots,v_m)$ by $\uu$ 
in $\Omega\backslash B_r(x)$. 

In the case $\uu$ is a local minimizer 
we should also have that $\vv$ is close enough 
in the metric $d$, see \eqref{equ400}, to $\uu$.
In the case $n\geq 3$ by choosing $r$ small enough 
we might achieve this. In the case $n=2$ by choosing 
both $r$ and $\rho$ small enough we achieve this. 
Thus we have $J(\uu)\leq J(\vv)$, therefore 
\begin{equation*}
\int_{B}\bigl(
\vert\nabla \uu_{x,r}\vert^{2}
+
Q_{x,r}^2
\chi_{\{\vert \uu_{x,r}\vert>0\}}
\bigr)dy
\leq 
\int_{B}\bigl(
\vert\nabla \vv_{x,r}\vert^{2}
+
Q_{x,r}^2
\chi_{\{\vert \vv_{x,r}\vert>0\}}
\bigr)dy\text{.}
\end{equation*}

Since $\vv_{x,r}=0$ in $B_{\rho}$ 
and $\{\vert \vv_{x,r}\vert>0\}=\{\vert \uu_{x,r}\vert>0\}$ in $B\backslash B_{\rho}$ 
we have 
\begin{multline}\label{equ1700} 
\int_{B_{\rho}}\bigl(
\vert\nabla \uu_{x,r}\vert^{2}
+
Q_{x,r}^2
\chi_{\{\vert \uu_{x,r}\vert>0\}}
\bigr)dy \\
=
\int_{B}\bigl(
\vert\nabla \uu_{x,r}\vert^{2}
+
Q_{x,r}^2
\chi_{\{\vert \uu_{x,r}\vert>0\}}
\bigr)dy \\
-
\int_{B\backslash B_{\rho}}\bigl(
\vert\nabla \uu_{x,r}\vert^{2}
+
Q_{x,r}^2
\chi_{\{\vert \uu_{x,r}\vert>0\}}
\bigr)dy \\
\leq 
\int_{B}\bigl(
\vert\nabla \vv_{x,r}\vert^{2}
+
Q_{x,r}^2
\chi_{\{\vert \vv_{x,r}\vert>0\}}
\bigr)dy \\
-
\int_{B\backslash B_{\rho}}\bigl(
\vert\nabla \uu_{x,r}\vert^{2}
+
Q_{x,r}^2
\chi_{\{\vert \uu_{x,r}\vert>0\}}
\bigr)dy \\
=
\int_{B\backslash B_{\rho}}\bigl(
\vert\nabla \vv_{x,r}\vert^{2}
+
Q_{x,r}^2
\chi_{\{\vert \vv_{x,r}\vert>0\}}
\bigr)dy \\
-
\int_{B\backslash B_{\rho}}\bigl(
\vert\nabla \uu_{x,r}\vert^{2}
+
Q_{x,r}^2
\chi_{\{\vert \uu_{x,r}\vert>0\}}
\bigr)dy \\
= 
\int_{B\backslash B_{\rho}}
\bigl(
\vert\nabla \vv_{x,r}\vert^{2}
-\vert\nabla \uu_{x,r}\vert^{2}
\bigr)
dy \\
=
\sum_{i=1}^{m}
\int_{B\backslash B_{\rho}}
\bigl(
\vert\nabla v_{x,r,i}\vert^{2}
-\vert\nabla u_{x,r,i}\vert^{2}
\bigr)
dy\text{,} 
\end{multline}
and for each $i=1,\cdots,m$  
\begin{multline}\label{equ1800} 
\int_{B\backslash B_{\rho}}
\bigl(\vert\nabla v_{x,r,i}\vert^{2}
-\vert\nabla u_{x,r,i}\vert^{2}
\bigr)dy \\
=
\int_{B\backslash B_{\rho}}
\Bigl(
-2\nabla (M_{x,r}\psi_{\rho})
\cdot \nabla (u_{x,r,i}-M_{x,r}\psi_{\rho})^{+} 
-\vert \nabla (u_{x,r,i}-M_{x,r}\psi_{\rho})^{+}\vert^{2}
\Bigr)dy \\
\leq 
-2
\int_{B\backslash B_{\rho}}
\nabla (M_{x,r}\psi_{\rho})
\cdot \nabla (u_{x,r,i}-M_{x,r}\psi_{\rho})^{+} 
dy \\
=
2M_{x,r}
\int_{\partial B_{\rho}}
u_{x,r,i} 
\partial_{\nu}\psi_{\rho}
d\sigma(y) 
=
C_{\rho}M_{x,r}
\int_{\partial B_{\rho}}
u_{x,r,i} 
d\sigma(y)\text{.} 
\end{multline}

Putting \eqref{equ1700} and \eqref{equ1800} together we obtain 
\begin{multline}\label{equ1900} 
\int_{B_{\rho}}\bigl(
\vert\nabla \uu_{x,r}\vert^{2}+Q_{x,r}^2\chi_{\{\vert \uu_{x,r}\vert>0\}}
\bigr)dy
\leq 
C_{\rho} M_{x,r}
\int_{\partial B_{\rho}}
\sum_{i=1}^{m}u_{x,r,i}
d\sigma(y) \\
\leq 
C_{1} C_{\rho}  M_{x,r}
\int_{\partial B_{\rho}}\vert \uu_{x,r}\vert d\sigma(y)\text{.} 
\end{multline}

It is easy to check that for each $0<\rho<1$ there exists $\tilde C_{\rho}>0$ such that 
for all $w\in W^{1,1}(B_{\rho})$ we have 
\begin{equation}\label{equ1910} 
\int_{\partial B_{\rho}}\vert w\vert d\sigma(y)
\leq 
\tilde C_{\rho}
\int_{B_{\rho}}\bigl(
\vert w\vert+\vert\nabla w\vert
\bigr)dy\text{.} 
\end{equation}

Using \eqref{equ1910} we estimate 
\begin{multline}\label{equ2000} 
\int_{\partial B_{\rho}} \vert \uu_{x,r}(y)\vert d\sigma(y) 
\leq 
\tilde C_{\rho}\int_{B_{\rho}} \bigl(\vert \uu_{x,r}\vert+\vert\nabla \vert \uu_{x,r}\vert \vert\bigr)dy \\
= 
\tilde C_{\rho}
\int_{B_{\rho}} \bigl(\vert \uu_{x,r}\vert \chi_{\{\vert \uu_{x,r}\vert>0\}} 
+\vert\nabla \vert \uu_{x,r}\vert \vert \chi_{\{\vert \uu_{x,r}\vert>0\}} \bigr)dy \\
\leq 
\tilde C_{\rho}
\int_{B_{\rho}} \bigl( M_{x,r} \chi_{\{\vert \uu_{x,r}\vert>0\}} +\vert\nabla \uu_{x,r} \vert 
\chi_{\{\vert \uu_{x,r}\vert>0\}} \bigr) dy \\
\leq 
\tilde C_{\rho}
\int_{B_{\rho}} \bigl( M_{x,r} \chi_{\{\vert \uu_{x,r}\vert>0\}} 
+
\frac{1}{2}
Q_{min}
\chi_{\{\vert \uu_{x,r}\vert>0\}} 
+
\frac{1}{2Q_{min}}
\vert\nabla \uu_{x,r} \vert^{2} 
\bigr) dy \\
= 
\tilde C_{\rho}
\Bigl(
M_{x,r}
\int_{B_{\rho}}\chi_{\{\vert \uu_{x,r}\vert>0\}} dy
+
\frac{1}{2Q_{min}}
\int_{B_{\rho}} 
\bigl( 
Q_{min}^2
\chi_{\{\vert \uu_{x,r}\vert>0\}} 
+
\vert\nabla \uu_{x,r} \vert^{2} 
\bigr) dy
\Bigr) \\
\leq  
\tilde C_{\rho}
\Bigl(
\frac{M_{x,r}}{Q_{min}^2}
\int_{B_{\rho}}Q^{2}_{x,r}\chi_{\{\vert \uu_{x,r}\vert>0\}} dy \\
+
\frac{1}{2Q_{min}}
\int_{B_{\rho}} 
\bigl( 
Q_{x,r}^2
\chi_{\{\vert \uu_{x,r}\vert>0\}} 
+
\vert\nabla \uu_{x,r} \vert^{2} 
\bigr) dy
\Bigr) \\
%=2C_{\rho}
%\bigl(\frac{M_{x,r}}{Q_{min}^2}+\frac{1}{2Q_{min}}\bigr)
%\int_{B_{\rho}} 
%\bigl( Q_{x,r}^2
%\chi_{\{\vert \uu_{x,r}\vert>0\}} 
%+\vert\nabla \uu_{x,r} \vert^{2} 
%\bigr) dy\\
\leq 
\tilde C_{\rho}
\frac{1}{Q_{min}}
\bigl( 
\frac{M_{x,r}}{Q_{min}}+\frac{1}{2}
\bigr)
\int_{B_{\rho}} 
\bigl( 
Q_{x,r}^2\chi_{\{\vert \uu_{x,r}\vert>0\}} 
+\vert\nabla \uu_{x,r} \vert^{2} 
\bigr) dy
\text{.} 
\end{multline} 

Because $\overline{B_{\rho r}(x)}\cap\{\vert \uu\vert>0\}\not=\emptyset$ we have 
\begin{equation}\label{equ2100} 
\int_{\partial B_{\rho}} \vert \uu_{x,r}\vert d\sigma(y)>0\text{.} 
\end{equation}

From \eqref{equ1900}, \eqref{equ2000} and \eqref{equ2100} we obtain that 
\begin{equation*} 
1
\leq 
C_1 C_{\rho} \tilde C_{\rho}
\frac{M_{x,r}}{Q_{min}}
\bigl(
\frac{M_{x,r}}{Q_{min}}
+
\frac{1}{2}
\bigr) ,
\end{equation*}
 which in turn implies  
$M_{x,r}\geq\min(1,\frac{2}{3}\frac{1}{C_{1} C_{\rho} \tilde C_{\rho}}) Q_{min}$. This   completes
  the proof.  
\end{proof}

\begin{remark}\label{remark1000} 
As noted in the proof of Theorem \ref{theorem1200}, 
in the case of a local minimizer $\uu$ and $n=2$, to have 
$\vv$ close enough in metric $d$, see \eqref{equ400}, to $\uu$ 
we should choose 
both $r$ and $\rho$ small enough. 

To see that this is necessary 
(with $\vv$ as in \eqref{equ1600}) 
one might consider $u=1$, in the scalar case. 
Then we have 
\begin{equation*}
\int_{B_r(x)}\vert \nabla (u-v)\vert^{2}dy=\int_{B}\vert \nabla \psi_{\rho}\vert^2 dz
\end{equation*}
where the right hand side is independent of $r$, but converges to $0$ as $\rho\to 0$.

\end{remark}

\subsection{Density of $\{\vert\uu\vert=0\}$ and $\{\vert\uu\vert>0\}$}
\label{subsection1100} 
\begin{lemma}\label{lemma1500} 
%There exists $0<c<1$ such that 
For any (local) 
minimizer $\uu$, (small ball) $B_{r}(x)\subset\Omega$ and 
$x\in\overline{\{\vert\uu\vert>0\}}$ there exists $x_0\in\partial B_{\frac{r}{2}}(x)$ such that 
$B_{\kappa r}(x_0)\subset\{\vert u\vert>0\}$ where 
$\kappa=\frac{c}{2}\frac{Q_{min}}{Q_{max}}$ and $0<c<1$ is universal 
except in the case when $u$ is a local minimizer and $n=2$, in which case $c$ depends on $u$. 
\end{lemma}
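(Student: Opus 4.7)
The plan is to combine the linear nondegeneracy of Theorem \ref{theorem1200}, the subharmonicity of each component from Lemma \ref{lemma1000}, and the Lipschitz regularity of Corollary \ref{corollary1000}; the role of subharmonicity is to push the large value guaranteed by nondegeneracy onto the sphere $\partial B_{r/2}(x)$, which is exactly where the lemma asks $x_0$ to sit.

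First I apply Theorem \ref{theorem1200} on the ball $B_{r/2}(x)$: since $x\in\overline{\{\vert\uu\vert>0\}}$, the hypothesis $\overline{B_{\rho r/2}(x)}\cap\{\vert\uu\vert>0\}\neq\emptyset$ holds for every $\rho\in(0,1)$, and fixing a suitable $\rho$ (taken small in the local-minimizer, $n=2$ case) yields a constant $c_1>0$ with $\sup_{B_{r/2}(x)}\vert\uu\vert\geq c_1 Q_{min} r/2$. Setting $v=\sum_{i=1}^{m} u_i$, Lemma \ref{lemma1000} makes $v$ nonnegative and subharmonic, and the elementary inequality $\vert\uu\vert\leq v\leq\sqrt m\,\vert\uu\vert$ gives $\sup_{\overline{B_{r/2}(x)}}v\geq c_1 Q_{min} r/2$. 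The maximum principle for subharmonic functions forces this supremum to be attained on $\partial B_{r/2}(x)$, producing some $x_0\in\partial B_{r/2}(x)$ with
\begin{equation*}
\vert\uu(x_0)\vert\geq v(x_0)/\sqrt m\geq c_1 Q_{min} r/(2\sqrt m).
\end{equation*}

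Next, if $B_r(x)\subset\{\vert\uu\vert>0\}$ the conclusion is immediate for any $x_0\in\partial B_{r/2}(x)$ (take $\kappa<1/2$). Otherwise there is a free boundary point in $B_r(x)$ and Corollary \ref{corollary1000}, applied on a suitably dilated concentric ball so that its $1/3$-sub-ball covers $B_r(x)$, yields a uniform Lipschitz bound $[\uu]_{C^{0,1}(B_r(x))}\leq CQ_{max}$ for $r$ small enough. By the triangle inequality, for $\vert y-x_0\vert<\kappa r$ with $\kappa=\tfrac{c_1}{2\sqrt m\,C}\cdot\tfrac{Q_{min}}{Q_{max}}$,
\begin{equation*}
\vert\uu(y)\vert\geq\vert\uu(x_0)\vert-CQ_{max}\vert y-x_0\vert>0,
\end{equation*}
so $B_{\kappa r}(x_0)\subset\{\vert\uu\vert>0\}$. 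Absorbing the dimensional factor $c_1/(\sqrt m\,C)$ into a universal $c$ puts $\kappa$ in the announced form $(c/2)\,Q_{min}/Q_{max}$.

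The main technical point is promoting the Lipschitz estimate of Corollary \ref{corollary1000} (which a priori controls only the $1/3$-ball around a free boundary point) to a uniform bound on $B_r(x)$. This is handled by a standard covering/chain-of-balls argument that combines Corollary \ref{corollary1000} at free-boundary points with interior gradient estimates for the componentwise-harmonic $u_i$ on $\{\vert\uu\vert>0\}$. Everything else is bookkeeping of constants.
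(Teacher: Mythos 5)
Your first half matches the paper exactly: apply Theorem \ref{theorem1200} on $B_{r/2}(x)$ to get the lower bound on the supremum, and push it onto $\partial B_{r/2}(x)$ by subharmonicity (the paper uses $\vert\uu\vert$ itself, you use $\sum u_i$; both are subharmonic and differ by at most a factor $\sqrt{m}$, so this is only cosmetic). The divergence — and the gap — is in the second half, where you conclude via a uniform Lipschitz bound. The paper instead argues by contradiction, using only the mean-value bound of Theorem \ref{theorem1100} and subharmonicity of $u_i$: if $\overline{B_{\kappa r}(x_0)}$ met $\{\vert\uu\vert=0\}$, then Theorem \ref{theorem1100} applied on $B_{\kappa r}(x_0)$ (which sits compactly in $B_r(x)\subset\Omega$) gives $\fint_{\partial B_{\kappa r}(x_0)} u_i \leq CQ_{max}\kappa r$, and subharmonicity of $u_i$ turns this into $\vert\uu(x_0)\vert\leq C\sqrt{m}\,Q_{max}\kappa r$, contradicting the nondegeneracy lower bound for $\kappa$ small. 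No Lipschitz estimate of any kind is needed.

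The step you invoke, $[\uu]_{C^{0,1}(B_r(x))}\leq CQ_{max}$, is not a consequence of Corollary \ref{corollary1000} and is in fact false in general. First, "a suitably dilated concentric ball so that its $1/3$-sub-ball covers $B_r(x)$" would have to be something like $B_{3r}(\cdot)$, which is not contained in $\Omega$ under the hypothesis $B_r(x)\subset\Omega$, and Corollary \ref{corollary1000} further requires its center to be a point where $\uu$ vanishes, which an arbitrary concentric dilation does not provide. Second, and more fundamentally, the constant in a Lipschitz bound on $B_r(x)$ cannot be universal: at a point $z$ close to $\partial B_r(x)$ but far from the free boundary, $\uu$ is harmonic on $B_\eta(z)$ with $\eta\lesssim \operatorname{dist}(z,\partial B_r(x))$ and $\vert\nabla u_i(z)\vert\lesssim \eta^{-1}\sup_{B_\eta(z)}u_i$, while $\sup_{B_r(x)}u_i$ is not controlled by $Q_{max}$ alone (it depends on the Dirichlet data). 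What is true, and is what Corollary \ref{corollary1000}'s own proof gives, is a bound $[\uu]_{C^{0,1}(B_{(1-\delta)r}(x))}\leq C_\delta Q_{max}$ on compact sub-balls, with $C_\delta$ blowing up as $\delta\to 0$. Since $x_0\in\partial B_{r/2}(x)$ and you will take $\kappa<1/4$, the ball $B_{\kappa r}(x_0)$ does lie in such a compact sub-ball, so the estimate you need is recoverable; but it is not Corollary \ref{corollary1000} as stated, the dilation trick does not supply it, and the "chain of balls" argument you defer to is the actual content — it essentially re-proves a localized Lipschitz estimate, which the paper's contradiction argument cleanly avoids.
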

\begin{proof}
Let $0<\rho<1$. Because $x\in\overline{\{\vert\uu\vert>0\}}$ we have 
$\overline{B_{\frac{\rho}{2}r}(x)}\cap\{\vert\uu\vert>0\}\not=\emptyset$ 
thus by Theorem \ref{theorem1200} we have  
\begin{equation*}
\sup_{B_{\frac{r}{2}}(x)}\vert\uu\vert 
\geq  
c_{\rho}Q_{min}\frac{r}{2}\text{.} 
\end{equation*}
where in the case when $u$ is a local minimizer, $r$ should be small enough 
and in the case when $n=2$ additionally $\rho$ should be small enough.

Now because $\vert\uu\vert$ is a subharmonic function there exists 
$x_{0}\in\partial B_{\frac{r}{2}}(x)$ such that 
\begin{equation}\label{equ2200} 
\vert\uu(x_{0})\vert\geq c_{\rho}Q_{min}\frac{r}{2}\text{.}
\end{equation} 

For $0<\kappa<\frac{1}{2}$ we have 
\begin{equation*}
\overline{B_{\kappa r}(x_{0})}
\subset 
\overline{
B_{\kappa r+\vert x-x_{0}\vert}(x)
}
=
\overline{
B_{\kappa r+\frac{r}{2}}(x)
}
\subset 
B_{r}(x)
\subset\Omega\text{.} 
\end{equation*}

We claim that for small enough $0<c_{1}<1$ and $\kappa=\frac{c_1}{2}\frac{Q_{min}}{Q_{max}}$ we have that $\vert\uu\vert>0$ in 
$\overline{B_{\kappa r}(x_{0})}$. 

Assume this is not the case, then we have 
$\overline{B_{\kappa r}(x_{0})}\cap\{\vert\uu\vert=0\}\not=\emptyset$ 
thus by Theorem \ref{theorem1100} we would have  
\begin{equation}\label{equ2300} 
\fint_{\partial B_{\kappa r}(x_{0})} u_i d\sigma
\leq 
CQ_{max}\kappa r\enskip\text{for}\enskip i=1,\cdots,m\text{.} 
\end{equation}
Because $u_i$ is subharmonic we have 
\begin{equation}\label{equ2400} 
u_i(x_{0})
\leq 
\fint_{\partial B_{\kappa r}(x_{0})} u_i d\sigma\text{.} 
\end{equation}
From \eqref{equ2200}, \eqref{equ2300} and \eqref{equ2400} we obtain that 
\begin{equation*} 
c_{\rho}Q_{min}\frac{r}{2}
\leq 
\vert\uu(x_{0})\vert
\leq 
\bigl\vert
\fint_{\partial B_{\kappa r}(x_{0})}\uu d\sigma
\bigr\vert
\leq 
CQ_{max}\sqrt{m}\kappa r\text{.}
\end{equation*} 
By choosing $c_1=\min(\frac{1}{2},\frac{c_{\rho}}{C\sqrt{m}})$ we arrive at a contradiction. 
\end{proof}

\begin{lemma}\label{lemma1600} 
For any (local) minimizer $\uu$ and (small) balls 
$B_r(x)\subset\Omega$ with $x\in\partial\{\vert\uu\vert>0\}$ we have 
\begin{equation*}
\vert B_r(x)\cap\{\vert\uu\vert=0\}\vert\geq c(\frac{Q_{min}}{Q_{max}})^{n+2}
\vert B_r\vert
\end{equation*}
where $0<c<1$ is universal 
except in the case when $u$ is a local minimizer and $n=2$, in which case $c$ depends on $u$. 
\end{lemma}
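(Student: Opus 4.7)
The plan is to follow the classical density argument of Alt and Caffarelli, combining the harmonic replacement technique with the nondegeneracy from Theorem \ref{theorem1200} and the capacity inequalities of Lemmas \ref{lemma1200}--\ref{lemma1300}.

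First, I would set up the vector-valued harmonic replacement: let $\vv$ coincide with $\uu$ outside $B_r(x)$, and inside $B_r(x)$ let each $v_i$ be harmonic with $v_i=u_i$ on $\partial B_r(x)$. By Lemma \ref{lemma1000} each $u_i$ is subharmonic and nonnegative, so $v_i\geq u_i\geq 0$ and $\{|\vv|>0\}\supset\{|\uu|>0\}$. The minimizing property $J(\uu)\leq J(\vv)$, combined with the identity $\int|\nabla\uu|^2=\int|\nabla\vv|^2+\int|\nabla(\uu-\vv)|^2$, yields the Caccioppoli-type estimate
\begin{equation*}
\int_{B_r(x)}|\nabla(\uu-\vv)|^{2}\,dy\leq Q_{max}^{2}\bigl|B_{r}(x)\cap\{|\uu|=0\}\bigr|,
\end{equation*}
exactly as in Lemma \ref{lemma1050}.

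Second, I would feed in nondegeneracy to show that the harmonic replacement is actually large on a substantial set. By Theorem \ref{theorem1200} applied to $B_{r/2}(x)$ there is $y_{*}\in B_{r/2}(x)$ with $|\uu(y_*)|\geq cQ_{min}r$; hence some component satisfies $u_{i_{0}}(y_{*})\geq cQ_{min}r/\sqrt{m}$, which forces $v_{i_0}(y_*)\geq c Q_{min}r/\sqrt{m}$. Since $v_{i_0}$ is nonnegative and harmonic in $B_r(x)$, Harnack's inequality upgrades this pointwise bound to a uniform lower bound $v_{i_0}\geq c'Q_{min}r$ throughout $B_{r/2}(x)$. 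Evaluating via Poisson at the center gives, after the rescaling of Definition \ref{definition1100}, the crucial boundary-average nondegeneracy
\begin{equation*}
\Bigl|\fint_{\partial B}\uu_{x,r}\,d\sigma\Bigr|\geq c'Q_{min}.
\end{equation*}

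Third, I would combine this with Lemma \ref{lemma1200} to get
\begin{equation*}
\operatorname{Cap}\bigl(B\cap\{|\uu_{x,r}|=0\};B,1-|y|\bigr)\leq C\Bigl(\frac{Q_{max}}{Q_{min}}\Bigr)^{2}\bigl|B\cap\{|\uu_{x,r}|=0\}\bigr|,
\end{equation*}
and then invoke Lemma \ref{lemma1300} together with the Sobolev/isoperimetric lower bound for the weighted capacity to extract an absolute lower bound on $|B_r(x)\cap\{|\uu|=0\}|/|B_r|$ with the advertised power of $Q_{min}/Q_{max}$. The main obstacle, in my view, is precisely the last step: the direct route via Poincaré (on $|\vv-\uu|$, which is bounded below by $c'Q_{min}r$ on $B_{r/2}(x)\cap\{|\uu|=0\}$ thanks to Step 2) yields only the comparison $|B_{r/2}(x)\cap\{|\uu|=0\}|\leq C(Q_{max}/Q_{min})^{2}|B_{r}(x)\cap\{|\uu|=0\}|$, which does not by itself produce any density bound. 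Running the argument at the level of capacities, as in Lemmas \ref{lemma1200}--\ref{lemma1300}, is what converts this relative inequality into an absolute density, and correctly accounts for the powers of $Q_{min}/Q_{max}$. Finally, in the case $n=2$ with $\uu$ a local minimizer, the nondegeneracy of Theorem \ref{theorem1200} requires the additional smallness in $\rho$ highlighted in Remark \ref{remark1000}, which is exactly what forces the constant $c$ to depend on $\uu$ in that case.
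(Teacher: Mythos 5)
Your Steps 1 and 2 are essentially the paper's Steps 1 and 2, and the Harnack/Poisson mechanism for bounding $v_{i_0}$ from below near $x$ is sound. The problem is Step 3: the capacity route you sketch does not close the gap, and in fact cannot. Feeding your boundary-average lower bound into Lemma~\ref{lemma1200} gives $\operatorname{Cap}(B\cap\{|\uu_{x,r}|=0\};B,1-|y|)\leq C(Q_{max}/Q_{min})^{2}|B\cap\{|\uu_{x,r}|=0\}|$, and Lemma~\ref{lemma1300} gives the reverse inequality $|B\cap\{|\uu_{x,r}|=0\}|\leq\operatorname{Cap}(B\cap\{|\uu_{x,r}|=0\};B,1-|y|)$. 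Concatenating these two just yields the tautology $1\leq C(Q_{max}/Q_{min})^{2}$ and produces no lower bound on the measure whatsoever. There is no isoperimetric or Sobolev input that rescues this: Lemmas~\ref{lemma1200} and \ref{lemma1300} are calibrated precisely so that the measure cancels, which is what makes them useful for the \emph{Lipschitz} estimate (Theorem~\ref{theorem1100}), not for density.

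The ingredient you are missing is the linear \emph{growth} bound at the free boundary, i.e.\ Theorem~\ref{theorem1100} applied at $x\in\partial\{|\uu|>0\}$. The paper bounds $u_{i_{0}}$ from \emph{above} on a small concentric ball $B_{\kappa r}(x)$ by $\sup_{B_{\kappa r}(x)}u_{i_0}\leq C\fint_{\partial B_{2\kappa r}(x)}u_{i_0}\,d\sigma\leq C'Q_{max}\kappa r$ (subharmonicity plus Theorem~\ref{theorem1100}), while the Poisson-kernel lower bound gives $v_{i_0}\geq c\,Q_{min}r$ on $B_{\kappa r}(x)$. Choosing $\kappa\sim Q_{min}/Q_{max}$ makes the lower bound dominate, so that $v_{i_0}-u_{i_0}\geq c\,Q_{min}r$ on \emph{all} of $B_{\kappa r}(x)$, not merely on $B_{\kappa r}(x)\cap\{|\uu|=0\}$. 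Plugging this into the Poincar\'e estimate $\int_{B_r(x)}|\uu-\vv|^{2}\leq Cr^{2}Q_{max}^{2}|B_r(x)\cap\{|\uu|=0\}|$ then yields $(cQ_{min}r)^{2}|B_{\kappa r}|\leq Cr^{2}Q_{max}^{2}|B_r(x)\cap\{|\uu|=0\}|$, and since $|B_{\kappa r}|=\kappa^{n}|B_r|$ with $\kappa\sim Q_{min}/Q_{max}$, the total power $(Q_{min}/Q_{max})^{n+2}$ emerges exactly as claimed. In short: the extra power of $n$ in the exponent is \emph{not} a capacity phenomenon; it comes from the radius of the ball where the growth bound forces $u_{i_0}$ to be small while the nondegeneracy forces $v_{i_0}$ to be large.
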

\begin{proof}
By the Poincar\'e inequality in $B_r(x)$ and the inequality \eqref{equ1400} 
proved in Lemma \ref{lemma1050} we have 
\begin{multline}\label{equ2500} 
\int_{B_r(x)}\vert\uu-\vv\vert^{2}dy
\leq 
Cr^2\int_{B_r(x)}\vert\nabla (\uu-\vv)\vert^{2}dy \\
\leq  Cr^2  Q_{max}^{2}\vert B_r(x)\cap\{\vert\uu\vert=0\}\vert
\end{multline}
where $\vv$ is as in the proof of Lemma \ref{lemma1050}. 
Since $\overline{B_{\frac{\rho}{2}r}(x)}\cap\{\vert \uu\vert>0\}\not=\emptyset$, 
by Theorem \ref{theorem1200} we have  
\begin{equation}\label{equ2700} 
c_{\rho}Q_{min}\frac{r}{2}
\leq 
\sup_{B_{\frac{r}{2}}(x)}\vert \uu\vert 
\leq 
C_{1}
\fint_{\partial B_{r}(x)}\vert\uu\vert d\sigma
\text{,} 
\end{equation}
where for the last inequality 
we have used the Poisson representation for the subharmonic function $\vert\uu\vert$. 

Let $c_{2}>0$ such that $c_{2}\vert z\vert_{2}\leq\vert z\vert_{1}$ for $z\in\mathbb{R}^{m}$ 
where $\vert z\vert_2$ is the usual Euclidean length of $z$ and 
$\vert z\vert_{1}=\sum_{i=1}^{m}\vert z_i\vert$. 
One may see that if $c_{3}\leq\frac{c_{\rho}}{2C_{1}}\frac{c_{2}}{m}$ then from 
\eqref{equ2700} it follows that there exists $i_0\in\{1,\cdots,m\}$ such that 
\begin{equation}\label{equ2900} 
\fint_{\partial B_r(x)}u_{i_0}d\sigma
\geq 
c_{3}Q_{min}r\text{.}
\end{equation}

As in Lemma \ref{lemma1100} there exists $c_{4}>0$ such that 
\begin{equation*}
v_{i_0}(y)\geq c_{4}(1-\frac{1}{r}\vert y-x\vert)
\fint_{\partial B_r(x)}v_{i_0}d\sigma\enskip\text{for all}\enskip y\in B_r(x)\text{.} 
\end{equation*}
Thus for $0<\kappa<\frac{1}{2}$ we have 
\begin{equation}\label{equ3000} 
v_{i_0}(y)\geq c_{4}(1-\kappa)
\fint_{\partial B_r(x)}u_{i_0}d\sigma\enskip\text{for all}\enskip y\in B_{\kappa r}(x)\text{.} 
\end{equation}

From \eqref{equ2900} and \eqref{equ3000} it follows that 
\begin{equation}\label{equ3100} 
v_{i_0}(y)\geq c_{4}c_{3} Q_{min} (1-\kappa) r\enskip\text{for all}\enskip y\in B_{\kappa r}(x)\text{.} 
\end{equation}
Because $B_{2\kappa r}(x)\subset\subset\Omega$ and 
$x\in\partial\{\vert\uu\vert>0\}$ by Theorem \ref{theorem1100} 
and Poisson representation formula we have 
\begin{equation}\label{equ3200}
\sup_{B_{\kappa r}(x)}u_{i_0}
\leq 
C_{5}
\fint_{\partial B_{2 \kappa r}(x)} u_{i_0} d\sigma
\leq 2 C_{5} C Q_{max}\kappa r
\end{equation}
and hence 
\begin{equation}\label{equ3400} 
u_{i_0}(y)\leq 2C_{5}C Q_{max} \kappa r\enskip\text{for all}\enskip y\in B_{\kappa r}(x)\text{.} 
\end{equation}

From \eqref{equ3100} and \eqref{equ3400} it follows that 
for $y\in B_{\kappa r}(x)$ we have  
\begin{multline*}
v_{i_0}(y)-u_{i_0}(y)
\geq 
c_{4}c_{3}Q_{min} (1-\kappa) r-2C_{5}C Q_{max} \kappa r \\
=
(c_{4}c_{3} Q_{min}+2C_{5}C Q_{max})
\bigl(
\frac{c_{4}c_{3} Q_{min}}{c_{4}c_{3} Q_{min}+2C_{5}C Q_{max}}
- \kappa
\bigr)
r
\text{.} 
\end{multline*}
Choosing 
\begin{equation*}
\kappa
=
\frac{1}{2}
\frac{1}{
1+\frac{2C_{5}C}{c_{4}c_{3}}\frac{Q_{max}}{Q_{min}}
}
\end{equation*}
we obtain 
\begin{equation}\label{equ3500} 
v_{i_0}(y)-u_{i_0}(y)
\geq
\frac{1}{2}c_{4}c_{3} Q_{min}r
\enskip\text{for all}\enskip y\in B_{\kappa r}(x)\text{.} 
\end{equation}

Now from \eqref{equ2500} and \eqref{equ3500} it follows that 
\begin{multline*}
(
\frac{1}{2}c_{4}c_{3} Q_{min} r
)^{2}\vert B_{\kappa r}\vert
\leq 
\int_{B_r(x)}\vert \uu-\vv\vert^{2}dy \\
%\leq Cr^2\int_{B_r(x)}\vert\nabla (\uu-\vv)\vert^{2}dy
\leq 
C Q_{max}^2 r^2
\vert B_r(x)\cap\{\vert \uu\vert=0\}\vert\text{.} 
\end{multline*}
Thus 
\begin{equation*}
c_6\frac{Q_{min}^2}{Q_{max}^2}\vert B_{\kappa r}\vert
\leq 
\vert B_r(x)\cap\{\vert\uu\vert=0\}\vert\text{,} 
\end{equation*}
which proves the lemma.  
\end{proof}

\subsection{$\{\vert\uu\vert>0\}$\label{supports} 
has locally finite perimeter in $\Omega$}\label{subsection1200} 
In this subsection $\uu$ is a (local) minimizer in $\Omega$. 
By strong minimum principle it follows that 
in each component of $\{\vert\uu\vert>0\}$, for $i=1,\cdots,m$, either 
$u_i$ is identically vanishing 
or it is positive. It follows that 
\begin{equation*}
\Omega\cap\partial\{u_i>0\}\subset\Omega\cap\partial\{\vert \uu\vert>0\}\enskip\text{for}\enskip 
i=1,\cdots,m\text{.}
\end{equation*} 

Because each $u_i$ is subharmonic in $\Omega$ 
and $u_i\geq 0$ we have that $\lambda_i=\Delta u_i$ is a positive Radon measure 
such that $\lambda_i(D)<\infty$ for all $D\subset\subset\Omega$. 
Since also $u_i$ is harmonic in $\{u_i>0\}$ 
we have that the support of $\lambda_i$ is in $\Omega\cap\partial\{u_i>0\}$. 
Let us define 
\begin{equation}\label{equ3600} 
\mathcal{U}=\sum_{i=1}^{m}u_{i}\text{.}
\end{equation}

\begin{lemma}\label{lemma1700}
\begin{enumerate}[(i)]
\item\label{lemma1700-1} 
For all $D\subset\subset\Omega$ there 
exist $0<c\leq C$ such that for $B_{r}(x)\subset D$ 
with $x\in\Omega\cap\partial\{\vert \uu\vert>0\}$ we have 
\begin{equation*}
cQ_{min}r^{n-1}\leq\sum_{i=1}^{m}\lambda_i(B_r(x))\leq CQ_{max}r^{n-1}\text{.} 
\end{equation*}
\item\label{lemma1700-2} 
For all $D\subset\subset\Omega$ we have $\mathcal{H}^{n-1}(D\cap\partial\{\vert \uu\vert>0\})<\infty$. 
\item\label{lemma1700-3} 
There exist nonnegative Borel functions $q_{i}$ such that 
\begin{equation*}
\lambda_i=q_{i}\mathcal{H}^{n-1}\llcorner{(\Omega\cap\partial\{\vert \uu\vert>0\})}\text{.}
\end{equation*} 
\item\label{lemma1700-4} 
For all $D\subset\subset\Omega$ there exist $0<c\leq C$ such that 
\begin{equation}\label{equ3610} 
cQ_{min}\leq \sum_{i=1}^{m}q_{i}\leq CQ_{max}\enskip\text{in}\enskip D
\end{equation}
and for $B_r(x)\subset D$ with 
$x\in\Omega\cap\partial\{\vert \uu\vert>0\}$ we have 
\begin{equation*}
cQ_{min}r^{n-1}\leq\mathcal{H}^{n-1}(B_r(x)\cap\partial\{\vert \uu\vert>0\})\leq CQ_{max}r^{n-1}\text{.}
\end{equation*}
\end{enumerate}
\end{lemma}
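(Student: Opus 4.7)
I will prove (i) directly and then deduce (ii)--(iv) by standard covering and Radon--Nikodym arguments. The inputs I rely on are the Lipschitz bound at free boundary points (Corollary \ref{corollary1000}), the linear nondegeneracy (Theorem \ref{theorem1200}), and subharmonicity of each $u_i$ (Lemma \ref{lemma1000}).

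\textbf{Upper bound in (i).} Fix a cutoff $\zeta \in C_c^\infty(B_{2r}(x))$ with $\zeta \equiv 1$ on $B_r(x)$, $0 \le \zeta \le 1$, $\vert\nabla\zeta\vert \le C/r$. Since $x \in \partial\{\vert\uu\vert>0\}$ and $\{\vert\uu\vert>0\}$ is open, $\uu(x) = 0$, so Corollary \ref{corollary1000} gives $\Vert\nabla u_i\Vert_{L^\infty(B_{2r}(x))} \le CQ_{max}$. Integration by parts yields $\lambda_i(B_r(x)) \le \int \zeta\, d\lambda_i = -\int \nabla u_i \cdot \nabla \zeta\, dy \le CQ_{max}r^{n-1}$; summing over $i$ gives the sum bound.

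\textbf{Lower bound in (i) --- the main obstacle.} Let $\mathcal{U} = \sum_i u_i$ and $\lambda = \sum_i \lambda_i = \Delta\mathcal{U}$, and let $v$ be the harmonic extension of $\mathcal{U}\vert_{\partial B_r(x)}$ in $B_r(x)$, so $\mathcal{U} \le v$. The Riesz (Poisson--Jensen) formula gives $v(x) - \mathcal{U}(x) = \int_{B_r(x)} \Gamma_r(y-x)\, d\lambda(y)$, where $\Gamma_r$ is the positive Green function of $B_r(x)$ with pole at $x$ (logarithmic if $n=2$). Since $\mathcal{U}(x)=0$ this equals $v(x)$. Theorem \ref{theorem1200} supplies $x_0 \in B_{r/2}(x)$ with $\vert\uu(x_0)\vert \geq cQ_{min}r$; since $\mathcal{U} \geq \vert\uu\vert$ (components are non-negative), $v(x_0) \geq cQ_{min}r$, and Harnack's inequality for the positive harmonic $v$ transfers this to $v(x) \geq c'Q_{min}r$. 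To bound $\int\Gamma_r\, d\lambda$ from above, I split over $B_{\alpha r}(x)$ and $B_r(x)\setminus B_{\alpha r}(x)$ for a parameter $\alpha \in (0,1/2)$. The outer piece is at most $C(\alpha r)^{2-n}\lambda(B_r(x))$ since $\Gamma_r \le C(\alpha r)^{2-n}$ there. The inner piece is controlled by a layer-cake integration of $\Gamma_r \le c_n\vert y-x\vert^{2-n}$ combined with the upper bound $\lambda(B_t(x)) \le CQ_{max}t^{n-1}$ for $0 < t \le \alpha r$ just proved, giving a contribution $\le CQ_{max}\alpha r$. The singularity of $\Gamma_r$ at the pole is the key difficulty: a pointwise conversion of Poisson--Jensen into a lower bound on $\lambda(B_r(x))$ fails, and $\alpha$ must be tuned to balance the outer contribution (scaling as $\alpha^{2-n}$) against the inner one (linear in $\alpha$). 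Taking $\alpha$ a small multiple of $Q_{min}/Q_{max}$ absorbs the inner contribution below $c'Q_{min}r/2$ and yields $\lambda(B_r(x)) \geq cQ_{min}r^{n-1}$, with $c$ depending on $n$ and $Q_{min}/Q_{max}$.

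\textbf{Parts (ii)--(iv).} For (ii), cover $D \cap \partial\{\vert\uu\vert>0\}$ by small balls centered on the free boundary and apply Besicovitch's covering combined with the lower bound in (i) to obtain $\mathcal{H}^{n-1}(D \cap \partial\{\vert\uu\vert>0\}) \leq C\lambda(D')/Q_{min} < \infty$ for $D \subset\subset D' \subset\subset \Omega$. For (iii), I first verify that $\spt(\lambda_i) \subset \partial\{\vert\uu\vert>0\}$: for $V \subset\subset \{\vert\uu\vert>0\}$ and non-negative $\phi \in C_c^\infty(V)$, the variation $\uu + \epsilon\phi e_i$ is admissible for small $\epsilon>0$ (since $\vert\uu\vert>0$ in $V$ keeps the volume term unchanged and $u_i + \epsilon\phi \geq 0$), whence $\int \nabla u_i \cdot \nabla \phi \geq 0$, i.e.\ $\Delta u_i \leq 0$ in $V$; combined with Lemma \ref{lemma1000} this forces $u_i$ harmonic in $V$, so the support claim follows (as $u_i$ is trivially harmonic in $(\{\vert\uu\vert=0\})^\circ$). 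The upper bound of (i) gives $\lambda_i \ll \mathcal{H}^{n-1}$, and Radon--Nikodym produces the Borel densities $q_i \ge 0$. Finally (iv) follows by Besicovitch differentiation of the pair $(\lambda, \mathcal{H}^{n-1}\llcorner\partial\{\vert\uu\vert>0\})$: the pointwise bounds on $\sum q_i$ come from (i) together with the Hausdorff bounds of (ii), and the Hausdorff measure bounds on $B_r(x) \cap \partial\{\vert\uu\vert>0\}$ follow from (i) combined with the density bounds on $\sum q_i$.
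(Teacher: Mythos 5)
Your proposal is correct and takes essentially the same route as the paper. The paper establishes the two-sided estimate $cQ_{min}\leq\frac{1}{r}\fint_{\partial B_r(x)}\mathcal{U}\,d\mathcal{H}^{n-1}\leq CQ_{max}$ for $\mathcal{U}=\sum_i u_i$ (from Theorems \ref{theorem1100} and \ref{theorem1200}) and then simply cites Theorems 4.3 and 4.5 of \cite{AltCaffarelli1981}; your argument unpacks exactly what those cited theorems do --- a cutoff computation for the upper density bound, a Riesz/Green-function representation with an $\alpha$-tuned inner/outer split for the lower density bound, support localization via harmonicity in $\{\vert\uu\vert>0\}$, and Besicovitch covering plus Radon--Nikodym differentiation for (ii)--(iv). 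One remark: for (iii), the paper reaches the support claim slightly differently, noting via strong minimum principle that in each connected component of $\{\vert\uu\vert>0\}$ each $u_i$ is either identically zero or strictly positive, so $\partial\{u_i>0\}\subset\partial\{\vert\uu\vert>0\}$ and hence $\spt\lambda_i\subset\partial\{\vert\uu\vert>0\}$; your first-variation argument in $V\subset\subset\{\vert\uu\vert>0\}$ yields the same conclusion and is equally valid.
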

\begin{proof}
Let $D\subset\subset\Omega$. Then by Theorems \ref{theorem1100} and \ref{theorem1200} 
there exist constants $0<c\leq C<\infty$ such that for (small) $B_{r}(x)\subset D$ 
with $x\in\Omega\cap\partial\{\vert \uu\vert>0\}$ we have 
\begin{equation}\label{equ3700}
cQ_{min}\leq\frac{1}{r}\fint_{\partial B_{r}(x)}\mathcal{U}d\mathcal{H}^{n-1}\leq CQ_{max}\text{.} 
\end{equation}

Using \eqref{equ3700} the assertions follow from 
Theorems 4.3 and 4.5 of \cite{AltCaffarelli1981}. 
\end{proof}

\begin{lemma}\label{lemma1800} 
The set $\{\vert \uu\vert>0\}$ has locally finite perimeter in $\Omega$, 
$\Omega\cap\partial^{*}\{\vert \uu\vert>0\}\subset\Omega\cap\partial\{\vert \uu\vert>0\}$ 
and 
\begin{equation*}
\mathcal{H}^{n-1}\bigl(
\Omega\cap
(\partial\{\vert \uu\vert>0\}
\backslash
\partial^{*}\{\vert \uu\vert>0\})
\bigr)
=0\text{.}
\end{equation*}
\end{lemma}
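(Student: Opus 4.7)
The plan is to derive all three assertions simultaneously from Federer's measure-theoretic criterion for sets of locally finite perimeter. The two inputs needed are a local $\mathcal{H}^{n-1}$ bound on $\Omega\cap\partial\{\vert\uu\vert>0\}$ and the inclusion of the topological boundary into the measure-theoretic boundary; both are essentially already established in the preceding subsections.

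To set the stage I would fix the continuous (in fact Lipschitz) representative of $\uu$ supplied by Corollary \ref{corollary1000} and record that, since $u_i\geq 0$ for every $i$, the identity $\{\vert\uu\vert>0\}=\{\mathcal{U}>0\}$ holds pointwise with $\mathcal{U}$ as in \eqref{equ3600}. This selects the canonical open representative $E:=\{\mathcal{U}>0\}$; for this open set the inclusion $\partial_{*}E\subset\partial E$ is automatic, since any point outside $\partial E$ has a ball lying entirely in $E$ or entirely in its complement. Next, at any $x\in\Omega\cap\partial E$, Lemma \ref{lemma1500} produces a ball of radius $\kappa r$ contained in $B_r(x)\cap E$, giving $\vert B_r(x)\cap E\vert\geq c\vert B_r\vert$, while Lemma \ref{lemma1600} supplies the symmetric bound $\vert B_r(x)\setminus E\vert\geq c\vert B_r\vert$. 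These two densities are precisely the defining property of a measure-theoretic boundary point, so $\Omega\cap\partial E\subset\partial_{*}E$, and combined with the reverse inclusion above this yields the equality $\Omega\cap\partial E=\Omega\cap\partial_{*}E$.

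For the finite-perimeter input, part (iv) of Lemma \ref{lemma1700} gives the uniform upper bound $\mathcal{H}^{n-1}(B_r(x)\cap\partial E)\leq CQ_{max}r^{n-1}$ for every small ball centered on the free boundary inside $D\subset\subset\Omega$; a standard Vitali-type covering of $\overline D\cap\partial E$ then yields $\mathcal{H}^{n-1}(D\cap\partial E)<\infty$. Federer's theorem (see, for instance, Evans-Gariepy) now applies: any Borel set whose measure-theoretic boundary has locally finite $\mathcal{H}^{n-1}$ measure is of locally finite perimeter, its reduced boundary satisfies $\partial^{*}E\subset\partial_{*}E$, and $\mathcal{H}^{n-1}(\partial_{*}E\setminus\partial^{*}E)=0$. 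Translating these three conclusions through the identity $\Omega\cap\partial E=\Omega\cap\partial_{*}E$ gives exactly the three assertions of the lemma.

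I do not anticipate a serious analytic obstacle. The only delicate point is making sure that all sets $\{\vert\uu\vert>0\}$ appearing across Section \ref{section1600} and Lemma \ref{lemma1700} are interpreted topologically in the same way, but the Lipschitz continuity of $\mathcal{U}$ renders this purely cosmetic and leaves the classical measure-theoretic machinery to do the rest.
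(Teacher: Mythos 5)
Your proposal is correct and follows essentially the same route as the paper: densities from Lemmas \ref{lemma1500} and \ref{lemma1600} give $\Omega\cap\partial\{\vert\uu\vert>0\}=\Omega\cap\partial_{*}\{\vert\uu\vert>0\}$, the $\mathcal{H}^{n-1}$ finiteness from Lemma \ref{lemma1700} (the paper cites part (ii) directly, you derive it from part (iv), which is the same content) feeds Federer's criterion, and the structure theorem for the reduced boundary closes the argument. The only cosmetic difference is your explicit choice of the open representative $\{\mathcal{U}>0\}$, which the paper leaves implicit.
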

\begin{proof}
For $D\subset\subset\Omega$ 
a compact set by part \eqref{lemma1700-2} 
in Lemma \ref{lemma1700} we have 
\begin{equation*}
\mathcal{H}^{n-1}\bigl(D\cap\partial_{*}\{\vert \uu\vert>0\}\bigr) \\
\leq 
\mathcal{H}^{n-1}\bigl(D\cap\partial\{\vert \uu\vert>0\}\bigr)
<\infty
\text{.} 
\end{equation*}

By a local version of Theorem 1 in Section 5.11 
of \cite{EvansGariepy1992} we have that 
because for all $D\subset\subset\Omega$ compact sets we have 
$\mathcal{H}^{n-1}(D\cap\partial_{*}\{\vert \uu\vert>0\})<\infty$
it follows that $\{\vert \uu\vert>0\}$ has locally finite perimeter in $\Omega$. 

Let $x\in\Omega\cap\partial\{\vert \uu\vert>0\}$. 
By Lemma \ref{lemma1500} and \ref{lemma1600} we obtain respectively 
\begin{equation*} 
\varlimsup_{r\to0}\frac{\vert B_{r}(x)\cap\{\vert \uu\vert>0\}\vert}{r^{n}}>0\enskip\text{and}
\enskip\varlimsup_{r\to0}\frac{\vert B_{r}(x)\backslash\{\vert \uu\vert>0\}
\vert}{r^{n}}>0\text{.} 
\end{equation*}
It follows that $x\in\partial_{*}\{\vert \uu\vert>0\}$. 
Hence 
\begin{equation*} 
\Omega\cap\partial\{\vert \uu\vert>0\}\subset\partial_{*}\{\vert \uu\vert>0\}
\end{equation*}
and together with $\partial_{*}\{\vert \uu\vert>0\}\subset\partial\{\vert \uu\vert>0\}$ we obtain 
\begin{equation}\label{equ3800} 
\Omega\cap\partial\{\vert \uu\vert>0\}=\Omega\cap\partial_{*}\{\vert \uu\vert>0\}\text{.}
\end{equation}

By a local version of Lemma 1 in Section 5.8 of \cite{EvansGariepy1992} we have that 
$\Omega\cap\partial^{*}\{\vert \uu\vert>0\}\subset \Omega\cap\partial_{*}\{\vert \uu\vert>0\}$ 
and $\mathcal{H}^{n-1}(\Omega\cap(\partial_{*}\{\vert \uu\vert>0\}\backslash\partial^{*}\{\vert \uu\vert>0\}))=0$. 

Now by these properties and \eqref{equ3800} the last two claims of the lemma follow. 
\end{proof}

\subsection{Domain variation formula}\label{subsection1300}

\begin{lemma}[Domain Variation Formula]\label{lemma1900} 
Let $Q$ be continuous, $\uu$ be a minimizer and 
$\Psi\in C^{\infty}_{c}(\Omega;\mathbb{R}^{n})$, then we have 
\begin{equation*}
\int_{\Omega}
\Bigl(
2\sum_{i=1}^{m}
(\nabla u_i^{T}
\nabla\Psi
\nabla u_i)
-\vert\nabla \uu\vert^{2}
\operatorname{div}\Psi
\Bigr)dx
+\int_{\Omega}Q^2
\Psi\cdot
d\mu
=0
\end{equation*}
where we have used the notation 
\begin{equation}\label{equ3850} 
\mu=\nabla\chi_{\{\vert \uu\vert>0\}}\text{.} 
\end{equation}
\end{lemma}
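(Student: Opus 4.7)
The plan is to apply an inner variation. For $\Psi\in C^\infty_c(\Omega;\R^n)$ and $|t|$ sufficiently small, the map $\Phi_t(x)=x+t\Psi(x)$ is a diffeomorphism of $\Omega$ equal to the identity outside a compact set, with smooth inverse. I would take the competitor $\uu^t(y)=\uu(\Phi_t^{-1}(y))$. Since $u_i\ge 0$ gives $u_i^t\ge 0$, and $\uu^t=\uu$ near $\partial\Omega$, we have $\uu^t\in\cA$, and a direct check shows $d(\uu,\uu^t)\to 0$ as $t\to 0$. Hence by minimality (absolute or local), $J(\uu)\le J(\uu^t)$ for all small $|t|$, so $\frac{d}{dt}\big|_{t=0}J(\uu^t)=0$ provided this derivative exists.

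Changing variables $y=\Phi_t(x)$ with Jacobian $A_t=I+t\nabla\Psi$ yields the exact identity
\[J(\uu^t)=\sum_{i=1}^m\int_\Omega|\nabla u_i(x)\,A_t(x)^{-1}|^2\det A_t(x)\,dx+\int_\Omega Q^2(\Phi_t(x))\,\chi_{\{|\uu|>0\}}(x)\,\det A_t(x)\,dx.\]
Expanding $A_t^{-1}=I-t\nabla\Psi+O(t^2)$ and $\det A_t=1+t\operatorname{div}\Psi+O(t^2)$, the Dirichlet integrand is smooth in $t$; its $t$-derivative at $0$ for the $i$-th term equals $|\nabla u_i|^2\operatorname{div}\Psi-2\,\nabla u_i^T\,\nabla\Psi\,\nabla u_i$, the antisymmetric part of $\nabla\Psi$ dropping out of the quadratic form. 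Summing over $i$ gives the contribution $\int_\Omega\bigl(|\nabla\uu|^2\operatorname{div}\Psi-2\sum_{i=1}^m\nabla u_i^T\,\nabla\Psi\,\nabla u_i\bigr)dx$ to the derivative.

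For the measure term, if $Q$ were $C^1$ one could differentiate under the integral sign to obtain $\int_\Omega\operatorname{div}(Q^2\Psi)\chi_{\{|\uu|>0\}}dx$, which by Lemma \ref{lemma1800} and the defining property of $\mu=\nabla\chi_{\{|\uu|>0\}}$ rewrites as $-\int_\Omega Q^2\Psi\cdot d\mu$. For merely continuous $Q$ the plan is to approximate $Q$ by smooth $Q_k$ uniformly on $\spt\Psi$, prove the identity with each $Q_k$, and pass to the limit: the estimate $|\int Q^2\chi_{\{|\uu^t|>0\}}dy-\int Q_k^2\chi_{\{|\uu^t|>0\}}dy|\le\|Q^2-Q_k^2\|_{L^\infty(\spt\Psi)}|\spt\Psi|$ holds uniformly in small $|t|$, and $\int Q^2\Psi\cdot d\mu$ likewise depends continuously on $Q$ in the uniform norm (using $|\mu|(\spt\Psi)<\infty$ from Lemma \ref{lemma1800}). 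Combining the Dirichlet and measure contributions and setting $\frac{d}{dt}|_{t=0}J(\uu^t)=0$ yields the stated identity. The main obstacle is this last step: justifying differentiability of the measure term at $t=0$ when $Q$ is only continuous, for which the crucial input is the local BV regularity of $\chi_{\{|\uu|>0\}}$ from Lemma \ref{lemma1800}, which ensures $\mu$ is a locally finite Radon measure pairing cleanly against continuous functions.
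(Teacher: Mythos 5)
Your approach is essentially the same as the paper's: an inner (domain) variation via a compactly supported diffeomorphism $\Phi_t=\operatorname{id}+t\Psi$, a change of variables in the Dirichlet term, and a first-order expansion of the Jacobian factors. The only cosmetic difference is that you pull back by $\Phi_t^{-1}$ where the paper composes with $\Phi_t$, which flips the sign of the first variation --- irrelevant since it is set to zero.

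There is, however, a gap in your approximation step for the measure term that the bounds you cite do not close. Write $E=\{|\uu|>0\}$, $g(t)=\int_\Omega Q^2\chi_{\{|\uu^t|>0\}}\,dy$ and $g_k(t)=\int_\Omega Q_k^2\chi_{\{|\uu^t|>0\}}\,dy$. You note $|g(t)-g_k(t)|\le\|Q^2-Q_k^2\|_{L^\infty(\spt\Psi)}|\spt\Psi|$ uniformly in $t$, together with $g_k'(0)=-\int Q_k^2\Psi\cdot d\mu\to-\int Q^2\Psi\cdot d\mu$. These two facts by themselves do not permit the interchange of the limits $t\to0$ and $k\to\infty$: a uniform bound on $g-g_k$ only gives a bound of the form $\bigl|\tfrac{(g-g_k)(t)-(g-g_k)(0)}{t}\bigr|\le 2\|Q^2-Q_k^2\|_{L^\infty}|\spt\Psi|/|t|$, which blows up as $t\to0$. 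What is actually needed is a bound on the difference quotient of $g-g_k$ that is uniform in $t$, and this is exactly where the finite perimeter of $E$ must enter concretely: since
\[
(g-g_k)(t)-(g-g_k)(0)=\int_\Omega\bigl(Q^2-Q_k^2\bigr)\bigl(\chi_{\{|\uu^t|>0\}}-\chi_E\bigr)\,dy
\]
is supported in $\Phi_t(E)\triangle E$, and $|\Phi_t(E)\triangle E|\le C\,|t|\,\operatorname{Per}(E,\spt\Psi)$ by the standard translation/flow estimate for sets of finite perimeter (available thanks to Lemma~\ref{lemma1800}), one obtains $\bigl|\tfrac{(g-g_k)(t)-(g-g_k)(0)}{t}\bigr|\le C\,\|Q^2-Q_k^2\|_{L^\infty(\spt\Psi)}\operatorname{Per}(E,\spt\Psi)$, which does let you pass to the limit. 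You correctly flag BV regularity as the crucial input, but the specific inequality you quote controls the values $g(t)-g_k(t)$, not the difference quotients. (For the record, the paper simply asserts the derivative formula for the measure term without elaboration, so you are following its strategy; you have just left the same technical point not quite filled in.)
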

\begin{proof}
Let $L$ be the Lipschitz constant of $\Psi$. 
We define for $t>0$ 
\begin{equation*}
\Phi_t(x)=x+t\Psi(x)\enskip\text{and}\enskip\uu_t(x)=\uu(x+t\Psi(x))\text{.}
\end{equation*}

One can show that for $0<t<\frac{1}{L}$, 
$\Phi_t:\Omega\to\Omega$ is bijective, $\Phi_t^{-1}\in C^{\infty}(\Omega;\mathbb{R}^{n})$, 
and moreover $\Phi_t^{-1}(y)-y\in C^{\infty}_{c}(\Omega;\mathbb{R}^{n})$ 
with $\operatorname{supp}(\Phi_t^{-1}(y)-y)\subset\operatorname{supp}(\Psi)$. 

We have $\nabla\Phi_t=I+t\nabla\Psi$ and for $i=1,\cdots,m$  
\begin{equation*}
\nabla u_{t,i}(x)
=
\nabla\Phi_t(x)
\nabla u_i(\Phi_t(x))
\text{.} 
\end{equation*}

It follows that 
\begin{equation*}
\vert \nabla u_{t,i}(x)\vert^{2}
=
(\nabla u_i(\Phi_{t}(x)))^{T}
A_t(x)
\nabla u_i(\Phi_{t}(x))
\end{equation*}
where 
\begin{equation*}
A_t(x)=(\nabla\Phi_t(x))^{T}\nabla\Phi_t(x)
=I+t\bigl((\nabla\psi)^{T}+\nabla\psi\bigr)
+t^2 (\nabla\psi)^{T}\nabla\psi\text{.}
\end{equation*} 

By change of variables 
\begin{multline}\label{equ3900} 
\int_{\Omega}\vert \nabla u_{t,i}\vert^{2}dx
=
\int_{\Omega}
(\nabla u_i(\Phi_{t}(x)))^{T}
A_t(x)
\nabla u_i(\Phi_{t}(x))
dx \\
=
\int_{\Omega}
(\nabla u_i(y))^{T}
A_t(\Phi_t^{-1}(y))
\nabla u_i(y)
\vert\det \nabla_y\Phi_t^{-1}(y)\vert dy\text{.} 
\end{multline}

We also have 
\begin{equation}\label{equ4000} 
\frac{d}{dt}
(A_t(\Phi_t^{-1}(y)))
\big\vert_{t=0}
=
\nabla\Psi(y)+\nabla\Psi(y)^{T}
\end{equation}
and 
\begin{equation}\label{equ4100} 
\frac{d}{dt}\vert\det \nabla_y\Phi_t^{-1}(y)\vert\big\vert_{t=0}
=
-\operatorname{div}\Psi(y)\text{.} 
\end{equation}

By differentiating \eqref{equ3900} with respect to $t$ and using \eqref{equ4000} 
and \eqref{equ4100} we obtain 
\begin{equation}\label{equ4200} 
\frac{d}{dt}
\int_{\Omega}\vert \nabla u_{t,i}\vert^{2}dx\big\vert_{t=0} 
=
\int_{\Omega}
\Bigl(
2\nabla u_i^{T}
\nabla\Psi
\nabla u_i 
-
\vert\nabla u_i\vert^{2}
\operatorname{div}\Psi
\Bigr)
dy\text{.} 
\end{equation}

We differentiate also the second term in our functional  
\begin{multline}\label{equ4300} 
\frac{d}{dt}
\int_{\Omega}Q^2(x)\chi_{\{\vert \uu_t\vert>0\}}(x)dx 
\big\vert_{t=0} \\
=
\frac{d}{dt}
\int_{\Omega}Q^2(x)\chi_{\{\vert \uu\vert>0\}}(\Phi_t(x))dx
\big\vert_{t=0} \\
=
\int_{\Omega}Q^2(x)
\Psi(x)\cdot
d\mu(x)\text{.} 
\end{multline}

By \eqref{equ4200} and \eqref{equ4300} we obtain 
\begin{multline}\label{equ4400} 
\frac{d}{dt}
\int_{\Omega}
\bigl(
\vert \nabla \uu_{t}\vert^{2}
+
Q^2
\chi_{\{\vert \uu_t\vert>0\}}
\bigr)
dx\big\vert_{t=0}  \\
=
\int_{\Omega}
\Bigl(
2
\sum_{i=1}^{m}
(
\nabla u_i^{T}
\nabla\Psi
\nabla u_i
)
-
\vert\nabla \uu\vert^{2}
\operatorname{div}\Psi
\Bigr)
dx
+
\int_{\Omega}Q^2
\Psi\cdot
d\mu
\text{.} 
\end{multline}
This proves the lemma in the case when $\uu$ is an absolute minimizer. 

In the case when $\uu$ is a local minimizer one should also show that 
$d(\uu_t,\uu)\to 0$ as $t\to0$ which together with the equation \eqref{equ4400} 
proves the lemma also in the case when $\uu$ is a local minimizer. 
\end{proof}

\subsection{Blow-up limits}\label{subsection1350} 
\begin{lemma}\label{lemma2000} 
Let $\uu$ be a minimizer in $\Omega$, $0\in\Omega$, $\uu(0)=0$ and 
$r_k\to0$ as $k\to\infty$. 
Assume also that $\vv\in H^{1}_{loc}(\mathbb{R}^{n};\mathbb{R}^{m})$ 
such that as $k\to\infty$ we have the convergences 
\begin{equation*}
\uu_{r_k}\to \vv\enskip\text{in}\enskip 
L^2_{loc}(\mathbb{R}^{n};\mathbb{R}^{m})\text{,}\enskip 
\uu_{r_k}\to \vv\enskip\text{a.e. in}\enskip\mathbb{R}^{n}\text{,} 
\end{equation*}
\begin{equation*} 
\uu_{r_k}\rightharpoonup \vv\enskip\text{weakly in}\enskip H^{1}(B_{R};\mathbb{R}^{m})
\enskip\text{for all}\enskip R>0
\end{equation*} 
and 
\begin{equation}\label{equ5300} 
\varlimsup_{k\to\infty}\int_{B_{R}}\vert Q_{r_k}-Q(0)\vert dx=0
\enskip\text{for all}\enskip R>0
\end{equation}
then $\vv$ is an absolute minimizer in $B_{R}$ for all $R>0$ 
with constant $Q(0)$. 
\end{lemma}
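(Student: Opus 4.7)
The plan is the canonical Alt--Caffarelli blow-up competitor argument: given an admissible competitor $\w$ for $\vv$ on $B_R$, build an admissible perturbation of $\uu$ supported in $B_{Rr_k}(0)$ by rescaling a gluing of $\w$ inside and $\uu_{r_k}$ near $\partial B_R$, invoke the minimality of $\uu$, and pass to the limit $k\to\infty$. Fix $R>0$ and a competitor $\w\in H^{1}(B_R;\R^m)$ with $w_i\geq 0$ and trace $\w=\vv$ on $\partial B_R$; the goal is
\begin{equation*}
\int_{B_R}\bigl(|\nabla\vv|^{2}+Q(0)^{2}\chi_{\{|\vv|>0\}}\bigr)dx\leq\int_{B_R}\bigl(|\nabla\w|^{2}+Q(0)^{2}\chi_{\{|\w|>0\}}\bigr)dx.
\end{equation*}

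For each small $\e>0$ take a cutoff $\eta\in C^{\infty}_c(B_R)$ with $\eta\equiv 1$ on $B_{R-\e}$, $0\leq\eta\leq 1$, and $|\nabla\eta|\leq C/\e$, and set $\w_k:=\eta\w+(1-\eta)\uu_{r_k}$, together with its rescaling $\tilde\w_k(x):=r_k\w_k(x/r_k)$ on $B_{Rr_k}$, extended by $\uu$ outside. Componentwise nonnegativity is inherited from $\w$ and $\uu_{r_k}$, and $\w_k=\uu_{r_k}$ on $\partial B_R$, so $\tilde\w_k\in\mathcal{A}$. By Corollary \ref{corollary1000} applied at the zero point $0$, $\uu$ is Lipschitz in a neighborhood of $0$, so $\uu_{r_k}$ is uniformly Lipschitz on $B_R$ for $k$ large; a direct computation then gives $d(\tilde\w_k,\uu)=O(r_k^{n/2})\to 0$. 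Minimality of $\uu$ (whether absolute or local) thus yields, after rescaling by $r_k^{n}$,
\begin{equation*}
\int_{B_R}\bigl(|\nabla\uu_{r_k}|^{2}+Q_{r_k}^{2}\chi_{\{|\uu_{r_k}|>0\}}\bigr)dx\leq\int_{B_R}\bigl(|\nabla\w_k|^{2}+Q_{r_k}^{2}\chi_{\{|\w_k|>0\}}\bigr)dx.
\end{equation*}

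Next I send $k\to\infty$ keeping $\e$ fixed. On the LHS, weak $H^{1}$-convergence furnishes $\int|\nabla\vv|^{2}\leq\liminf\int|\nabla\uu_{r_k}|^{2}$; Fatou applied to $\chi_{\{|\vv|>0\}}\leq\liminf\chi_{\{|\uu_{r_k}|>0\}}$ (true pointwise a.e.\ by $\uu_{r_k}\to\vv$ a.e.) handles the measure term; and hypothesis \eqref{equ5300} combined with $Q_{min}\leq Q_{r_k}\leq Q_{max}$ replaces $Q_{r_k}^{2}$ by $Q(0)^{2}$. On the RHS, decompose $B_R=B_{R-\e}\cup A_\e$ with $A_\e:=B_R\setminus B_{R-\e}$. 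On $B_{R-\e}$ the integrand equals $|\nabla\w|^{2}+Q_{r_k}^{2}\chi_{\{|\w|>0\}}$, which converges to the target integrand with constant $Q(0)^{2}$; on $A_\e$ use
\begin{equation*}
|\nabla\w_k|^{2}\leq 3|\nabla\w|^{2}+3|\nabla\uu_{r_k}|^{2}+C\e^{-2}|\w-\uu_{r_k}|^{2},\qquad\chi_{\{|\w_k|>0\}}\leq 1.
\end{equation*}

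The main obstacle is showing this annular overhead vanishes in the joint limit. Three ingredients suffice. First, the uniform Lipschitz bound on $\uu_{r_k}$ gives $\int_{A_\e}|\nabla\uu_{r_k}|^{2}\leq C|A_\e|=O(\e)$ (crucially $O(\e)$ rather than merely $O(1)$, since only weak $H^{1}$-convergence is available). Second, splitting $\w-\uu_{r_k}=(\w-\vv)+(\vv-\uu_{r_k})$, the strong $L^{2}_{loc}$-convergence of $\uu_{r_k}$ to $\vv$ gives $\e^{-2}\int_{A_\e}|\vv-\uu_{r_k}|^{2}\to 0$ for each fixed $\e$ as $k\to\infty$. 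Third, the Hardy inequality applied componentwise to $\w-\vv\in H^{1}_{0}(B_R;\R^m)$ gives $|\w-\vv|^{2}/\operatorname{dist}(\cdot,\partial B_R)^{2}\in L^{1}(B_R)$, whence $\e^{-2}\int_{A_\e}|\w-\vv|^{2}\leq\int_{A_\e}|\w-\vv|^{2}/\operatorname{dist}(\cdot,\partial B_R)^{2}\to 0$ as $\e\to 0$ by absolute continuity. Combining, the annular contribution to the RHS is $O(\e)+o_k(1)$; sending first $k\to\infty$ and then $\e\to 0$ delivers the desired minimization inequality and completes the proof.
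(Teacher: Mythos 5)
Your proof is correct and reaches the same conclusion, but via a genuinely different gluing and error-control mechanism. The paper glues with $\w^k=(\w+(1-\eta)(\uu_{r_k}-\vv))^{+}$, a linear correction followed by a positive part, and then expands $|\nabla\uu_{r_k}|^{2}-|\nabla\w^k|^{2}$ algebraically: the quadratic term $(1-(1-\eta)^{2})|\nabla(\uu_{r_k}-\vv)|^{2}$ is nonnegative and dropped, the cross terms containing $\nabla(\uu_{r_k}-\vv)$ vanish by \emph{weak} $H^{1}$-convergence, and the $|\nabla\eta|^{2}|\uu_{r_k}-\vv|^{2}$ term vanishes by strong $L^{2}_{loc}$-convergence — no quantitative cutoff, no Lipschitz estimate, and no Hardy inequality are needed, so the argument is self-contained given only the stated hypotheses. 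You instead take the simpler convex-combination gluing $\w_k=\eta\w+(1-\eta)\uu_{r_k}$ (which has the advantage of being automatically nonnegative, with no positive-part operation), bound the annular overhead crudely by Young's inequality, and then kill it with three separate inputs: the uniform Lipschitz bound on $\uu_{r_k}$ from Corollary \ref{corollary1000} (needed because $3\int_{A_\e}|\nabla\uu_{r_k}|^{2}$ cannot be controlled by weak convergence alone), strong $L^{2}_{loc}$-convergence for $\vv-\uu_{r_k}$, and Hardy's inequality for $\w-\vv\in H^{1}_{0}(B_R)$ to handle the $\e^{-2}$ factor. Your route is mechanically more transparent and the Hardy device for the annular $\e^{-2}\int_{A_\e}|\w-\vv|^{2}$ term is a nice touch; the paper's route buys independence from the regularity theory (useful if one wanted to prove this lemma before establishing Lipschitz continuity, or in settings where the competitor is not glued near a free boundary point). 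One small caveat: your Lipschitz estimate for $\uu_{r_k}$ on $B_R$ uses that $\uu(0)=0$, which is indeed a hypothesis of the lemma, so this is fine here, but it does tie the argument to blow-ups at free boundary points.
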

\begin{proof}
Let us fix $R>0$. 
Let $\w\in H^{1}(B_{R};\mathbb{R}^{m})$ 
such that $w_i\geq 0$ a.e. in $B_{R}$ for $i=1,\cdots,m$ 
and $\w=\vv$ on $\partial B_{R}$. 
We should show that 
\begin{equation}\label{equ5400} 
\int_{B_{R}}
\bigl(\vert\nabla \vv\vert^{2}+Q^2(0)\chi_{\{\vert \vv\vert>0\}}\bigr)dy
\leq
\int_{B_{R}}\bigl(\vert\nabla\w\vert^{2}+Q^2(0)\chi_{\{\vert\w\vert>0\}}\bigr)dy\text{.}
\end{equation}

Let $0<r<R$ and $\eta\in C^{\infty}_{c}(B_{R})$ 
such that $\eta=1$ on $B_r$ and $0\leq\eta\leq 1$. 
Let us define 
\begin{equation*}
\w^k
=
\bigl(\w+(1-\eta)(\uu_{r_k}-\vv)\bigr)^{+}
\text{,} 
\end{equation*}
here positive part is taken for each component separately. 
Then $\w^{k}\in H^{1}(B_{R};\mathbb{R}^{m})$, 
$\w^k_{i}\geq 0$ a.e. in $B_{R}$ for $i=1,\cdots,m$ 
and $\w^k=\uu_{r_k}$ on $\partial B_{R}$. 
Define 
\begin{equation*}
\h^k(x)
=
r_k\w^k(\frac{x}{r_k})
\enskip\text{for}\enskip x\in B_{R r_{k}}
\end{equation*}
and extend it by $\uu$ outside $B_{R r_k}$. 

It is easy to see that $d(\uu,\h^k)\to 0$ as $k\to\infty$ where 
the distance $d$ is defined in \eqref{equ400}. 
Because $\uu$ is a (local) minimizer 
it follows that (for small enough $r_k$, i.e. large $k$) 
we have 
\begin{equation*}
\int_{B_{R r_k}}
\bigl(\vert \nabla \uu\vert^{2}+Q^2\chi_{\{\vert \uu\vert>0\}} \bigr)dx
\leq 
\int_{B_{R r_k}}
\bigl(\vert\nabla \h^k\vert^{2}+Q^2\chi_{\{\vert \h^k\vert>0\}} \bigr)dx\text{.} 
\end{equation*}
Thus for large $k$ we have 
\begin{equation*}
\int_{B_{R}}\bigl(\vert\nabla \uu_{r_k}\vert^{2}+Q^2_{r_k}\chi_{\{\vert \uu_{r_k}\vert>0\}} \bigr)dy
\leq 
\int_{B_{R}}\bigl(   \vert\nabla \w^k \vert^{2}+Q^2_{r_k}\chi_{\{\vert \w^k\vert>0\}}   \bigr)dy\text{.} 
\end{equation*}
It follows that 
\begin{multline}\label{equ5500} 
\varliminf_{k\to\infty}
\int_{B_{R}}\bigl(
\vert\nabla \uu_{r_k}\vert^{2}
-\vert\nabla \w^k \vert^{2}
\bigr)dy
+\varliminf_{k\to\infty}
\int_{B_{R}}Q^{2}_{r_k}\chi_{\{\vert \uu_{r_k}\vert>0\}}dy \\
\leq\varliminf_{k\to\infty}
\int_{B_{R}}Q^{2}_{r_k}\chi_{\{\vert \w^k\vert>0\}}dy\text{.} 
\end{multline}

For $i=1,\cdots,m$ we compute 
\begin{multline*}
\int_{B_{R}}\bigl(
\vert \nabla u_{r_k,i}\vert^{2}-\vert \nabla w^k_i\vert^{2}
\bigr)dy \\
\geq 
\int_{B_{R}}\bigl(
\vert \nabla u_{r_k,i}\vert^{2}
-\bigl\vert \nabla (w_i+(1-\eta)(u_{r_k,i}-v_i))\bigr\vert^{2}
\bigr)dy \\
=
\int_{B_{R}}\bigl(
\vert\nabla v_i\vert^{2}-\vert \nabla w_i\vert^{2}
\bigr)dy \\
+
\int_{B_{R}}\bigl(
(1-(1-\eta)^{2})\vert \nabla (u_{r_k,i}-v_i)\vert^{2}
-\vert\nabla\eta\vert^{2}(u_{r_k,i}-v_i)^{2} \\
+2(u_{r_k,i}-v_i)\nabla\phi\cdot\nabla\eta 
+2\nabla v_i\cdot \nabla (u_{r_k,i}-v_i) \\
-(1-\eta)\nabla w_i\cdot\nabla(u_{r_k,i}-v_i)  
+2(u_{r_k,i}-v_i)(1-\eta)\nabla\eta\cdot\nabla(u_{r_k,i}-v_i)
\bigr)dy \\
\geq 
\int_{B_{R}}\bigl(
\vert\nabla v_i\vert^{2}-\vert \nabla w_i\vert^{2}
\bigr)dy \\
+
\int_{B_{R}}\bigl(
-\vert\nabla\eta\vert^{2}(u_{r_k,i}-v_i)^{2} 
+2(u_{r_k,i}-v_i)\nabla\phi\cdot\nabla\eta 
+2\nabla v_i\cdot \nabla (u_{r_k,i}-v_i) \\
-(1-\eta)\nabla\phi\cdot\nabla(u_{r_k,i}-v_i)  
+2(u_{r_k,i}-v_i)(1-\eta)\nabla\eta\cdot\nabla (u_{r_k,i}-v_i)
\bigr)dy
\end{multline*}
thus 
\begin{equation}\label{equ5600} 
\varliminf_{k\to\infty}
\int_{B_{R}}\bigl(
\vert \nabla \uu_{r_k}\vert^{2}
-
\vert \nabla \w^k\vert^{2}
\bigr)dy
\geq 
\int_{B_{R}}\bigl(
\vert\nabla \vv\vert^{2}-\vert \nabla\w\vert^{2}
\bigr)dy\text{.}
\end{equation}
For a.e. $y\in\{\vert\vv\vert>0\}$ and large enough $k$ we have $\vert\uu_{r_k}(y)\vert>0$. 
Thus for a.e. $y\in B_{R}$ we have 
$\chi_{\{\vert \vv(y)\vert>0\}}\leq\varliminf_{k\to\infty}\chi_{\{\vert \uu_{r_k}(y)\vert>0\}}$. 
Using Fatou's Lemma we have 
\begin{multline}\label{equ5700} 
\varliminf_{k\to\infty}\int_{B_{R}} Q^{2}_{r_k} \chi_{\{\vert \uu_{r_k}\vert>0\}}dy \\
=
\varliminf_{k\to\infty}\int_{B_{R}} 
\bigl(
Q^{2}(0) \chi_{\{\vert \uu_{r_k}\vert>0\}}
+(Q^{2}_{r_k}-Q^{2}(0)) \chi_{\{\vert \uu_{r_k}\vert>0\}}
\bigr)
dy \\
\geq 
\varliminf_{k\to\infty}\int_{B_{R}} 
\bigl(
Q^{2}(0) \chi_{\{\vert \uu_{r_k}\vert>0\}}-2Q_{max}\vert Q_{r_k}-Q(0)\vert 
\bigr)
dy \\
\geq 
\varliminf_{k\to\infty}\int_{B_{R}} 
Q^{2}(0) \chi_{\{\vert \uu_{r_k}\vert>0\}}dy 
-2Q_{max} \varlimsup_{k\to\infty}\int_{B_{R}} \vert Q_{r_k}-Q(0)\vert dy \\
=
\varliminf_{k\to\infty}\int_{B_{R}} 
Q^{2}(0) \chi_{\{\vert \uu_{r_k}\vert>0\}}
dy 
\geq 
\int_{B_{R}} 
Q^{2}(0) 
\varliminf_{k\to\infty}\chi_{\{\vert \uu_{r_k}\vert>0\}}
dy \\
\geq 
\int_{B_{R}} 
Q^{2}(0) 
\chi_{\{\vert \vv\vert>0\}}
dy
\text{.}
\end{multline}

Next we estimate 
\begin{multline}\label{equ5800} 
\varliminf_{k\to\infty}
\int_{B_{R}}
Q^{2}_{r_k}
\chi_{\{\vert\w^k\vert>0\}}dy \\
=
\varliminf_{k\to\infty}
\int_{B_{R}}
(Q^{2}(0)+(Q^{2}_{r_k}-Q^{2}(0)))
\chi_{\{\vert\w^k\vert>0\}}dy \\
\leq 
\varliminf_{k\to\infty}
\int_{B_{R}}
Q^{2}(0)
\chi_{\{\vert\w^k\vert>0\}}dy 
+
\varlimsup_{k\to\infty}
\int_{B_{R}}
(Q^{2}_{r_k}-Q^{2}(0))
\chi_{\{\vert\w^k\vert>0\}}dy \\
\leq 
\varliminf_{k\to\infty}
\int_{B_{R}}
Q^{2}(0)
\chi_{\{\vert\w^k\vert>0\}}dy 
+
2Q_{max}
\varlimsup_{k\to\infty}
\int_{B_{R}}
\vert Q_{r_k}-Q(0)\vert 
dy \\
=
\varliminf_{k\to\infty}
\int_{B_{R}}
Q^{2}(0)
\chi_{\{\vert\w^k\vert>0\}}dy \\
=
\varliminf_{k\to\infty}
\int_{B_r\cup(B_{R}\backslash B_r)}
Q^{2}(0)
\chi_{\{
\vert (\w+(1-\eta)(\uu_{r_k}-\vv))^{+}\vert>0
\}}dy \\
\leq 
\int_{B_r}Q^{2}(0)\chi_{\{\vert\w\vert>0\}}dy
+Q^{2}(0)\vert B_{R}\backslash B_r\vert\text{.}
\end{multline}

From \eqref{equ5500}, \eqref{equ5600}, \eqref{equ5700} and \eqref{equ5800} 
it follows that 
\begin{multline*}
\int_{B_{R}}\bigl(\vert\nabla \vv\vert^{2}-\vert\nabla \w\vert^{2}\bigr)dy
+\int_{B_{R}}Q^{2}(0)\chi_{\{\vert \vv\vert>0\}}dy \\
\leq\int_{B_r}Q^{2}(0)\chi_{\{\vert \w\vert>0\}}dy
+Q^{2}(0)\vert B_{R}\backslash B_r\vert 
\end{multline*}
letting $r\to R$ we obtain \eqref{equ5400}. 
\end{proof}

We say that the sequence of sets $D_{k}\subset\mathbb{R}^{n}$ locally in Hausdorff distance 
converges to the set $D\subset \mathbb{R}^{n}$ as $k\to\infty$, if for all $R>0$ we have 
$D_{k}\cup B_R^{c}\to D\cup B_R^c$ in Hausdorff distance as $k\to\infty$. 

\begin{definition}\label{definition1200} 
Let $\uu$ be a minimizer in $\Omega$ with $0\in\Omega$ and $\uu(0)=0$. 
We call $\vv\in C^{0,1}_{loc}(\mathbb{R}^{n};\mathbb{R}^{m})$ 
a blowup limit of $\uu$ at the origin if there 
is a sequence $B_{r_k}\subset\Omega$ with $r_k\to 0$ as $k\to\infty$ such that 
\begin{equation*}
\begin{aligned}
& \uu_{r_k}\to \vv\enskip\text{in}\enskip C^{0,\alpha}_{loc}( \mathbb{R}^{n} ;\mathbb{R}^{m})
\enskip\text{for all}\enskip 0\leq\alpha<1\text{,} \\ 
& \uu_{r_k}\to \vv\enskip\text{in}\enskip W^{1,p}_{loc}(\mathbb{R}^{n};\mathbb{R}^{m})
\enskip\text{for all}\enskip 1\leq p<\infty\text{,} \\ 
& \nabla \uu_{r_k}\to \nabla \vv\enskip\text{a.e. in}\enskip\mathbb{R}^{n}\text{,} \\ 
& \chi_{\{\vert \uu_{r_k}\vert>0\}}\to\chi_{\{\vert \vv\vert>0\}}
\enskip\text{in}\enskip L^{p}_{loc}(\mathbb{R}^{n})\enskip\text{for all}\enskip 1\leq p<\infty\text{,} \\
& \chi_{\{\vert \uu_{r_k}\vert>0\}}\to\chi_{\{\vert \vv\vert>0\}}
\enskip\text{a.e. in}\enskip\mathbb{R}^{n}\text{,} \\
& \{\vert \uu_{r_k}\vert>0\}\to\{\vert \vv\vert>0\}
\enskip\text{locally in Hausdorff distance}\text{,} \\ 
& \{\vert \uu_{r_k}\vert=0\}\to\{\vert \vv\vert=0\}
\enskip\text{locally in Hausdorff distance} \\ 
\end{aligned}
\end{equation*}
and 
\begin{equation*}
\partial\{\vert \uu_{r_k}\vert>0\}\to\partial\{\vert \vv\vert>0\}
\enskip\text{locally in Hausdorff distance}\text{.}
\end{equation*}

\end{definition}

\begin{lemma}\label{lemma2100} 
Let $\uu$ be a minimizer in $\Omega$, $0\in\Omega$, $\uu(0)=0$, 
$r_k\to 0$ as $k\to\infty$ 
and \eqref{equ5300} hold. 
Then there exists a subsequence $k_j$ and $\vv\in C^{0,1}_{loc}(\mathbb{R}^{n};\mathbb{R}^{m})$ 
such that $\vv$ is a blowup limit of $\uu$ at the origin with respect to the sequence 
$\uu_{r_{k_j}}$. 
\end{lemma}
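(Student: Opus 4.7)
The plan is to extract the subsequence step by step, relying on the uniform Lipschitz control given by Corollary \ref{corollary1000}, on the minimality property (via Lemma \ref{lemma2000}) to strengthen weak gradient convergence, and on the two-sided nondegeneracy (Theorem \ref{theorem1200}, Lemmas \ref{lemma1500} and \ref{lemma1600}) to control the positive sets and free boundaries.

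First I would observe that since $\uu(0)=0$ and $B_{r_k}\subset\Omega$ with $r_k\to 0$, applying Corollary \ref{corollary1000} at the origin with radius $3Rr_k$ (valid for $k$ large, $R$ arbitrary) yields $[\uu]_{C^{0,1}(B_{Rr_k})}\leq CQ_{max}$, hence $[\uu_{r_k}]_{C^{0,1}(B_R)}\leq CQ_{max}$ together with $\uu_{r_k}(0)=0$. A diagonal Arzel\`a--Ascoli argument in $R$ produces a subsequence (relabeled $k_j$) converging uniformly on every $B_R$, hence in $C^{0,\alpha}_{loc}$ for every $0\leq\alpha<1$ and weakly-$\ast$ in $W^{1,\infty}_{loc}$, to some $\vv\in C^{0,1}_{loc}(\R^n;\R^m)$. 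The hypotheses of Lemma \ref{lemma2000} are now in force, so $\vv$ is an absolute minimizer of the constant-$Q(0)$ functional on every ball $B_R$.

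To upgrade the weak gradient convergence to strong $W^{1,p}_{loc}$ convergence I would replay the competitor construction already used in the proof of Lemma \ref{lemma2000}: glue $\vv$ to $\uu_{r_{k_j}}$ via a cutoff $\eta$ concentrated in an annulus $B_R\setminus B_r$ to obtain an admissible function sharing the positivity set of $\uu_{r_{k_j}}$ outside $B_r$, and feed it into the minimality inequality for $\uu_{r_{k_j}}$. Combined with weak lower semicontinuity, this yields
\begin{equation*}
\lim_{j\to\infty}\int_{B_r}|\nabla\uu_{r_{k_j}}|^2\,dx=\int_{B_r}|\nabla\vv|^2\,dx
\end{equation*}
for every $r<R$, which is strong $L^2_{loc}$ convergence of the gradients. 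The uniform $L^\infty$ bound on $\nabla\uu_{r_{k_j}}$ then promotes this to strong $W^{1,p}_{loc}$ convergence for every $p<\infty$ by the interpolation $\|\cdot\|_p\leq\|\cdot\|_\infty^{1-2/p}\|\cdot\|_2^{2/p}$, and a further subsequence delivers a.e.\ convergence of the gradients.

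For the pointwise convergence $\chi_{\{|\uu_{r_{k_j}}|>0\}}\to\chi_{\{|\vv|>0\}}$ a.e., the open set $\{|\vv|>0\}$ is handled by uniform convergence; on the interior of $\{|\vv|=0\}$ the nondegeneracy of Theorem \ref{theorem1200}, applied to each (minimizer) $\uu_{r_{k_j}}$, prevents spurious positive points, since a positive point in a small ball contained in this interior would force $\sup|\uu_{r_{k_j}}|\geq cQ_{min}r$ on a comparable ball, contradicting uniform convergence to zero there. Since $\partial\{|\vv|>0\}$ has zero Lebesgue measure (Lemma \ref{lemma1700} gives locally finite $\mathcal{H}^{n-1}$ measure), dominated convergence upgrades the a.e. statement to $L^p_{loc}$ convergence for all $p<\infty$. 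Finally, the Hausdorff convergence of $\{|\uu_{r_{k_j}}|>0\}$, $\{|\uu_{r_{k_j}}|=0\}$ and $\partial\{|\uu_{r_{k_j}}|>0\}$ is obtained by coupling this pointwise information with the two-sided uniform density estimates (Lemmas \ref{lemma1500}, \ref{lemma1600}) applied both to $\uu_{r_{k_j}}$ and to $\vv$, which prevent the positive set and the zero set from pinching against each other and force free boundary points to be limits of free boundary points.

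I expect the genuinely delicate step to be the strong Sobolev convergence coupled with the Hausdorff convergence of the free boundaries, since both require carefully combining minimality with the two-sided density; in the $n=2$ local-minimizer case those density constants depend on $\uu$, but $\uu$ is fixed throughout the blowup so the constants stay uniform in $k$ and do not spoil compactness.
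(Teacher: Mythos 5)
Your proof takes essentially the same route as the paper: Lipschitz compactness from Corollary \ref{corollary1000} plus Arzel\`a--Ascoli and diagonalization to extract $\vv$, then Lemma \ref{lemma2000} to get minimality of $\vv$, then the standard arguments for the remaining convergences. The paper itself stops after the first two steps and delegates the gradient, characteristic-function, and Hausdorff convergences to the literature (\cite{AltCaffarelli1981,AltCaffarelliFriedman1984a}), whereas you correctly spell those out (competitor-cutoff inequality for strong $L^2$ gradient convergence, interpolation with the uniform $L^\infty$ bound for $W^{1,p}_{loc}$, nondegeneracy and the measure-zero boundary for a.e.\ and $L^p$ convergence of characteristic functions, two-sided density for Hausdorff convergence); this is exactly what the cited references do, so the argument is sound and matches the paper's intent.
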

\begin{proof}
Let $\uu$ and $r_k$ as in the statement of the Lemma. 
Let $R>0$, then by Corollary \ref{corollary1000} for 
large enough $k$ we have that 
$\uu_{r_k}$ are uniformly bounded in $C^{0,1}(\overline{B}_{R};\mathbb{R}^{m})$. 
Thus there exists a subsequence $k_j$ 
and $\vv\in C^{0,1}_{loc}(\mathbb{R}^{n};\mathbb{R}^{m})$ 
such that $\uu_{r_{k_j}}\to \vv$ in $C^{0,\alpha}(\overline{B}_{R};\mathbb{R}^{m})$ 
for all $0\leq\alpha<1$ and $R>0$ (using a diagonalization argument) 
and weakly in $H^{1}(B_{R};\mathbb{R}^{m})$ for all $R>0$. 

From Lemma \ref{lemma2000} it follows that $\vv$ is an 
absolute minimizer in $B_{R}$ 
with constant $Q(0)$ for all $R>0$. 
From part \eqref{lemma1700-2} in Lemma 
\ref{lemma1700} it follows that $\vert B_{R}\cap\partial\{\vert \vv\vert>0\}\vert=0$ 
for all $R>0$. 

Then by similar arguments as in \cite{AltCaffarelli1981,AltCaffarelliFriedman1984a} we 
obtain the rest of the convergences. 
\end{proof}

\section{Reduction to Scalar Weak Solution\\ 
(Proofs of Theorems \ref{theorem1300} and \ref{theorem1350}) }\label{section1800}

\subsection{Equation satisfied by $u_i$}\label{subsection1500} 

\begin{proof}[Proof of Theorem \ref{theorem1300}]
For $i=1,\cdots,m$, by Lemma \ref{lemma1700} part \eqref{lemma1700-3} there exists a nonnegative 
Borel function $q_i$ such that 
\begin{equation*}
\Delta u_i=q_i\mathcal{H}^{n-1}\llcorner(\Omega\cap\partial\{\vert \uu\vert>0\})\text{.} 
\end{equation*}

By Lemma \ref{lemma1800} we obtain 
\begin{equation}\label{equ5930} 
\Delta u_i=q_i\mathcal{H}^{n-1}\llcorner(\Omega\cap\partial^{*}\{\vert \uu\vert>0\})\text{.} 
\end{equation}

By Theorem 4.8 and Remark 4.9 in \cite{AltCaffarelli1981} for 
$\mathcal{H}^{n-1}$ a.e. $x\in\Omega\cap\partial^{*}\{\vert \uu\vert>0\}$, 
$x$ is a Lebesgue point of $q_i$ with respect to the measure 
$\mathcal{H}^{n-1}\llcorner(\Omega\cap\partial^{*}\{\vert \uu\vert>0\})$ 
and as $y\to 0$ we have 
\begin{equation}\label{equ6000} 
u_i(x+y)
=
q_{i}(x)\bigl(-\nu_{\{\vert \uu\vert>0\}}(x)\cdot y\bigr)^{+}+o(\vert y\vert)
\text{.} 
\end{equation}

We might assume that for $\mathcal{H}^{n-1}$ a.e. 
$x\in\Omega\cap\partial^{*}\{\vert \uu\vert>0\}$,  
\eqref{equ6000} holds 
for all $i=1,\cdots,m$. 

From \eqref{equ6000} it follows that for $\mathcal{H}^{n-1}$ a.e. 
$x\in\Omega\cap\partial^{*}\{\vert \uu\vert>0\}$ as $y\to0$ we have  
\begin{equation}\label{equ6010} 
\vert \uu(x+y)\vert
=
\vert \q(x)\vert\bigl(-\nu_{\{\vert \uu\vert>0\}}(x)\cdot y\bigr)^{+}+o(\vert y\vert)
\end{equation}
here $\q(x)$ is the vector function with components $q_i(x)$. 

Now using the first inequality in \eqref{equ3610}, 
\eqref{equ6000} and \eqref{equ6010} for $y\in\{\vert \uu\vert >0 \}-x$, $y\to0$ non-tangentially, i.e. 
for some $\eta>0$ we have $-\nu_{\{\vert \uu\vert>0\}}(x)\cdot \frac{y}{\vert y\vert}\geq\eta$, 
we might compute and obtain 
\begin{equation*}
\bigl\vert
\frac{u_{i}(x+y)}{\vert \uu(x+y)\vert}
-\frac{q_{i}(x)}{\vert \q(x)\vert}
\bigr\vert 
\leq 
\frac{
o(1)}{
\vert \q(x)\vert\eta}
\end{equation*}
which  proves \eqref{equ500}. 

From \eqref{equ6000} it follows that 
for $\mathcal{H}^{n-1}$ a.e. 
$x\in\Omega\cap\partial^{*}\{\vert \uu\vert>0\}$ 
the blowup of $\uu$ at $x$ is unique and given by 
\begin{equation}\label{equ6030} 
\uu_{x,r}(y)
\to 
\vv_{x}(y):=\q(x)\bigl(-\nu_{\{\vert \uu\vert>0\}}(x)\cdot y\bigr)^{+}
\enskip\text{as}\enskip r\to 0\text{.} 
\end{equation}

From Lemma \ref{lemma2000} it follows that 
$\vv_{x}$ is a global absolute minimizer with constant $Q(x)$. 

For short notation let us denote $\nu_{\{\vert \uu\vert>0\}}(x)=\nu(x)$. 
Let $\varphi\in C^{\infty}_{c}(B)$ and set $\Psi_{x}(y):=\varphi(y)\nu(x)$, 
then 
\begin{equation*}
\{\vert \vv\vert>0\}
=\{y\in \mathbb{R}^{n}\enskip\vert\enskip \nu(x) \cdot y<0\}
\text{,}\enskip 
\nabla v_{x,i}(y)=-q_i(x) \chi_{\{\nu(x)\cdot y<0\}}\nu(x)\text{,}
\end{equation*}
\begin{equation*}
\nabla\Psi_{x}(y)=\nabla\varphi(y)\nu(x)^{T}\text{,}\qquad 
\operatorname{div}\Psi_{x}(y)=\partial_{\nu(x)}\varphi(y)\text{,} 
\end{equation*}
and 
\begin{multline*}
\Psi_{x}(y)\cdot d\mu(y)
=-\Psi_{x}(y)\cdot \nu(x)d(\mathcal{H}^{n-1}\llcorner\{ z\in\mathbb{R}^{n}\enskip\vert\enskip  \nu(x) \cdot z=0\})(y) \\
=-\varphi(y)d(\mathcal{H}^{n-1}\llcorner\{ z\in\mathbb{R}^{n}\enskip\vert\enskip  \nu(x) \cdot z=0\})(y)
\end{multline*}
(the vector Radon measure $\mu$ is defined in \eqref{equ3850}). 

By domain variation formula proved in Lemma \ref{lemma1900} 
we have 
\begin{multline*}
0
=
\int_{B}
\Bigl(
2\sum_{i=1}^{m}
(\nabla v_i^{T}
\nabla\Psi_{x}
\nabla v_i)
-\vert\nabla \vv\vert^{2}
\operatorname{div}\Psi_{x}
\Bigr)dy
+\int_{B}Q^2(x)
\Psi_{x}(y)\cdot d\mu(y) \\
=
\int_{B\cap\{\nu(x)\cdot y<0\}}
\Bigl(
2\vert \q(x)\vert^{2}\partial_{\nu(x)}\varphi(y)
-\vert \q(x)\vert^{2}\partial_{\nu(x)}\varphi(y)
\Bigr)dy \\
-\int_{B\cap\{\nu(x)\cdot y=0\}}Q^2(x)\varphi(y)d\sigma(y) \\
=
\vert \q(x)\vert^{2}
\int_{B\cap\{\nu(x)\cdot y<0\}}
\partial_{\nu(x)}\varphi(y)dy 
-Q^2(x)
\int_{B\cap\{\nu(x)\cdot y=0\}}\varphi(y)d\sigma(y) \\
=
\vert \q(x)\vert^{2}
\int_{B\cap\{\nu(x)\cdot y=0\}}\varphi(y)d\sigma(y) 
-Q^2(x)
\int_{B\cap\{\nu(x)\cdot y=0\}}\varphi(y)d\sigma(y) \\
=
\bigl(\vert \q(x)\vert^{2}-Q^2(x)\bigr)
\int_{B\cap\{\nu(x)\cdot y=0\}}\varphi(y) d\sigma(y)\text{.} 
\end{multline*}

Now by choosing $\varphi\in C^{\infty}_{c}(B)$ 
such that 
\begin{equation*}
\int_{B\cap\{\nu(x)\cdot y=0\}}\varphi(y) d\sigma(y)
\not=0
\end{equation*}
we obtain that $\vert \q(x)\vert=Q(x)$. 
It follows that 
\begin{equation}\label{equ6100} 
\q(x)=\w(x)Q(x)\text{,}
\end{equation} 
where $\w$ is defined in \eqref{equ500}. 

By \eqref{equ5930} and \eqref{equ6100} we obtain \eqref{equ600} 
and this finishes the proof of the theorem. 
\end{proof}

\subsection{$\{\vert \uu\vert>0\}$ is a Non-tangentially Accessible Domain}
\label{subsection1600}  

\begin{proof}[Proof of Theorem \ref{theorem1350}]
We might assume that $x_{0}=0$. 
By Corollary \ref{corollary1000} we have 
\begin{equation*}
[\uu]_{C^{0,1}(B_{\frac{1}{3}r_{0}})}\leq CQ_{max}\text{.}
\end{equation*}

Thus for $K=C_1CQ_{max}$ we have 
\begin{equation*}
[\mathcal{U}]_{C^{0,1}(B_{\frac{1}{3}r_{0}})}\leq K
\end{equation*}
where $\mathcal{U}$ is defined in \eqref{equ3600}. 

For $x\in B_{\frac{1}{3}r_{0}}\cap\{\vert \uu\vert>0\}$ 
let $\delta=\operatorname{dist}(x,\{\vert \uu\vert=0\})$. 
Then $\delta\leq\vert x\vert<\frac{1}{3}r_{0}$, thus 
\begin{equation*}
B_{\delta}(x)\subset B_{\frac{1}{3}r_{0}}(x)\subset B_{\frac{1}{3}r_{0}+\vert x\vert}\subset B_{\frac{2}{3}r_{0}}\text{.} 
\end{equation*}
Because $u_i$ is harmonic in $B_{\delta}(x)$ using Poisson representation and 
Theorem \ref{theorem1200} we have 
\begin{equation*}
\mathcal{U}(x)
=
\fint_{\partial B_{\delta}(x)}\mathcal{U}d\sigma
\geq c_{2}\sup_{B_{\frac{1}{2}\delta}(x)}\mathcal{U}
\geq c_{2}c_{3}\sup_{B_{\frac{1}{2}\delta}(x)}\vert \uu\vert 
\geq c_{2}c_3c_{\rho} Q_{min} \frac{1}{2}\delta
>k\delta
\end{equation*}
where $0<\rho<1$ (and in the case when $n=2$ and $u$ is a local minimizer then $\rho$ is small enough) 
and we have denoted $k=c_{2}c_3c_{\rho} Q_{min} \frac{1}{4}$. 
Thus 
\begin{equation*}
\mathcal{U}(x)>k\operatorname{dist}(x,\{\vert \uu\vert=0\})
\enskip\text{for}\enskip x\in B_{\frac{1}{3}r_{0}}\cap\{\vert \uu\vert>0\}\text{.}
\end{equation*}

Now one follows the proof given in Section 4 of \cite{AguileraCaffarelliSpruck1987} 
by considering the function $\mathcal{U}$ in the domain 
$B_{\frac{1}{3}r_{0}}\cap\{\vert \uu\vert>0\}$. 
\end{proof}

\subsection{Reduction to nondegenerate scalar weak solution}\label{subsection1700}

In the following $u$ is a minimizer and 
$x_{0},r_{0},\epsilon_1,\tilde\epsilon_1,M$ and $c$ are 
as in Theorem \ref{theorem1350}. 

In this subsection let us denote 
\begin{equation*}
D_{1}=B_{\epsilon_{1} r_{0}}(x_{0})\cap\{\vert \uu\vert>0\}\text{.} 
\end{equation*} 

By the definition of a non-tangentially accessible domain 
(see Definition \ref{definition4300} in the appendix) 
for any $y\in \partial D_{1}$ 
and $0<r<\xi=\tilde\epsilon_1 r_0$ there exists 
$a_{r}(y)\in D_{1}\cap B_{r}(y)$ such that 
$\operatorname{dist}\bigl(a_{r}(y),\partial D_{1}\bigr)>\frac{r}{M}$. 

Let the constants $0<\lambda<1$, $0<c_1<1$, $0<c_2<1$, 
$C_{3}>1$ and $C_{4}>0$ be as in Lemma \ref{lemma3400} for the 
non-tangentially accessible domain $D_{1}$.

\begin{lemma}\label{lemma2200} 
There exists $0<\epsilon_{2}<1$ such that 
\begin{equation}\label{equ6300} 
\frac{1}{mC_{4}}\mathcal{U}(y)\leq u_{i_{0}}(y)
\enskip\text{for}\enskip y\in B_{\epsilon_{2}\epsilon_{1}r_{0}}(x_{0})\cap\{\vert\uu\vert>0\}
\end{equation}
where $i_{0}\in\{1,\cdots,m\}$ is such that 
\begin{equation}\label{equ6310} 
u_{i_{0}}(a_{r_{1}}(x_0))=\max_{i\in\{1,\cdots,m\}}u_{i}(a_{r_{1}}(x_0))
\end{equation}
with  
\begin{equation}\label{equ6310.3} 
r_{1}=\frac{1}{c_2}\epsilon_{2}\epsilon_1r_0
=\min(\frac{1}{2}c_{1}\tilde\epsilon_{1},\frac{\epsilon_{1}}{2C_3})
r_0
\text{.}
\end{equation}
\end{lemma}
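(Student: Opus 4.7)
The plan is to apply the boundary comparison principle for non-tangentially accessible domains (Lemma \ref{lemma3400}) to each pair $(u_i, u_{i_0})$, viewed as nonnegative harmonic functions in the NTA domain $D_1$ that vanish on the free-boundary portion of $\partial D_1$. The special choice of $i_0$ via \eqref{equ6310} ensures that the ratio $u_i/u_{i_0}$ is controlled by $1$ at the corkscrew point $a_{r_1}(x_0)$; summing over $i$ then delivers the desired comparison with $\mathcal{U}$.

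First, recall that on the open set $\{\vert \uu\vert>0\}$ each component $u_i$ is harmonic (see the argument in the proof of Lemma \ref{lemma1400}), so each $u_i$ is a nonnegative harmonic function on the connected NTA domain $D_1$. By the strong minimum principle, every $u_i$ is either identically zero on $D_1$ or strictly positive there. The index $i_0$ chosen by \eqref{equ6310} must satisfy $u_{i_0}(a_{r_1}(x_0))>0$: otherwise every $u_i(a_{r_1}(x_0))$ would vanish, contradicting $a_{r_1}(x_0)\in D_1\subset\{\vert \uu\vert>0\}$. Hence $u_{i_0}>0$ throughout $D_1$.

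Second, I would apply Lemma \ref{lemma3400} at the boundary point $x_0\in\partial D_1$ with radius $r_1$. The definition \eqref{equ6310.3} is tailored so that the two admissibility requirements hold simultaneously: the inequality $r_1\leq\tfrac{1}{2}c_1\tilde\epsilon_1 r_0$ keeps $r_1$ within the NTA range dictated by $\xi=\tilde\epsilon_1 r_0$, and $C_3 r_1\leq\tfrac{1}{2}\epsilon_1 r_0$ forces the enlarged ball $B_{C_3 r_1}(x_0)$ to lie inside $B_{\epsilon_1 r_0}(x_0)$, far from the outer part of $\partial D_1$. Consequently $\partial D_1\cap B_{C_3 r_1}(x_0)$ is a subset of the free boundary $\partial\{\vert \uu\vert>0\}$, on which every $u_i$ vanishes continuously by the Lipschitz bound of Corollary \ref{corollary1000}. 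For each $i$ with $u_i\not\equiv 0$ on $D_1$, Lemma \ref{lemma3400} yields, for all $y\in D_1\cap B_{c_2 r_1}(x_0)$,
\begin{equation*}
\frac{u_i(y)}{u_{i_0}(y)}\leq C_4\,\frac{u_i(a_{r_1}(x_0))}{u_{i_0}(a_{r_1}(x_0))}\leq C_4,
\end{equation*}
the last inequality being precisely the maximality property \eqref{equ6310} of $i_0$. For indices $i$ with $u_i\equiv 0$ on $D_1$ the estimate is trivial.

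Finally, since $c_2 r_1=\epsilon_2\epsilon_1 r_0$ by \eqref{equ6310.3} and $B_{\epsilon_2\epsilon_1 r_0}(x_0)\cap\{\vert \uu\vert>0\}\subset D_1$, summing the above inequality over $i=1,\ldots,m$ gives $\mathcal{U}(y)\leq mC_4\,u_{i_0}(y)$ on $B_{\epsilon_2\epsilon_1 r_0}(x_0)\cap\{\vert \uu\vert>0\}$, which rearranges to \eqref{equ6300}. The only subtle point I anticipate is verifying that the hypotheses of the NTA comparison principle are met at the precise scale $r_1$, in particular that $\partial D_1\cap B_{C_3 r_1}(x_0)$ is contained in the free boundary so that both numerator and denominator vanish there; this is exactly what the second term in the minimum defining $r_1$ secures, so once this bookkeeping is handled the result is a direct consequence of the boundary Harnack principle together with the defining property of $i_0$.
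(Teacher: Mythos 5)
Your proof is correct and reaches the result by the same mechanism — the boundary Harnack comparison principle of Lemma \ref{lemma3400} applied in the NTA domain $D_1$ at the point $x_0$ and scale $r_1$, with the same verification that $B_{C_3 r_1}(x_0)\cap\partial D_1$ sits inside the free boundary — but with a small structural variation. The paper takes $v_1=\mathcal{U}$ and $v_2=u_{i}$, applies \eqref{equ7700} once with $i=i_0$, and closes by noting that the maximality in \eqref{equ6310} yields $u_{i_0}(a_{r_1}(x_0))\geq\frac{1}{m}\mathcal{U}(a_{r_1}(x_0))$; in particular $\mathcal{U}>0$ on $D_1$ is immediate from $|\uu|>0$, so positivity of $v_1$ needs no separate argument. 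You instead take $v_1=u_{i_0}$ and $v_2=u_i$, use maximality to bound the ratio at the corkscrew point by $1$, and sum over $i$. This requires you to first establish $u_{i_0}>0$ throughout $D_1$ (via the strong minimum principle on the connected NTA domain) and to handle separately the indices with $u_i\equiv 0$, both of which you do; the cost is $m$ applications of the comparison principle instead of one and a bit more case-checking, while the benefit is arguably a cleaner conceptual statement ($u_i\leq C_4 u_{i_0}$ componentwise). Both routes land on exactly the constant $\frac{1}{mC_4}$, and your handling of the admissibility bookkeeping for $r_1$ (the two terms in the minimum in \eqref{equ6310.3}) matches the paper's inclusion \eqref{equ6310.5}.
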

\begin{proof}
Let $r_{1}=\min(\frac{1}{2}c_{1}\tilde\epsilon_{1}r_0,\frac{1}{2C_3}\epsilon_{1}r_0)$. 
We have 
\begin{multline}\label{equ6310.5} 
B_{C_3r_{1}}(x_0)\cap\partial D_{1}
\subset B_{\frac{1}{2}\epsilon_{1} r_0}(x_0)\cap\partial (B_{\epsilon_{1} r_0}(x_0)\cap\{\vert u\vert>0\}) \\
=B_{\frac{1}{2}\epsilon_{1} r_0}(x_0)\cap\partial \{\vert u\vert>0\}\text{.} 
\end{multline}

Thus we have that $\uu$ vanishes on $B_{C_{3}r_{1}}(x_0)\cap\partial D_1$ 
and also we have $r_{1}<c_1\xi$ ($\xi=\tilde\epsilon_{1}r_{0}$). 
From \eqref{equ6310.5}, because of non-tangential accessibility and the first inequality in \eqref{equ7700} 
for all $i\in\{1,\cdots,m\}$ we have 
\begin{equation}\label{equ6400}
\frac{1}{C_{4}}\frac{u_{i}(a_{r_{1}}(x_0))}{\mathcal{U}(a_{r_{1}}(x_0))}\mathcal{U}(y)
\leq u_{i}(y)\enskip\text{for}\enskip y\in B_{c_{2}r_{1}}(x_0)\cap D_{1}\text{.} 
\end{equation}

Let us define 
\begin{equation*}
\epsilon_{2}=\frac{c_2 r_{1}}{\epsilon_1 r_0}
\end{equation*}
it is easy to see that $0<\epsilon_{2}<\frac{1}{2}c_2<1$. 
Now for $i_{0}\in\{1,\cdots,m\}$ such that \eqref{equ6310} holds, we have 
\begin{equation}\label{equ6500}
u_{i_{0}}(a_{r_{1}}(x_0))\geq\frac{1}{m}\mathcal{U}(a_{r_{1}}(x_0))\text{.}
\end{equation}
From \eqref{equ6400} and \eqref{equ6500} the assertion follows. 
\end{proof}

In the following $\epsilon_{2}$ and $r_{1}$ are as in Lemma \ref{lemma2200}. 

\begin{lemma}\label{lemma2300} 
If $j\in\{1,\cdots,m\}$ such that $u_j>0$ in $D_{1}$ then we have 
\begin{equation*}
[\frac{u_{j}}{\vert \uu\vert}]_{C^{\lambda}(B_{\epsilon_{2}\epsilon_1 r_0}(x_0)\cap \{\vert \uu\vert>0\})}
\leq 
\frac{C_{5}}{r_0^{\lambda}}
\Bigl(
\sum_{i\not=j}
\bigl(
\frac{u_{i}(a_{r_{1}}(x_0))}{u_{j}(a_{r_{1}}(x_0))}
\bigr)^{2}
\Bigr)^{\frac{1}{2}}
\end{equation*}
where 
\begin{equation*}
C_5=(\frac{c_2}{\epsilon_{2}\epsilon_1})^{\lambda}C_4
\end{equation*}
and $r_{1}$ is defined in \eqref{equ6310.3}. 
\end{lemma}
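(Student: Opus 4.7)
The strategy is to apply the boundary Hölder/Harnack principle for non-tangentially accessible domains (Lemma~\ref{lemma3400}) to each pair $(u_i, u_j)$ inside $D_1$, and then combine the resulting estimates on the pointwise ratios $u_i/u_j$ via a simple Lipschitz composition. The setup for the NTA comparison mirrors that of Lemma~\ref{lemma2200}: every component $u_k$ is harmonic in $D_1$, and since $D_1$ is NTA (hence connected), the strong maximum principle forces each $u_k$ to be either identically zero or strictly positive on $D_1$. By \eqref{equ6310.5}, every $u_k$ vanishes continuously on $B_{C_3 r_1}(x_0) \cap \partial D_1$, a portion of the free boundary. Indices $i \neq j$ with $u_i \equiv 0$ contribute zero to both sides of the lemma (since then $u_i(a_{r_1}(x_0))=0$ as well) and may be ignored, so we restrict attention to those $i$ for which $u_i > 0$ in $D_1$.

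For each such $i$, I would apply Lemma~\ref{lemma3400} to the two positive harmonic functions $u_i, u_j$ in the NTA domain $D_1$ at the corkscrew point $a_{r_1}(x_0)$; since $r_1 \le c_1 \xi$ and both functions vanish on $B_{C_3 r_1}(x_0) \cap \partial D_1$, its Hölder-regularity conclusion produces
\begin{equation*}
\left[\frac{u_i}{u_j}\right]_{C^{\lambda}(B_{c_2 r_1}(x_0) \cap D_1)}
\le \frac{C_4}{r_1^{\lambda}} \, \frac{u_i(a_{r_1}(x_0))}{u_j(a_{r_1}(x_0))}.
\end{equation*}
The localization region $B_{c_2 r_1}(x_0) \cap D_1$ coincides with $B_{\epsilon_2 \epsilon_1 r_0}(x_0) \cap \{\vert\uu\vert>0\}$, since $c_2 r_1 = \epsilon_2 \epsilon_1 r_0 < \epsilon_1 r_0$ by the choice of $\epsilon_2$ in Lemma~\ref{lemma2200}.

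To convert these individual estimates into a bound on $u_j/\vert\uu\vert$, set $z(y) = (u_i(y)/u_j(y))_{i \neq j} \in \mathbb{R}^{m-1}$ and observe that, because $\vert\uu(y)\vert = u_j(y)\sqrt{1+\vert z(y)\vert^{2}}$, we have $u_j(y)/\vert\uu(y)\vert = f(z(y))$ where $f(z) := (1 + \vert z\vert^2)^{-1/2}$. A direct calculation gives $\vert\nabla f(z)\vert = \vert z\vert(1+\vert z\vert^2)^{-3/2} \le 1$, so $f$ is $1$-Lipschitz on $\mathbb{R}^{m-1}$. Hence, for any $y_1, y_2 \in B_{\epsilon_2 \epsilon_1 r_0}(x_0) \cap \{\vert\uu\vert>0\}$,
\begin{equation*}
\left|\frac{u_j(y_1)}{\vert\uu(y_1)\vert} - \frac{u_j(y_2)}{\vert\uu(y_2)\vert}\right|
\le \left(\sum_{i \neq j}\left|\frac{u_i(y_1)}{u_j(y_1)} - \frac{u_i(y_2)}{u_j(y_2)}\right|^2\right)^{1/2}.
\end{equation*}
Dividing by $\vert y_1-y_2\vert^{\lambda}$, taking the supremum, and inserting the previous Hölder bounds yields the claim, using the identity $C_4 r_1^{-\lambda} = (c_2/(\epsilon_2 \epsilon_1))^{\lambda} C_4 \, r_0^{-\lambda} = C_5 \, r_0^{-\lambda}$.

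The only non-trivial input is the NTA boundary Hölder estimate recorded in Lemma~\ref{lemma3400}; once that black-box inequality is applied, everything else is an algebraic reshuffling. The main potential obstacle is bookkeeping: verifying that the comparison principle delivers the Hölder constant in exactly the form $C_4 r_1^{-\lambda} \cdot u_i(a_{r_1}(x_0))/u_j(a_{r_1}(x_0))$, with the amplitude given by the quotient at the corkscrew point, and that its scale of validity matches $B_{\epsilon_2\epsilon_1 r_0}(x_0) \cap \{\vert\uu\vert > 0\}$ on the nose. Both checks are consequences of the choice of $r_1$ in \eqref{equ6310.3} and the standard formulation of the NTA boundary Hölder principle recalled in the appendix.
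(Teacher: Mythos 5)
Your proof is correct and follows essentially the same route as the paper: apply the NTA comparison principle (Lemma~\ref{lemma3400}) to each pair $(u_i,u_j)$ to get the H\"older bound on $u_i/u_j$ (the paper's equation (8.7)/\eqref{equ6600}), then pass to $u_j/\vert\uu\vert$ via a $1$-Lipschitz composition. The only cosmetic difference is that you use the vector-valued map $f(z)=(1+\vert z\vert^2)^{-1/2}$ on $\mathbb{R}^{m-1}$ directly, while the paper uses the scalar $\phi(t)=(1+t^2)^{-1/2}$ followed by the reverse triangle inequality for $\ell^2$-norms; both give the same estimate, and your explicit remark that identically-vanishing components $u_i$ contribute trivially is a welcome small clarification that the paper leaves implicit.
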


\begin{proof}
Let $j\in\{1,\cdots,m\}$ be such that $u_j>0$ in $D_{1}$. 
By \eqref{equ6310.5} and  \eqref{equ7800} for $i\in\{1,\cdots,m\}$ we have 
\begin{equation}\label{equ6600}
[\frac{u_{i}}{u_{j}}]_{C^{\lambda}(B_{c_{2}r_{1}}(x_0)\cap D_1)}
\leq\frac{C_{4}}{r_{1}^{\lambda}}\frac{u_{i}(a_{r_{1}}(x_0))}{u_{j}(a_{r_{1}}(x_0))}
\text{.}
\end{equation}

Next from 
\begin{equation*}
\vert \uu(y)\vert^2
=\sum_{i=1}^{m}u_{i}^{2}(y)
=u_{j}^{2}(y)
\Bigl(
1+\sum_{i\not=j}(\frac{u_{i}(y)}{u_{j}(y)})^{2}
\Bigr)
\end{equation*}
we have 
\begin{equation*}
\frac{u_{j}(y)}{\vert \uu(y)\vert}
=\Bigl(1+\sum_{i\not=j}(\frac{u_{i}(y)}{u_{j}(y)})^{2}
\Bigr)^{-\frac{1}{2}}
=
\phi\Bigl(
\bigl(\sum_{i\not=j}(\frac{u_{i}(y)}{u_{j}(y)})^{2}\bigr)^{\frac{1}{2}}
\Bigr)
\end{equation*}
where 
\begin{equation*}
\phi(z)=(1+z^{2})^{-\frac{1}{2}}\enskip\text{for}\enskip z\in\mathbb{R}\text{.}
\end{equation*}
The function $\phi$ is Lipschitz continuous in $\mathbb{R}$ with Lipschitz constant bounded by $1$. 
For $y_{1},y_{2}\in B_{c_{2}r_{1}}(x_0)\cap D_1$, using \eqref{equ6600} we have  
\begin{multline*}
\bigl\vert 
\frac{u_{j}(y_{2})}{\vert \uu(y_{2})\vert}
-\frac{u_{j}(y_{1})}{\vert \uu(y_{1})\vert}
\bigr\vert 
=
\Bigl\vert 
\phi\Bigl(
\bigl(\sum_{i\not=j}(\frac{u_{i}(y_{2})}{u_{j}(y_{2})})^{2}\bigr)^{\frac{1}{2}}
\Bigr)
-
\phi\Bigl(
\bigl(\sum_{i\not=j}(\frac{u_{i}(y_{1})}{u_{j}(y_{1})})^{2}\bigr)^{\frac{1}{2}}
\Bigr)
\Bigr\vert \\
\leq 
\Bigl\vert 
\bigl(
\sum_{i\not=j}(\frac{u_{i}(y_{2})}{u_{j}(y_{2})})^{2}
\bigr)^{\frac{1}{2}}
-
\bigl(\sum_{i\not=j}(\frac{u_{i}(y_{1})}{u_{j}(y_{1})})^{2}\bigr)^{\frac{1}{2}}
\Bigr\vert 
\leq 
\Bigl(
\sum_{i\not=j}
(
\frac{u_{i}(y_{2})}{u_{j}(y_{2})}
-
\frac{u_{i}(y_{1})}{u_{j}(y_{1})}
)^{2}
\Bigr)^{\frac{1}{2}} \\
\leq 
\Bigl(
\sum_{i\not=j}
(
\frac{C_{4}}{r^{\lambda}}\frac{u_{i}(a_{r_{1}}(x_0))}{u_{j}(a_{r_{1}}(x_0))}
\vert y_{2}-y_{1}\vert^{\lambda}
)^{2}
\Bigr)^{\frac{1}{2}} 
=
\frac{C_{4}}{r^{\lambda}}
\Bigl(
\sum_{i\not=j}
\bigl(
\frac{u_{i}(a_{r_{1}}(x_0))}{u_{j}(a_{r_{1}}(x_0))}
\bigr)^{2}
\Bigr)^{\frac{1}{2}}
\vert y_{2}-y_{1}\vert^{\lambda}
\end{multline*}
and this proves the lemma. 
\end{proof} 

\begin{corollary}\label{corollary2000} 
We have 
\begin{equation*}
[\frac{u_{i_{0}}}{\vert \uu\vert}]_{C^{\lambda}(B_{\epsilon_{2} \epsilon_{1} r_{0}}(x_{0})\cap\{\vert \uu\vert>0\})}
\leq (m-1)^{\frac{1}{2}}
\frac{C_{5}}{r_0^{\lambda}}\text{.}
\end{equation*}
\end{corollary}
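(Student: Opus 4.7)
The plan is to apply Lemma \ref{lemma2300} directly with $j=i_{0}$, once I verify the positivity hypothesis, and then exploit the maximality property \eqref{equ6310} of $i_{0}$ to trivially bound the sum on the right-hand side.

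First I would check that $u_{i_{0}}>0$ throughout $D_{1}$, which is the hypothesis needed to invoke Lemma \ref{lemma2300} with $j=i_{0}$. Since $a_{r_{1}}(x_{0})\in D_{1}$, we have $\vert\uu(a_{r_{1}}(x_{0}))\vert>0$, so at least one component is strictly positive at $a_{r_{1}}(x_{0})$. By the choice \eqref{equ6310}, the component $u_{i_{0}}$ is the largest, hence $u_{i_{0}}(a_{r_{1}}(x_{0}))>0$. Recall that (as noted in Section \ref{subsection1200}) each $u_{i}$ is harmonic on $\{u_{i}>0\}$ and, by the strong minimum principle, on every connected component of $\{\vert\uu\vert>0\}$ it is either identically zero or strictly positive. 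By the non-tangential accessibility of $D_{1}$ established in Theorem \ref{theorem1350}, $D_{1}$ is (in particular) connected, so $u_{i_{0}}>0$ on all of $D_{1}$.

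Next I would use the defining property \eqref{equ6310} of $i_{0}$, namely $u_{i_{0}}(a_{r_{1}}(x_{0}))=\max_{i}u_{i}(a_{r_{1}}(x_{0}))$, to conclude that for each $i\neq i_{0}$,
\begin{equation*}
\frac{u_{i}(a_{r_{1}}(x_{0}))}{u_{i_{0}}(a_{r_{1}}(x_{0}))}\leq 1\text{.}
\end{equation*}
Summing the squares over the $m-1$ indices $i\neq i_{0}$ yields
\begin{equation*}
\Bigl(\sum_{i\neq i_{0}}\bigl(\tfrac{u_{i}(a_{r_{1}}(x_{0}))}{u_{i_{0}}(a_{r_{1}}(x_{0}))}\bigr)^{2}\Bigr)^{1/2}\leq (m-1)^{1/2}\text{.}
\end{equation*}

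Finally I would plug this estimate into the bound of Lemma \ref{lemma2300} applied with $j=i_{0}$, obtaining the claimed inequality
\begin{equation*}
[\tfrac{u_{i_{0}}}{\vert\uu\vert}]_{C^{\lambda}(B_{\epsilon_{2}\epsilon_{1}r_{0}}(x_{0})\cap\{\vert\uu\vert>0\})}\leq (m-1)^{1/2}\tfrac{C_{5}}{r_{0}^{\lambda}}\text{.}
\end{equation*}

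The only non-routine step is the positivity argument for $u_{i_{0}}$ on all of $D_{1}$; once that is in place the corollary is immediate. The connectedness of $D_{1}$ needed for the strong minimum principle argument is built into the non-tangential accessibility provided by Theorem \ref{theorem1350}, so there is no real obstacle—this corollary is essentially a clean consequence of choosing $i_{0}$ as the maximizer at the reference point $a_{r_{1}}(x_{0})$.
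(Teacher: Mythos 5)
Your proposal is correct and follows the approach the paper leaves implicit: verify $u_{i_0}>0$ on $D_1$ (via positivity at $a_{r_1}(x_0)$, connectedness of the NTA domain $D_1$, and the strong-minimum-principle dichotomy recorded in Section \ref{subsection1200}), apply Lemma \ref{lemma2300} with $j=i_0$, and bound each ratio $u_i(a_{r_1}(x_0))/u_{i_0}(a_{r_1}(x_0))$ by $1$ using the maximality in \eqref{equ6310}. Nothing is missing.
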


\begin{corollary}\label{corollary3000}
For each $K\subset\subset\Omega$, $\w=(w_1,\cdots,w_m)$ (as defined in \eqref{equ500}) 
is uniformly (H\"older) continuous in $K\cap\{\vert \uu\vert>0\}$. 
\end{corollary}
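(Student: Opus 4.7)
The plan is to cover $K \cap \overline{\{\vert \uu\vert>0\}}$ by finitely many neighborhoods in which Hölder estimates on each $w_i$ are already available, distinguishing points near the free boundary from points in the interior of $\{\vert \uu\vert>0\}$, and then to extract a finite subcover using compactness of $K$.

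For a free boundary point $x_0 \in K \cap \partial\{\vert \uu\vert>0\}$, Theorem \ref{theorem1350} supplies an NTA neighborhood, and Corollary \ref{corollary2000} gives Hölder continuity of $u_{i_0}/\vert \uu\vert$ on $B_{\epsilon_2 \epsilon_1 r_0}(x_0) \cap \{\vert \uu\vert>0\}$, where $i_0$ realizes the maximum of $u_i(a_{r_1}(x_0))$. For the remaining components $u_j$ which are not identically zero on the connected component of $\{\vert \uu\vert>0\}$ containing the ball (in which case $u_j/\vert \uu\vert \equiv 0$ trivially), I would invoke estimate \eqref{equ6600} in the form $[u_j/u_{i_0}]_{C^\lambda(B_{c_2 r_1}(x_0) \cap D_1)} \leq (C_4/r_1^\lambda)(u_j(a_{r_1}(x_0))/u_{i_0}(a_{r_1}(x_0)))$. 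The choice of $i_0$ forces the ratio to be bounded by $1$, and since both $u_j/u_{i_0}$ and $u_{i_0}/\vert \uu\vert$ are bounded Hölder functions, their product $u_j/\vert \uu\vert$ is Hölder with a uniform bound.

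At points $x_0 \in K \cap \{\vert \uu\vert>0\}$ lying at positive distance $\delta$ from $\partial\{\vert \uu\vert>0\}$, each $u_i$ is harmonic in $B_\delta(x_0)$ and $\vert \uu\vert$ is bounded below by non-degeneracy together with continuity, so interior elliptic estimates give smoothness of $u_i/\vert \uu\vert$ with bounds depending only on $\delta$, $Q_{min}$ and $\sup_K \vert \uu\vert$. Patching the two kinds of neighborhoods using compactness of $K$, taking the worst Hölder exponent and largest Hölder constant over the finite subcover, yields uniform Hölder continuity of each $w_i$ on $K \cap \{\vert \uu\vert>0\}$, consistent with \eqref{equ500} up to $\partial\{\vert \uu\vert>0\}$.

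The main obstacle is keeping constants uniform across the cover. Fortunately, the NTA parameters from Theorem \ref{theorem1350} depend only on $n$, $m$ and $Q_{max}/Q_{min}$, so they are uniform in $x_0$, and the scale $r_0$ can be chosen uniformly once $\dist(K, \partial\Omega) > 0$ is fixed. The $1/r_0^\lambda$ dependence in Corollary \ref{corollary2000} is then harmless, and the normalization $u_j(a_{r_1})/u_{i_0}(a_{r_1}) \leq 1$ built into the choice of $i_0$ absorbs the a priori unpleasant ratio that appears in Lemma \ref{lemma2300}, which is the reason one cannot apply that lemma directly with an arbitrary $j$ in the denominator.
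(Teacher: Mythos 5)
Your argument is correct and supplies exactly what the paper leaves implicit: the bound in Lemma \ref{lemma2300} is uniform only for $j=i_0$, so for general $j$ one must route through $u_{i_0}$, writing $w_j=(u_j/u_{i_0})\,w_{i_0}$ and bounding each factor via the comparison principle \eqref{equ7700}--\eqref{equ7800} (with $u_{i_0}$ as reference, so the ratio $u_j(a_{r_1})/u_{i_0}(a_{r_1})\le 1$ by the choice of $i_0$) together with Corollary \ref{corollary2000}, while components vanishing on the relevant component of $\{\vert\uu\vert>0\}$ give $w_j\equiv 0$ trivially. The interior elliptic estimate and the compactness/covering step are routine, and since the NTA parameters, the exponent $\lambda$, and the admissible radius $r_0$ are uniform once $K\subset\subset\Omega$ is fixed, the patched Hölder modulus is uniform on $K\cap\{\vert\uu\vert>0\}$.
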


\begin{lemma}\label{lemma2305} 
Let $Q$ be a continuous function 
then for all $x\in\Omega\cap\partial^{*}\{\vert \uu\vert>0\}$ 
as $y\to 0$ we have 
\begin{equation}\label{equ6605} 
\uu(x+y)=\w(x)Q(x)\bigl(-\nu_{\{\vert \uu\vert>0\}}(x)\cdot y\bigr)^{+}
+o(\vert y\vert)\text{.} 
\end{equation}
\end{lemma}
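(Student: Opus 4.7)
\textbf{Proof plan for Lemma \ref{lemma2305}.} The plan is to upgrade the a.e.\ expansion \eqref{equ6000}--\eqref{equ6010}, established in the course of the proof of Theorem \ref{theorem1300}, to an everywhere statement on $\Omega\cap\partial^{*}\{\vert\uu\vert>0\}$ by combining a blow-up argument with the H\"older continuity of $\w$ proved in Corollary \ref{corollary3000}. Fix $x\in\Omega\cap\partial^{*}\{\vert\uu\vert>0\}$ and let $\nu=\nu_{\{\vert\uu\vert>0\}}(x)$. Along any sequence $r_k\to 0$, Lemma \ref{lemma2100} (after passing to a subsequence) provides a blow-up $\vv\in C^{0,1}_{loc}(\R^n;\R^m)$ that is an absolute minimizer in every ball with constant $Q(x)$, and such that $\{\vert\uu_{x,r_k}\vert>0\}\to\{\vert\vv\vert>0\}$ locally in Hausdorff distance together with $\chi_{\{\vert\uu_{x,r_k}\vert>0\}}\to\chi_{\{\vert\vv\vert>0\}}$ in $L^1_{loc}$. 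Since $x\in\partial^{*}\{\vert\uu\vert>0\}$, by definition of the reduced boundary the left characteristic function also converges to $\chi_{\{y:\nu\cdot y<0\}}$ in $L^1_{loc}$, so
\begin{equation*}
\{\vert\vv\vert>0\}=\{y\in\R^n:\nu\cdot y<0\}=:H.
\end{equation*}

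Next I would classify $\vv$. Each component $v_i$ is nonnegative, harmonic in $H$, vanishes on $\partial H$, and grows at most linearly (by Corollary \ref{corollary1000} applied to $\uu_{x,r_k}$ and passing to the limit). Odd reflection across $\partial H$ then yields an entire harmonic function with linear growth, so by the classical Liouville theorem $v_i(y)=c_i(-\nu\cdot y)^{+}$ for some $c_i\geq 0$; writing $\mathbf{c}=(c_1,\dots,c_m)$, we obtain $\vv(y)=\mathbf{c}(-\nu\cdot y)^{+}$. Plugging this half-plane solution into the domain variation formula of Lemma \ref{lemma1900} on $B$, exactly as in the final computation of the proof of Theorem \ref{theorem1300}, forces $\vert\mathbf{c}\vert=Q(x)$.

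It remains to identify the direction $\mathbf{c}/\vert\mathbf{c}\vert$ and to remove the passage to a subsequence; this is where Corollary \ref{corollary3000} is crucial. Choose a point $y_{0}\in H$ with $-\nu\cdot y_{0}>0$ (so convergence of $\uu_{x,r_k}(y_{0})/\vert\uu_{x,r_k}(y_{0})\vert$ to $\mathbf{c}/\vert\mathbf{c}\vert$ follows from the $C^{0}_{loc}$ convergence together with $\vert\vv(y_{0})\vert>0$). On the other hand, $x+r_{k}y_{0}$ approaches $x$ non-tangentially through $\{\vert\uu\vert>0\}$, and by Corollary \ref{corollary3000} the function $\w=\uu/\vert\uu\vert$ is uniformly H\"older continuous on $K\cap\{\vert\uu\vert>0\}$ for a suitable neighbourhood $K$ of $x$, hence extends continuously to $x$ with value $\w(x)$. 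Equating the two limits gives $\mathbf{c}/\vert\mathbf{c}\vert=\w(x)$, so $\mathbf{c}=\w(x)Q(x)$. Because this limit is independent of the chosen subsequence, the full blow-up exists and
\begin{equation*}
\uu_{x,r}(y)\longrightarrow \w(x)Q(x)\bigl(-\nu\cdot y\bigr)^{+}\quad\text{as}\quad r\to 0
\end{equation*}
locally uniformly, which rewrites as \eqref{equ6605}.

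The main obstacle is the last step: identifying $\mathbf{c}$ pointwise at \emph{every} point of $\partial^{*}\{\vert\uu\vert>0\}$, not merely on a set of full $\mathcal{H}^{n-1}$-measure. This is precisely where the a priori H\"older estimate of Corollary \ref{corollary3000}, and hence the non-tangentially accessible structure of Theorem \ref{theorem1350}, is indispensable; without it, $\w(x)$ would not even be defined at $x$ and the blow-up direction could a priori depend on the subsequence. The rest of the argument (blow-up existence and half-plane classification) is a routine consequence of Lemma \ref{lemma2100}, Corollary \ref{corollary1000}, and the domain variation identity.
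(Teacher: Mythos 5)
Your proposal is correct, but it takes a genuinely different route than the paper's very short proof. The paper simply observes that, once $Q$ is continuous and Corollary \ref{corollary3000} makes $\w$ H\"older continuous near $x$, the Borel density $q_i = w_i Q$ appearing in the scalar representation $\Delta u_i = q_i\,\mathcal{H}^{n-1}\llcorner(\Omega\cap\partial^{*}\{\vert\uu\vert>0\})$ is continuous, so \emph{every} point of $\Omega\cap\partial^{*}\{\vert\uu\vert>0\}$ is a Lebesgue point of $q_i$; Theorem 4.8 of \cite{AltCaffarelli1981} then hands over the expansion \eqref{equ6605} directly, exactly as it did a.e.\ in the proof of Theorem \ref{theorem1300}. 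You instead unwind Theorem 4.8 by hand: you run a compactness/blow-up argument (Lemma \ref{lemma2100}), identify the blow-up's positivity set with the half-space $H$ coming from the reduced-boundary definition, classify the blow-up as a one-dimensional profile via odd reflection and Liouville, pin down its modulus $\vert\mathbf{c}\vert=Q(x)$ by the domain-variation computation from Theorem \ref{theorem1300}, and then use the continuous extension of $\w$ from Corollary \ref{corollary3000} to identify the direction $\mathbf{c}/\vert\mathbf{c}\vert=\w(x)$ independently of the subsequence, upgrading subsequential convergence to full blow-up convergence. Both approaches hinge on the same key input (H\"older continuity of $\w$ from Corollary \ref{corollary3000} plus continuity of $Q$); the paper's is a one-line reduction to the scalar theory, while yours is longer but self-contained and more transparently explains how the unique blow-up profile is identified at \emph{every} reduced boundary point rather than only a.e.

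One small remark on your write-up: when you pass from $\chi_{\{\vert\uu_{x,r_k}\vert>0\}}\to\chi_H$ in $L^1_{loc}$ to the set equality $\{\vert\vv\vert>0\}=H$, you should add a word about why a.e.\ equality of characteristic functions gives genuine equality of the open sets --- this follows from the non-degeneracy (Lemma \ref{lemma1500}) and positive-density (Lemma \ref{lemma1600}) estimates applied to the absolute minimizer $\vv$, which rule out either set having extra points of positive density outside the other. This is minor and does not affect the validity of the plan.
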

\begin{proof}
For $i=1,\cdots,m$, by Theorem \ref{theorem1300} 
we have 
\begin{equation*}
\Delta u_i=w_i Q \mathcal{H}^{n-1}\llcorner(\Omega\cap\partial^{*}\{\vert \uu\vert>0\})\text{.} 
\end{equation*}

Let $x\in\Omega\cap\partial\{\vert \uu\vert>0\}$ then 
by Corollary \ref{corollary3000} we have that $\w$ is uniformly 
continuous close to $x$. Also we have assumed that $Q$ is continuous. 
Now as in Theorem \ref{theorem1300} by 
Theorem 4.8 in \cite{AltCaffarelli1981} we obtain \eqref{equ6605}. 
\end{proof}

\begin{lemma}\label{lemma2310} 
$u_{i_{0}}$ is weak solution for $w_{i_{0}}Q$ in $B_{\epsilon_{2}\epsilon_1r_0}(x_0)$ 
as defined in Section 5 of \cite{AltCaffarelli1981}, i.e. 
\begin{enumerate}[(i)]
\item\label{lemma2310-1} 
$u_{i_{0}}$ is continuous and non-negative 
in $B_{\epsilon_{2}\epsilon_1r_0}(x_0)$ and harmonic in 
$B_{\epsilon_{2}\epsilon_1r_0}(x_0)\cap\{u_{i_{0}}>0\}$.  
\item\label{lemma2310-2} 
For $K\subset\subset B_{\epsilon_{2}\epsilon_1r_0}(x_0)$ there are constants $0<c\leq C$ 
such that for balls $B_{s}(y)\subset K$ with $y\in\partial\{u_{i_{0}}>0\}$ 
we have 
\begin{equation}\label{equ6610} 
c\leq\frac{1}{s}\fint_{\partial B_{s}(y)}u_{i_{0}}d\sigma \leq C\text{.} 
\end{equation}
\item\label{lemma2310-3} 
The equation 
$\Delta u_{i_{0}}=w_{i_{0}}Q\mathcal{H}^{n-1}\llcorner(B_{\epsilon_{2}\epsilon_1r_0}(x_0)\cap\partial^{*}\{u_{i_{0}}>0\})$ 
holds in $B_{\epsilon_{2}\epsilon_1r_0}(x_0)$ 
(in the sense of distributions). 
\end{enumerate}
\end{lemma}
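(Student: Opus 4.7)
The plan is to exploit the pointwise lower bound $u_{i_0}\geq\frac{1}{mC_4}\mathcal{U}$ furnished by Lemma \ref{lemma2200}, which combined with the trivial estimates $u_{i_0}\leq\vert\uu\vert\leq\mathcal{U}$ (the latter using $u_i\geq 0$) forces the set identity
\[
B_{\epsilon_{2}\epsilon_1 r_0}(x_0)\cap\{u_{i_0}>0\}=B_{\epsilon_{2}\epsilon_1 r_0}(x_0)\cap\{\vert\uu\vert>0\}\text{.}
\]
This identification is the crux: every property demanded of $u_{i_0}$ will then be inherited from the corresponding property already established for $\uu$ or $\vert\uu\vert$ on this common set. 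In particular the two sets have the same reduced boundary inside the ball.

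For \eqref{lemma2310-1}, continuity (indeed Lipschitz continuity) is given by Corollary \ref{corollary1000}, and non-negativity is built into the admissibility class. Harmonicity of $u_{i_0}$ on $\{u_{i_0}>0\}$ follows from Theorem \ref{theorem1300}: the measure $\Delta u_{i_0}=w_{i_0}Q\mathcal{H}^{n-1}\llcorner(\Omega\cap\partial^{*}\{\vert\uu\vert>0\})$ is supported on the free boundary, and so vanishes on the open set $\{\vert\uu\vert>0\}$, which coincides with $\{u_{i_0}>0\}$ inside our ball.

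For \eqref{lemma2310-2}, the upper bound is Theorem \ref{theorem1100} applied to $u_{i_0}$: any $y\in\partial\{u_{i_0}>0\}\cap B_{\epsilon_{2}\epsilon_1 r_0}(x_0)=\partial\{\vert\uu\vert>0\}\cap B_{\epsilon_{2}\epsilon_1 r_0}(x_0)$ has $\overline{B_s(y)}\cap\{\vert\uu\vert=0\}\neq\emptyset$, whence $\fint_{\partial B_s(y)}u_{i_0}d\sigma\leq CQ_{max}s$. For the lower bound I would first apply Theorem \ref{theorem1200} (with $\rho=1/2$) to produce a point $z\in B_{s/2}(y)$ with $\vert\uu(z)\vert\geq cQ_{min}s$; Lemma \ref{lemma2200} then upgrades this to $u_{i_0}(z)\geq\frac{1}{mC_4}\mathcal{U}(z)\geq\frac{1}{mC_4}\vert\uu(z)\vert\geq\frac{c Q_{min}}{mC_4}s$. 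A standard harmonic-majorant argument closes the loop: letting $v$ be harmonic in $B_s(y)$ with $v=u_{i_0}$ on $\partial B_s(y)$, subharmonicity gives $u_{i_0}\leq v$ in $B_s(y)$, and Poisson's formula yields $v(z)\leq C_n\fint_{\partial B_s(y)}u_{i_0}d\sigma$ on $B_{s/2}(y)$, turning the pointwise lower bound into the claimed integral lower bound.

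For \eqref{lemma2310-3}, Theorem \ref{theorem1300} provides $\Delta u_{i_0}=w_{i_0}Q\mathcal{H}^{n-1}\llcorner(\Omega\cap\partial^{*}\{\vert\uu\vert>0\})$ on all of $\Omega$; restriction to $B_{\epsilon_{2}\epsilon_1 r_0}(x_0)$ together with the identification of positivity sets (and hence of reduced boundaries inside this ball) gives \eqref{lemma2310-3}. The only real subtlety—and it is minor—is verifying that the restriction of $\partial^{*}\{\vert\uu\vert>0\}$ to the ball coincides with $\partial^{*}\{u_{i_0}>0\}$ there; this is immediate from the set equality above since the reduced boundary is determined by the characteristic function. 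The essential work was already absorbed into Lemma \ref{lemma2200} via the NTA comparison principle, and this lemma is the bookkeeping step that packages $u_{i_0}$ as a nondegenerate scalar weak solution in the sense of \cite{AltCaffarelli1981}, ready to feed into the flatness-implies-regularity machinery.
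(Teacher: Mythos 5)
Your proof is correct and follows essentially the same route as the paper's: first deduce the set identity $B_{\epsilon_{2}\epsilon_1 r_0}(x_0)\cap\{u_{i_0}>0\}=B_{\epsilon_{2}\epsilon_1 r_0}(x_0)\cap\{\vert\uu\vert>0\}$ from Lemma \ref{lemma2200}, then read off the three properties from what is already known about $\uu$. The only divergence is in the lower bound of \eqref{equ6610}: the paper combines the integral nondegeneracy of $\mathcal{U}$ from \eqref{equ3700} directly with the pointwise estimate \eqref{equ6300}, whereas you rederive it from the sup-nondegeneracy of Theorem \ref{theorem1200} plus a Poisson/harmonic-majorant step; these are equivalent (\eqref{equ3700} is itself a repackaging of Theorem \ref{theorem1200}), so the content is the same.
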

\begin{proof}
By Lemma \ref{lemma2200} we have 
\begin{equation*}
B_{\epsilon_{2}\epsilon_1 r_0}(x_0)\cap\{\vert \uu\vert>0\}
=B_{\epsilon_{2}\epsilon_1 r_0}(x_0)\cap\{u_{i_{0}}>0\}\text{.} 
\end{equation*} 

Part \eqref{lemma2310-1} is trivial. 
The first inequality in \eqref{equ6610} follows from the 
first inequality in \eqref{equ3700} and the inequality \eqref{equ6300}. 
The second inequality in \eqref{equ6610} follows directly from 
Theorem \ref{theorem1100} and this finishes the proof of 
Part \eqref{lemma2310-2}. 

Part \eqref{lemma2310-3} follows from Theorem \ref{theorem1300} 
and this finishes the proof of the lemma. 
\end{proof}

\section{Flat Free Boundary Points\\ 
(Proof of Theorem \ref{theorem1360}) }\label{section1810} 

In this section $x_0$, $r_0$, $\epsilon_{1}$, $\epsilon_2$ and $i_0$ 
are as in subsection \ref{subsection1700}.

\subsection{Flatness implies regularity}\label{subsection1800}

\begin{proof}[Proof of Theorem \ref{theorem1360}]
By Lemma \ref{lemma2310}, $u_{i_{0}}$ is weak solution for $w_{i_{0}}Q$ in $B_{\epsilon_{2}\epsilon_1r_0}(x_0)$ 
as defined in Section 5 of \cite{AltCaffarelli1981}. 
By Lemma \ref{lemma2300} we know that $w_{i_{0}}Q$ is H\"older continuous 
in $B_{\epsilon_{2}\epsilon_1r_0}(x_0)$. 
Now the Theorem follows from Theorem 8.1 in \cite{AltCaffarelli1981}. 
\end{proof}

\subsection{Equivalence of reduced, regular, and flat free boundary points}
\label{subsection1900} 

\begin{definition}[Flat free boundary points] 
We call $x\in\Omega\cap\partial\{\vert \uu\vert>0\}$ 
a flat free boundary point if for all $0<\sigma<1$ there 
exist sequences $\rho_j\to 0$ and $\nu_j\in\partial B$ 
such that $u$ is $\sigma$-flat in $B_{\rho_j}(x)$ in the direction $\nu_j$ 
(see Definition \ref{definition600}). 
\end{definition}

\begin{definition}\label{definition1300}  
We call $x\in\Omega\cap\partial\{\vert \uu\vert>0\}$ 
a regular point if $Q$ is continuous at $x$, there exists a sequence $r_j\to 0$ 
such that $\uu_{x,r_j}\to Q(x) (\nu\cdot y)^{+}e$ in $C(B)$ where 
$\nu\in\mathbb{R}^{n}$, $\vert \nu\vert=1$, $e\in \mathbb{R}^{m}$, 
$\vert e\vert=1$ and $e_i\geq 0$ for $i=1,\cdots,m$. 
\end{definition}

\begin{lemma}\label{lemma2320} 
Let $Q$ be H\"older continuous and $u$ be a minimizer of our functional. 
Then the following free boundary points coincide 
\begin{enumerate}[(i)]
\item Reduced free boundary points.
\item Regular free boundary points. 
\item Flat free boundary points. 
\item $C^{1,\alpha}$ free boundary points (i.e. Free boundary points 
in a neighborhood of which the free boundary is $C^{1,\alpha}$ smooth). 
\end{enumerate}
\end{lemma}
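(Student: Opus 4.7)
The plan is to establish the cycle $(i) \Rightarrow (ii) \Rightarrow (iii) \Rightarrow (iv) \Rightarrow (i)$. For $(i) \Rightarrow (ii)$, at a reduced boundary point $x$ Lemma \ref{lemma2305} provides the expansion $\uu(x+y) = Q(x)\w(x)(-\nu_{\{\vert \uu \vert > 0\}}(x) \cdot y)^{+} + o(\vert y \vert)$; rescaling and sending $r \to 0$ yields $\uu_{x,r}(z) \to Q(x)\w(x)(-\nu_{\{\vert \uu \vert > 0\}}(x) \cdot z)^{+}$ uniformly on $\overline{B}$. The proof of Theorem \ref{theorem1300} has already established $\vert \w(x) \vert = 1$ via the domain variation formula, and $w_i(x) \geq 0$ follows from $u_i \geq 0$; with $e = \w(x)$ and $\nu = -\nu_{\{\vert \uu \vert > 0\}}(x)$ the regular-point condition is verified.

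The hardest step will be $(ii) \Rightarrow (iii)$. Given $\uu_{x,r_j} \to Q(x)(\nu \cdot y)^{+}e$ in $C(B)$ and $\sigma \in (0,1)$, set $\tilde\nu := -\nu$ and $\rho_j := r_j/2$. Translating $\sigma$-flatness of $\uu$ in $B_{\rho_j}(x)$ into the $r_j$-scale via $\uu_{x,\rho_j}(z) = 2\uu_{x,r_j}(z/2)$ reduces the requirement to showing that $\uu_{x,r_j}$ vanishes on $K := B_{1/2} \cap \{w : w \cdot \nu \leq -\sigma/2\}$, which sits at distance at least $\sigma/4$ from the boundary of the larger set $K_{\sigma/4} := B \cap \{w \cdot \nu \leq -\sigma/4\}$ on which $\Vert \uu_{x,r_j} \Vert_{L^{\infty}} \to 0$. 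If $\uu_{x,r_j}(w_0) \neq 0$ at some $w_0 \in K$, then the nondegeneracy of Theorem \ref{theorem1200}, applied to the rescaled minimizer $\uu_{x,r_j}$ (which still minimizes the rescaled functional with the same $Q_{min}$) in the ball $B_{\sigma/4}(w_0) \subset K_{\sigma/4}$, would force $\sup \vert \uu_{x,r_j} \vert \geq c Q_{min}\sigma/4$ there, contradicting the smallness for large $j$. The shrinking factor $1/2$ built into $\rho_j$ is precisely what neutralizes boundary effects near $\partial B$ in this nondegeneracy argument; it is the bridge from uniform $L^{\infty}$-smallness to exact vanishing that makes this step the main obstacle.

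For $(iii) \Rightarrow (iv)$, apply Theorem \ref{theorem1360} with $\sigma = \sigma_0$: for $j$ sufficiently large, $\rho_j \leq \min(\tau_0 \sigma_0^{2/\beta}, \tfrac{1}{2}\epsilon_{2}\epsilon_{1}r_0)$, so $B_{\rho_j/4}(x) \cap \partial\{\vert \uu \vert > 0\}$ is a $C^{1,\alpha}$ graph, and the free boundary is $C^{1,\alpha}$ in a neighborhood of $x$. Finally, $(iv) \Rightarrow (i)$ is immediate: a $C^{1,\alpha}$ hypersurface has a classical outer unit normal at each of its points, which coincides with the measure-theoretic normal, so $x \in \partial^{*}\{\vert \uu \vert > 0\}$.
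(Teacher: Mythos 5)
Your cycle $(i)\Rightarrow(ii)\Rightarrow(iii)\Rightarrow(iv)\Rightarrow(i)$ is exactly the paper's structure, and the implications $(i)\Rightarrow(ii)$ (via Lemma \ref{lemma2305}), $(iii)\Rightarrow(iv)$ (via Theorem \ref{theorem1360}), and $(iv)\Rightarrow(i)$ are handled the same way. The one genuine difference is in $(ii)\Rightarrow(iii)$: the paper passes to a subsequence and cites Lemma \ref{lemma2100} to get Hausdorff convergence of the coincidence sets $\{\vert\uu_{r_k}\vert=0\}$ to the half-space, from which the desired exact vanishing in $B_{\rho_j}(x)\cap\{(y-x)\cdot\tilde\nu\geq\sigma\rho_j\}$ is read off directly. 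You instead unpack that convergence by hand, using the $C(\overline{B})$-smallness of $\uu_{x,r_j}$ on $K_{\sigma/4}$ together with the nondegeneracy of Theorem \ref{theorem1200} applied at a hypothetical non-vanishing point $w_0\in K$ to derive a contradiction. That is precisely the mechanism by which Hausdorff convergence of coincidence sets is proved in the Alt--Caffarelli framework, so your argument is a self-contained, more elementary version of the same step; the paper's route is shorter because it invokes Lemma \ref{lemma2100} as a black box. Both are correct, and your version has the pedagogical advantage of making explicit why uniform smallness upgrades to exact vanishing.
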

\begin{proof}
From Lemma \ref{lemma2305} we obtain that at a reduced free boundary point 
there exists a unique halfspace blowup limit thus in particular such a point is 
a regular point. 

Assume $x$ is a regular free boundary point. Then, by definition, there exists 
$r_k\to 0$, $\nu\in\partial B^{n}$, $e\in\partial B^{m}$ with $e_i\geq 0$ for $i=1,\cdots,m$ 
such that $\uu_{x,r_k}\to Q(x)(y\cdot\nu)^{+}e$ in $C(B;\mathbb{R}^{m})$. 
From Lemma \ref{lemma2100} it follows that there 
exists a subsequence $k_{j}$ such that 
$Q(x)(y\cdot\nu)^{+}e$ is the blowup limit of $\uu$ at $x$ with respect to the 
sequence $r_{k_{j}}$. 
Now by the Hausdorff convergence of the coincidence sets 
it follows that for given $0<\sigma<1$, for $j$ large enough 
we have $\uu=0$ in $B_{\frac{1}{2}r_{k_j}}(x)\cap\{-(y-x)\cdot\nu \geq \frac{1}{2} \sigma r_{k_j}\}$. 
This proves that $x$ is a flat free boundary point. Now
if $x$ is a flat free boundary point then by Theorem \ref{theorem1360}, 
in a neighborhood of $x$ the free boundary is $C^{1,\alpha}$ smooth. 

Finally, if the free boundary is $C^{1,\alpha}$ smooth in a neighborhood of the free 
boundary point $x$ then $x$ is a reduced free boundary point. 
\end{proof}

\begin{corollary} 
Let $Q$ be H\"older continuous and $u$ be a minimizer for $Q$ 
then the reduced boundary is an open subset of the free boundary 
in the relative topology of the free boundary. 
\end{corollary}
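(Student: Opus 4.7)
The plan is to deduce the corollary directly from the equivalence established in Lemma \ref{lemma2320}. Let $x \in \Omega \cap \partial^{*}\{\vert \uu \vert > 0\}$ be a reduced free boundary point. By clause (i)$\Leftrightarrow$(iv) of Lemma \ref{lemma2320}, $x$ is a $C^{1,\alpha}$ free boundary point, so there exists an open neighborhood $V \subset \Omega$ of $x$ such that $V \cap \partial\{\vert \uu \vert > 0\}$ is a $C^{1,\alpha}$ hypersurface.

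Next, I would observe that every point on a $C^{1,\alpha}$ hypersurface lies in the reduced boundary of the set it bounds. Concretely, for any $y \in V \cap \partial\{\vert \uu \vert > 0\}$, after shrinking $V$ if necessary, the surface admits a $C^{1,\alpha}$ graph representation in a ball $B_{\rho}(y)$, so $\{\vert \uu \vert > 0\} \cap B_\rho(y)$ is a half-space--like Lipschitz domain with unit outer normal varying continuously. This yields both (a) the blow-ups of $\chi_{\{\vert \uu \vert > 0\}}$ at $y$ converge to the characteristic function of a half-space, and (b) $\mathcal{H}^{n-1}$-density of the topological boundary equals $1$ at $y$. In particular, $y \in \partial^{*}\{\vert \uu \vert > 0\}$.

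Consequently $V \cap \partial\{\vert \uu \vert > 0\} \subset \Omega \cap \partial^{*}\{\vert \uu \vert > 0\}$, so $x$ has a neighborhood in the relative topology of $\Omega \cap \partial\{\vert \uu \vert > 0\}$ that lies entirely in the reduced boundary. Since $x$ was arbitrary, the reduced boundary is relatively open in the full free boundary.

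No real obstacle is expected: the whole content is the equivalence (i)$\Leftrightarrow$(iv) already proved in Lemma \ref{lemma2320}, together with the standard fact that $C^{1,\alpha}$ hypersurfaces consist entirely of reduced-boundary points (a purely geometric measure-theoretic check). Alternatively, once one knows $V \cap \partial\{\vert \uu \vert > 0\}$ is $C^{1,\alpha}$, each $y \in V \cap \partial\{\vert \uu \vert > 0\}$ is itself a regular (hence reduced) free boundary point by Lemma \ref{lemma2320}, (iv)$\Rightarrow$(i), completing the argument in one line.
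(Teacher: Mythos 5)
Your proposal is correct and is exactly the intended argument: the corollary is stated immediately after Lemma \ref{lemma2320} precisely because it follows from the chain (i)$\Rightarrow$(iv) applied at a reduced point to produce a $C^{1,\alpha}$ neighborhood, and then (iv)$\Rightarrow$(i) applied to every point of that neighborhood. Your "one-line" alternative at the end is the cleanest version and is the route the paper leaves implicit.
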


\section{Monotonicity Formula}\label{section1900}  

\begin{lemma}[Basic Energy Identity]\label{lemma2400} 
Let $\uu$ be a minimizer and $B_{r_{0}}(x_0)\subset\Omega$ then 
for a.e. $r\in(0,r_{0})$ we have 
\begin{equation}\label{equ6700} 
\int_{B_r(x_0)}\vert\nabla u_i\vert^{2}dx
=\int_{\partial B_r(x_0)}u_i\partial_{\nu}u_i d\sigma
\end{equation}
for $i=1,\cdots,m$. 
\end{lemma}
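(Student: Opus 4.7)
The plan is to exploit the fact that $\Delta u_i$ is a positive Radon measure supported on $\Omega\cap\partial\{\vert\uu\vert>0\}$, and that $u_i$ vanishes continuously on this support. So $u_i\, d(\Delta u_i)\equiv 0$, and the identity reduces to standard integration by parts once one is allowed to use $u_i$ itself as a test function against its Laplacian.

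First, I would collect the ingredients already in hand. By Lemma \ref{lemma1000} (and the subsequent discussion), $u_i$ is subharmonic, so $\mu_i:=\Delta u_i$ is a positive Radon measure on $\Omega$. By the discussion preceding Lemma \ref{lemma1700}, $\operatorname{spt}\mu_i\subset \Omega\cap\partial\{u_i>0\}\subset \Omega\cap\partial\{\vert\uu\vert>0\}$, and $u_i$ is continuous and nonnegative with $u_i=0$ on $\Omega\cap\partial\{u_i>0\}$. Therefore, for any bounded Borel set $E\subset\subset\Omega$,
\begin{equation*}
\int_{E} u_i\, d\mu_i = 0.
\end{equation*}

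Next I would establish the pointwise identity by an approximation argument. Fix $B_{r_0}(x_0)\subset\Omega$ and, for $0<r<r_0$ and $\delta>0$ small, pick a cutoff $\eta_{r,\delta}\in C_c^\infty(B_{r_0}(x_0))$ that equals $1$ on $B_{r-\delta}(x_0)$, vanishes outside $B_{r}(x_0)$, and satisfies $\nabla\eta_{r,\delta}\to -\nu\, d\sigma\llcorner\partial B_r(x_0)$ weakly as $\delta\to 0$ in a suitable sense. The function $u_i\eta_{r,\delta}$ lies in $H^1_0(\Omega)\cap C_c(\Omega)$, so it is admissible as a test function against $\Delta u_i=\mu_i$:
\begin{equation*}
\int_\Omega \nabla u_i\cdot\nabla(u_i\eta_{r,\delta})\, dx = -\int_\Omega u_i\eta_{r,\delta}\, d\mu_i = 0,
\end{equation*}
the second equality by the vanishing of $u_i$ on $\operatorname{spt}\mu_i$. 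Expanding the left-hand side gives
\begin{equation*}
\int_\Omega \eta_{r,\delta}\vert\nabla u_i\vert^2\, dx = -\tfrac{1}{2}\int_\Omega \nabla\eta_{r,\delta}\cdot\nabla(u_i^2)\, dx.
\end{equation*}

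Finally I would pass to the limit $\delta\to 0$. Since $u_i$ is locally Lipschitz (Corollary \ref{corollary1000}) and in particular $u_i^2\in W^{1,\infty}_{\text{loc}}(\Omega)$, by the coarea formula the trace of $\nabla(u_i^2)$ on the sphere $\partial B_r(x_0)$ coincides with $2u_i\nabla u_i$ for a.e.\ $r\in(0,r_0)$, and the classical rearrangement of the cutoff computation yields
\begin{equation*}
\int_{B_r(x_0)}\vert\nabla u_i\vert^2\, dx = \tfrac{1}{2}\int_{\partial B_r(x_0)} \partial_\nu(u_i^2)\, d\sigma = \int_{\partial B_r(x_0)} u_i\,\partial_\nu u_i\, d\sigma
\end{equation*}
for a.e. $r\in(0,r_0)$. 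The main obstacle, and the reason for the ``a.e. $r$'' restriction, is justifying the boundary trace identification: one must know that $\partial_\nu u_i$ exists $\mathcal{H}^{n-1}$-a.e.\ on $\partial B_r(x_0)$ for almost every $r$ and that the pointwise product $u_i\partial_\nu u_i$ coincides with $\tfrac12\partial_\nu(u_i^2)$ there. This is standard for locally Lipschitz functions with $L^2$ weak gradient via Fubini/coarea, but it is where the ``a.e.'' condition enters unavoidably.
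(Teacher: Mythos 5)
Your proof is correct and takes essentially the same approach as the paper: test $\Delta u_i$ against $u_i$ times a cutoff approximating $\chi_{B_r(x_0)}$, use the fact that $u_i$ vanishes on the support of $\Delta u_i$ (equivalently, harmonicity of $u_i$ in $\{|\uu|>0\}$) to kill the measure term, expand the gradient, and pass to the limit to recover the boundary integral for a.e.\ $r$. The paper uses the Lipschitz ramp $\varphi_{r,\epsilon}=\max(0,\min(1,\tfrac{r}{\epsilon}(1-\tfrac{|x|}{r})))$ and phrases the vanishing step as $u_i\varphi_{r,\epsilon}\in H^1_0(B_r\cap\{|\uu|>0\})$, while you use a smooth cutoff and the measure-theoretic formulation $\int u_i\eta\,d\mu_i=0$; these are cosmetic variations on the same argument.
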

\begin{proof}
We might assume that $x_0=0$. 
We know that $u_i$ is harmonic in $\{\vert \uu\vert>0\}$ and 
$u_i\in H^{1}(\Omega)$. 
Let $0<r<r_{0}$. For $0<\epsilon<r$ let us define 
\begin{equation}\label{equ6800}
\varphi_{r,\epsilon}(x)
=\max\Bigl(0,\min\bigl(1,\frac{r}{\epsilon}(1-\frac{\vert x\vert}{r})\bigr)\Bigr)\text{.} 
\end{equation}
We compute 
\begin{equation*}
\nabla\varphi_{r,\epsilon}(x)
=
-
\chi_{\{r-\epsilon<\vert x\vert<r\}}
\frac{1}{\epsilon}
\frac{x}{\vert x\vert}
\text{.} 
\end{equation*}
We have 
$u_i\varphi_{r,\epsilon}\in H^{1}_{0}(B_r\cap\{\vert \uu\vert>0\})$. 
We compute  
\begin{multline*}
0
=\int_{B_r\cap\{\vert \uu\vert >0 \}}\nabla u_i\cdot\nabla (\varphi_{r,\epsilon}u_i)dx  \\
=\int_{B_r}\vert\nabla u_i\vert^{2}  \varphi_{r,\epsilon}  dx
-\frac{1}{\epsilon}\int_{B_r\backslash \overline{B_{r-\epsilon}}}
\nabla u_i\cdot\frac{x}{\vert x\vert}u_i dx 
\text{.}
\end{multline*}
Letting $\epsilon\to 0$ we obtain that for a.e. $r\in(0,r_{0})$ 
\begin{equation*}
\int_{B_r}\vert\nabla u_i\vert^{2}dx
=
\int_{\partial B_{r}}
\nabla u_i 
\cdot\frac{x}{\vert x\vert}
u_i d\sigma(x)
=
\int_{\partial B_{r}}
u_i\partial_{\nu}u_i 
d\sigma(x) ,
\end{equation*}
and this finishes the proof of the lemma.
\end{proof}

\begin{lemma}[Poho\v{z}aev Type Identity]\label{lemma2500}  
Let $\uu$ be a minimizer with continuous $Q$ and 
$B_{r_{0}}(x_{0})\subset\Omega$ then for a.e. $r\in(0,r_{0})$ 
the following Poho\v{z}aev type identity holds 
\begin{multline}\label{equ6900} 
n\int_{B_r(x_0)}\vert\nabla \uu\vert^{2}dx  \\
=
2\int_{B_r(x_0)}\vert\nabla \uu\vert^{2}dx  
+
r\int_{\partial B_r(x_0)}
\bigl(
\vert\nabla \uu\vert^{2}
-2\vert\partial_{\nu} \uu\vert^{2}
\bigr)d\sigma(x) \\
+\int_{B_r(x_0)}Q^2(x) (x-x_0)\cdot d\mu(x) ;
\end{multline}
the vector Radon measure $\mu$ is define in \eqref{equ3850}. 
\end{lemma}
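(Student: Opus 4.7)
The plan is to apply the domain variation formula (Lemma \ref{lemma1900}) with a carefully chosen radial cut-off of the dilation vector field, and then pass to the limit in a thin annulus, much as in the proof of Lemma \ref{lemma2400}. Without loss of generality, take $x_0=0$. The candidate test field is
\begin{equation*}
\Psi_{r,\epsilon}(x)=\varphi_{r,\epsilon}(|x|)\,x,
\end{equation*}
where $\varphi_{r,\epsilon}$ is the radial cut-off defined in \eqref{equ6800}. Lemma \ref{lemma1900} is stated for $\Psi\in C^\infty_c(\Omega;\mathbb{R}^n)$, but an immediate regularization (convolve $\varphi_{r,\epsilon}$ with a standard mollifier and then let the mollification parameter tend to zero) shows that the identity remains valid for compactly supported Lipschitz $\Psi$; this is the only, essentially routine, preparatory step. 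For $x$ in the annulus $\{r-\epsilon<|x|<r\}$ one has
\begin{equation*}
\nabla\varphi_{r,\epsilon}(|x|)=-\frac{1}{\epsilon}\,\frac{x}{|x|},\qquad (\nabla\Psi_{r,\epsilon})_{jk}=\varphi_{r,\epsilon}\,\delta_{jk}+x_j\,\partial_k\varphi_{r,\epsilon},
\end{equation*}
and one computes directly that $\operatorname{div}\Psi_{r,\epsilon}=n\varphi_{r,\epsilon}+x\cdot\nabla\varphi_{r,\epsilon}$ and
\begin{equation*}
(\nabla u_i)^T\nabla\Psi_{r,\epsilon}\nabla u_i=\varphi_{r,\epsilon}|\nabla u_i|^2+(x\cdot\nabla u_i)(\nabla u_i\cdot\nabla\varphi_{r,\epsilon}).
\end{equation*}

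Substituting into Lemma \ref{lemma1900} and collecting terms yields
\begin{multline*}
(2-n)\int_\Omega\varphi_{r,\epsilon}|\nabla\uu|^2\,dx+\int_\Omega\Bigl(2\sum_{i=1}^m(x\cdot\nabla u_i)(\nabla u_i\cdot\nabla\varphi_{r,\epsilon})-|\nabla\uu|^2(x\cdot\nabla\varphi_{r,\epsilon})\Bigr)dx\\
+\int_\Omega Q^2\,\varphi_{r,\epsilon}\,x\cdot d\mu=0.
\end{multline*}
On the support of $\nabla\varphi_{r,\epsilon}$ one has $x\cdot\nabla u_i=|x|\,\partial_\nu u_i$ and $\nabla u_i\cdot\nabla\varphi_{r,\epsilon}=-\epsilon^{-1}\partial_\nu u_i$, so the bracketed integral equals
\begin{equation*}
-\frac{1}{\epsilon}\int_{\{r-\epsilon<|x|<r\}}|x|\bigl(2|\partial_\nu\uu|^2-|\nabla\uu|^2\bigr)dx.
\end{equation*}
Letting $\epsilon\to 0$, and applying the coarea formula / Lebesgue differentiation, the annular integral converges for a.e.\ $r\in(0,r_0)$ to $r\int_{\partial B_r}(|\nabla\uu|^2-2|\partial_\nu\uu|^2)\,d\sigma$, while $\int\varphi_{r,\epsilon}|\nabla\uu|^2\to\int_{B_r}|\nabla\uu|^2$ by monotone convergence. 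For the boundary measure term, since by Lemma \ref{lemma1800} the set $\{|\uu|>0\}$ has locally finite perimeter, $|\mu|$ is a finite Radon measure on $\overline{B_{r_0/2}}$ (say), so $|\mu|(\partial B_r)=0$ for all but countably many $r$; for such $r$, dominated convergence gives $\int\varphi_{r,\epsilon}Q^2\,x\cdot d\mu\to\int_{B_r}Q^2\,x\cdot d\mu$. Rearranging the resulting identity gives exactly \eqref{equ6900}.

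The main obstacle I anticipate is purely technical, namely the fact that the natural test field $\Psi_{r,\epsilon}$ is only Lipschitz, so Lemma \ref{lemma1900} has to first be extended (via mollification) to Lipschitz compactly supported vector fields; once that extension is in hand the rest is an explicit computation and a Lebesgue-point / locally finite perimeter passage to the limit. A minor additional point is to verify that the measure $\mu$ does not concentrate on $\partial B_r$ for a.e.\ $r$, which is immediate from local finiteness of $|\mu|$. No new ingredient beyond what is already established in Sections \ref{section1400}--\ref{section1800} is needed.
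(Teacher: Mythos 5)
Your proposal is correct and takes essentially the same route as the paper: set $x_0=0$, test the domain variation formula (Lemma \ref{lemma1900}) with the dilation field $\Psi(x)=x\,\varphi_{r,\epsilon}(x)$ built from the radial cut-off $\varphi_{r,\epsilon}$ of \eqref{equ6800}, compute $\nabla\Psi$ and $\operatorname{div}\Psi$, and pass to the limit $\epsilon\to 0$ to obtain the identity for a.e. $r$. You additionally spell out two technical points that the paper leaves implicit — that $\Psi$ is only Lipschitz so the variation formula should be extended by mollification, and that non-concentration of $\mu$ on $\partial B_r$ for a.e.\ $r$ justifies the limit of the measure term — both of which are sound.
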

\begin{proof}
We might assume that $x_0=0$. 
Let $r\in(0,r_{0})$ and $\varphi_{r,\epsilon}$ be as in \eqref{equ6800}. 
Let $\Psi(x)=x\varphi_{r,\epsilon}(x)$. 
We compute 
\begin{equation*}
\nabla\Psi(x)=\varphi_{r,\epsilon}(x)I+\nabla\varphi_{r,\epsilon}(x)x^{T}
\enskip 
\text{and}
\enskip 
\operatorname{div}\Psi(x)
=
n\varphi_{r,\epsilon}(x)+x\cdot\nabla\varphi_{r,\epsilon}(x)\text{.} 
\end{equation*}
Now by the Lemma \ref{lemma1900} we obtain that 
\begin{multline*}
\int_{\Omega}
\Bigl(
2
\bigl(
\varphi_{r,\epsilon}\vert\nabla \uu\vert^{2}+\sum_{i=1}^{m}\nabla\varphi_{r,\epsilon}\cdot\nabla u_i x\cdot\nabla u_i
\bigr)
-\vert\nabla \uu\vert^{2}
\bigl(
n\varphi_{r,\epsilon}+x\cdot\nabla\varphi_{r,\epsilon}
\bigr)
\Bigr)dx \\
+\int_{\Omega}Q^2
\varphi_{r,\epsilon}
x\cdot
d\mu(x)
=0\text{.}
\end{multline*}

Passing to the limit as $\epsilon\to 0$ we obtain that for a.e. $r\in(0,r_{0})$ 
\begin{multline*}
n\int_{B_r}\vert\nabla \uu\vert^{2}dx 
=
2\int_{B_r}\vert\nabla \uu\vert^{2}dx 
+
r\int_{\partial B_r}
\bigl(
\vert\nabla \uu\vert^{2}
-2\vert\partial_{\nu} \uu\vert^{2}
\bigr)d\sigma(x) \\
+\int_{B_r}Q^2 x\cdot d\mu(x)
\end{multline*}
which proves the lemma. 
\end{proof}

Let us define 
\begin{equation*}
W(r,\uu;Q)
=\frac{1}{r^n}\int_{B_r}\bigl(\vert \nabla \uu\vert^{2}
+Q^2(x)\chi_{\{\vert \uu\vert>0\}}\bigr)dx
-\frac{1}{r^{n+1}}\int_{\partial B_r}\vert \uu\vert^{2}d\sigma(x)\text{.} 
\end{equation*}

\begin{lemma}[Weiss type Monotonicity Formula]\label{lemma2600} 
For $r,s>0$ and $\uu\in H^{1}(B_{rs};\mathbb{R}^{m})$ we have $W(rs,\uu;Q)=W(s,\uu_r;Q_r)$. 
For $\uu\in H^{1}(B_{r_0};\mathbb{R}^{m})$, 
$W(r,\uu;Q)$ as a function of $0<r<r_0$ is locally bounded and 
absolutely continuous. 
Let $\uu$ be a minimizer in $B_{r_0}$. 
If $Q=Q(0)$ is constant then for a.e. $r\in(0,r_{0})$ we have 
\begin{equation}\label{equ7000}
\frac{d}{dr}W(r,\uu;Q(0))
=2r\int_{\partial B_1} \vert \partial_{r}\uu_{r}\vert^{2} d\sigma(x)
\end{equation}
and generally for continuous $Q$ for a.e. $r\in(0,r_{0})$ we have 
\begin{equation}\label{equ7100}
\frac{d}{dr}W(r,\uu;Q)
\geq 
2r\int_{\partial B_1} \vert \partial_{r}\uu_{r}\vert^{2} d\sigma(x)
-CQ_{max}\frac{1}{r}\operatorname{osc}_{B_r}Q\text{.} 
\end{equation}
For $\uu$ a first order homogenous minimizer in $B_1$ we have 
\begin{equation*}
W(r,\uu;Q(0))
=
Q^2(0)
\int_{B}
\chi_{\{\vert \uu\vert>0\}}dx \\
=
\frac{1}{n}Q^2(0)
\int_{\partial B}\chi_{\{\vert \uu\vert>0\}}d\sigma(x)
\text{.}
\end{equation*}
\end{lemma}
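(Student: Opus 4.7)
My plan is to prove the four claims in the order: scaling, absolute continuity, the differential identity for constant $Q$, the estimate for variable $Q$, and the computation in the homogeneous case. The scaling identity $W(rs,\uu;Q)=W(s,\uu_r;Q_r)$ I would obtain by direct change of variables $x=ry$ in each of the three integrals defining $W$, using $\nabla\uu_r(y)=(\nabla\uu)(ry)$ and $d\sigma(x)=r^{n-1}d\sigma(y)$. Writing $I(r):=\int_{B_r}(\vert\nabla\uu\vert^2+Q^2\chi_{\{\vert\uu\vert>0\}})dx$ and $J(r):=\int_{\partial B_r}\vert\uu\vert^2d\sigma$, so that $W=r^{-n}I-r^{-n-1}J$, local boundedness and absolute continuity follow since $I$ is the integral function of an $L^1$ density, while for $J$ I would use $\uu\in H^1$ together with the decomposition $J(r)=r^{n-1}\int_{\partial B}\vert\uu(r\omega)\vert^2d\sigma(\omega)$.

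\textbf{Constant $Q$.} Fix $x_0=0$, assume $Q\equiv Q(0)$, and set $A(r):=\int_{B_r}\vert\nabla\uu\vert^2dx$. The central ingredient is the Poho\v{z}aev identity of Lemma~\ref{lemma2500} combined with
\begin{equation*}
\int_{B_r}x\cdot d\mu=r\int_{\partial B_r}\chi_{\{\vert\uu\vert>0\}}d\sigma-n\int_{B_r}\chi_{\{\vert\uu\vert>0\}}dx\quad\text{for a.e. }r,
\end{equation*}
which I would obtain by pairing $\mu=\nabla\chi_{\{\vert\uu\vert>0\}}$ with the vector field $\varphi_{r,\epsilon}(x)x$ (with $\varphi_{r,\epsilon}$ the cutoff from~\eqref{equ6800}) and sending $\epsilon\to0$. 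Substituting into Poho\v{z}aev and rearranging yields $rI'(r)-nI(r)=-2A(r)+2r\int_{\partial B_r}\vert\partial_\nu\uu\vert^2d\sigma$. For the $J$-term I would differentiate $J(r)=r^{n-1}\int_{\partial B}\vert\uu(r\omega)\vert^2d\sigma(\omega)$ to obtain $J'(r)=\frac{n-1}{r}J(r)+2\int_{\partial B_r}\uu\cdot\partial_\nu\uu\,d\sigma$, and convert the boundary term to $2A(r)$ via the basic energy identity of Lemma~\ref{lemma2400} summed over $i$. Dividing by $r^{n+1}$ and $r^{n+2}$ respectively and subtracting,
\begin{equation*}
\frac{d}{dr}W(r,\uu;Q(0))=-\frac{4A(r)}{r^{n+1}}+\frac{2}{r^n}\int_{\partial B_r}\vert\partial_\nu\uu\vert^2d\sigma+\frac{2J(r)}{r^{n+2}}.
\end{equation*}
Using $\partial_r\uu_r(y)=\frac{1}{r^2}(r\partial_\nu\uu(ry)-\uu(ry))$ for $y\in\partial B$, a direct expansion of $\vert\partial_r\uu_r\vert^2$ and integration over $\partial B$ identifies the right-hand side with $2r\int_{\partial B}\vert\partial_r\uu_r\vert^2 d\sigma$, proving~\eqref{equ7000}.

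\textbf{Variable $Q$ and the homogeneous case.} For general continuous $Q$ I would decompose $Q^2=Q^2(0)+(Q^2-Q^2(0))$ in both the Poho\v{z}aev term $\int_{B_r}Q^2x\cdot d\mu$ and in $rB'(r)-nB(r)$, where $B(r):=\int_{B_r}Q^2\chi_{\{\vert\uu\vert>0\}}dx$. By Lemma~\ref{lemma1700}\eqref{lemma1700-4} (together with Lemma~\ref{lemma1800}), the total variation obeys $\vert\mu\vert(B_r)\leq Cr^{n-1}$, and since $\vert x\vert\leq r$ the oscillation-weighted error is bounded by $CQ_{max}\,r^n\,\operatorname{osc}_{B_r}Q$; dividing by $r^{n+1}$ produces exactly the defect $CQ_{max}\,r^{-1}\operatorname{osc}_{B_r}Q$ of~\eqref{equ7100}. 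For the homogeneous statement, first-order homogeneity gives $\uu_r\equiv\uu$, so $\partial_r\uu_r\equiv0$ and~\eqref{equ7000} forces $W(\cdot,\uu;Q(0))$ to be constant in $r$. Evaluating at $r=1$, Euler's identity $x\cdot\nabla u_i=u_i$ combined with Lemma~\ref{lemma2400} yields $\int_B\vert\nabla\uu\vert^2dx=\int_{\partial B}\vert\uu\vert^2d\sigma$, leaving $W=Q^2(0)\int_B\chi_{\{\vert\uu\vert>0\}}dx$; integrating in polar coordinates and using that $\{\vert\uu\vert>0\}$ is a cone then yields the final equality with $\tfrac{1}{n}Q^2(0)\int_{\partial B}\chi_{\{\vert\uu\vert>0\}}d\sigma$.

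\textbf{Main obstacle.} The most delicate step will be the Gauss--Green/integration-by-parts computation for the vector Radon measure $\mu=\nabla\chi_{\{\vert\uu\vert>0\}}$ paired with the merely continuous field $Q^2(x)x\varphi_{r,\epsilon}(x)$: justifying the passage to the limit in $\epsilon$ and quantifying the error in the non-constant case will rely on the sharp density bound from Lemma~\ref{lemma1700} together with a smooth approximation of $Q$. A secondary technicality is selecting the "good" radii $r$ at which both Poho\v{z}aev and the basic energy identity hold simultaneously, but this is a standard Fubini/coarea argument inherited from Lemmas~\ref{lemma2400} and~\ref{lemma2500}.
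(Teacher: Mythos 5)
Your proposal is correct and follows essentially the same route as the paper: it uses the same three pillars (the Poho\v{z}aev identity of Lemma~\ref{lemma2500}, the basic energy identity of Lemma~\ref{lemma2400}, and the divergence identity relating $\int_{B_r}x\cdot d\mu$ to the bulk and boundary integrals of $\chi_{\{|\uu|>0\}}$, which is~\eqref{equ7500}), recognizes the same perfect square $\vert r\partial_\nu\uu-\uu\vert^2$ giving $\vert\partial_r\uu_r\vert^2$, and handles the variable-$Q$ correction by the same $Q^2=Q^2(0)+(Q^2-Q^2(0))$ decomposition together with the density bounds of Lemma~\ref{lemma1700}. The only differences are cosmetic: you decompose $W=r^{-n}I-r^{-n-1}J$ and differentiate the pieces separately, whereas the paper differentiates $W$ directly and groups terms (arriving at~\eqref{equ7300}); and in the homogeneous case you invoke constancy of $W$ plus Euler's identity, while the paper evaluates $W(1,\uu;Q(0))$ directly and observes the boundary integrand $(\partial_\nu\uu-\uu)\cdot\uu$ vanishes.
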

\begin{proof}
For $r,s>0$ and $\uu\in H^{1}(B_{rs})$ we compute 
\begin{multline*}
W(rs,\uu;Q)
=\frac{1}{(rs)^n}\int_{B_{rs}}\bigl(\vert \nabla \uu\vert^{2}
+Q^2\chi_{\{\vert \uu\vert>0\}}\bigr)dx
-\frac{1}{(rs)^{n+1}}\int_{\partial B_{rs}}\vert \uu\vert^{2}d\sigma(x) \\
=\frac{1}{s^n}\int_{B_{s}}\bigl(\vert \nabla \uu(rx)\vert^{2}
+Q^2(rx)\chi_{\{\vert \uu(rx)\vert>0\}}\bigr)dx
-\frac{1}{s^{n+1}r^{2}}\int_{\partial B_{s}}\vert \uu(rx)\vert^{2}d\sigma(x) \\
=\frac{1}{s^n}\int_{B_{s}}\bigl(\vert \nabla \uu_r\vert^{2}
+Q^2_r(x)\chi_{\{\vert \uu_r\vert>0\}}\bigr)dx
-\frac{1}{s^{n+1}}\int_{\partial B_{s}}\vert \uu_r\vert^{2}d\sigma(x) 
=W(s,\uu_r;Q_r)
\end{multline*} 
and this proves the first claim. 

Let $\uu\in H^{1}(B_{r_0};\mathbb{R}^{m})$ then for $0<r<r_0$ by direct computation using polar coordinates 
we have 
\begin{multline}\label{equ7200} 
\int_{\partial B_{r}}
u_i^2
d\sigma(x)
=
-
2r^{n-1}\int_{B_{r_0}\backslash B_r}\frac{1}{\vert x\vert^{n}}u_i(x)\nabla u_i(x)\cdot xdx \\
+
(\frac{r}{r_0})^{n-1}
\int_{\partial B_{r_0}}
u_i^2
d\sigma(x)
\end{multline}

The equation \eqref{equ7200} together with the fact that for 
$f\in L^{1}_{loc}(\mathbb{R}^{n})$, $\int_{B_r}fdx$ 
as a function of $r$ is bounded and absolutely continuous 
proves the second claim. 

Let $\uu$ be a minimizer in $B_{r_0}$ and $0<r<r_0$. 
By scaling in the second integral in the definition of $W$ we obtain 
\begin{equation*}
W(r,\uu;Q)=r^{-n}\int_{B_r}\bigl(\vert \nabla \uu\vert^{2}
+Q^2\chi_{\{\vert \uu\vert>0\}}\bigr)dx
-r^{-2}\int_{\partial B_1}\vert \uu\vert^{2}(ry)d\sigma(y)\text{.} 
\end{equation*}
Computing the derivative with respect to $r$, for a.e. $r\in(0,r_{0})$ we obtain 
\begin{multline*}
\frac{1}{2}r^{n+1}\frac{d}{dr}W(r,\uu;Q)
=-\frac{1}{2}n\int_{B_r}\vert \nabla \uu\vert^{2}dx 
-\frac{n}{2}\int_{B_r}Q^2\chi_{\{\vert \uu\vert>0\}}dx \\
+\frac{r}{2}\int_{\partial B_r}\bigl(\vert \nabla \uu\vert^{2}
+Q^2\chi_{\{\vert \uu\vert>0\}}\bigr)d\sigma(x) \\
+\frac{1}{r}\int_{\partial B_r}\vert \uu\vert^{2}d\sigma(x) 
-\int_{\partial B_r}\sum_{i=1}^{m} u_i \partial_{\nu}u_i d\sigma(x)\text{.} 
\end{multline*}

Using the identity \eqref{equ6900} proved in the previous 
lemma for the first integral on 
the right hand side we obtain 
(in the following the vector Radon measure $\mu$ is defined in \eqref{equ3850})
\begin{multline*}
\frac{1}{2}r^{n+1}\frac{d}{dr}W(r,\uu;Q) 
=
-\frac{1}{2}
\Bigl(
2\int_{B_r}\vert\nabla \uu\vert^{2}dx  \\
+
r\int_{\partial B_r} 
\bigl(
\vert\nabla \uu\vert^{2}
-2\vert\partial_{\nu} \uu\vert^{2}
\bigr)d\sigma(x) 
+\int_{B_r}Q^2(x) x\cdot d\mu(x)
\Bigr) \\
-\frac{n}{2}\int_{B_r}Q^2\chi_{\{\vert \uu\vert>0\}}dx 
+\frac{r}{2}\int_{\partial B_r}\bigl(\vert \nabla \uu\vert^{2}
+Q^2\chi_{\{\vert \uu\vert>0\}}\bigr)d\sigma(x) \\
+\frac{1}{r}\int_{\partial B_r}\vert \uu\vert^{2}d\sigma(x) 
-\int_{\partial B_r}\sum_{i=1}^{m} u_i \partial_{\nu}u_i d\sigma(x) \\
=
-\int_{B_r}\vert\nabla \uu\vert^{2}dx 
+r\int_{\partial B_r}\vert\partial_{\nu} \uu\vert^{2} d\sigma(x) 
+\frac{1}{r}\int_{\partial B_r}\vert \uu\vert^{2}d\sigma(x)  \\
-\int_{\partial B_r}\sum_{i=1}^{m} u_i \partial_{\nu}u_i d\sigma(x) 
-\frac{1}{2}\int_{B_r}Q^2(x) x\cdot d\mu(x) \\
-\frac{n}{2}\int_{B_r}Q^2\chi_{\{\vert \uu\vert>0\}}dx 
+\frac{r}{2}\int_{\partial B_r}Q^2\chi_{\{\vert \uu\vert>0\}}d\sigma(x)
\text{.} 
\end{multline*}

Using the identity \eqref{equ6700} proved in the Lemma \ref{lemma2400} for the first integral on 
the right hand side we obtain for a.e. $r\in(0,r_{0})$ 
\begin{multline}\label{equ7300}
\frac{1}{2}r^{n+1}\frac{d}{dr}W(r,\uu;Q) \\
=r\int_{\partial B_r}\vert\partial_{\nu} \uu\vert^{2} d\sigma(x) 
+\frac{1}{r}\int_{\partial B_r}\vert \uu\vert^{2}d\sigma(x) 
-2\int_{\partial B_r}\sum_{i=1}^{m} u_i \partial_{\nu}u_i d\sigma(x) \\
-\frac{1}{2}\int_{B_r}Q^2(x) x\cdot d\mu(x)
-\frac{n}{2}\int_{B_r}Q^2\chi_{\{\vert \uu\vert>0\}}dx 
+\frac{r}{2}\int_{\partial B_r}Q^2\chi_{\{\vert \uu\vert>0\}}d\sigma(x)\text{.}
\end{multline}

Separately we compute 
\begin{multline}\label{equ7400}
r\int_{\partial B_r}\vert\partial_{\nu} \uu\vert^{2} d\sigma(x) 
+\frac{1}{r}\int_{\partial B_r}\vert \uu\vert^{2}d\sigma(x) 
-2\int_{\partial B_r}\sum_{i=1}^{m} u_i \partial_{\nu}u_i d\sigma(x) \\
=
\frac{1}{r}
\sum_{i=1}^{m}
\int_{\partial B_r}
\bigl(
r\partial_{\nu} u_i-u_i
\bigr)^{2}
d\sigma(x) 
=
r^{n+2}\int_{\partial B_1} \vert \partial_{r}\uu_{r}\vert^{2} d\sigma(x)\text{.} 
\end{multline}

One may see that 
\begin{equation}\label{equ7500} 
-\frac{1}{2}\int_{B_r}x\cdot d\mu(x)
-\frac{n}{2}\int_{B_r}\chi_{\{\vert \uu\vert>0\}}dx 
+\frac{r}{2}\int_{\partial B_r}\chi_{\{\vert \uu\vert>0\}}d\sigma(x)
=0\text{.} 
\end{equation}
From \eqref{equ7300}, \eqref{equ7400} and \eqref{equ7500} we obtain 
\eqref{equ7000}. 

Using Lemma \ref{lemma1700} part \eqref{lemma1700-4} we further compute 
\begin{multline}\label{equ7600} 
-\frac{1}{2}\int_{B_r}Q^2(x) x\cdot d\mu(x) \\
-\frac{n}{2}\int_{B_r}Q^2\chi_{\{\vert \uu\vert>0\}}dx 
+\frac{r}{2}\int_{\partial B_r}Q^2\chi_{\{\vert \uu\vert>0\}}d\sigma(x) \\
=
-\frac{1}{2}\int_{B_r}(Q^2(x)-Q^2(0)) x\cdot d\mu(x) \\
-\frac{n}{2}\int_{B_r}(Q^2(x)-Q^2(0))\chi_{\{\vert \uu\vert>0\}}dx  \\
+\frac{r}{2}\int_{\partial B_r}(Q^2(x)-Q^2(0))\chi_{\{\vert \uu\vert>0\}}d\sigma(x) \\
\geq 
-C_1Q_{max}r\operatorname{osc}_{B_r}Q\int_{B_r}d\vert \mu\vert(x) \\
-C_1Q_{max}\operatorname{osc}_{B_r}Q\int_{B_r} \chi_{\{\vert \uu\vert>0\}}dx  \\
-C_1Q_{max}r\operatorname{osc}_{B_r}Q\int_{\partial B_r} \chi_{\{\vert \uu\vert>0\}} d\sigma(x) \\
\geq 
-C_2Q_{max}r^{n}\operatorname{osc}_{B_r}Q\text{.} 
\end{multline} 

By \eqref{equ7300}, \eqref{equ7400} and \eqref{equ7600} we obtain 
\begin{equation*}
\frac{1}{2}r^{n+1}\frac{d}{dr}W(r,\uu;Q)
\geq 
r^{n+2}\int_{\partial B_1} \vert \partial_{r}\uu_{r}\vert^{2} d\sigma(x)
-C_{2}Q_{max}r^{n}\operatorname{osc}_{B_r}Q
\text{;}
\end{equation*}
from which \eqref{equ7100} follows. 

For $\uu$ a minimizer in $B_1$ with constant $Q=Q(0)$ 
using the identity \eqref{equ6700} proved in the Lemma \ref{lemma2400} we compute 
\begin{multline*}
W(1,\uu;Q(0))
=
\int_{B_1}\vert \nabla \uu\vert^{2}dx
+\int_{B_1}Q^2(0)\chi_{\{\vert \uu\vert>0\}}dx
-\int_{\partial B_1}\vert \uu\vert^{2}d\sigma(x) \\
=
\int_{\partial B_1}\sum_{i=1}^{m}u_i\partial_{\nu}u_id\sigma(x)
+\int_{B_1}Q^2(0)\chi_{\{\vert \uu\vert>0\}}dx
-\int_{\partial B_1}\vert \uu\vert^{2}d\sigma(x) \\
=
\int_{B_1}Q^2(0)\chi_{\{\vert \uu\vert>0\}}dx
+\int_{\partial B_1}\sum_{i=1}^{m}\bigl(u_i\partial_{\nu}u_i-u_i^2\bigr)d\sigma(x) \\
=
\int_{B_1}Q^2(0)\chi_{\{\vert \uu\vert>0\}}dx
+\int_{\partial B_1}\sum_{i=1}^{m}(\partial_{\nu}u_i-u_i)u_id\sigma(x) \\
=
\int_{B_1}Q^2(0)\chi_{\{\vert \uu\vert>0\}}dx
+\int_{\partial B_1}(\partial_{\nu}\uu-\uu)\cdot \uu d\sigma(x)\text{.} 
\end{multline*}
For a first order homogenous function we have $\partial_{\nu}\uu=\uu$ thus 
the last integral vanishes and this proves the last claim of the lemma. 
\end{proof}

\begin{lemma}\label{lemma2700} 
Let $\uu$ be a local minimizer with continuous $Q$ 
in $\Omega$ with $0\in\Omega$ and $\uu(0)=0$. 
Assume that $Q$ Dini continuous at origin, i.e. 
\begin{equation}\label{equ7610}
\int_{0+}\frac{1}{r}\operatorname{osc}_{B_r}Q dr<\infty\text{.} 
\end{equation}
Let $B_{r_k}\subset\Omega$ with $r_k\to 0$ as $k\to\infty$. 
Let $\vv$ be a blowup limit of $\uu$ at the origin with respect to the sequence $r_k$. 
Then $\vv$ is first order homogenous.
\end{lemma}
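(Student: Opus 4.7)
The plan is to combine the Weiss-type monotonicity formula (Lemma \ref{lemma2600}) with the Dini hypothesis to show that $W(r,\uu;Q)$ has a finite limit as $r\to 0^+$, then transfer this to the blowup via the scaling $W(Rr_k,\uu;Q)=W(R,\uu_{r_k};Q_{r_k})$, and finally apply the equality form of the monotonicity formula for constant $Q$ to force homogeneity.

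First, define $G(r):=CQ_{max}\int_0^r \frac{1}{s}\operatorname{osc}_{B_s}Q\,ds$; this is finite for $r<r_0$ and $G(r)\to 0$ as $r\to 0^+$ by \eqref{equ7610}. The inequality \eqref{equ7100} then gives
\begin{equation*}
\frac{d}{dr}\bigl(W(r,\uu;Q)+G(r)\bigr)\;\geq\;2r\int_{\partial B_1}\vert\partial_r \uu_r\vert^{2}d\sigma\;\geq\;0,
\end{equation*}
so the perturbed functional is nondecreasing on $(0,r_0)$. By Corollary \ref{corollary1000} together with $\uu(0)=0$, one has $\vert\uu(x)\vert\leq C\vert x\vert$ and $\vert\nabla\uu\vert\leq C$ near the origin, which makes both terms in $W(r,\uu;Q)$ uniformly bounded. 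Hence the nondecreasing function $W(r,\uu;Q)+G(r)$ is bounded below, so it has a finite limit as $r\to 0^+$, and since $G(r)\to 0$,
\begin{equation*}
W(0^+,\uu;Q):=\lim_{r\to 0^+}W(r,\uu;Q)\in\mathbb{R}
\end{equation*}
exists.

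Next, the scaling identity from Lemma \ref{lemma2600} gives $W(Rr_k,\uu;Q)=W(R,\uu_{r_k};Q_{r_k})$ for every $R>0$, and the left side tends to $W(0^+,\uu;Q)$ as $k\to\infty$. On the right, the strong $W^{1,p}_{loc}$ convergence $\uu_{r_k}\to\vv$, the $L^{1}_{loc}$ convergence $\chi_{\{\vert\uu_{r_k}\vert>0\}}\to\chi_{\{\vert\vv\vert>0\}}$ from Definition \ref{definition1200}, together with the uniform convergence $Q_{r_k}\to Q(0)$ on $B_R$ (continuity of $Q$ at $0$) and the uniform convergence $\uu_{r_k}\to\vv$ on $\partial B_R$ (the $C^{0,\alpha}_{loc}$ convergence), show
\begin{equation*}
W(R,\uu_{r_k};Q_{r_k})\;\longrightarrow\;W(R,\vv;Q(0)).
\end{equation*}
Consequently $W(R,\vv;Q(0))$ is constant in $R>0$.

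Finally, by Lemma \ref{lemma2000}, $\vv$ is a global absolute minimizer with constant $Q(0)$, so the equality case \eqref{equ7000} of the monotonicity formula applies: $\frac{d}{dr}W(r,\vv;Q(0))=2r\int_{\partial B_1}\vert\partial_r \vv_r\vert^{2}d\sigma$. Since the left side vanishes for a.e.\ $r$, we obtain $\partial_r \vv_r=0$ on $\partial B_1$, i.e.\ the map $r\mapsto \vv_r(x)=r^{-1}\vv(rx)$ is constant in $r$ for each $x\in\partial B_1$, giving $\vv(rx)=r\vv(x)$ and hence first order homogeneity. The main delicate step is the passage to the limit $W(R,\uu_{r_k};Q_{r_k})\to W(R,\vv;Q(0))$: strong (not merely weak) $W^{1,p}$ convergence is essential for the gradient term, and the uniform convergence on $\partial B_R$ is what handles the trace integral. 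The Dini hypothesis enters precisely to absorb the defect $-CQ_{max}r^{-1}\operatorname{osc}_{B_r}Q$ in \eqref{equ7100}.
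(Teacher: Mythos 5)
Your proof is correct and follows essentially the same route as the paper's: add the Dini integral $G(r)$ to $W(r,\uu;Q)$ to recover monotonicity from \eqref{equ7100}, deduce existence of $W(0^+,\uu;Q)$, pass to the blow-up limit via the scaling identity and the convergences in Definition \ref{definition1200}, and then use the equality case \eqref{equ7000} for the constant-coefficient functional (applicable since $\vv$ is an absolute minimizer by Lemma \ref{lemma2000}) to force homogeneity. The only difference is that you make the finiteness of $W(0^+,\uu;Q)$ explicit via the Lipschitz bound from Corollary \ref{corollary1000}, whereas the paper simply appeals to ``regularity results''; the substance is the same.
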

\begin{proof}
Let $\uu$, $Q$, $r_k$ and $\vv$ as in the statement of the Lemma. 

Let us define 
\begin{equation*}
\rho(r)=\int_{0}^{r}\frac{1}{s}\operatorname{osc}_{B_{s}}Q ds\text{.} 
\end{equation*}
By \eqref{equ7610}, $\rho(r)$ is well defined for small enough $r>0$.  

Let $B_{r_{0}}\subset\Omega$. 
By \eqref{equ7100} in Lemma \ref{lemma2600} we have that 
for a.e. $r\in(0,r_{0})$ 
\begin{equation*}
\frac{d}{dr}W(r,\uu;Q)
\geq 
-CQ_{max}\frac{1}{r}\operatorname{osc}_{B_r}Q
=
-CQ_{max}\rho^{\prime}(r)
\text{.} 
\end{equation*}
Thus we have 
\begin{equation*}
\frac{d}{dr}\bigl(W(r,\uu;Q)+CQ_{max}\rho(r)\bigr)\geq 0\text{.} 
\end{equation*}
Therefore $W(r,\uu;Q)+CQ_{max}\rho(r)$ is a nondecreasing and absolutely 
continuous function. 
It follows that the limit $\lim_{r\to0,\ r>0}(W(r,\uu;Q)+CQ_{max}\rho(r))$ exists. 
Because $\lim_{r\to0,\ r>0}\rho(r)=0$ we obtain that 
the limit 
\begin{equation*} 
W(+0,\uu;Q)=\lim_{r\to0,\ r>0}W(r,\uu;Q)
\end{equation*} 
exists. 
Because $\uu(0)=0$ one may see that by regularity results $W(+0,\uu;Q)>-\infty$. 

For $s>0$ we have $W(r_k s,\uu;Q)=W(s,\uu_{r_k};Q_{r_k})$. 
Clearly we have $W(+0,\uu;Q)=\lim_{k\to\infty}W(r_k s,\uu;Q)$, and 
\begin{multline*}
W(s,\uu_{r_k};Q_{r_k}) \\
=
\frac{1}{s^n}\int_{B_s}\bigl(\vert \nabla \uu_{r_k}\vert^{2}
+Q_{r_k}^{2}\chi_{\{\vert \uu_{r_k}\vert>0\}}\bigr)dx
-\frac{1}{s^{n+1}}\int_{\partial B_s}\vert \uu_{r_k}\vert^{2}d\sigma(x) \\
\to 
\frac{1}{s^n}\int_{B_s}\bigl(\vert \nabla \vv\vert^{2}
+Q^{2}(0)\chi_{\{\vert \vv\vert>0\}}\bigr)dx
-\frac{1}{s^{n+1}}\int_{\partial B_s}\vert \vv\vert^{2}d\sigma(x) \\
=
W(s,\vv;Q(0))\text{.} 
\end{multline*}
Thus $-\infty<W(+0,\uu;Q)=W(s,\vv;Q(0))$ for $0<s<1$. 
By Lemma \ref{lemma2000} we know that $\vv$ is an 
absolute minimizer thus by Lemma \ref{lemma2600} 
because $W(s,\vv;Q(0))$ is independent of $s$ it 
follows that $\vv$ is first order homogenous. 
\end{proof}

\section{Homogenous Global Minimizers\\ 
(Proof of Theorem \ref{theorem1400})}\label{section2000} 

In this section we use the notation $S^{n-1}=\partial B$.  
Assume $Q_{0}>0$. Then one may see that 
$\uu$ is a minimizer in $\Omega$ with $Q=Q_{0}$ 
if and only if $\vv=\frac{1}{Q_{0}}\uu$ is a minimizer in $\Omega$ with $Q=1$. 
This allows us in the following to consider only the case $Q=1$. 

\begin{lemma}\label{lemma2800} 
If $v$ is an absolute scalar minimizer in $B$, 
$c\in\mathbb{R}^{m}$, $\vert c\vert=1$, $c_i\geq 0$ for $i=1,\cdots,m$ and 
$u_i=c_iv$ then $\uu$ is an absolute (vector) minimizer in $B$. 
\end{lemma}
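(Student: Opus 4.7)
The plan is to reduce the vector minimization to its scalar counterpart by using the right linear ``averaging'' of competitors, namely $\w\mapsto c\cdot\w$. First I will handle the easy half: since $|c|=1$ and $v\ge 0$, one has $|\uu|^{2}=(\sum_i c_i^{2})v^{2}=v^{2}$, so $\{|\uu|>0\}=\{v>0\}$, and $|\nabla\uu|^{2}=(\sum_i c_i^{2})|\nabla v|^{2}=|\nabla v|^{2}$. Taking $Q\equiv 1$ (as justified by the rescaling remark at the beginning of Section \ref{section2000}), this gives
\[
J(\uu)=\int_{B}\bigl(|\nabla v|^{2}+\chi_{\{v>0\}}\bigr)\,dx=:J_{\mathrm{sc}}(v).
\]

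The core step is constructing a scalar competitor from an arbitrary vector competitor. Given $\w\in\mathcal{A}$ with $\w=\uu$ on $\partial B$ and $w_i\ge 0$, I will set $\tilde w:=c\cdot\w=\sum_{i=1}^{m}c_i w_i$. Then $\tilde w\ge 0$ since $c_i,w_i\ge 0$; and on $\partial B$, $\tilde w=c\cdot(cv)=|c|^{2}v=v$, so $\tilde w$ is admissible for the scalar problem with the same Dirichlet data as $v$. The Dirichlet energy drops under this lifting: componentwise Cauchy--Schwarz in $\mathbb{R}^{m}$ gives, for each $k$,
\[
(\partial_k\tilde w)^{2}=\Bigl(\sum_{i}c_i\,\partial_k w_i\Bigr)^{\!2}\le |c|^{2}\sum_{i}(\partial_k w_i)^{2}=\sum_{i}(\partial_k w_i)^{2},
\]
and summing over $k$ yields $|\nabla\tilde w|^{2}\le|\nabla\w|^{2}$. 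The support inclusion $\{\tilde w>0\}\subset\{|\w|>0\}$ is immediate, because $\tilde w(x)>0$ forces $w_i(x)>0$ for some $i$, whence $|\w(x)|>0$. Consequently
\[
J_{\mathrm{sc}}(\tilde w)=\int_{B}\bigl(|\nabla\tilde w|^{2}+\chi_{\{\tilde w>0\}}\bigr)dx\le\int_{B}\bigl(|\nabla\w|^{2}+\chi_{\{|\w|>0\}}\bigr)dx=J(\w).
\]

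Combining the two steps with the scalar absolute minimality of $v$,
\[
J(\uu)=J_{\mathrm{sc}}(v)\le J_{\mathrm{sc}}(\tilde w)\le J(\w),
\]
and taking $\w$ arbitrary in $\mathcal{A}$ finishes the proof. There is no genuine obstacle; the only point deserving a moment's thought is the choice of scalar lifting. A naive attempt with $\tilde w=|\w|$ does not interact as cleanly with the gradient, and worse, does not automatically match the scalar boundary datum $v$. The linear lifting $c\cdot\w$ handles both issues at once, and it does so \emph{precisely because} $|c|=1$: this unit-length hypothesis is what makes Cauchy--Schwarz give the sharp bound $|\nabla\tilde w|^{2}\le|\nabla\w|^{2}$ and what makes the boundary trace reproduce $v$ exactly.
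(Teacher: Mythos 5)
Your proof is correct, and it takes a genuinely different route from the paper. Where you pass from a vector competitor $\w$ to a \emph{single} scalar competitor $\tilde w = c\cdot\w$ and invoke scalar minimality once, the paper instead treats each component $w_i/c_i$ as its own scalar competitor for $v$ (boundary data matches since $w_i/c_i = c_i v/c_i = v$ on $\partial B$), applies scalar minimality for each $i$, and then sums the resulting inequalities with weights $c_i^2$; the measure term is controlled by $\sum_i c_i^2 \chi_{\{w_i>0\}} \leq \chi_{\{|\w|>0\}}$ using $|c|=1$. Your linear lifting is arguably cleaner: it sidesteps entirely the degenerate case $c_i=0$, where $w_i/c_i$ is not defined (the paper's argument still works because such terms carry weight $c_i^2=0$, but this has to be noted), and it replaces $m$ applications of scalar minimality plus a weighted sum with one Cauchy--Schwarz estimate $|\nabla(c\cdot\w)|^2\le|c|^2|\nabla\w|^2$ and one support inclusion $\{c\cdot\w>0\}\subset\{|\w|>0\}$. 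Both arguments hinge in the same two places on $|c|=1$ (to make the Dirichlet energies match and the indicator terms add up to the right constant), so neither is more general; yours is a bit shorter and avoids a small case distinction.
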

\begin{proof}
For $i=1,\cdots,m$ let $w_i\in H^{1}(B;\mathbb{R}^{m})$, $w_i\geq 0$ in $B$, 
$w_i=u_i=c_i v$ on $\partial B$. Then 
\begin{multline*}
\int_{B}\bigl(\vert\nabla \uu\vert^{2}+\chi_{\{\vert \uu\vert>0\}}\bigr)dx 
=
\int_{B}\bigl(\vert\nabla v\vert^{2}+\chi_{\{v>0\}}\bigr)dx \\
=
\sum_{i=1}^{m}c_i^2\int_{B}\bigl(\vert\nabla v\vert^{2}+\chi_{\{v>0\}}\bigr)dx 
\leq 
\sum_{i=1}^{m}c_i^2\int_{B}\bigl(\vert\nabla(\frac{w_i}{c_i})\vert^{2}
+\chi_{\{\frac{w_i}{c_i}>0\}}\bigr)dx \\
=
\int_{B}\bigl(\vert\nabla\w\vert^{2}
+\sum_{i=1}^{m}c_i^2 \chi_{\{w_i>0\}}\bigr)dx 
\leq 
\int_{B}\bigl(\vert\nabla\w\vert^{2}
+\sum_{i=1}^{m}c_i^2 \chi_{\{\vert\w\vert>0\}}\bigr)dx  \\
=
\int_{B}\bigl(\vert\nabla\w\vert^{2}
+ \chi_{\{\vert\w\vert>0\}}\bigr)dx
\end{multline*} 
which finishes the proof of the lemma. 
\end{proof}

\begin{lemma}\label{lemma2900} 
Suppose $\uu$ is a first order homogenous 
absolute minimizer in $B$, $\{\vert \uu\vert>0\}$ is a connected 
open set and $\{\vert \uu\vert>0\}\not=B\backslash\{0\}$. 
Then $u_i=c_iv$ where $c\in\mathbb{R}^{m}$, $\vert c\vert=1$, $c_i\geq 0$ 
for $i=1,\cdots,m$ and $v$ is a scalar first order homogenous absolute 
minimizer with $\{v>0\}=\{\vert \uu\vert>0\}$.  
\end{lemma}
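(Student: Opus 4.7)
The plan is to pass to polar coordinates and exploit the simplicity of the principal Dirichlet eigenvalue of the spherical Laplacian. Since $\uu$ is $1$-homogeneous, the open set $C := \{\vert \uu\vert > 0\}$ is a cone, and by hypothesis it is connected and a strict subset of $B \setminus \{0\}$. Consequently $\Sigma := C \cap S^{n-1}$ is a connected proper open subset of $S^{n-1}$. Each $u_i$ is harmonic in $C$ (by the radial-perturbation argument in the proof of Lemma \ref{lemma1400}); combined with $u_i \geq 0$ and the strong minimum principle on the connected open set $C$, each $u_i$ is either identically zero in $C$ or strictly positive throughout $C$. Let $I := \{i : u_i > 0 \text{ in } C\}$; since $\uu \not\equiv 0$ we have $I \neq \emptyset$.

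For $i \in I$, homogeneity lets me write $u_i(x) = \vert x\vert\, \varphi_i(x/\vert x\vert)$ with $\varphi_i \in C(\overline{\Sigma})$, $\varphi_i > 0$ on $\Sigma$, and $\varphi_i \equiv 0$ on $\partial \Sigma$. Expanding $\Delta u_i = 0$ in polar coordinates shows that $\varphi_i$ is a Dirichlet eigenfunction of $-\Delta_{S^{n-1}}$ on $\Sigma$ with eigenvalue $n-1$. Since $\varphi_i > 0$ on $\Sigma$, the value $n-1$ must in fact be the principal Dirichlet eigenvalue on $\Sigma$, and by the standard simplicity result the corresponding eigenspace is one-dimensional. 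Therefore for any $i, j \in I$ there is a constant $a_{ij} > 0$ with $u_i = a_{ij}\, u_j$ on $B$.

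Fix $i_0 \in I$ and write $u_i = a_i u_{i_0}$ with $a_{i_0} = 1$, $a_i > 0$ for $i \in I$, and $a_i = 0$ otherwise. Put $\tilde c := (a_1, \ldots, a_m)$, $c := \tilde c / \vert \tilde c\vert$, and $v := \vert \tilde c\vert\, u_{i_0}$. Then $\vert c\vert = 1$, $c_i \geq 0$, $u_i = c_i v$, and $v$ is $1$-homogeneous with $\{v > 0\} = C$. To see that $v$ is a scalar absolute minimizer in $B$, given any admissible scalar competitor $w \in H^{1}(B)$ with $w \geq 0$ and $w = v$ on $\partial B$, I would form the vector $\w := c w$: then $\w_i = c_i w \geq 0$, $\w = \uu$ on $\partial B$, and $\vert \w\vert = w$, so using $\vert c\vert = 1$ and the minimality of $\uu$,
\begin{equation*}
\int_B \bigl(\vert \nabla v\vert^2 + \chi_{\{v>0\}}\bigr)\,dx = \int_B \bigl(\vert \nabla \uu\vert^2 + \chi_{\{\vert \uu\vert>0\}}\bigr)\,dx \leq \int_B \bigl(\vert \nabla \w\vert^2 + \chi_{\{\vert \w\vert>0\}}\bigr)\,dx = \int_B \bigl(\vert \nabla w\vert^2 + \chi_{\{w>0\}}\bigr)\,dx,
\end{equation*}
which is exactly the scalar minimality of $v$.

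The main obstacle will be rigorously justifying the simplicity of the principal Dirichlet eigenvalue of $-\Delta_{S^{n-1}}$ on the possibly irregular spherical domain $\Sigma$. By Corollary \ref{corollary1000} each $\varphi_i$ lies in $C^{0,1}(\overline{\Sigma}) \cap H^{1}_{0}(\Sigma)$, so the usual Krein-Rutman style argument should suffice: given two positive eigenfunctions $\varphi_i, \varphi_j$ with eigenvalue $n-1$, set $\lambda_{0} := \inf_{\Sigma} \varphi_j/\varphi_i$; after showing $\lambda_{0} > 0$ (using comparability of $\varphi_i,\varphi_j$ via the boundary Harnack encoded in the non-tangential accessibility of $C$, Theorem \ref{theorem1350}), the function $h := \varphi_j - \lambda_{0} \varphi_i$ is a nonnegative eigenfunction attaining the value $0$ at an interior point of $\Sigma$; the equation $-\Delta_{S^{n-1}} h = (n-1) h \geq 0$ makes $h$ superharmonic on $\Sigma$, and the strong minimum principle on the connected set $\Sigma$ then forces $h \equiv 0$ and hence $\varphi_j = \lambda_{0} \varphi_i$.
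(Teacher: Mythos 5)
Your proof is correct and follows essentially the same route as the paper's: reduce to the Dirichlet eigenvalue problem for $-\Delta_{S^{n-1}}$ on the spherical cap $\Sigma$, observe that positivity forces $n-1$ to be the principal eigenvalue, invoke its simplicity to conclude that all nonzero components are proportional, and then verify scalar minimality of $v$ by the same competitor $\w = c\,\phi$ construction. You add a bit more care than the paper (explicitly treating the components that vanish identically, and sketching the boundary-Harnack argument for simplicity on the possibly irregular $\Sigma$, which the paper simply invokes), but the underlying argument is identical.
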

\begin{proof}
Let us define the set 
\begin{equation*}
U=\{\vert \uu\vert>0\}\cap S^{n-1}\text{.} 
\end{equation*}

$U$ is an open and connected strict subset of $S^{n-1}$. 
Because $\uu$ is harmonic in the cone $\{\vert \uu\vert>0\}$ 
and first order homogenous we obtain that 
for all $i=1,\cdots,m$ we have 
\begin{equation*}
\left\{
\begin{aligned}
& -\Delta_{S^{n-1}}u_i=(n-1)u_i\enskip\text{on}\enskip U\text{,} \\
& u_i=0\enskip\text{on}\enskip \partial_{S^{n-1}} U\text{.} 
\end{aligned}
\right.
\end{equation*}
Here $\Delta_{S^{n-1}}$ is the Laplacian on the sphere and 
$\partial_{S^{n-1}} U$ is the boundary of $U$ in the sphere. 
It follows that $u_i$ are in the eigenspace corresponding to the eigenvalue $n-1$. 
Because $u_i$ are nonnegative it follows that $n-1$ is the first eigenvalue. 
Since the first eigenvalue is simple, $u_i$ are in a one dimensional space. 
Let $u_i=a_i w$ for $w$ a fixed eigenfunction corresponding to the first eigenvalue. 
Let us define $c=\frac{a}{\vert a\vert}$ and $v=\vert a\vert w$. 

Now let us show that $v$ is an absolute scalar minimizer in $B$. 

For $\phi\in H^{1}(B)$, $\phi\geq 0$ a.e. in $B$ 
and $\phi=v$ on $\partial B$, define $w_i=c_i\phi$. 
Then $w_i=c_iv=u_i$ on $\partial B$, and 
\begin{multline*}
\int_{B}\bigl(\vert\nabla v\vert^{2}+\chi_{\{v>0\}}\bigr)dx 
=
\int_{B}\bigl(\vert\nabla \uu\vert^{2}+\chi_{\{\vert \uu\vert>0\}}\bigr)dx \\
\leq 
\int_{B}\bigl(\vert\nabla \w\vert^{2}+\chi_{\{\vert \w\vert>0\}}\bigr)dx 
=
\int_{B}\bigl(\vert\nabla\phi\vert^{2}+\chi_{\{\phi>0\}}\bigr)dx\text{,} 
\end{multline*}
which proves that $v$ is an absolute scalar minimizer in $B$ and this finishes the proof of the lemma. 
\end{proof}

In the previous lemma we have considered the cases when 
$\{\vert \uu\vert>0\}\not=B\backslash\{0\}$. 
In the following lemma we consider the case when 
$\{\vert \uu\vert>0\}=B\backslash\{0\}$. 

\begin{lemma}\label{lemma3000} 
There exists no first order homogenous 
absolute minimizer $\uu$ in $B$ such that 
$\{\vert \uu\vert>0\}=B\backslash\{0\}$. 
\end{lemma}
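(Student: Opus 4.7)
The plan is to argue by contradiction using Theorem~\ref{theorem1300}. Suppose such a first-order homogeneous absolute minimizer $\uu$ exists; as noted in the opening paragraph of Section~\ref{section2000}, I may assume $Q\equiv 1$, which is in particular continuous, so Theorem~\ref{theorem1300} applies. The key observation is that the hypothesis $\{\vert\uu\vert>0\}=B\setminus\{0\}$ forces the topological boundary of the positivity set inside $B$ to be the single point $\{0\}$, so $B\cap\partial\{\vert\uu\vert>0\}$ (and, by the inclusion in Lemma~\ref{lemma1800}, the reduced boundary $B\cap\partial^{*}\{\vert\uu\vert>0\}$) has zero $\mathcal{H}^{n-1}$-measure.

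Consequently the right-hand side of the equation
\begin{equation*}
\Delta u_i = w_i Q\,\mathcal{H}^{n-1}\llcorner(B\cap\partial^{*}\{\vert\uu\vert>0\})
\end{equation*}
supplied by Theorem~\ref{theorem1300} vanishes identically for every $i$, so each $u_i$ is distributionally harmonic on $B$. By Weyl's lemma, $u_i$ is then classically harmonic in $B$.

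To finish, I exploit the first-order homogeneity: for any fixed $x_0\in B$, $u_i(0)=\lim_{r\to 0^{+}} r\, u_i(x_0)=0$. Since $u_i$ is harmonic, non-negative (by the admissibility constraint together with continuity of $u_i$), and attains the value $0$ at the interior point $0$, the strong minimum principle (equivalently, the mean value property applied on any ball $B_r(0)\subset B$) forces $u_i\equiv 0$ in $B$ for every $i$. Hence $\uu\equiv 0$ in $B$, contradicting $\{\vert\uu\vert>0\}=B\setminus\{0\}$.

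There is essentially no substantive obstacle in this argument: once Theorem~\ref{theorem1300} is invoked, the fact that a single point is $\mathcal{H}^{n-1}$-negligible collapses the source term for $\Delta u_i$, and homogeneity plus non-negativity does the rest. The only detail worth underlining is the verification that $B\cap\partial^{*}\{\vert\uu\vert>0\}\subseteq\{0\}$, which is a direct consequence of the general inclusion $\partial^{*}\subseteq\partial$ recorded in Lemma~\ref{lemma1800}.
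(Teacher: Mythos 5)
Your proof is correct, but it takes a genuinely different (and heavier) route than the paper's. You invoke Theorem~\ref{theorem1300}, note that the reduced boundary inside $B$ is contained in the single point $\{0\}$ and hence is $\mathcal{H}^{n-1}$-null, conclude by Weyl's lemma that each $u_i$ is harmonic on all of $B$, and then use homogeneity together with the strong minimum principle for non-negative harmonic functions to force $\uu\equiv 0$, a contradiction. The paper's argument reaches the same intermediate state (each $u_i$ harmonic across $B$) more elementarily: $\uu$ is already harmonic on $\{|\uu|>0\}=B\setminus\{0\}$ as a minimizer, it is bounded near the origin, and a bounded harmonic function on a punctured ball extends harmonically across the puncture (removable isolated singularity, $n\geq 2$); after that the minimum principle finishes the job just as in your write-up. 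So the key mechanism — harmonicity across $B$, then the minimum principle with the constraint $u_i\geq 0$ — is shared; what differs is how harmonicity at the origin is obtained. Your route is sound but brings in Theorem~\ref{theorem1300} (which itself rests on Lemmas~\ref{lemma1700}, \ref{lemma1800}, \ref{lemma1900}, and the blow-up analysis) where a one-line removable-singularity argument suffices. Incidentally, an even quicker path is available: Lemma~\ref{lemma1400} (or the density estimate Lemma~\ref{lemma1600}) applied at $x=0$ already yields $|B_r\cap\{|\uu|=0\}|>0$, contradicting $\{|\uu|=0\}\cap B=\{0\}$. One minor phrasing nit: saying the strong minimum principle is ``equivalently, the mean value property'' is loose — it is the mean value property \emph{combined with non-negativity} that gives vanishing on spheres; the implication is correct, just not an equivalence.
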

\begin{proof}
Assume that $\uu$ is a first order homogenous 
absolute minimizer in $B$ 
such that $\{\vert \uu\vert>0\}=B\backslash\{0\}$. 
Because $\uu$ is harmonic in $\{\vert \uu\vert>0\}=B\backslash\{0\}$ 
and bounded in a neighborhood of the origin, it follows that 
$\uu$ might be extended as 
a harmonic function at the origin. 
From this it follows that $\{\vert \uu\vert>0\}=B$ a contradiction. 
\end{proof}

\begin{proof}[Proof of Theorem \ref{theorem1400}]
This follows from Lemma \ref{lemma2800}, \ref{lemma2900} and \ref{lemma3000}. 
\end{proof} 

\section{Smoothness of the Free Boundary\\ 
(Proofs of Theorems \ref{theorem1500} and \ref{theorem1600})}\label{section2050}

\begin{proof}[Proof of Theorem \ref{theorem1500}]
This follows as in Sections 3 and 4 of \cite{Weiss1999} by using 
Theorem \ref{theorem1360}, 
Lemma \ref{lemma2700} 
and Theorem \ref{theorem1400}. 
\end{proof}

In the following we prove Theorem \ref{theorem1600}. 
The proof is based on the 
Schauder estimates and the regularity theory of elliptic systems as 
in \cite{Morrey1966} which is a further development of \cite{AgmonDouglisNirenberg1959}. 

\begin{proof}[Proof of Theorem \ref{theorem1600}] 

\hfill

\noindent
\textbf{Step 1)} In this step we outline the partial hodograph transform to straighten the free 
boundary. 

Let $x_0,r_0,\epsilon_1,\epsilon_{2}$ and $i_0$ as 
in subsection \ref{subsection1700}. 
We might assume that $x_0=0$, $i_{0}=1$ and $\nu(0)=-e_n$ where $\nu(x)$ 
is the outward normal of $\partial\{\vert \uu\vert>0\}$ 
at $x$. 
Let $\rho$ be as in Theorem \ref{theorem1360}. 
Thus $B_{\frac{\rho}{4}}\cap\partial\{\vert \uu\vert>0\}$ is the 
graph in direction $-e_n$ of a $C^{1,\alpha}$ function. 
It follows that $u_1$ is $C^{1,\alpha}$ regular in $B_{\frac{\rho}{8}}\cap\overline{\{\vert \uu\vert>0\}}$.  Let us denote $\tilde r_0=\frac{\rho}{8}$. 

We consider the partial hodograph transform defined as the mapping of 
$x\in B_{\tilde r_0}\cap\{\vert \uu\vert>0\}$ 
to $y$ defined by the equations 
\begin{equation*}
\left\{
\begin{aligned}
& y_{n}=u_{1}(x)\text{,} \\
& y^{\prime}=x^{\prime}\text{.} 
\end{aligned}
\right.
\end{equation*}
Here $y^{\prime}=(y_1,\cdots,y_{n-1})$ and $x^{\prime}=(x_1,\cdots,x_{n-1})$. 
One may see that this mapping is injective. 

Let us denote by $D$ the image of $B_{\tilde r_0}\cap\{\vert \uu\vert>0\}$ 
after this mapping. 
The inverse of this mapping is the Legendre transform defined 
as the mapping of $y\in D$ to $x$ given by 
\begin{equation*}
\left\{
\begin{aligned}
& x_{n}=v_1(y)\text{,} \\
& x^{\prime}=y^{\prime}\text{,}
\end{aligned}
\right.
\end{equation*}
where the function $v_1:D\to\mathbb{R}$ satisfies 
\begin{equation}\label{equ7615} 
y_{n}=u_{1}(y^{\prime},v_1(y))\enskip\text{for all}\enskip y\in D\text{.}
\end{equation}

By differentiating equation \eqref{equ7615} with respect to 
$y_n$ we obtain
\begin{equation}\label{equ7620} 
1=\partial_{x_n}u_{1}(y^{\prime},v_1(y))\partial_{y_{n}}v_1(y)
\end{equation}
and by differentiating equation \eqref{equ7615} with respect to 
$y_i$ for $i=1,\cdots,n-1$ we obtain 
\begin{equation}\label{equ7625} 
0=\partial_{x_i}u_{1}(y^{\prime},v_1(y))
+\partial_{x_{n}}u_{1}(y^{\prime},v_1(y))
\partial_{y_i}v_1(y)\text{.} 
\end{equation}

Let $g$ be a function defined in $B_{\tilde r_{0}}\cap\{\vert u\vert>0\}$ and 
$f(y)=g(x)$, then from \eqref{equ7620} and \eqref{equ7625} we obtain that 
\begin{equation}\label{equ7630} 
\partial_{x_j}g(x)
=
\partial_{y_j}f(y)
-\frac{\partial_{y_j}v_1(y)}{\partial_{y_n}v_1(y)}
\partial_{y_n}f(y)
\enskip\text{for}\enskip j=1,\cdots,n-1
\end{equation}
and 
\begin{equation}\label{equ7635} 
\partial_{x_n}g(x)
=
\frac{1}{\partial_{y_n}v_1(y)}\partial_{y_n}f(y)\text{.}
\end{equation}

Let us define $v_k(y)=u_k(x)$ for $k=2,\cdots,m$. 

\noindent
\textbf{Step 2)} In this step we derive the differential equations satisfied by 
$v_k$ for $k=1,\cdots,m$. 

For functions defined on $D$ let us define the second order differential 
operator 
\begin{equation*}
\mathcal{L}(v_1)f
=
\frac{1+\vert \nabla_{y^{\prime}}v_1\vert^{2}}{(\partial_{y_{n}}v_1)^{2}}
\partial_{y_ny_n}f
+\Delta_{y^{\prime}}f
-
2\frac{\nabla_{y^{\prime}}v_1}{\partial_{y_{n}}v_1}
\cdot 
\nabla_{y^{\prime}}\partial_{y_n}f
\text{.} 
\end{equation*}
Let $g(x)=f(y)$ then using \eqref{equ7630} and \eqref{equ7635} we compute 
\begin{equation}\label{equ7637} 
\Delta g(x)
=\mathcal{L}(v_1)f
-\frac{\partial_{y_n}f}{\partial_{y_{n}}v_1}\mathcal{L}(v_1)v_1\text{.} 
\end{equation}
Because $\Delta u_1=0$ in $\{\vert \uu\vert>0\}$ and $u_1(x)=y_n$ 
we have 
\begin{equation*}
0=\Delta u_1
=\mathcal{L}(v_1)y_n
-\frac{\partial_{y_n}y_n}{\partial_{y_{n}}v_1}
\mathcal{L}(v_1)v_1
=-\frac{1}{\partial_{y_{n}}v_1}\mathcal{L}(v_1)v_1
\text{.} 
\end{equation*}
It follows that 
\begin{equation}\label{equ7640} 
\mathcal{L}(v_1)v_1=0\enskip\text{in}\enskip D\text{.} 
\end{equation}
Because $\Delta u_k=0$ in $\{\vert \uu\vert>0\}$ 
from \eqref{equ7637} and \eqref{equ7640} we obtain 
\begin{equation}\label{equ7642} 
0=\Delta u_{k}
=
\mathcal{L}(v_1)v_k
-\frac{\partial_{y_n}v_k}{\partial_{y_{n}}v_1}
\mathcal{L}(v_1)v_1
=
\mathcal{L}(v_1)v_k
\enskip\text{in}\enskip D
\text{.} 
\end{equation}

The free boundary is the graph of the function $v_1(y^{\prime},0)$, 
therefore we have 
\begin{equation}\label{equ7644} 
\nu(x)=\frac{(\nabla_{y^{\prime}} v_1,-1)}
{\sqrt{1+\vert\nabla_{y^{\prime}}v_1 \vert^{2}}}\text{.}
\end{equation}
Using \eqref{equ7630}, \eqref{equ7635}, \eqref{equ7644} 
and $u_1(x)=y_n$ we have 
\begin{equation*}
\partial_{\nu}u_1(x)
=\nu(x)\cdot\nabla u_1(x)
=-\frac{\sqrt{1+\vert\nabla_{y^{\prime}}v_1 \vert^{2}}}{\partial_{y_{n}}v_1}\text{.} 
\end{equation*}

Similarly for $k=2,\cdots,m$, 
using \eqref{equ7630}, \eqref{equ7635}, \eqref{equ7644} 
and $\nabla_{y^{\prime}}v_k(y^{\prime},0)=0$ we have 
\begin{equation*}
\partial_{\nu}u_k(x)
=
\nu(x)\cdot\nabla u_k(x)
=
-
\frac{
\sqrt{1+\vert\nabla_{y^{\prime}}v_1 \vert^{2}}
}{\partial_{y_{n}}v_1}
\partial_{y_n}v_k
\text{.} 
\end{equation*}

Now by the free boundary condition (which follows from \eqref{equ600}) 
\begin{equation*}
Q^{2}(x)
=
\sum_{k=1}^{m}(\partial_{\nu}u_k(x))^{2}
\enskip\text{for}\enskip x\in B_{\tilde r_{0}}\cap\partial \{\vert \uu\vert>0\}
\end{equation*}
we obtain 
\begin{equation}\label{equ7648} 
Q^{2}(x)
=\frac{(1+\vert\nabla_{y^{\prime}}v_1\vert^{2})}{(\partial_{y_{n}}v_1)^{2}}
\bigl(1+\sum_{k=2}^{m}(\partial_{y_n}v_k)^{2}\bigr)
\enskip\text{for}\enskip x\in B_{\tilde r_{0}}\cap\partial \{\vert \uu\vert>0\}
\text{.} 
\end{equation}
Thus $v_1$ satisfies 
\begin{equation}\label{equ7649.100} 
\left\{
\begin{aligned}
& \mathcal{L}(v_1)v_1=0
\enskip\text{in}\enskip D\text{,} \\
& 
\frac{\partial_{y_{n}}v_1}{\sqrt{1+\vert\nabla_{y^{\prime}}v_1\vert^{2}}}
=
\frac{1}{Q(y^{\prime},v_1(y))}
\bigl(1+\sum_{k=2}^{m}(\partial_{y_n}v_k)^{2}\bigr)^{\frac{1}{2}}
\enskip\text{on}\enskip\overline{D}\cap\{y_n=0\}
\end{aligned}
\right.
\end{equation}
and for $k=2,\cdots,m$ we have 
\begin{equation}\label{equ7649.200} 
\left\{
\begin{aligned}
& \mathcal{L}(v_1)v_k=0
\enskip\text{in}\enskip D\text{,} \\
& v_k=0\enskip\text{on}\enskip\overline{D}\cap\{y_n=0\}\text{.}
\end{aligned}
\right.
\end{equation}

\noindent
\textbf{Step 3)} In this step we show that the linear, homogenous second order and 
scalar operator $\mathcal{L}(v_1)$ is uniformly elliptic. 

Let $D_r$ denote the set of those $y$ which correspond 
to those $x\in B_r\cap\{\vert \uu\vert>0\}$. 
Because in $B_{\tilde r_0}\cap\overline{\{\vert \uu\vert>0\}}$ 
we have $u_1\in C^{1,\alpha}$ it follows that 
for small enough $0<r_{2}<\tilde r_0$ we have 
\begin{equation*}
0<c\leq \partial_{y_{n}}v_1\leq C\enskip\text{and}\enskip 
\vert \nabla_{y^{\prime}}v_1\vert\leq\epsilon\enskip\text{in}\enskip D_{r_{2}}
\end{equation*}
where  
\begin{equation*}
0<\epsilon\leq\frac{c}{2\max(1,C^2)}\text{.} 
\end{equation*}
For $\zeta\in\mathbb{R}^{n}$ and $y\in D_{r_{2}}$ we compute 
\begin{multline*}
\frac{1}{(\partial_{y_{n}}v_1)^{2}} 
\bigl(1+\vert \nabla_{y^{\prime}}v_1\vert^{2}\bigr)\zeta_n^2
+\vert\zeta^{\prime}\vert^{2}
-\frac{2}{\partial_{y_{n}}v_1}
(\nabla_{y^{\prime}}v_1\cdot \zeta^{\prime}) \zeta_n \\
\geq 
\frac{1}{(\partial_{y_{n}}v_1)^{2}} \zeta_n^2
+\vert\zeta^{\prime}\vert^{2}
-\frac{2}{\partial_{y_{n}}v_1}
\vert\nabla_{y^{\prime}}v_1\vert \vert \zeta^{\prime}\vert \vert\zeta_n\vert \\
\geq 
\frac{1}{C^{2}} \zeta_n^2
+\vert\zeta^{\prime}\vert^{2}
-\frac{2}{c} 
\epsilon \vert \zeta^{\prime}\vert \vert\zeta_n\vert \\
\geq 
\frac{1}{C^{2}} \zeta_n^2
+\vert\zeta^{\prime}\vert^{2}
-\frac{2}{c} 
\epsilon 
\bigl(
\frac{1}{2}\vert \zeta^{\prime}\vert^{2}
+\frac{1}{2}\zeta_n^{2}
\bigr) \\
=\bigl(\frac{1}{C^{2}}-\frac{\epsilon}{c}\bigr)\zeta_n^2
+\bigl(1-\frac{\epsilon}{c}\bigr)\vert\zeta^{\prime}\vert^{2} 
\geq 
\frac{1}{2\max(1,C^2)}
\vert\zeta\vert^{2}
\end{multline*}
which proves the claim of this step. 

\noindent\textbf{Step 4)} In this step we show that if $Q\in C^{1,\gamma}$ 
then $v_k\in C^{2,\min(\alpha,\gamma)}(D_{\frac{1}{2}r_{2}})$ for $k=1,\cdots,m$. 
Because $v_1\in C^{1,\alpha}(D)$ the coefficients of the operator $\mathcal{L}(v_1)$ 
are $C^{\alpha}(D)$ regular. 
Also from the previous step we have that this operator is 
uniformly elliptic in $D_{r_{2}}$. 

For $k=2,\cdots,m$ because $v_k$ satisfies \eqref{equ7649.200} 
from Schauder estimates it follows that $v_k\in C^{2,\alpha}(D_{\frac{1}{2}r_{2}})$. 
It is easy to see that because $v_1\in C^{1,\alpha}(D)$ and 
$Q\in C^{1,\gamma}$ we have  
$Q(y^{\prime},v_1(y))\in C^{1,\min(\alpha,\gamma)}(D)$. 
Now the right hand side of the second equation in \eqref{equ7649.100} 
is in $C^{1,\min(\alpha,\gamma)}(D_{\frac{1}{2}r_{2}})$. 
Because $v_1$ satisfies \eqref{equ7649.100}, 
from Schauder estimates it follows that 
$v_1\in C^{2,\min(\alpha,\gamma)}(D_{\frac{1}{4}r_{2}})$.

\noindent\textbf{Step 5)} In this step we collect the equations 
satisfied by all $v_k$ for $k=1,\cdots,m$ in a nonlinear system. 

Let us define 
\begin{equation*}
F_k(\vv)=\mathcal{L}(v_1)v_k\enskip\text{for}\enskip k=1,\cdots,m\text{,} 
\end{equation*}
\begin{equation*}
\Phi_1(y^{\prime},\vv)
=\frac{(1+\vert\nabla_{y^{\prime}}v_1\vert^{2})}{(\partial_{y_{n}}v_1)^{2}}
\bigl(1+\sum_{k=2}^{m}(\partial_{y_n}v_k)^{2}\bigr)-Q^{2}(y^{\prime},v_1)
\end{equation*}
and 
\begin{equation*}
\Phi_k(\vv)=v_k\enskip\text{for}\enskip k=2,\cdots,m\text{.}
\end{equation*}

Now by \eqref{equ7649.100} and \eqref{equ7649.200} we obtain the nonlinear system 
\begin{equation}\label{equ7649.300} 
\left\{
\begin{aligned}
& F_k(\vv)=0\enskip\text{in}\enskip D\enskip\text{for}\enskip k=1,\cdots,m\text{,} \\
& \Phi_k(\vv)=0\enskip\text{on}\enskip\overline{D}\cap\{y_n=0\}
\enskip\text{for}\enskip k=1,\cdots,m\text{.}
\end{aligned}
\right.
\end{equation}

\noindent\textbf{Step 6)} In this step we compute the linearization of the nonlinear system. 

We compute 
\begin{multline}\label{equ7649.1000} 
\frac{d}{dt}\mathcal{L}(v_1+t\bar{v}_1)f\big\vert_{t=0} \\
=\frac{2}{(\partial_{y_{n}}v_1)^{2}}
\Bigl(
\bigl(\nabla_{y^{\prime}}v_1\cdot\nabla_{y^{\prime}}\partial_{y_n}f 
-\frac{(1+\vert \nabla_{y^{\prime}}v_1\vert^{2})}{\partial_{y_{n}}v_1}
\partial_{y_{n}y_{n}}f \bigr)
\partial_{y_n}\bar{v}_1 \\
+
\bigl(\partial_{y_{n}y_{n}}f\nabla_{y^{\prime}}v_1
-\partial_{y_{n}}v_1\nabla_{y^{\prime}}\partial_{y_n}f\bigr)
\cdot\nabla_{y^{\prime}}\bar{v}_1
\Bigr)
\text{,} 
\end{multline}
\begin{multline}\label{equ7649.1010} 
\frac{d}{dt}\Phi_1(y^{\prime},v_1+t\bar{v}_1,v_2,\cdots,v_m)\big\vert_{t=0} \\
=
\frac{-2}{(\partial_{y_{n}}v_1)^{2}}
\bigl(1+\sum_{k=2}^{m}(\partial_{y_n}v_k)^{2}\bigr)
\Bigl(
\frac{(1+\vert\nabla_{y^{\prime}}v_1\vert^{2})}{\partial_{y_{n}}v_1}
\partial_{y_{n}}\bar{v}_1-\nabla_{y^{\prime}}v_1\cdot\nabla_{y^{\prime}}\bar{v}_1
\Bigr) \\
-2Q(y^{\prime},v_1)
\partial_{x_n}Q(y^{\prime},v_1)
\bar{v}_{1}
\end{multline}
and for $j=2,\cdots,m$ 
\begin{equation}\label{equ7649.1020} 
\frac{d}{dt}\Phi_1(y^{\prime},v_1,\cdots,v_j+t\bar{v}_j,\cdots,v_m)\big\vert_{t=0} \\
=2\frac{(1+\vert\nabla_{y^{\prime}}v_1\vert^{2})}{(\partial_{y_{n}}v_1)^{2}}
\partial_{y_n}v_j\partial_{y_n}\bar{v}_j\text{.}
\end{equation}

For $k,j=1,\cdots,m$ we denote by $D_{j}F_k(\vv)$ the derivative of $F_k$ in the direction $v_j$, i.e. 
assuming $\vv$ and $\bar{v}_j$ smooth enough 
\begin{equation*}
\frac{d}{dt}F_k(v_1,\cdots,v_j+t\bar{v}_j,\cdots,v_m)\big\vert_{t=0}
=D_{j}F_{k}(\vv)\bar{v}_j\text{.} 
\end{equation*}
Similarly we define $D_{j}\Phi_1(y^{\prime},\vv)$ and $D_{j}\Phi_k(\vv)$ for $k=2,\cdots,m$.

It follows from \eqref{equ7649.1000} that for all $k,j=1,\cdots,m$ 
\begin{multline*}
D_{j}F_{k}(\vv)\bar{v}_j=\delta_{1j}
\frac{2}{(\partial_{y_{n}}v_1)^{2}}
\Bigl(
\bigl(\nabla_{y^{\prime}}v_1\cdot\nabla_{y^{\prime}}\partial_{y_n}v_k
-\frac{(1+\vert \nabla_{y^{\prime}}v_1\vert^{2})}{\partial_{y_{n}}v_1}
\partial_{y_{n}y_{n}}v_k \bigr)
\partial_{y_n}\bar{v}_j \\
+
\bigl(\partial_{y_{n}y_{n}}v_k\nabla_{y^{\prime}}v_1
-\partial_{y_{n}}v_1\nabla_{y^{\prime}}\partial_{y_n}v_k\bigr)
\cdot\nabla_{y^{\prime}}\bar{v}_j
\Bigr) 
+\delta_{kj}\mathcal{L}(v_1)\bar{v}_j\text{.}
\end{multline*}

Also from \eqref{equ7649.1010} and \eqref{equ7649.1020} it follows 
\begin{multline*}
D_{1}\Phi_{1}(y^{\prime},\vv)\bar{v}_{1} \\
=\frac{-2\bigl(1+\sum_{k=2}^{m}(\partial_{y_n}v_k)^{2}\bigr)}{(\partial_{y_{n}}v_1)^{2}}
\Bigl(
\frac{(1+\vert\nabla_{y^{\prime}}v_1\vert^{2})}{\partial_{y_{n}}v_1}\partial_{y_n}\bar{v}_{1}
-\nabla_{y^{\prime}}v_1\cdot\nabla_{y^{\prime}}\bar{v}_{1}
\Bigr) \\
-2Q(y^{\prime},v_1)
\partial_{x_n}Q(y^{\prime},v_1)\bar{v}_{1}\text{,} 
\end{multline*} 
\begin{equation*}
D_{j}\Phi_{1}(\vv)\bar{v}_{j}=2\frac{(1+\vert\nabla_{y^{\prime}}v_1\vert^{2})}{(\partial_{y_{n}}v_1)^{2}}
\partial_{v_n}v_j\partial_{y_n}\bar{v}_{j}\enskip\text{for}\enskip j=2,\cdots,m
\end{equation*}
and 
\begin{equation*}
D_{j}\Phi_{k}(\vv)\bar{v}_{j}=\delta_{kj}\bar{v}_{j}
\enskip\text{for}\enskip k=2,\cdots,m\enskip\text{and}\enskip j=1,\cdots,m\text{.} 
\end{equation*}

\noindent\textbf{Step 7)} In this step we compute the principal part of the linearization. 

The theory outlined in \cite{KinderlehrerNirenbergSpruck1978} 
requires a special structure for the orders of principal parts of the linearized system. 
These orders are described by integers $s_k$, $t_j$ and $r_k$ for $j,k=1,\cdots,m$.  
The order of $D_{j}F_k$ should be less than or equal to $s_k+t_j$ 
with its principal part $D_{j}^{\prime}F_k$ having order $s_k+t_j$. 
Similarly the order of $D_{j}\Phi_k$ should be less than or equal to $r_k+t_j$ 
with its principal part $D_{j}^{\prime}\Phi_k$ having order $r_k+t_j$. 
Let us note that we consider the null operator to be of any integer order. 

For our system we choose 
$t_j=2$, $s_k=0$ and $r_k=(-1)\chi_{\{k=1\}}+(-2)\chi_{\{k\not=1\}}$ 
for $k,j=1,\cdots,m$. 

Then from the expressions derived for the linear parts in the previous step it follows that 
the principal parts are given by 
\begin{equation}\label{equ7649.1030} 
D_{j}^{\prime}F_{k}(\vv)=\delta_{kj}\mathcal{L}(v_1)\text{,} 
\end{equation}
\begin{multline}\label{equ7649.1040} 
D_{1}\Phi_{1}(y^{\prime},\vv)\bar{v}_{1} \\
=\frac{-2\bigl(1+\sum_{k=2}^{m}(\partial_{y_n}v_k)^{2}\bigr)}{(\partial_{y_{n}}v_1)^{2}}
\Bigl(
\frac{(1+\vert\nabla_{y^{\prime}}v_1\vert^{2})}{\partial_{y_{n}}v_1}\partial_{y_n}\bar{v}_{1}
-\nabla_{y^{\prime}}v_1\cdot\nabla_{y^{\prime}}\bar{v}_{1}
\Bigr)\text{,} 
\end{multline} 
\begin{equation}\label{equ7649.1042} 
D_{j}^{\prime}\Phi_{1}(\vv)\bar{v}_{j}=2\frac{(1+\vert\nabla_{y^{\prime}}v_1\vert^{2})}{(\partial_{y_{n}}v_1)^{2}}
\partial_{v_n}v_j\partial_{y_n}\bar{v}_{j}\enskip\text{for}\enskip j=2,\cdots,m
\end{equation}
and 
\begin{equation}\label{equ7649.1044} 
D_{j}^{\prime}\Phi_{k}(\vv)\bar{v}_{j}=\delta_{kj}\bar{v}_{j}
\enskip\text{for}\enskip k=2,\cdots,m\enskip\text{and}\enskip j=1,\cdots,m\text{.} 
\end{equation}

\noindent\textbf{Step 8)} In this step we prove that the principal part of the linearization 
is elliptic and coercive at $y=0$ as defined in \cite{KinderlehrerNirenbergSpruck1978}. 

The principal part of the linearization at $y=0$ is given by 
\begin{equation}\label{equ7649.1045} 
D_{j}^{\prime}F_{k}(\vv)\big\vert_{y=0}\bar{v}_{j}
=\delta_{kj}\Delta_{y}\bar{v}_{j}
\text{,} 
\end{equation}
\begin{equation}\label{equ7649.1046} 
D_{1}\Phi_{1}(y^{\prime},\vv)\big\vert_{y=0}\bar{v}_{1} \\
=-2\bigl(1+\sum_{k=2}^{m}(\partial_{y_n}v_k(0))^{2}\bigr)\partial_{y_n}\bar{v}_{1}
\text{,} 
\end{equation} 
\begin{equation}\label{equ7649.1047} 
D_{j}^{\prime}\Phi_{1}(\vv)\big\vert_{y=0}\bar{v}_{j}
=2\partial_{y_n}v_j(0)\partial_{y_n}\bar{v}_{j} 
\enskip\text{for}\enskip j=2,\cdots,m
\end{equation}
and 
\begin{equation}\label{equ7649.1048} 
D_{j}^{\prime}\Phi_{k}(\vv)\big\vert_{y=0}\bar{v}_{j}=\delta_{kj}\bar{v}_{j}
\enskip\text{for}\enskip k=2,\cdots,m\enskip\text{and}\enskip j=1,\cdots,m\text{.} 
\end{equation}

By \eqref{equ7649.1045} the principal part has a diagonal structure and 
on the diagonal we have Laplacians. By this simple structure the 
ellipticity is easy to check. 

From \eqref{equ7649.1045}, \eqref{equ7649.1046}, \eqref{equ7649.1047} 
and \eqref{equ7649.1048} it follows that the system is coercive if 
$\partial_{y_n}\bar{\varphi}(0)=0$ is coercive for $\Delta\bar{\varphi}=0$. 
But the latter is easy to check and thus we obtain the coercivity of the system. 

\noindent\textbf{Step 9)} In this step we finish the proof of the theorem. 

By step 4 if $Q\in C^{1,\gamma}$ 
then $v\in C^{2,\min(\alpha,\gamma)}(D_{\frac{1}{4}r_{2}})$. 
By this initial regularity, the ellipticity and coercivity at $y=0$ as demonstrated in the 
previous step the proof of the theorem follows from 
Theorem 6.8.2 in \cite{Morrey1966}. 
\end{proof} 

\appendix

\section{Non-Tangentially Accessible Domains}\label{section2140} 

In the following we bring the definition of non-tangentially accessible domains 
and recall the comparison principle. For more on these one may refer to 
\cite{AguileraCaffarelliSpruck1987} and \cite{Kenig1994}. 
Let us note that the standard reference for non-tangentially accessible domains is 
\cite{JerisonKenig1982}. But in \cite{AguileraCaffarelliSpruck1987} and 
\cite{Kenig1994} the definition of a non-tangentially accessible domain is more 
general than the one in \cite{JerisonKenig1982} and this generalization is necessary 
for the results in this paper. 

\begin{definition}[Harnack chain with parameter $M$]\label{definition4100}
Let $D\subset\mathbb{R}^{n}$ be a domain, $M>1$ 
and $x_1,x_2\in D$. 
A Harnack chain with parameter $M$ from $x_1$ to $x_2$ in $D$ is a 
finite sequence $B_{r_i}(x_i)\subset D$ for $i=1,\cdots,\ell$ of balls such that 
\begin{equation*}
\frac{r_i}{M}<\operatorname{dist}(B_{r_i}(x_i),\partial D)<Mr_i
\enskip\text{for all}\enskip i=1,\cdots,\ell\text{,} 
\end{equation*}
the first ball contains $x_1$, the last contains $x_2$ and consecutive balls intersect. 
The number of balls in the chain, i.e. $\ell$, is called the length of the chain. 
\end{definition}

\begin{definition}[Non-tangentially accessible domain 
with parameters $M$, $\xi$ and $c$]\label{definition4300}
A bounded domain $D$ in $\mathbb{R}^{n}$ is called 
a non-tangentially accessible with parameters $M>1$, $\xi>0$ and $0<c<1$ 
when  
\begin{enumerate}[(i)]\label{enumerate1000} 
\item\label{definition4300-1} $D$ satisfies corkscrew condition 
with parameters $M$ and $\xi$, i.e. 
for any $x\in\partial D$ and $0<r<\xi$ there exists 
$a_{r}(x)\in D\cap B_{r}(x)$ such that 
$\operatorname{dist}(a_{r}(x),\partial D)>\frac{r}{M}$. 
\item\label{definition4300-2} $D^{c}$ satisfies 
uniform positive density condition with parameter $c$, 
i.e. for all $x\in D^{c}$ we have $\vert B_r(x)\cap D^{c}\vert\geq c\vert B_r\vert$. 
\item\label{definition4300-3} 
$D$ satisfies Harnack chain condition with parameter $M$, i.e. 
for $\epsilon>0$ and $x_{1},x_{2}\in D$ such that 
$\operatorname{dist}(x_{1},\partial D)>\epsilon$, 
$\operatorname{dist}(x_{2},\partial D)>\epsilon$ 
and $\vert x_{2}-x_{1}\vert<C\epsilon$ then there exists a Harnack 
chain with parameter $M$ from $x_{1}$ to $x_{2}$ 
whose length depends on $C$, but not $\epsilon$. 
\end{enumerate}
\end{definition}

\begin{lemma}[Comparison principle]\label{lemma3400} 
Let $D\subset\mathbb{R}^{n}$ be a non-tangentially accessible 
domain with parameters $M>1$, $\xi>0$ and $0<c<1$. 
Then there exist $0<\lambda<1$, $0<c_1<1$, $0<c_2<1$, 
$C_{3}>1$ and $C_{4}>0$ depending only on $M$ and $c$ such that 
if $r<c_{1}\xi$, $x\in\partial D$, $v_{1}$ and $v_{2}$ 
be positive harmonic functions in $D$ 
vanishing continuously on $B_{C_{3}r}(x)\cap\partial D$. Then 
\begin{equation}\label{equ7700} 
\frac{1}{C_{4}}
\frac{v_{2}(a_{r}(x))}{v_{1}(a_{r}(x))}
\leq 
\frac{v_{2}(y)}{v_{1}(y)}
\leq C_{4}
\frac{v_{2}(a_{r}(x))}{v_{1}(a_{r}(x))}
\enskip\text{for}\enskip y\in B_{c_{2}r}(x)\cap D
\end{equation}
and 
\begin{equation}\label{equ7800}
[\frac{v_{2}}{v_{1}}]_{C^{\lambda}(B_{c_{2}r}(x)\cap D)}
\leq 
\frac{C_{4}}{r^{\lambda}}
\frac{v_{2}(a_{r}(x))}{v_{1}(a_{r}(x))}
\text{.}
\end{equation}
\end{lemma}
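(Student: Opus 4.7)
The plan is to follow the classical proof of the boundary Harnack principle for non-tangentially accessible domains, adapted to the slightly more permissive formulation used here. The argument has three ingredients: a Carleson-type maximum estimate, a quantitative lower bound near the boundary coming from the Harnack chain condition, and an oscillation-decay iteration. The first two ingredients combine to give the two-sided ratio bound \eqref{equ7700}; the third converts it into the H\"older estimate \eqref{equ7800}.

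First I would prove the Carleson estimate: if $v > 0$ is harmonic in $D$ and vanishes continuously on $B_{C_3 r}(x) \cap \partial D$, then
\[
\sup_{B_r(x) \cap D} v \leq C \, v(a_r(x)).
\]
The key input is the uniform positive density of $D^c$, which lets one place, at every scale $\rho \leq r$ and near every boundary point, a Newtonian-potential barrier against a ball of radius comparable to $\rho$ sitting inside $D^c$. Iterating this barrier on a dyadic family gives polynomial decay of $v$ toward $\partial D$; a single Harnack chain from the maximum point of $v$ in $B_r(x)\cap D$ to $a_r(x)$, whose length is controlled by the Harnack chain condition with parameter $M$, then closes the estimate.

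Combining the Carleson estimate with its companion pointwise lower bound $v(y) \geq c \, (\operatorname{dist}(y,\partial D)/r)^{\gamma} v(a_r(x))$ for $y \in B_{c_2 r}(x) \cap D$ --- which follows by iterating interior Harnack along a chain whose length is comparable to $\log(r/\operatorname{dist}(y,\partial D))$ --- yields \eqref{equ7700}: after normalizing so that $v_1(a_r(x)) = v_2(a_r(x)) = 1$, applying the Carleson upper bound to the numerator and the lower bound to the denominator (and vice versa) traps the ratio $v_2/v_1$ between two universal positive constants. The H\"older estimate \eqref{equ7800} then follows by the familiar oscillation-decay argument: if $M_\rho$ and $m_\rho$ denote the supremum and infimum of $v_2/v_1$ over $B_\rho(x) \cap D$, then both $M_\rho v_1 - v_2$ and $v_2 - m_\rho v_1$ are nonnegative harmonic functions in $D$ vanishing on $B_\rho(x) \cap \partial D$, so \eqref{equ7700} applied to each gives $M_{\rho/C} - m_{\rho/C} \leq \theta (M_\rho - m_\rho)$ for some universal $\theta < 1$, and iteration delivers the H\"older decay with exponent $\lambda = -\log \theta / \log C$.

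The main obstacle is the Carleson estimate, since that is the only step where the geometric hypotheses on $D$ enter in a non-formal way; in particular, the uniform positive density of $D^c$ (rather than a stronger exterior-cone condition) forces one to use a log-iteration of barriers instead of a single-scale comparison. Once the Carleson estimate is in place, the rest of the argument is essentially formal. As this lemma is entirely classical and independent of the free boundary analysis that is the actual subject of the paper, the full details are deferred to \cite{AguileraCaffarelliSpruck1987,Kenig1994}.
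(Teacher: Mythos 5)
The paper does not actually supply a proof of this lemma; it is recalled in the appendix with pointers to \cite{AguileraCaffarelliSpruck1987} and \cite{Kenig1994}, so deferring to those references is appropriate. However, the sketch you offer of how the two-sided bound \eqref{equ7700} is derived contains a genuine gap. After normalizing $v_1(a_r(x))=v_2(a_r(x))=1$, the Carleson estimate gives $v_2(y)\le C$ on $B_r(x)\cap D$, while your Harnack-chain lower bound gives only $v_1(y)\ge c\,(\operatorname{dist}(y,\partial D)/r)^{\gamma}$; combining these yields
\[
\frac{v_2(y)}{v_1(y)}\le \frac{C}{c}\Bigl(\frac{r}{\operatorname{dist}(y,\partial D)}\Bigr)^{\gamma},
\]
which blows up as $y\to\partial D$ rather than remaining bounded. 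The missing ingredient is the \emph{matching upper bound near the vanishing boundary}: one must show that $v(y)/v(a_r(x))$ actually decays, at the same rate as a fixed reference function --- the harmonic measure $\omega^y\bigl(B_r(x)\cap\partial D\bigr)$, or equivalently $r^{n-2}G(y,a_r(x))$ --- so that both $v_1$ and $v_2$ are sandwiched between constant multiples of that same quantity. Establishing this refined upper bound is the genuinely nontrivial core of the boundary Harnack / CFMS comparison theorem, and it needs more than Carleson plus interior Harnack chains: a maximum-principle comparison of $v$ against a multiple of the harmonic measure of a boundary cap (using the Carleson bound only as input data on $\partial B_r(x)\cap D$), together with harmonic-measure doubling to pass between caps of comparable radii. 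Once that is in place, \eqref{equ7700} follows immediately, and your oscillation-decay iteration for the H\"older estimate \eqref{equ7800} is then correct as stated.
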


%\section{Alt-Caffarelli-Friedman Monotonicity Formula}

\end{document}